\definecolor{maus}{rgb}{0.2,0.2,0.2}
\newtheorem{thm}{Theorem}[section]
\newtheorem{prop}[thm]{Proposition}
\newtheorem{lem}[thm]{Lemma}
\newtheorem{cor}[thm]{Corollary}
\newtheorem*{chekanov*}{Theorem \ref{t:chekanov}}
\newtheorem*{non-loose class*}{Theorem \ref{t:non-loose class}}
\newtheorem*{thm*}{Theorem}
\newtheorem*{lem*}{Lemma}
\newtheorem*{prob*}{Problem}
\theoremstyle{definition}
\newtheorem{defn}[thm]{Definition}
\newtheorem{ex}[thm]{Example}
\newtheorem{rem}[thm]{Remark}
\newtheorem*{ex*}{Example}
\newcommand{\thmref}[1]{Theorem~\ref{#1}}
\newcommand{\lemref}[1]{Lemma~\ref{#1}}
\newcommand{\propref}[1]{Proposition~\ref{#1}}
\newcommand{\defref}[1]{Definition~\ref{#1}}
\newcommand{\exref}[1]{Example~\ref{#1}}
\newcommand{\remref}[1]{Remark~\ref{#1}}
\newcommand{\corref}[1]{Corollary~\ref{#1}}
\newcommand{\figref}[1]{Figure~\ref{#1}}
\newcommand{\secref}[1]{Section~\ref{#1}}
\newcommand{\lra}{\longrightarrow}
\newcommand{\FF}{\mathcal{ F}}
\newcommand{\LL}{\mathcal{ L}}
\newcommand{\R}{\mathbb{ R}}
\newcommand{\Z}{\mathbb{ Z}}
\newcommand{\N}{\mathbb{N}}
\newcommand{\tb}{\mathrm{tb}}
\newcommand{\eing}[1]{\big|_{#1}}
\newcommand{\eps}{\varepsilon}
\newcommand{\lmt}{\longmapsto}
\DeclareMathAccent{\ring}{\mathalpha}{operators}{"17}
\author{T. Vogel}
\title{Non-loose unknots, overtwisted discs,  and the contact mapping class group of $S^3$} 
\address{Mathematisches Institut, Ludwig-Maximilians-Universit{\"a}t, Theresienstr. 39, 80333 M{\"u}nchen, Germany}
\email{tvogel@math.lmu.de}
\date{\today; MSC 2000: 57R17, 57R30}
\begin{document}


\begin{abstract}
We classify  Legendrian unknots in overtwisted contact structures on $S^3$. In particular, we show  that up to contact isotopy for every pair $(n,\pm(n-1))$ with $n>0$ there are exactly two oriented non-loose Legendrian unknots in $S^3$ with Thurston-Bennequin invariant $n$ and rotation number $\pm(n-1)$.   (Only one overtwisted contact structure on $S^3$ admits a non-loose unknot $K$ and the classical invariants have to be $\mathrm{tb}(K)=n$ and $\mathrm{rot}(K)=\pm(n-1)$ for $n>1$.) 

This can be used to prove two results attributed to Y.~Che\-kan\-ov: The first implies that the contact mapping class group of an overtwisted contact structure on $S^3$ depends on the contact structure. The second result is that the identity component of the contactomorphism group of an overtwisted contact structure on $S^3$ does not always act transitively on the set of boundaries of overtwisted discs. 
\end{abstract}

\maketitle


\markright{Non-loose unknots and overtwisted discs in $S^3$}
\pagestyle{myheadings}

\section{Introduction}

In $3$-dimensional contact topology there is a fundamental dichotomy between tight and overtwisted contact structures pioneered by D.~Bennequin \cite{bennequin} and developed further by Y.~Eliashberg \cite{el20}.

Usually, the attention is restricted to tight contact structures. These contact structures are characterized by the absence of so-called overtwisted discs. This is because a theorem of Y.~Eliashberg implies, among other things, that the classification of contact structures on a fixed closed $3$-manifold up to diffeomorphism  which contain an overtwisted disc  up to isotopy  coincides with the classification of plane fields up to homotopy. Nevertheless, as is shown in this paper, there are rigidity phenomena for overtwisted contact structures. 

What is important in Eliashberg's result is that one needs to  control an overtwisted disc. In particular, if one considers the isotopy problem for a pair of  Legendrian or transverse links, then Eliashberg's theorem can be applied effectively when the complement of the links is overtwisted. 

This is not always the case. As K.~Dymara noted in \cite{dym}, it may happen that a Legendrian or transverse knot has a tight complement. In other words it intersects all overtwisted discs.  A knot with this property is called non-loose. Also, it may happen that two Legendrian knots have the same classical invariants (the Thurston-Bennequin invariant and the rotation number), each knot has an overtwisted complement but the complement of the union of the two knots is tight. In these situations one cannot apply Eliashberg's theorem directly to construct for example Legendrian isotopies between two non-loose Legendrian knots with the same classical invariants. 

In her preprint \cite{dym2} K.~Dymara gives several examples of non-loose Legendrian knots in $S^3$.  J.~Etnyre constructed more examples in \cite{etnyre-ex}. He also found examples of pairs of non-loose Legendrian knots whose complements are not diffeomorphic. 

Non-loose Legendrian knots are interesting because one can obtain interesting tight contact structures  from surgeries on non-loose Legendrian knots. Furthermore, as shown in this paper, non-loose unknots can be used to obtain non-trivial information about overtwisted contact structures.

The coarse classification (i.e. up to diffeomorphism) of non-loose unknots in $S^3$ is due to Y.~Eliash\-berg and M.~Fraser in \cite{elfr-long} and independently to J.~Etnyre (see \cite{etnyre-ot}).  It turns out that on $S^3$ there is a unique contact structure which admits a non-loose unknot, we denote it by $\xi_{-1}$. As a plane field  this contact structure is homotopic to the unique negative tight contact structure. Moreover, the Thurston-Bennequin number of a non-loose unknot has to be positive.  Non-loose unknots with Thurston-Bennequin invariant one are called minimal. 

A coarse classification of certain rationally null-homologous knots in lens spaces was obtained by H.~Geiges and S.~Onaran \cite{go}. Also, the problem list \cite{etnyreng} states several questions concerning Legendrian knots in overtwisted contact structures and the structure of the contact mapping class group (in particular the items  31,32, 38(4) and 40).
  
One main result of this paper is the classification of non-loose Legendrian unknots in $S^3$ up to Legendrian isotopy.  
\begin{non-loose class*} 
Let $K\subset S^3$ be a non-loose Legendrian unknot with $\mathrm{tb}(K)=n>0$ and $\mathrm{rot}(K)=n-1$. Then $K$ is isotopic to exactly one of two standard unknots with the same classical invariants.
\end{non-loose class*}

The most important case is the study of the minimal case, i.e. $n=1$. The arguments used to prove the theorem in this case rely on 
\begin{itemize}
\item Eliashberg's classification theorem \cite{elot}, and
\item a study of the characteristic foliation on families of spheres using the techniques developed in the beautiful paper \cite{gi-bif} by E.~Giroux.  
\end{itemize}

\figref{b:geography} summarizes the classification of Legendrian unknots in $(S^3,\xi_{-1})$ up to Legendrian isotopy. This is the combination of \thmref{t:non-loose class}, \propref{p:loose class} with Proposition 4.11  from \cite{elfr-long}. It turns out that Legendrian unknots (loose or non-loose) with non-negative Thurston-Bennequin number are never classified by their classical invariants. 

\begin{figure}[htb]
\begin{center}
\includegraphics[scale=0.6]{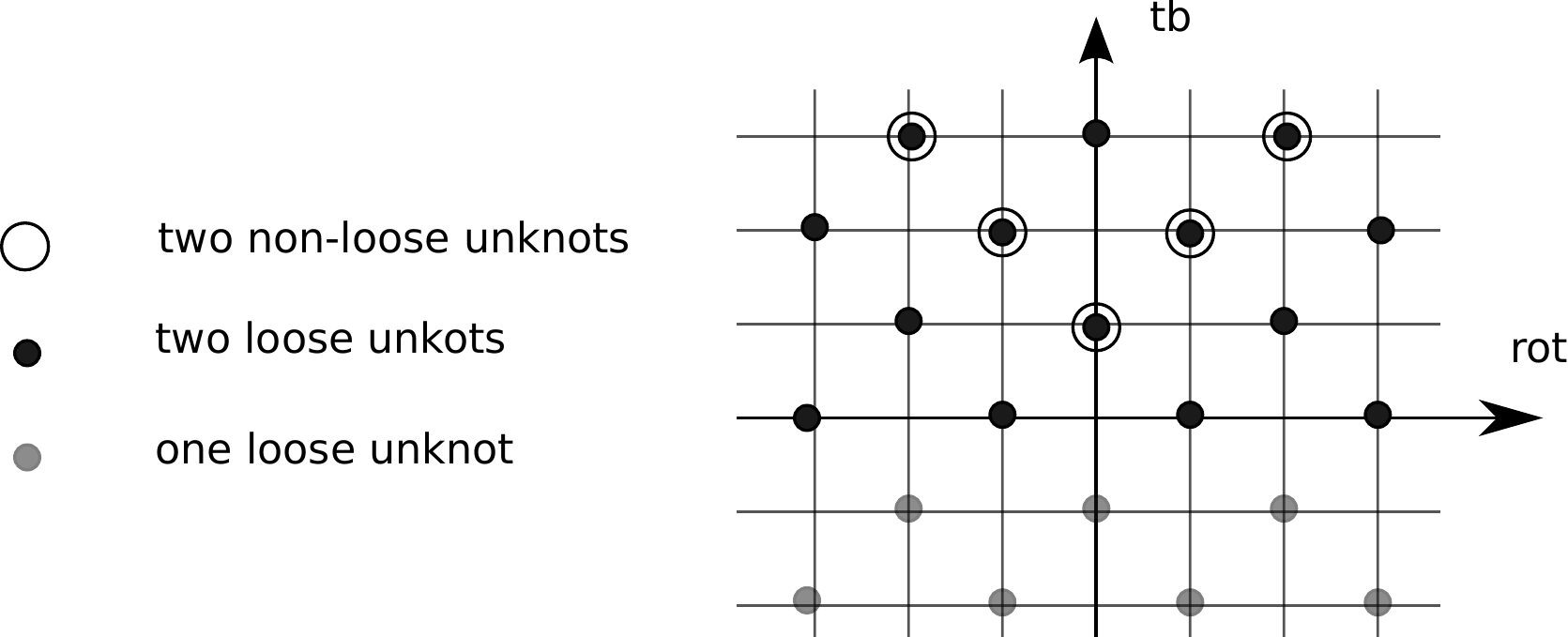}
\caption{Summary of the classification of Legendrian unknots in $(S^3,\xi_{-1})$ up to Legendrian isotopy.}
\label{b:geography}
\end{center}
\end{figure}

For all other contact structures on $S^3$, two Legendrian unknots with the same classical invariants are Legendrian isotopic. This is stated as  \propref{p:other contstr unknot Leg simple} in the case of overtwisted contact structures. The tight case is the subject of \cite{elfr, elfr-long}. J.~Etnyre has shown that transverse unknots are classified up to isotopy by their self-linking number (Cor. 2.3 in \cite{etnyre-ot}). In particular, there are no non-loose transverse unknots in $S^3$.  

There are several  applications of the classification of minimal non-loose Legendrian unknots up to isotopy:  First, we prove that there is a pair of boundaries of overtwisted discs in $(S^3,\xi_{-1})$ which are not Legendrian isotopic to each other. More precisely, it is not always possible to find a contact isotopy in a contact manifold which moves an overtwisted ball into another such ball. 

Another application concerns the contact mapping class group $\pi_0\left(\mathrm{Cont_+(S^3,\xi)}\right)$ of contact structures on $S^3$ which preserve the orientation of the contact structure.  The following theorem is attributed to Y.~Chekanov in \cite{elfr} without any indication of proof. 
\begin{chekanov*}[Chekanov]
Let $\xi$ be an overtwisted contact structure on $S^3$ and denote by $\mathrm{Diff}_+(S^3,\xi)$ the group of diffeomorphisms of $S^3$ which preserve $\xi$ and an orientation of this plane field. 
Then
\begin{align*}%
\pi_0\left(\mathrm{Diff}_+(S^3,\xi)\right) & =\left\{\begin{array}{rl} \Z_2\oplus \Z_2 & \textrm{ if } \xi\simeq \xi_{-1} \\
\Z_2 & \textrm{otherwise.}
\end{array} \right.
\end{align*}
\end{chekanov*}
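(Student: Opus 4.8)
The plan is to split the mapping class group into a ``formal'' part, computed by homotopy theory, and an exotic part detected by the non-loose unknot classification. First I would record two reductions. If $\phi^*\alpha=f\alpha$ then $\phi^*(\alpha\wedge d\alpha)=f^2\,\alpha\wedge d\alpha$, so every contactomorphism preserves the orientation of $S^3$ induced by $\xi$; consequently $\mathrm{Diff}_+(S^3,\xi)\subset\mathrm{Diff}^+(S^3)$ and preserving the orientation of the plane field is equivalent to preserving its co-orientation. By Cerf's theorem $\mathrm{Diff}^+(S^3)$ is connected, so every element of $\mathrm{Diff}_+(S^3,\xi)$ is smoothly isotopic to the identity; thus $\pi_0\big(\mathrm{Diff}_+(S^3,\xi)\big)$ measures purely the difference between smooth and contact isotopy.

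Next I would introduce the formal invariant. Given $\phi\in\mathrm{Diff}_+(S^3,\xi)$ choose a smooth isotopy $\phi_t$ from the identity to $\phi$; since $\phi_*\xi=\xi$ the plane fields $(\phi_t)_*\xi$ form a loop in the space $\mathcal{D}=\mathrm{Map}(S^3,S^2)$ of co-oriented plane fields, and its class modulo the image of $\pi_1\big(\mathrm{Diff}^+(S^3)\big)=\pi_1(SO(4))$ (using Hatcher's theorem $\mathrm{Diff}^+(S^3)\simeq SO(4)$) is independent of all choices. A computation with the evaluation fibration $\Omega^3S^2\to\mathcal{D}\to S^2$ shows that the relevant component has $\pi_1\cong\pi_4(S^2)=\Z_2$ and that the contribution of $SO(4)$ is trivial, uniformly in the Hopf invariant; this yields a homomorphism
\[
\Phi\colon \pi_0\big(\mathrm{Diff}_+(S^3,\xi)\big)\lra \Z_2 .
\]
Surjectivity of $\Phi$ for every overtwisted $\xi$ follows from Eliashberg's $h$-principle \cite{elot}: the nontrivial formal loop of plane fields is realised by an honest loop of overtwisted contact structures, and its Gray monodromy is a contactomorphism with $\Phi=1$. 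This is the summand present for all $\xi$.

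It remains to analyse $\ker\Phi$, the contactomorphisms that are trivial on the formal level. For $\xi\not\simeq\xi_{-1}$ I would argue that $\ker\Phi=0$: a formally trivial $\phi$ comes with a smooth isotopy along which the plane field sweeps a null-homotopic loop, and since such $\xi$ admit no non-loose unknots this formal data can be integrated to a genuine contact isotopy by applying Eliashberg's theorem in overtwisted complements, so $\phi$ is contact-isotopic to the identity. Hence $\pi_0\big(\mathrm{Diff}_+(S^3,\xi)\big)=\Z_2$ in this case. For $\xi\simeq\xi_{-1}$ I would use \thmref{t:non-loose class}: in the minimal case there are exactly two non-Legendrian-isotopic non-loose unknots $L_{+},L_{-}$ with $\mathrm{tb}=1$ and $\mathrm{rot}=0$, and I would construct a contactomorphism $c$ which is smoothly isotopic to the identity, satisfies $\Phi(c)=0$, and interchanges $L_{+}$ and $L_{-}$. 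As $L_{+}$ and $L_{-}$ are not Legendrian isotopic, $c$ is not contact-isotopic to the identity, so $[c]\in\ker\Phi$ is nontrivial; since $c$ is formally trivial and $c^2$ fixes the isotopy class of $L_{+}$, one checks $[c]^2=1$, giving a $\Z_2\subset\ker\Phi$.

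The main obstacle is the upper bound: showing that $\ker\Phi$ is \emph{exactly} this $\Z_2$ for $\xi_{-1}$ (and that $[c]$ is independent of the universal class, so that the two involutions commute and $\pi_0=\Z_2\oplus\Z_2$). Here the full force of the classification is needed: a formally trivial contactomorphism should be pinned down, up to contact isotopy, by its action on the finite set of isotopy classes of non-loose unknots and on the boundaries of overtwisted discs, and \thmref{t:non-loose class} reduces this action to the two-element group generated by the swap $L_+\leftrightarrow L_-$. The essential difficulty throughout is that every isotopy realising these identifications must be routed through tight complements, exactly where Eliashberg's $h$-principle fails; consequently the control of $\ker\Phi$ cannot be obtained formally and must instead rest on the direct analysis of characteristic foliations on families of spheres in the spirit of \cite{gi-bif} that underlies the classification itself.
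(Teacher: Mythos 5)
Your overall architecture coincides with the paper's: your $\Phi$ is Dymara's homomorphism $\mathrm{d}$, your swap invariant is the paper's $\kappa$ defined through the action on the oriented minimal non-loose unknot, surjectivity of $\mathrm{d}$ comes from Eliashberg's theorem exactly as in \remref{r:dym surj}, and the kernel analysis rests on the classification. However, two steps are genuinely missing, and the first would fail as written. For $\xi\not\simeq\xi_{-1}$ you deduce $\ker\Phi=0$ from the absence of non-loose unknots, saying the formal data can be integrated ``in overtwisted complements''. But to arrange that a contactomorphism $\psi$ with $\mathrm{d}(\psi)=0$ preserves an overtwisted disc $D$, one must first move $\psi(\partial D)$ back to $\partial D$ by a contact isotopy, and by \thmref{t:loose triv} this requires an overtwisted disc in the complement of the two-component link $\partial D\cup\psi(\partial D)$. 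Looseness of each component separately does not give this: a pair of individually loose knots can still have tight complement, which is precisely the phenomenon the paper stresses in the introduction. The paper closes this hole with \thmref{t:three ot discs}, whose proof is not formal --- it uses the characterization of $\xi_{-1}$ as the unique overtwisted structure on $S^3$ containing a simple Lutz tube inside a ball with tight complement (\propref{p:xi1 detect}), Colin's gluing theorem, and an iterated squeezing procedure (\propref{p:contact squeezing}). Without this ingredient your case $\xi\not\simeq\xi_{-1}$ is incomplete.

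For $\xi\simeq\xi_{-1}$ you explicitly defer the two decisive points, so they remain gaps. First, surjectivity onto $\Z_2\oplus\Z_2$ needs an element realizing $(\Phi,\kappa)=(0,1)$, equivalently the knowledge that the universal $\mathrm{d}=1$ class admits a representative with $\kappa=0$; the paper obtains this by taking the $\mathrm{d}=1$ contactomorphism supported away from a ball containing an overtwisted disc (\remref{r:dym surj}) and invoking \thmref{t:not transitive}: since the positive stabilizations of $K$ and $\overline{K}$ are non-isotopic overtwisted disc boundaries, a contactomorphism preserving one of these discs cannot reverse the oriented class of $K$. Together with the explicit $\psi$ of \lemref{l:K preserved,dym=1} satisfying $(\mathrm{d},\kappa)=(1,1)$ this yields surjectivity; note also that your ``one checks $[c]^2=1$'' is not an independent verification but is itself a consequence of kernel triviality. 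Second, the upper bound $\ker(\mathrm{d},\kappa)=0$ is exactly \corref{c:part of chekanov}, and its proof is not a formal consequence of \thmref{t:non-loose class}: it reruns the two-parameter family of contact structures constructed in the proof of \thmref{t:non or isotopy} (controlling the functions $T_\pm$, eliminating locally non-loose unknots, and applying the parametric version of \thmref{t:eliashclass} along families of overtwisted discs). You correctly locate the difficulty in the characteristic-foliation analysis on families of spheres, but since no argument is supplied there, the proposal is a faithful outline of the paper's strategy rather than a proof.
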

The additional $\Z_2$-summand in the case $\xi\simeq\xi_{-1}$ is determined by considering the action of contactomorphisms on isotopy classes of oriented minimal non-loose Legendrian unknots. In \cite{dym} K.~Dymara introduced an invariant which detects the other $\Z_2$-factor for all overtwisted contact structures on $S^3$. Finally,  recall that the group of diffeomorphisms preserving the oriented tight contact structure on $S^3$ is connected according to \cite{el20}. 

\thmref{t:chekanov} can be applied to obtain the classification of Legendrian links (loose and non-loose) in overtwisted contact structures once the classification up to contact diffeomorphism is known.  The simplest case is of course the unknot mentioned above.

An important aspect of the classification problem of non-loose unknots in $S^3$, that is left open in this article, is the following problem.

\begin{prob*}
Let $K,K'$ be Legendrian unknots in $(S^3,\xi_{-1})$ with $\tb(K)=\tb(K')=n>0$ and the same rotation number which are not isotopic to each other and both loose or both non-loose. 

If $L$ is yet another Legendrian knot with the same properties, determine whether $L$ is isotopic to $K$ or to $K'$. 
\end{prob*}
More problems can be found in \cite{etnyre-ot} and \cite{bo}. Etnyre's article also contains proofs of most of the previously known results on non-loose knots. In \cite{bo}, K.~Baker and S.~Onaran propose to quantify the degree of non-looseness of Legendrian/transverse knots and suggest a number of problems related to their invariants. Another reference is \cite{non-loose-ex} where the authors give an example of a pair of non-loose transverse knots with the same classical invariants which are not isotopic.

\bigskip


{\bf Organization of the paper:} \secref{s:prelim} contains definitions and examples of non-loose knots.  In \secref{s:class} we first review the coarse classification using the approach taken in \cite{elfr-long} because the normal form for the characteristic foliation on a disc bounding a non-loose unknot is used  in the next sections. After the construction of examples of contact diffeomorphisms preserving a particular non-loose unknot, we prove that every minimal non-loose unknot is isotopic to a standard example $K\subset (S^3,\xi_{-1})$. Then we show that  $K$ and $\overline{K}$ are not isotopic as oriented knots (here we $\overline{K}$ is $K$ with the orientation reversed).  Finally, in \secref{s:appl} we discuss Chekanov's theorem and classify Legendrian unknots in overtwisted contact structures on $S^3$. 
\medskip 

{\bf Acknowledgments:} The first time I heard about non-loose knots was in discussions with J.~Etnyre during which  he developed a surgery based proof of the coarse classification of non-loose unknots, and I thank him for his explanations. Others, in particular Y.~Eliashberg, H.~Geiges, R.~Gompf, P.~Massot and  S.~Onaran, have discussed the problem with me, and I want to thank all of them.  It is a great pleasure for me to thank the Mittag-Leffler Institute (and the organizers T.~Ekholm, Y.~Eliashberg)  for organizing a program on symplectic topology in the fall of 2015 where a portion of this manuscript was prepared. 

The referees provided valuable suggestions which helped to improve the presentation. In addition, one of the referees proposed that one should try to classify loose Legendrian unknots in $S^3$. This classification is the content of \propref{p:loose class}. I want to thank both referees for their thorough and  thoughtful comments.

\section{Preliminaries} \label{s:prelim}

In this section we review standard definitions, we review Giroux's description of contact structures in terms of families of characteristic foliations. We also discuss basic facts  about non-loose knots and some examples. Finally, in \secref{s:homotopy} we discuss a topological  invariant introduced by K.~Dymara in \cite{dym} which can be used to show that the group of diffeomorphisms preserving an oriented overtwisted contact structure on $S^3$ is not connected. 

\subsection{Tight and overtwisted contact structures} \label{ss:tight ot}

\begin{defn} \label{d:cont str}
A {\em  contact structure} is a smooth plane field $\xi$ on a $3$-manifold which is locally defined by a $1$-form $\alpha$ such that $\alpha\wedge d\alpha$ never vanishes. 
\end{defn}
Manifolds admitting contact structures are necessarily oriented, and we will only consider cooriented contact structures which are positive (i.e. $\alpha\wedge d\alpha>0$). Throughout this paper, we assume that $\xi$ is oriented as a plane field. 

An important fact which we will use is Gray's theorem:
\begin{thm} \label{t:gray}
Let $\xi_t$ be a family of contact structures on a closed $3$-manifold. Then there is an isotopy $\psi_t$ such that $\psi_{t*}(\xi_0)=\xi_t$. 
\end{thm}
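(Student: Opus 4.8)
The plan is to prove this via Moser's method (the so-called Moser trick), realizing the isotopy $\psi_t$ as the flow of a suitable time-dependent vector field tangent to the contact planes. First I would choose a smooth family of defining $1$-forms $\alpha_t$ with $\ker\alpha_t=\xi_t$; this is possible since $\xi_t$ is cooriented and varies smoothly. The desired conclusion $\psi_{t*}(\xi_0)=\xi_t$, equivalently $\psi_t^*\xi_t=\xi_0$, is the same as asking for a smooth family of positive functions $\lambda_t$ (with $\psi_0=\id$, $\lambda_0\equiv 1$) such that $\psi_t^*\alpha_t=\lambda_t\,\alpha_0$. If $\psi_t$ is the flow of $X_t$, differentiating this relation in $t$ and inserting Cartan's formula $\mathcal{L}_{X_t}\alpha_t=d(\iota_{X_t}\alpha_t)+\iota_{X_t}d\alpha_t$ converts the requirement into the pointwise equation
\[
\dot\alpha_t+d(\iota_{X_t}\alpha_t)+\iota_{X_t}d\alpha_t=\mu_t\,\alpha_t ,
\]
where $\mu_t=(\dot\lambda_t/\lambda_t)\circ\psi_t^{-1}$ is the (as yet unknown) conformal factor.

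Next I would impose the extra condition $X_t\in\xi_t$, i.e.\ $\iota_{X_t}\alpha_t=0$, which kills the middle term and leaves $\dot\alpha_t+\iota_{X_t}d\alpha_t=\mu_t\,\alpha_t$. Evaluating this on the Reeb field $R_t$ of $\alpha_t$ and using $\iota_{R_t}d\alpha_t=0$ together with $\alpha_t(R_t)=1$ forces $\mu_t=\dot\alpha_t(R_t)$, so the conformal factor is determined. The residual equation
\[
\iota_{X_t}d\alpha_t=\mu_t\,\alpha_t-\dot\alpha_t=:\beta_t
\]
has a right-hand side that annihilates $R_t$, so it can be read as an equation inside $\xi_t$. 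The contact condition is precisely the statement that $d\alpha_t|_{\xi_t}$ is a non-degenerate $2$-form on the plane $\xi_t$, so there is a unique $X_t\in\xi_t$ solving it, and this $X_t$ depends smoothly on the point and on $t$.

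Finally, because the underlying $3$-manifold is closed, the time-dependent vector field $X_t$ is complete, so its flow $\psi_t$ with $\psi_0=\id$ exists for all $t$ in the given parameter interval. Retracing the computation shows that this flow satisfies $\psi_t^*\alpha_t=\lambda_t\,\alpha_0$, where $\lambda_t$ is recovered from $\mu_t$ by integration; positivity of $\lambda_t$ follows from $\lambda_0\equiv 1$ and continuity, whence $\psi_{t*}(\xi_0)=\xi_t$ as required. I expect the solvability step to be the only genuinely contact-geometric point of the argument: it is the non-degeneracy of $d\alpha_t|_{\xi_t}$ that allows $X_t$ to be inverted uniquely and smoothly out of $\beta_t$. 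The other essential ingredient is compactness, which guarantees global existence of the flow; without it the theorem can fail, so \emph{closed} cannot be dropped.
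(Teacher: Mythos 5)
Your proposal is correct and is precisely the Moser-method argument that the paper itself invokes for \thmref{t:gray} (the paper gives no proof of its own, referring instead to Section~2.2 of \cite{geiges}, where exactly this computation --- choosing $\alpha_t$ with $\ker\alpha_t=\xi_t$, requiring $X_t\in\xi_t$, fixing $\mu_t=\dot\alpha_t(R_t)$ via the Reeb field, and solving $\iota_{X_t}d\alpha_t=\mu_t\alpha_t-\dot\alpha_t$ using the non-degeneracy of $d\alpha_t\eing{\xi_t}$ --- is carried out). All the key steps check out, including the observation that closedness is needed for completeness of the flow and that coorientability (assumed throughout the paper) guarantees the global family of defining $1$-forms.
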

Parametric and relative versions of this theorem also hold. One refinement which comes out of the proof of \thmref{t:gray} by the Moser method is the following: If $K$ is a Legendrian knot which is tangent to $\xi_t$ for all $t$, then $\psi_t$ can be chosen to preserve $K$ (but not pointwise, of course). The details can be found in Section 2.2 of \cite{geiges}, for example.

\begin{defn} \label{d:ot}
An embedded disc $D\lra M$ in a contact manifold $(M,\xi)$ is {\em overtwisted} if $T_pD=\xi(p)$ for all points $p\in\partial D$. A contact structure is {\em tight} if there is no overtwisted disc, otherwise it is {\em overtwisted}. 
\end{defn}

The following two theorems of Eliashberg highlight the fundamental difference between overtwisted contact structures (which are very flexible) and tight ones (for which more rigidity phenomena appear).

\begin{thm}[Eliashberg \cite{el20}] \label{t:tight on ball}
Let $\xi$ be the germ of a tight contact structure along $\partial B^3$. The space of tight contact structures on the  ball which coincide with $\xi$ near $\partial B^3$ is weakly contractible. 
\end{thm}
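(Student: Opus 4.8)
The plan is to reformulate weak contractibility as a statement about the space of characteristic foliations on a family of spheres sweeping out the ball, and to control this ``movie'' of foliations using an elimination lemma for singularities together with tightness. Fix a foliation of $B^3$ by concentric spheres $S_t$, $t\in(0,1]$, degenerating to the center as $t\to 0$. A contact structure $\xi$ on $B^3$ induces on each $S_t$ its characteristic foliation $\xi S_t$, and by \thmref{t:gray} (in its relative and parametric forms) two contact structures inducing isotopic movies of characteristic foliations, with the isotopy fixed near $\partial B^3$, are themselves isotopic rel boundary. So it suffices to contract, in an appropriate sense, the space of such movies compatible with the fixed germ.

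First I would establish the basic normal form for a single sphere. Using the elimination lemma of Giroux and Fuchs one can cancel pairs of elliptic and hyperbolic singularities of $\xi S_t$ joined by a trajectory, and using genericity one can make each $S_t$ convex. The crucial input of tightness is then Giroux's criterion: a convex sphere in a tight contact structure has a connected dividing set, i.e. a single dividing circle, since a contractible dividing curve would bound an overtwisted disc. On $S^2$ this pins $\xi S_t$ down, up to the moves allowed by the elimination lemma, to the standard characteristic foliation with two poles. This is the step where the dichotomy between tight and overtwisted is used decisively, and it is what makes the normal form rigid rather than merely generic.

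With the normal form in hand the non-parametric content follows. The space is nonempty because the given germ is realized by a standard tight model on the ball. For path-connectedness (uniqueness), given two tight structures $\xi_0,\xi_1$ with the same boundary germ, I would bring each movie into the standard form above, sphere by sphere from $\partial B^3$ inward, keeping everything fixed near the boundary; tightness guarantees no overtwisted disc is created in the process. The matching of the two standardized movies is then realized by an ambient contact isotopy through \thmref{t:gray}, while the innermost degenerate sphere at the center is handled by the local uniqueness of tight germs at an interior point.

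The main obstacle is the passage to the parametric statement, i.e. the vanishing of all higher homotopy groups $\pi_k$. Here one must run the entire argument for a $k$-parameter family of tight contact structures, which forces a study of the generic bifurcations of the movie of characteristic foliations in families: births and deaths of elliptic--hyperbolic pairs, and the appearance and resolution of saddle--saddle (retrograde) connections. The hard part is to carry out the elimination and convex normalization \emph{parametrically} while controlling these bifurcations, and to use tightness to exclude precisely the bifurcations that would produce an overtwisted disc, so that the discretized normalization glues up into a genuine contraction. This bifurcation analysis for families of spheres is exactly the circle of techniques developed in \cite{gi-bif}, and it is the technically most demanding ingredient; the underlying uniqueness theorem itself is due to Eliashberg \cite{el20}.
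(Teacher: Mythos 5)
The paper does not prove this statement: \thmref{t:tight on ball} is imported from Eliashberg \cite{el20} as a black box, so there is no internal proof to compare against. Your sketch does follow the standard Eliashberg--Giroux strategy (normalize the movie of characteristic foliations on a sweep-out of $B^3$ by spheres, use the elimination lemma and the connectedness of the dividing set, then conclude via Gray's theorem), but as a proof it has genuine gaps. The most concrete one is the claim that ``using genericity one can make each $S_t$ convex.'' This is false for a one-parameter family of spheres: convexity is generic for a \emph{single} surface, but in a movie the non-convex spheres occur stably at isolated times --- retrogradient connections and degenerate closed leaves are codimension-one phenomena that persist under perturbation of the sphere family (this is exactly the content of \lemref{l:rgc} and \lemref{l:birth}, and the reason Giroux's tomography exists at all). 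One cannot perturb the foliation by spheres to avoid them; instead one must show, using tightness, that the contact structure itself can be deformed to remove the bifurcation times, via the parametric elimination lemma as in \lemref{l:only one}. Your sketch conflates these two operations, and as written that step would fail.

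The second gap is that the parametric case --- which \emph{is} the theorem, since weak contractibility means vanishing of all $\pi_k$ --- is described rather than proved. Deferring to ``the circle of techniques developed in \cite{gi-bif}'' names the toolbox but supplies no argument: for $k$-parameter families one must classify the generic degenerations of movies up to codimension $k$ (simultaneous retrogradient connections, degenerate singularities and closed leaves, and their interactions), show that tightness forbids exactly the configurations that would be needed for the normalization to obstruct (compare the forbidden configuration of \figref{b:giroux-imp}, which is the $k=1$ instance of such an analysis), and verify that the sphere-by-sphere normalizations can be made coherent in the parameter. None of this is addressed for $k\ge 2$, where new degenerate configurations appear. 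The handling of the degenerate central sphere is likewise only gestured at: ``local uniqueness of tight germs at an interior point'' is Darboux's theorem in its non-parametric form, but gluing the radial normalization to the center in families again needs a parametric Darboux--Gray argument. In short, the skeleton is the right one, but the steps you flag as ``the hard part'' are not optional refinements --- they constitute essentially all of the content of Eliashberg's theorem, and the one step you do assert in detail (convexity of all $S_t$ by genericity) is incorrect.
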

In particular, the relative homotopy type of the plane field on $(B^3,\partial B^3)$ is completely determined be the boundary data and the requirement that $\xi$ is a tight contact structure. 

Now let $\xi_\Delta$ be a contact structure defined near a disc $\Delta$ and on a neighborhood of a compact set $N$ so that $\Delta$ is an overtwisted disc for $\xi_\Delta$. We will use the following notation:
\begin{align*}
\mathrm{Cont}(M,N,\xi_\Delta) & = \{ \textrm{contact structures equal to }\xi_\Delta\textrm{ near  }\Delta \cup N\} \\
\mathrm{Distr}(M,N,\xi_\Delta) & = \{ \textrm{plane fields which coincide with }\xi_\Delta\textrm{ near  }\Delta\cup N\}.
\end{align*}
Both spaces carry the $C^k$-topology with $k\ge 1$. 
The proof of the following theorem can be found in \cite{elot}.  It is discussed also in \cite{gi-bour} and \cite{geiges}. Y.~Huang proved a non-parametric version using bypasses in \cite{huang-ot}.

\begin{thm}[Eliashberg \cite{elot}] \label{t:eliashclass}
Let $\xi_\Delta$ be a contact structure as above and $N\subset M$ a compact set in the complement of $\Delta$ such that $M\setminus  N$ is connected. Then 
$$
\mathrm{Cont}(M,N,\xi_\Delta) \hookrightarrow \mathrm{Distr}(M,N,\xi_\Delta) 
$$
is a weak homotopy equivalence. 
\end{thm}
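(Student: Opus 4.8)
The plan is to prove that the inclusion is a weak homotopy equivalence by establishing the corresponding parametric, relative $h$-principle directly. Recall that an inclusion $A\hookrightarrow B$ is a weak homotopy equivalence precisely when every map of pairs $(D^k,\partial D^k)\lra (B,A)$ can be homotoped rel $\partial D^k$ into $A$. Unwinding the definitions, this amounts to the following statement, which I would take as the technical heart of the proof: a family of plane fields $\{\eta_s\}_{s\in D^k}$ on $M$, each agreeing with $\xi_\Delta$ near $\Delta\cup N$, and such that $\eta_s$ is a contact structure for every $s\in\partial D^k$, can be deformed rel $\partial D^k$ and rel a neighborhood of $\Delta\cup N$ into a family of contact structures. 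The case $k=0$ gives surjectivity on $\pi_0$ (existence of an overtwisted contact structure in every plane-field class), and general $k$ gives injectivity and surjectivity on all higher homotopy groups simultaneously.

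To prove this $h$-principle I would argue by induction over the skeleta of a sufficiently fine triangulation (or handle decomposition) of $M$ rel $N\cup\Delta$, deforming $\{\eta_s\}$ into a contact family one cell at a time. Over a single top-dimensional cell the situation is modelled on a ball $B^3$, and the required local input is the flexibility of \emph{overtwisted} contact structures on the ball: with fixed boundary germ, the structures realizing a prescribed relative plane-field homotopy class are nonempty and, in the appropriate relative sense, carry the homotopy type dictated by the plane-field data. This is the overtwisted counterpart of \thmref{t:tight on ball}, and it is exactly here that one must \emph{import} overtwistedness. Using that $M\setminus N$ is connected, I would attach to each cell a thin tube reaching the fixed overtwisted disc $\Delta$, so that the deformation on that cell always has a genuine overtwisted disc at its disposal to absorb the obstruction to being contact. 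Once the local deformations are made compatible across overlaps and continuous in $s$, \thmref{t:gray} together with its parametric and relative refinements straightens the resulting family back to the fixed germ near $\Delta\cup N$.

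The main obstacle, and the step demanding the most care, is the parametric control of the local model on the ball. Establishing mere existence (the case $k=0$) is comparatively soft once an overtwisted disc is available; the difficulty is that for $k\ge 1$ one must understand how overtwisted contact structures on a ball vary in a $k$-parameter family with fixed overtwisted boundary behaviour, and show that no secondary obstruction beyond the plane-field homotopy type survives. Concretely, this forces an analysis of the bifurcations of the characteristic foliations induced on a family of spheres sweeping out the ball, which is precisely the convex-surface technology of Giroux that the paper invokes elsewhere. The delicate points are keeping all deformations supported away from $\Delta\cup N$, matching them across cells so that the global object remains a genuine plane field throughout the homotopy, and ensuring continuous dependence on $s\in D^k$; managing these compatibilities simultaneously with the bifurcation analysis is where the real work lies, and it is the reason the statement is a theorem of Eliashberg rather than a formal consequence of the tight case.
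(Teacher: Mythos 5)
You should first be aware that the paper contains no proof of \thmref{t:eliashclass} to compare against: it is quoted as Eliashberg's theorem, with the proof in \cite{elot}, expositions in \cite{gi-bour} and \cite{geiges}, and a non-parametric alternative by Huang \cite{huang-ot}. So your proposal has to be measured against Eliashberg's original argument. At the level of architecture you reproduce it correctly: translating the weak homotopy equivalence into the compression property for maps of pairs $(D^k,\partial D^k)$, working cell by cell rel $N\cup\Delta$ (near the $2$-skeleton the contact condition can be achieved because it is open --- this step, which you leave implicit, is where Gromov's h-principle for open manifolds enters), reducing to an extension problem over $3$-cells, and using connectedness of $M\setminus N$ to bring the fixed disc $\Delta$ to the residual balls. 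One organizational difference: in \cite{elot} (and in the exposition in \cite{geiges}) the leftover $3$-cells are joined to each other and to $\Delta$ to form a \emph{single} ball with the overtwisted model in its boundary region, rather than attaching a separate tube to each cell; this sidesteps the disjointness and ordering of tubes and part of the cross-cell compatibility that your scheme has to manage by hand.

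The genuine gap is the ``local input'' itself. The statement you import --- that overtwisted contact structures on $B^3$ with fixed boundary germ and fixed disc realize, parametrically, exactly the homotopy type of the corresponding plane fields --- \emph{is} \thmref{t:eliashclass} for $(M,N)=(B^3,\emptyset)$ with $\Delta$ in a boundary collar. Calling it ``the overtwisted counterpart of \thmref{t:tight on ball}'' suggests an available reference, but unlike the tight case (which is \cite{el20}) there is none: this parametric ball lemma is precisely the content of Eliashberg's key extension lemma, proved in \cite{elot} by normalizing the family so that the characteristic foliations on a sweep-out of the ball by spheres are controlled, and then showing that an arbitrary homotopy of such foliation families can be realized by contact structures, with the fixed disc serving as the absorbing model. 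Your final paragraph names the right tool (bifurcations of characteristic foliations on families of spheres) but supplies none of it: no normal form for the family near $\Delta$, no precise statement of the lemma to be proved on the ball, and no argument for continuity in $s\in D^k$ of the local deformations. So what you actually carry out is the formal reduction, which is the soft part, and the theorem's heart is assumed rather than proved; to close the gap you would need to state and prove the parametric ball lemma along the lines of \cite{elot}, since even Huang's convex-surface proof \cite{huang-ot} covers only the non-parametric case.
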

The statement can be enhanced even further: Not only does a relative version with respect to compact subsets $N$ of $M$ hold, but also a relative version with respect to compact subsets of the parameter space. More precisely, if $\zeta_s, s\in S$, is a family of plane fields in $\mathrm{Distr}(M,N,\xi_\Delta)$ with compact parameter space $S$ and $S'\subset S$ is a compact set  such that $\zeta_s\in\mathrm{Cont}(M,N,\xi_\Delta)$ for $s\in S'$,  then one does not have to change $\zeta_s$ for $s\in S'$ when applying \thmref{t:eliashclass}.

Given two disjoint overtwisted discs $\Delta_1$ and $\Delta_2$ one can combine the two relative versions and obtain a family of contact structure from a family $\zeta_s,s\in[0,1],$ 
such that $\Delta_1$, respectively $\Delta_2$, are overtwisted discs of $\zeta_s$ when $s\in[0,2/3]$ respectively $[1/3,1]$.

Also, assume that for a family of plane fields $\zeta_s, s\in[0,1],$ there is a family of  overtwisted discs $\Delta_s$ contained in the region where $\zeta_s$ is a positive contact structure and an isotopy $\varphi_s$ such that $\Delta_s=\varphi_s(\Delta_0)$.  Then one can apply  \thmref{t:eliashclass} to $\left(\varphi_{s}^{-1}\right)_*\left(\zeta_s\right)$ to obtain a deformation of $\zeta_s$ to a family of contact structures $\xi_s$ such that $\Delta_s$ is an overtwisted disc of $\xi_s$. 


Theorem \ref{t:eliashclass} can be used to produce families of overtwisted contact structures on $S^3$ with prescribed homotopy type as families of plane fields. The following consequence of \thmref{t:eliashclass} which can be found \cite{dym}.

\begin{lem}  \label{l:families}
Let $\xi$ be a contact structure on a collar $N$ of the boundary of $B^3$ which is overtwisted. For every homotopy class of families of plane fields $\zeta_s, s\in K$, where $K$ is a compact manifold, which coincides with $\xi$ on $N$ there is a family of contact structures $\xi_s$ in the same homotopy class.
\end{lem}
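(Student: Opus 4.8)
The plan is to obtain \lemref{l:families} as a direct consequence of the parametric $h$-principle \thmref{t:eliashclass}, together with the relative versions discussed immediately after its statement. The entire content is to recognize the assertion as the surjectivity of the map, on homotopy classes of maps out of $K$, that is induced by the inclusion of contact structures into plane fields.

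Concretely, I would first fix an overtwisted disc $\Delta$ for $\xi$ inside the collar $N$; such a disc exists precisely because $\xi$ is overtwisted. Since every plane field under consideration agrees with $\xi$ on $N\supseteq \Delta$, the disc $\Delta$ is automatically an overtwisted disc for each of them. Write $\mathrm{Distr}$ for the space of plane fields on $B^3$ coinciding with $\xi$ on (a neighborhood of) $N$, and $\mathrm{Cont}\subset \mathrm{Distr}$ for the subspace of those that are contact structures; by the previous remark every element of $\mathrm{Cont}$ is overtwisted, with $\Delta$ as an overtwisted disc.

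The central step is that the inclusion $\iota\colon \mathrm{Cont}\hookrightarrow \mathrm{Distr}$ is a weak homotopy equivalence. This is \thmref{t:eliashclass} with ambient manifold $B^3$ and overtwisted disc $\Delta$: the complement of a neighborhood of $N$ in $B^3$ is an open ball, hence connected, so the hypotheses are satisfied, and the relative version with respect to the compact subset $N$ — keeping the germ of $\xi$ fixed along all of $N$, the disc $\Delta$ included — is exactly the refinement noted after \thmref{t:eliashclass}. Granting this, the lemma becomes formal. A family $\zeta_s$, $s\in K$, is precisely a continuous map $f\colon K\to \mathrm{Distr}$, and what is sought is a map $g\colon K\to \mathrm{Cont}$ with $\iota\circ g$ homotopic to $f$ through plane fields coinciding with $\xi$ on $N$. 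Because $K$ is a compact manifold it is homotopy equivalent to a finite CW complex, so the weak equivalence $\iota$ induces a bijection $[K,\mathrm{Cont}]\xrightarrow{\cong}[K,\mathrm{Distr}]$; surjectivity yields the desired $g$, and the family $\xi_s:=g(s)$ of contact structures then lies in the same homotopy class as $\zeta_s$, which is the claim.

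The only genuinely delicate point I anticipate is the bookkeeping in the central step: arranging a single application of the relative, parametric form of \thmref{t:eliashclass} that simultaneously preserves the overtwisted disc $\Delta$ \emph{and} the full germ of $\xi$ along the collar $N$, rather than merely near $\Delta$ together with an auxiliary collar disjoint from it, as the bare statement is phrased. Once these conventions are matched, everything else — the existence of $\Delta$, the CW structure on $K$, and the implication ``weak homotopy equivalence $\Rightarrow$ bijection on $[K,-]$'' — is standard.
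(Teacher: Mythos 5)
Your proposal is correct and is essentially the paper's own proof: the paper disposes of \lemref{l:families} in one line as ``a direct application of \thmref{t:eliashclass}'', using exactly your observation that the fixed overtwisted disc $\Delta\subset N$ lies in the region where all plane fields in the family agree with $\xi$. Your additional bookkeeping (the relative version along $N$, $K$ compact so a weak equivalence gives a bijection on $[K,-]$) merely spells out what the paper leaves implicit.
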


\begin{proof}
This is a direct application of \thmref{t:eliashclass}: By assumption, there is always a fixed overtwisted disc in $N$, so every family of plane fields  $\zeta_s, s\in K$, as in the lemma on $B^3$ can be homotoped to a family of contact structures $\xi_s, s\in K$. 
\end{proof}
Before we discuss Legendrian and transverse knots, let us note one consequence of \thmref{t:tight on ball}. The arcs $\gamma_0,\gamma_1$ appearing in the second part of the proposition will be open segments of Legendrian knots in later applications.

\begin{prop}[V.~Colin, \cite{col-gluing}] \label{p:contact squeezing}
Let $B_0,B_1\subset M$ be two closed balls in a contact manifold such that $B_0\subset\ring{B}_1$ and  the contact structure is tight on an open neighborhood $V$ of $B_1$. Then there is a contact isotopy of $M$ with support in $V$ which moves $B_1$ into the interior of $B_0$. 

If there is a pair of unknotted open Legendrian arcs $\gamma_0,\gamma_1$ which are transverse to $\partial B_0$ and $\partial B_1$ each arc intersects $\partial B_0$ and $\partial B_1$ exactly once, then one can choose the isotopy so that it preserves $\gamma_0,\gamma_1$.
\end{prop}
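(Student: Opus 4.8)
The plan is to reduce the statement to an explicit standard model and then contract the large ball by a cut-off scaling flow; the only substantial input beyond elementary contact geometry is the uniqueness contained in \thmref{t:tight on ball}. First I would normalize the contact structure near $B_1$. After a $C^\infty$-small perturbation supported in $V$ I may assume that $\partial B_0$ and $\partial B_1$ are convex. In a tight contact structure a convex $2$-sphere has connected dividing set, so by Giroux's flexibility the characteristic foliation on $\partial B_1$ can be matched with the one on a round sphere $S_R$ in the standard tight $(\R^3,\xi_{\mathrm{std}})$. This gives a contactomorphism between a neighborhood of $\partial B_1$ and a neighborhood of $S_R$. The two tight structures now agree near this boundary, so by \thmref{t:tight on ball} they are joined by a path of tight structures rel the boundary germ; feeding this path into the relative version of \thmref{t:gray} converts it into a contactomorphism $\Phi$ from a neighborhood of $B_1$ onto a neighborhood of the round ball $\{\rho\le R\}$, carrying $B_1$ to that round ball and a chosen interior point $p\in\ring{B}_0$ to the origin.

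In the model I would use the contact scaling field $X=x\partial_x+y\partial_y+2z\partial_z$, which satisfies $L_X\alpha_{\mathrm{std}}=2\alpha_{\mathrm{std}}$ and whose backward flow $\phi_{-t}(x,y,z)=(e^{-t}x,e^{-t}y,e^{-2t}z)$ contracts every round ball toward the origin. Replacing the contact Hamiltonian of $X$ by $\chi$ times it, where $\chi$ is a cut-off equal to $1$ on $\Phi(B_1)$ and $0$ outside a compact subset of $\Phi(V)$, yields a contact vector field that still equals $X$ on $\Phi(B_1)$ but is supported in $\Phi(V)$. Since the flow contracts $\Phi(B_1)$ into itself, points never leave the region where the field equals $X$, so for large $t$ the ball $\Phi(B_1)$ is carried into any preassigned neighborhood of the origin, in particular into $\Phi(\ring{B}_0)$. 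Pulling back by $\Phi^{-1}$ and reparametrizing to $[0,1]$ gives a contact isotopy of $M$ supported in $V$ with $\phi_1(B_1)\subset\ring{B}_0$, proving the first statement.

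For the second statement I would carry out the normalization relative to the arcs. Because $\gamma_0,\gamma_1$ are unknotted Legendrian arcs transverse to the concentric spheres and meeting each once, the Legendrian neighborhood theorem together with a relative version of the above normalization lets me arrange $\Phi$ so that $\gamma_0$ and $\gamma_1$ become two disjoint radial Legendrian segments, for instance arcs of the positive $x$- and $y$-axes, which are exactly the radial lines tangent to $\xi_{\mathrm{std}}$. The scaling field $X$ is tangent to these axes, so choosing $\chi$ radially symmetric, and such that the cut-off field stays tangent to the $\gamma_i$, the flow preserves $\gamma_0$ and $\gamma_1$ setwise while still contracting the ball; transporting back by $\Phi^{-1}$ gives the desired arc-preserving isotopy.

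The main obstacle is the relative normalization: producing a single contact chart in which both nested balls and both Legendrian arcs are simultaneously standard, and then choosing the cut-off so that the resulting contact vector field is tangent to the arcs everywhere (not merely on $B_1$) while remaining supported in $V$. This is exactly where \thmref{t:tight on ball} must be applied relative to the Legendrian arcs, and where convex-surface-theoretic care is needed; the explicit contraction afterwards is routine.
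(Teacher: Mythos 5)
The paper offers no proof of this proposition at all: it is quoted from Colin \cite{col-gluing}, so your reconstruction can only be judged against Colin's argument and on its own merits. Your first paragraph is correct and is essentially the standard (and Colin's) reduction: Eliashberg's \thmref{t:tight on ball} together with a relative version of \thmref{t:gray} shows that a tight ball is contactomorphic to a standard round ball in $(\R^3,\xi_{\mathrm{std}})$, after which the contact Hamiltonian $H=2z$ of the scaling field $X=x\partial_x+y\partial_y+2z\partial_z$, cut off inside the image of $V$, squeezes the round ball into any preassigned neighborhood of the origin. The details you sketch (convexity of $\partial B_1$, connected dividing set, Giroux flexibility to match characteristic foliations, invariance of the round ball under the anisotropic contraction) all go through.

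The second part, however, contains a genuine gap in two places. First, your explicit model is unattainable in general: the positive $x$- and $y$-axes both limit to the origin, but the inner endpoints of $\overline{\gamma}_0\cap B_0$ and $\overline{\gamma}_1\cap B_0$ are in general two distinct points, and no embedding can send both of them to the single zero of $X$. This is not cosmetic: if the inner endpoint of $\gamma_i$ is \emph{not} a fixed point of the flow, then a field merely tangent to the line containing $\gamma_i$ slides the open arc along itself (in your model $\phi_t$ maps the subinterval $(a,b)$ of the axis to $(e^{-t}a,e^{-t}b)$), so $\phi_t(\gamma_i)\neq\gamma_i$ and the arcs are not preserved. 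The correct mechanism, which your phrase ``choose $\chi$ so that the cut-off field stays tangent to the $\gamma_i$'' gestures at but does not deliver, is the general fact that a contact vector field $X_H$ is tangent to a Legendrian curve if and only if $H$ vanishes along that curve; one should therefore construct a contact Hamiltonian vanishing on $\gamma_0\cup\gamma_1$ \emph{and} on neighborhoods of their interior endpoints (so that the ends are genuine fixed points and the open arcs are setwise invariant), compactly supported in $V$, whose time-one flow still carries $\partial B_1$ into $\ring{B}_1\cap\ring{B}_0$ --- and the existence of such an $H$ requires an argument, since it is no longer the one-line scaling flow. Second, you explicitly defer the ``relative normalization'': a version of \thmref{t:tight on ball} relative to the two Legendrian arcs is not available off the shelf, and it is precisely here that the unknottedness hypothesis on $\gamma_0,\gamma_1$ must be used (for instance by first standardizing tubular neighborhoods of the arcs via the Legendrian neighborhood theorem and then applying Eliashberg's theorem to the complementary ball). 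As written, the proof of the second statement is a plan rather than an argument, and the plan as formulated (both arcs as rays through the origin) would fail even if the normalization were granted.
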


\begin{defn} \label{d:Leg knot}
A {\em Legendrian knot} in a contact manifold $(M,\xi)$ is an embedding $S^1\lra M$ so that the tangent space of the image is contained in $\xi$. A knot $K$ in $(M,\xi)$ is {\em transverse} if its tangent space is transverse to $\xi(p)$ at every point $p\in K$, usually a transverse knot is oriented so that the orientation coincides with the coorientation of $\xi$. 
\end{defn}

In addition to their isotopy type as embedded curves, null-homologous Legendrian or transverse knots have the following classical invariants.

\begin{defn} \label{d:tb rot}
Let $\Sigma$ be a connected Seifert surface for a Legendrian knot $K$. The {\em Thurston-Bennequin invariant} $\tb(K)$ of $K$ is the algebraic intersection number of the push off of $K$ along a vector field transverse to $\xi$ with $\Sigma$. The {\em rotation number} of $K$ is the number of full turns the positive tangent vector of $K$ makes compared to an oriented framing of $\xi\eing{\Sigma}$ ($\Sigma$ is a surface with boundary) as one moves along the oriented curve $K$.

The {\em self-linking number} $\mathrm{sl}(K)$ of a null-homologous transverse knot $K$ is the algebraic intersection number of a Seifert surface $\Sigma$ with $K'$ where $K'$ is obtained from $K$ by pushing $K$ away from itself using a vector field tangent to $\xi$ which vanishes nowhere along $\Sigma$.   
\end{defn}
In general, these invariants depend on the choice of $[\Sigma]\in H_2(M,K;\Z)$, but if $M=S^3$, then the Thurston-Bennequin invariant, the rotation number and the self-linking number are independent of the Seifert surface $\Sigma$. Note that $\tb$ is also independent of the orientation of the knot while the rotation number changes sign when the orientation of $K$ is reversed.

For an oriented Legendrian knot $K$ one obtains a transverse knot by pushing $K$ away from itself using a vector field $N$ tangent to $\xi$ so that along the knot the vector field $N$ followed by the tangent direction of $K$ is an oriented basis of $\xi$. The resulting knot will be denoted by $K^+$ and according to \cite{bennequin} the classical invariants of $K$ and $K^+$ are related as follows
\begin{equation} \label{e:sl tb}
\mathrm{sl}(K^+)=\mathrm{tb}(K)-\mathrm{rot}(K). 
\end{equation} 

\begin{thm}[Eliashberg \cite{el20}] \label{t:tbineq}
Let $(M,\xi)$ be a contact $3$-manifold. Then $\xi$ is tight if and only if 
\begin{equation} \label{e:tbineq}
\tb(K) + |\mathrm{rot}(K)|\le-\chi(\Sigma)
\end{equation}
for every null-homologous Legendrian knot $K$ and Seifert surface $\Sigma$. This is equivalent to 
\begin{equation} \label{e:tbineq t}
\mathrm{sl}(K)\le-\chi(\Sigma) 
\end{equation}
for every transverse knot with Seifert surface $\Sigma$.
\end{thm}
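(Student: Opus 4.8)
The statement comprises two directions together with the asserted equivalence between \eqref{e:tbineq} and \eqref{e:tbineq t}, and I would dispose of the equivalence first. Applying \eqref{e:sl tb} both to $K$ and to its orientation reversal $\overline{K}$ (for which $\tb(\overline{K})=\tb(K)$ and $\mathrm{rot}(\overline{K})=-\mathrm{rot}(K)$) gives $\mathrm{sl}(K^+)=\tb(K)-\mathrm{rot}(K)$ and $\mathrm{sl}(\overline{K}^+)=\tb(K)+\mathrm{rot}(K)$, so the two-sided bound $\tb(K)+|\mathrm{rot}(K)|\le-\chi(\Sigma)$ is exactly the conjunction of these two self-linking inequalities. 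Conversely every transverse knot is the positive transverse push-off of a Legendrian approximation sharing the same Seifert surface, so \eqref{e:tbineq t} for all transverse knots is equivalent to \eqref{e:tbineq} for all Legendrian knots. Hence it suffices to prove that $\xi$ is tight if and only if $\mathrm{sl}(T)\le-\chi(\Sigma)$ for every transverse $T$.

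For the direction inequality $\Rightarrow$ tight I would argue by contraposition. If $\xi$ is overtwisted, choose an overtwisted disc $D$; its boundary $K=\partial D$ is Legendrian, and because $T_pD=\xi(p)$ all along $\partial D$ the inward normal of $K$ inside $D$ lies in $\xi$, so the contact framing of $K$ coincides with the surface framing and $\tb(K)=0$. Since $\chi(D)=1$ this gives $\tb(K)+|\mathrm{rot}(K)|\ge 0>-1=-\chi(D)$, so \eqref{e:tbineq} fails. Thus if \eqref{e:tbineq} holds for every Legendrian knot then $\xi$ must be tight.

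The substantial direction is tight $\Rightarrow$ inequality, and here I would work with the characteristic foliation of $\Sigma$, following Eliashberg \cite{el20}. After a $C^\infty$-small isotopy of $\Sigma$ (rel $\partial$) the foliation is Morse--Smale with nondegenerate singularities, each elliptic or hyperbolic and each carrying a sign according to whether the coorientations of $\xi$ and $\Sigma$ agree; let $e_\pm$ and $h_\pm$ count the positive/negative elliptic/hyperbolic points. Poincar\'e--Hopf yields $\chi(\Sigma)=(e_++e_-)-(h_++h_-)$, and the relative Euler number computation of the self-linking number of the transverse push-off $T$ of $\partial\Sigma$ gives $\mathrm{sl}(T)=-(e_+-h_+)+(e_--h_-)$. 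Adding the two identities,
\begin{equation*}
\mathrm{sl}(T)+\chi(\Sigma)=2(e_--h_-),
\end{equation*}
so the desired bound $\mathrm{sl}(T)\le-\chi(\Sigma)$ is equivalent to $e_-\le h_-$ (and the orientation-reversed bound reduces symmetrically to $e_+\le h_+$).

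It remains to prove $e_-\le h_-$ when $\xi$ is tight, and this is where the real work lies. The plan is to apply Giroux's elimination lemma, which cancels a negative elliptic point against a negative hyperbolic point whenever the two are joined by a separatrix; each such cancellation lowers $e_-$ and $h_-$ by one and so preserves $e_--h_-$, the quantity we must bound. At most $h_-$ cancellations are possible, so if $e_->h_-$ then after cancelling maximally a negative elliptic point survives with no negative hyperbolic point left to absorb it. The key claim --- and the main obstacle --- is that such a trapped elliptic point forces a closed leaf (limit cycle) of the characteristic foliation bounding a disc whose only interior singularity is that elliptic point; after a small isotopy this disc is tangent to $\xi$ along its boundary, i.e. an overtwisted disc, contradicting tightness. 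Proving this --- analysing the limit-cycle structure of the foliation and showing that a trapped elliptic point cannot occur in a tight manifold --- is precisely Eliashberg's central argument. Once it is in place we conclude $e_-\le h_-$, and symmetrically $e_+\le h_+$, so by the displayed identity $\mathrm{sl}(T)\le-\chi(\Sigma)$, which completes the hard direction and hence the theorem.
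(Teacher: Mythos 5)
The paper contains no proof of \thmref{t:tbineq}: it is imported verbatim from Eliashberg \cite{el20}, so your proposal can only be measured against the classical Bennequin--Eliashberg argument you are reconstructing. Your reductions are correct and standard: the equivalence of \eqref{e:tbineq} and \eqref{e:tbineq t} via \eqref{e:sl tb} applied to $K$ and $\overline{K}$ together with Legendrian approximation of transverse knots; the contrapositive direction, since the boundary of an overtwisted disc $D$ has $\tb(\partial D)=0$ while $-\chi(D)=-1$; and the singularity count $\mathrm{sl}(T)+\chi(\Sigma)=2(e_--h_-)$, which is consistent with \eqref{e:rot number}, Poincar{\'e}--Hopf, and \eqref{e:sl tb}. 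So the theorem correctly reduces to showing $e_-\le h_-$ on a generic Seifert surface of a transverse knot in a tight manifold.

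At that point, however, there is a genuine gap, and not only because you explicitly defer the central claim to Eliashberg: the claim as you state it is false as a dynamical statement. A negative elliptic point $x$ surviving maximal cancellation does \emph{not} force a closed leaf around it. Generically the boundary of the basin of the sink $x$ is a polycycle whose vertices are all \emph{positive} singularities --- positive elliptic sources alternating with positive hyperbolic points whose stable separatrices emanate from those sources, and one unstable separatrix of each saddle flowing into the basin towards $x$. This configuration contains no saddle-saddle connection and no limit cycle, so it is stable under perturbation and produces no immediate contradiction with tightness; a leaf from a positive hyperbolic point to a negative elliptic point is generic and permitted by the contact condition. The missing step is the further case analysis: after the negative cancellations, one applies the elimination lemma (\lemref{l:elim}, same-sign pairs) to the source--saddle pairs on the basin boundary to turn the polycycle into a closed leaf, and only then does the disc it bounds, containing the single sink $x$, perturb to an overtwisted disc; one also needs the orientation remark that the characteristic foliation exits through the positively transverse boundary $\partial\Sigma$, so basins of interior sinks cannot reach $\partial\Sigma$ and their boundaries are genuinely invariant graphs or cycles. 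Separately, your parenthetical claim that ``the orientation-reversed bound reduces symmetrically to $e_+\le h_+$'' is wrong: for the standard disc bounding the transverse unknot with $\mathrm{sl}=-1$ one has $e_+=1$, $h_+=0$, $e_-=h_-=0$. Only $e_-\le h_-$ holds in general, and only it is needed --- both Legendrian inequalities in \eqref{e:tbineq} are instances of the transverse bound applied to the push-offs $K^+$ and $\overline{K}^+$, exactly as in your own reduction.
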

Finally, we note that a Legendrian knot can be stabilized to another Legendrian knot  in the same smooth knot type such that the Thurston-Bennequin invariant decreases by $1$ and the rotation number has changes by $\pm 1$. Depending on the sign the stabilization is said to be positive or negative (a negative stabilization appears in \figref{b:isotopy} on p.~\pageref{b:isotopy}). The stabilized knot is well-defined up to Legendrian isotopy and if two Legendrian knots are isotopic, then the same is true for their positive (or negative) stabilizations. 

\subsection{Surfaces in contact manifolds} \label{s:non-convex}

Let $\Sigma$ be an oriented surface embedded in a contact manifold $(M,\xi)$. If $\partial\Sigma\neq\emptyset$, then  we require that the boundary is tangent to $\xi$. The following terminology introduced in \cite{gi-conv} for contact structures will be used for general plane fields (in this context we will often use the notation $\zeta$ instead of $\xi)$.

\begin{defn} \label{d:char fol voc}
The {\em characteristic foliation} $\xi(\Sigma)$ on $\Sigma$ is defined by the singular line field $T\Sigma\cap \xi$. The characteristic foliation is oriented so that the orientation of $\xi(\Sigma)$ followed by the coorientation of $\xi$ is the orientation of $\Sigma$. 

A point $p\in\Sigma$ is called {\em singular} if $T_p\Sigma=\xi(p)$, when $\xi$ and $\Sigma$ are oriented, a singular point is {\em positive} if the orientations coincide, otherwise it is {\em negative}.
\end{defn} 
The characteristic foliation on a closed surface $\Sigma$ determines the germ of $\xi$ along $\Sigma$ up to isotopy preserving $\xi(\Sigma)$. 

For singular points there is a well-defined notion of positive/negative divergence. Positive singularities have positive divergence while the divergence at negative singularities is negative.  The orientation convention turns positive/negative elliptic singularities into sources/sinks. 

Because of the contact condition, the characteristic foliation near an isolated singular point has a particular form. For example, the index of such a critical point can only be $\pm 1$ or $0$ (a proof can be found in \cite{unique}).  

Generically, the singular line field $\xi(\Sigma)$ on $\Sigma$ defines a Morse-Smale foliation, i.e. 
\begin{itemize}
\item all singularities are non-degenerate,
\item all closed orbits are hyperbolic, and
\item there are no connections between hyperbolic singularities. 
\end{itemize}

Giroux \cite{gi-conv} has proved that these properties together imply that $\Sigma$ is there is a contact vector field which is transverse to $\Sigma$. 

\begin{defn} \label{d:convex}
An embedded surface with Legendrian boundary in a contact manifold is {\em convex} if there is a contact vector field transverse to $\Sigma$. 
\end{defn}
Of course, the characteristic foliation does not have to be Morse-Smale to be convex. For example, connections between hyperbolic singularities of the same sign never prevent convexity.

Giroux has found a topological property of the characteristic foliation on $\Sigma$ which determines whether $\Sigma$ is convex or not.     
\begin{defn} \label{d:dividing}
A collection $\Gamma$ of curves and properly embedded arcs on a surface $\Sigma$ with $\partial \Gamma\subset\partial\Sigma$ {\em divides} a singular foliation if 
\begin{itemize}
\item[(i)] $\Gamma$ is transverse to $\xi(\Sigma)$ and $\Gamma$ decomposes $\Sigma$  into two subsurfaces $\Sigma_+,\Sigma_-$ (not necessarily connected),  and
\item[(ii)] there is a vector field  $X$ directing $\xi(\Sigma)$ and an area form $\omega$ on $\Sigma$ such that $L_{X}\omega>0$, respectively $L_X\omega<0$, on the interior of $\Sigma_+$, respectively $\Sigma_-$, and $X$ is pointing out of $\Sigma_+$. 
\end{itemize}
\end{defn}

\begin{thm}[Giroux \cite{gi-conv}] \label{t:char convexity} 
An oriented compact surface $\Sigma\subset (M,\xi)$ with Legendrian boundary is convex if and only if $\xi(\Sigma)$ admits a dividing set. The dividing set is unique up to isotopy through multicurves transverse to the characteristic foliation. 

A convex surface in a contact manifold has a neighborhood where $\xi$ is tight if and only if either
\begin{itemize}
\item $\Sigma\simeq S^2$ and $\xi(\Sigma)$ has a connected dividing set, or
\item $\Sigma\not\simeq S^2$ and no component of a dividing set of $\xi(\Sigma)$ bounds a disc. 
\end{itemize}
\end{thm}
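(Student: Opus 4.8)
The plan is to handle the convexity criterion and the tightness criterion separately. For the first equivalence I would pass to an invariant neighborhood. If $\Sigma$ is convex, the flow of a transverse contact vector field $X$ identifies a neighborhood of $\Sigma$ with $\Sigma\x\R_t$ so that $\xi$ is invariant under $\partial_t=X$; after rescaling the defining form, write it as $\alpha=\beta+u\,dt$ with $\beta\in\Omega^1(\Sigma)$ and $u\in C^\infty(\Sigma)$ both $t$-independent. Since $\beta\ww d\beta=0$ for dimension reasons, one computes $\alpha\ww d\alpha=(u\,d\beta+\beta\ww du)\ww dt$, so the contact condition is exactly $u\,d\beta+\beta\ww du>0$ on $\Sigma$. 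Put $\Gamma=\{u=0\}$. Along $\Gamma$ the inequality reduces to $\beta\ww du>0$, which shows simultaneously that $0$ is a regular value (so $\Gamma$ is an embedded multicurve), that $\beta\neq0$ there (so $\Gamma$ misses the singular set of $\xi(\Sigma)$), and that $\Gamma\pitchfork\ker\beta=\xi(\Sigma)$. Choosing $\omega$ and the directing field of $\xi(\Sigma)$ so that $L_X\omega$ has the sign of $u$, the regions $\Sigma_\pm=\{\pm u>0\}$ exhibit $\Gamma$ as a dividing set in the sense of \defref{d:dividing}.

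For the converse I would run this in reverse: given $(\Gamma,\Sigma_\pm,X,\omega)$ as in \defref{d:dividing}, set $\beta=\iota_X\omega$, so that $\ker\beta=\langle X\rangle$ recovers $\xi(\Sigma)$ and $d\beta=L_X\omega$ has the sign prescribed on $\Sigma_\pm$; then choose $u$ vanishing exactly on $\Gamma$, with the sign of $u$ matching that of $L_X\omega$ and with $\beta\ww du>0$ along $\Gamma$. For a suitable such $u$ (after scaling) the inequality $u\,d\beta+\beta\ww du>0$ holds on all of $\Sigma$, so $\alpha=\beta+u\,dt$ is contact on $\Sigma\x\R$, its characteristic foliation on $\Sigma\x\{0\}$ is $\xi(\Sigma)$, and $\partial_t$ is a transverse contact field. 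Because the characteristic foliation determines the germ of the plane field up to isotopy, this model agrees with $\xi$ near $\Sigma$, so $\Sigma$ is convex. For uniqueness of $\Gamma$ up to isotopy I would use the dynamics of $\xi(\Sigma)$: off its singular set the foliation runs from the positive region (sources) to the negative region (sinks), and any dividing set is a transversal separating the two; two such transversals are carried onto one another by flowing along $\xi(\Sigma)$, which is the required isotopy through curves transverse to the foliation.

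For the tightness criterion, the invariant model shows that tightness of a neighborhood depends only on the isotopy class of $\Gamma$. Suppose first that $\Gamma$ has a component $\gamma$ bounding a disc in $\Sigma$ (automatic for every component when $\Sigma\simeq S^2$, and the hypothesis to be excluded when $\Sigma\not\simeq S^2$); take $\gamma$ innermost, bounding $D\subset\Sigma$ with $\mathrm{int}(D)\cap\Gamma=\emptyset$. I would apply the Legendrian realization principle to a curve $C$ parallel to $\gamma$ but lying on the opposite side, so that $C$ is disjoint from $\Gamma$ while the disc it bounds contains $\gamma$ in its interior; this $C$ is non-isolating precisely because its complementary disc meets $\Gamma$ along $\gamma$ (here one uses $\#\Gamma\ge2$ in the $S^2$ case so that the other side of $C$ also meets $\Gamma$). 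After realizing $C$ as a Legendrian curve, $\tb(C)=-\tfrac12\#(C\cap\Gamma)=0$, while $C$ bounds a disc in $M$; this contradicts the Bennequin inequality \thmref{t:tbineq}, so $\xi$ is overtwisted near $\Sigma$.

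It remains to prove tightness when $\Gamma$ is connected on $S^2$, or has no contractible component on $\Sigma\not\simeq S^2$. For $S^2$ with a single dividing circle the characteristic foliation is the standard one (a single source and a single sink), so the invariant model is contactomorphic to a neighborhood of the boundary sphere of a Darboux ball and is tight by \thmref{t:tight on ball}. The general case is where I expect the real difficulty: to rule out every overtwisted disc one must show that the presence of one would force a contractible dividing curve on $\Sigma$. I would put a hypothetical overtwisted disc into a convex normal form and, using that $\Gamma$ has no contractible component (so $\Gamma$ is incompressible in $\Sigma$) together with an innermost-disc and exchange argument on the intersection of the disc with the invariant neighborhood, produce either a contractible curve in $\Gamma$ or a violation of \thmref{t:tight on ball}. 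This converse---genuinely excluding all overtwisted discs---is the main obstacle, since the overtwisted direction only needs one bad disc whereas tightness is a statement about all of them.
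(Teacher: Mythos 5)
The paper itself gives no proof of this statement: it is imported as background from Giroux \cite{gi-conv}, so your attempt can only be measured against the standard argument in the literature. Measured that way, your first half is correct and is exactly Giroux's route: the invariant model $\alpha=\beta+u\,dt$, the contact condition $u\,d\beta+\beta\wedge du>0$, the definition $\Gamma=\{u=0\}$, the reverse construction from $(\Gamma,\Sigma_\pm,X,\omega)$, and the germ-determination step; the rescaling needed to extract the pair $(X,\omega)$ of \defref{d:dividing} from $(\beta,u)$, and the care needed near the singular set in your uniqueness-by-flowing argument, are routine. Your $S^2$ tight case is also fine (it follows just as directly from \thmref{t:bennequin}). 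In the overtwisted direction, however, your Legendrian realization argument fails in a case you did not flag: when $\Sigma\not\simeq S^2$ and $\Gamma$ consists of the single contractible curve $\gamma$, the component of $\Sigma\setminus(\Gamma\cup C)$ on the far side of $C$ misses $\Gamma$, so $C$ is isolating and the realization principle does not apply --- the same defect you correctly note for $S^2$ with $\#\Gamma=1$. The clean fix avoids realizing $C$ altogether: by Giroux flexibility one may perturb $\Sigma$ inside its invariant neighborhood so that the characteristic foliation acquires a closed leaf parallel to $\gamma$; a closed leaf bounding a disc in $\Sigma$ bounds an overtwisted disc, with no nonisolating hypothesis needed.

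The genuine gap is the one you concede: tightness of the invariant neighborhood when $\Sigma\not\simeq S^2$ and no component of $\Gamma$ bounds a disc. The strategy you sketch --- put a hypothetical overtwisted disc in convex position and run an innermost-disc/exchange argument against \thmref{t:tight on ball} --- cannot work as stated: \thmref{t:tight on ball} presupposes tightness on a ball and describes the homotopy type of the space of such structures, so it cannot be used to exclude overtwisted discs; moreover an overtwisted disc in $\Sigma\times\R$ need not meet $\Sigma$ or the dividing set in any configuration that a finite innermost-disc induction simplifies, since $\Sigma\times\R$ is not a ball and there is no Darboux-type terminal model for the induction. The actual proof is global: one shows the $\R$-invariant structure with $\Gamma$ free of contractible components is \emph{universally} tight, by passing to the universal cover $\widetilde{\Sigma}\times\R$ --- where the lifted dividing set is a locally finite family of properly embedded lines --- and embedding the lifted invariant structure into the standard tight $\R^3$ (equivalently, via Giroux's classification of invariant structures or their relation to taut foliations), after which tightness follows from Bennequin. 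This covering-space/embedding input is the missing idea; without it the hard direction of the second assertion remains unproved, while the rest of your proposal is essentially the standard argument.
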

If one of these two conditions in the theorem are violated for a convex surface, we will say that this surface contains an obvious overtwisted disc. 

Because of the Poincar{\'e}-Bendixson theorem, it is easier to describe the necessary and sufficient conditions on $\xi(\Sigma)$ for $\Sigma$ to being convex when $\Sigma\simeq S^2$. Recall that $\xi(\Sigma)$ is oriented, so we may think of the leaves of $\xi(\Sigma)$ as flow lines of a vector field.

\begin{lem} \label{l:convex spheres}
An embedded sphere $S^2$ in a contact manifold $(M,\xi)$ whose characteristic foliation has only isolated singularities is convex if and only if 
\begin{itemize}
\item each closed leaf of $\xi(S^2)$ is hyperbolic, and
\item there is no connection from a negative singular point to a positive singular point. 
\end{itemize}
\end{lem}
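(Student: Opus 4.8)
The plan is to reduce the whole statement to the existence of a dividing set via Giroux's criterion (\thmref{t:char convexity}): since $S^2$ is an oriented compact surface (here with empty Legendrian boundary), it is convex if and only if $\xi(S^2)$ admits a dividing set $\Gamma$. I would therefore show that, for a characteristic foliation with only isolated singularities, a dividing set exists precisely when every closed leaf is hyperbolic and no leaf runs from a negative to a positive singularity. Throughout I think of $\xi(S^2)$ as the flow of a vector field $X$ directing it, fix an area form $\omega$, and use that positive singularities have positive divergence and negative ones negative divergence, so elliptic points of either sign are sources or sinks, and a leaf oriented from a negative to a positive point can only run between two hyperbolic points.

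For the easy implication I assume $\Gamma$, the splitting $S^2=\Sigma_+\cup\Sigma_-$, and the data of \defref{d:dividing} are given, so that $L_X\omega>0$ on the interior of $\Sigma_+$, $L_X\omega<0$ on the interior of $\Sigma_-$, and $X$ points out of $\Sigma_+$ along $\Gamma$. Since $\Gamma$ is transverse to the flow and the flow crosses it in only one direction, no closed orbit can meet $\Gamma$; hence every closed leaf lies in the interior of $\Sigma_+$ or of $\Sigma_-$, where the divergence has a fixed sign. The holonomy multiplier of a closed leaf equals the exponential of the integral of the divergence over one period, so it is $>1$ in $\Sigma_+$ and $<1$ in $\Sigma_-$; either way the closed leaf is hyperbolic. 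Likewise, since $\Gamma$ carries no singularities, every positive singularity lies in the interior of $\Sigma_+$ and every negative one in the interior of $\Sigma_-$. A leaf oriented from a negative to a positive singularity would then start in $\Sigma_-$ and end in $\Sigma_+$, forcing the flow to cross $\Gamma$ into $\Sigma_+$ and contradicting that $X$ points out of $\Sigma_+$. This rules out negative-to-positive connections.

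The substantial direction is the construction of $\Gamma$ from the two conditions, and here the Poincar\'e--Bendixson theorem is the main tool: on $S^2$ the $\alpha$- and $\omega$-limit set of every leaf is a single singular point, a single closed orbit, or a graphic built from hyperbolic singularities and connecting separatrices. I would let $R_+$ be the union of the positive singular points, the repelling closed leaves, and the unstable separatrices of the positive hyperbolic points, and dually build $R_-$ from the negative singular points, the attracting closed leaves and the stable separatrices of the negative hyperbolic points. Poincar\'e--Bendixson together with the hyperbolicity of the closed leaves shows that every leaf not contained in $R_+\cup R_-$ has its $\alpha$-limit in $R_+$ and its $\omega$-limit in $R_-$, a gradient-like picture; the hypothesis of no negative-to-positive connection is exactly what prevents $R_+$ and $R_-$ from meeting along a leaf. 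I would then take $\Sigma_+$ to be a thin closed regular neighborhood of $R_+$ and set $\Gamma=\partial\Sigma_+$. The gradient-like structure guarantees that for a sufficiently thin neighborhood the flow exits $\Sigma_+$ transversally along all of $\Gamma$, so $\Gamma$ is a multicurve transverse to $\xi(S^2)$ across which $X$ points outward; hyperbolicity of the closed leaves is precisely what supplies such a transverse circle near a repelling cycle. Finally, a rescaling $X\mapsto fX$ by a positive function $f$ leaves the foliation and its singularities unchanged but replaces the divergence by $Xf+f\operatorname{div}_\omega X$, which can be arranged to be positive on $\Sigma_+$ and negative on $\Sigma_-$; then $\Gamma$ divides $\xi(S^2)$ and \thmref{t:char convexity} yields convexity.

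The hard part is this last construction, and more precisely the verification that a thin regular neighborhood of $R_+$ has boundary transverse to the foliation. One must organize the limit sets produced by Poincar\'e--Bendixson into a coherent repelling skeleton and rule out orbits that return to it, control the separatrix configurations the hypotheses still permit (same-sign saddle connections and prograde, positive-to-negative, connections, which must be swept into $\Sigma_+$ or correctly straddled by $\Gamma$), and handle closed leaves so that a transverse circle genuinely exists near each repelling cycle. Realizing the divergence condition afterwards by the rescaling of $X$ is the technical heart; the two hypotheses enter exactly as the two obstructions that would otherwise force $\Gamma$ to be tangent to $\xi(S^2)$ or to be crossed by the flow in both directions.
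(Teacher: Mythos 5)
Your overall outline is the same as the paper's: the easy implication is proved by locating the singularities in $\Sigma_\pm$ and deducing hyperbolicity of closed leaves from the sign of the divergence, and the converse invokes the Poincar\'e--Bendixson theorem and then builds the triple $(X,\omega,\Gamma)$ as in Proposition~2.6 of Giroux's paper \cite{gi-conv}, which is exactly what the paper's proof does (the paper simply cites that construction instead of sketching it). Your forward direction is correct and in fact more detailed than the paper's.

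In the converse, however, there is a concrete error: you have interchanged stable and unstable separatrices in the skeletons $R_\pm$. The unstable separatrices of a \emph{positive} hyperbolic point flow away from it, and since retrogradient connections are excluded their $\omega$-limits are negative singularities, attracting closed leaves, or negative graphics; so with your definition the closure of $R_+$ reaches into the attracting part of the dynamics. A thin neighborhood of such a leaf then fails the exit condition of \defref{d:dividing} in two ways: along the two boundary arcs parallel to an unstable separatrix the transverse dynamics is contracting toward the leaf (it lies in the basin of a sink), so $X$ crosses $\partial\Sigma_+$ \emph{inward}; and the stable separatrices of the positive saddle, which emanate from sources outside your $R_+$, pierce $\partial\Sigma_+$ pointing into $\Sigma_+$. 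The correct skeleton is dual to yours: $R_+$ must consist of the positive singularities, the repelling closed leaves, and the \emph{stable} separatrices of the positive hyperbolic points -- these are precisely the leaves running between positive singularities, hence contained in $\Sigma_+$ -- while $R_-$ uses the \emph{unstable} separatrices of the negative hyperbolic points. This is consistent with the paper's own graphs (it defines $\Gamma_{t,-}$ with edges the unstable leaves of negative singularities, and $\Gamma_+$ as ``positive singular points and their stable leaves''). With this swap, your transversality verification goes through and the argument becomes essentially Giroux's construction cited in the paper. One further small caveat: the lemma assumes only \emph{isolated}, not non-degenerate, singularities, so ``stable/unstable separatrices of hyperbolic points'' should be replaced by the separatrices of general isolated singularities and the graphics of Poincar\'e--Bendixson; the divergence-sign dichotomy you use survives in that generality and is what Giroux's proof actually exploits.
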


\begin{proof}
If $\Sigma$ is convex and $\Gamma$ is a dividing set, then all positive singularities are contained in $\Sigma_+$ while the negative singularities lie in $\Sigma_-$ Also, a closed leaf is hyperbolic if and only if there is a couple $(X,\omega)$ as in \defref{d:dividing} on a neighborhood of the closed leaf. Thus, in order to be convex, (i) and (ii) have to be satisfied.

For the opposite direction, recall that by the Poincar{\'e}-Bendixson theorem all limit sets of leaves of $\xi(\Sigma)$ are either closed leaves or cycles formed by finitely many leaves of $\xi(\Sigma)$ which connect singular points. Therefore, if a characteristic foliation on $S^2$ satisfies (i) and (ii) then one can construct a triple $(X,\omega,\Gamma)$ as in the proof of Proposition 2.6 of \cite{gi-conv}.    
\end{proof}
There are effective methods to manipulate the characteristic foliation on a surface, in particular when the surface is convex (see \cite{gi-conv} and Theorem 3.7 in \cite{honda}).  The following lemma can also be found in \cite{gi-conv} and does not require convexity.

\begin{lem} \label{l:elim}
Let $\Sigma\subset (M,\xi)$ be an embedded surface such that the characteristic foliation has a leaf $\gamma$ connecting two singular points with the same sign such that the index of one of the singularities is $1$ while the other has index $-1$. 

Then $\Sigma$ can be isotoped on a neighborhood of $\gamma$ such that these two singular points  disappear from the characteristic foliation on the isotoped surface and $\gamma$ remains Legendrian while no new singular points appear. 
\end{lem}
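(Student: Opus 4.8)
The plan is to reduce the statement to a purely local computation in a Darboux chart around $\gamma$ and to cancel the two singularities by an explicit deformation of $\Sigma$ written as a graph. Since $\gamma$ is a Legendrian arc, I would first choose coordinates $(x,y,z)$ on a tubular neighborhood $U$ of $\gamma$ with $\xi=\ker(dz-y\,dx)$ and $\gamma=\{y=0,\,z=0,\,0\le x\le 1\}$, placing the elliptic point $e$ (index $+1$) at $x=0$ and the hyperbolic point $h$ (index $-1$) at $x=1$. The key structural observation is that, because $e$ and $h$ have the \emph{same} sign, the piece $\Sigma\cap U$ can be written with its given orientation as a single graph $z=u(x,y)$: in such a graph one checks that every singular point has the same sign (the orientation of $\xi$ coming from $d\alpha$ always agrees with the graph orientation), so two singularities of opposite sign could never sit in one consistently oriented chart — they would be separated by a dividing curve. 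Thus the same-sign hypothesis is exactly what makes a single normal form available.

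In this model, writing $\beta=(dz-y\,dx)|_\Sigma=(u_x-y)\,dx+u_y\,dy$, the characteristic foliation is directed by $X=u_y\,\partial_x+(y-u_x)\,\partial_y$. The conditions that $\Sigma$ contain $\gamma$ and that $\gamma$ be a leaf read $u(x,0)=0$ and $u_x(x,0)=0$, so along $\gamma$ one has $X(x,0)=\varphi(x)\,\partial_x$ with $\varphi(x):=u_y(x,0)$, and the singular points are precisely the zeros of $X$, namely $u_y=0,\ u_x=y$; along $\gamma$ these are the zeros $x=0,1$ of $\varphi$, with $\mathrm{sign}\det DX(e)>0$ (elliptic) and $\mathrm{sign}\det DX(h)<0$ (hyperbolic). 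I would then cancel $e$ and $h$ by a family $u_s$, $s\in[0,1]$, supported in an arbitrarily small neighborhood of $\gamma$, with $u_0=u$, chosen so that on the segment $[0,1]$ one keeps $u_s(x,0)=0$ and $(u_s)_x(x,0)=0$ — preserving $\gamma$ as a Legendrian leaf — while $(u_s)_y(x,0)=\varphi_s(x)$ is pushed to a nowhere-vanishing function $\varphi_1>0$ on $[0,1]$. The point is that just outside $[0,1]$ one simultaneously allows $(u_s)_x(x,0)\neq 0$, so that where $\varphi_1$ must transition back to its original values the field $X_1=\big((u_1)_y,\,y-(u_1)_x\big)$ nevertheless stays nonzero. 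The surfaces $\Sigma_s=\{z=u_s\}$ are then the desired isotopy: at $s=1$ the zeros at $e$ and $h$ are gone, $\gamma$ remains Legendrian, and $\Sigma$ is unchanged outside $U$.

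The main obstacle I anticipate is not the one-dimensional cancellation along $\gamma$ but verifying that the \emph{two}-dimensional field $X_s$ acquires no new zeros anywhere in $U$ — the singular set is $\{u_y=0\}\cap\{u_x=y\}$, not merely its trace on $\gamma$ — and that each $\Sigma_s$ stays embedded. I would control this by first fixing the transverse $2$-jet of $u$ along $\gamma$ so that $e,h$ are non-degenerate and $X$ is transversally non-vanishing off them, and then taking the perturbation $C^1$-small and equal to $u$ off a thin strip: outside the strip the foliation is literally unchanged (hence singularity-free there), while inside the strip the arranged non-vanishing of $(u_s)_y$ on $[0,1]$ together with the nonzero $\partial_y$-component of $X_s$ just outside $[0,1]$ rules out new zeros. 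This is the step requiring the most care, and it is precisely where the same-sign hypothesis is indispensable, since with opposite signs no such sign-preserving, singularity-free local deformation exists.
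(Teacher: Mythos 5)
The paper itself contains no proof of this lemma --- it is quoted from Giroux \cite{gi-conv} (an exposition is in \cite{geiges}) --- so the benchmark is the standard local argument, which your proposal essentially reconstructs: put $\xi=\ker(dz-y\,dx)$ near $\gamma$, write $\Sigma$ as a graph $z=u(x,y)$, read off $X=u_y\,\partial_x+(y-u_x)\,\partial_y$, and cancel the two zeros of $\varphi=u_y(\cdot,0)$ by a compactly supported change of $u$. Your model computations are correct: the singular set is $\{u_y=0\}\cap\{u_x=y\}$; along the axis $u_{xx}(x,0)=0$, so $\det DX=\varphi'(1-\varphi')$ there, forcing $\varphi'\in(0,1)$ at the index-$(+1)$ point and (given that $\varphi$ has a fixed sign on the open arc) $\varphi'<0$ at the saddle; working directly with $u$ keeps every stage an embedded surface and keeps $\gamma$ pointwise fixed, hence Legendrian.

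The genuine gap is your very first step, which you settle by an implication running the wrong way. You correctly observe that in any graph presentation all singular points carry the same sign, and from this you conclude that two same-sign singularities joined by a leaf admit a graph presentation. That converse is precisely the hard half of the lemma and is not automatic: one must produce a chart in which \emph{simultaneously} $\alpha=dz-y\,dx$, $\gamma=\{y=z=0,\ 0\le x\le 1\}$, and $T\Sigma$ is transverse to $\partial_z$ along all of $\gamma$. Both $T\Sigma$ and $\xi$ contain $T\gamma$ and coincide exactly at the two endpoints, so the obstruction is the relative rotation of $T\Sigma$ against $\xi$ along the arc, and it is exactly the same-sign hypothesis that must be consumed constructively to straighten it; with opposite signs the normalization is impossible, as your own orientation argument shows, so no argument that never uses the hypothesis positively can produce the chart. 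In Giroux's and Geiges's proofs this normalization along $\gamma$ (after first standardizing neighborhoods of the two singular points) takes most of the effort. A second, repairable, slip: with your conventions $\varphi$ has the \emph{opposite} sign on both sides just outside $[0,1]$ (from $\varphi'(0)\in(0,1)$ and $\varphi'(1)<0$), so insisting on $\varphi_1>0$ over $[0,1]$ creates two transition zones where $u_{1,y}$ changes sign on the axis and must be guarded by $u_{1,x}\neq y$; the natural move is to push $\varphi$ to a nowhere-vanishing function of the \emph{outside} sign (the birth--death merger of the two points), which matches the boundary values without extra crossings. Either way, the two-dimensional no-new-zeros estimate that you rightly identify as the crux still requires the explicit interpolation, as carried out in \cite{geiges}, which the proposal only gestures at.
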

Conversely, one can also create a pair of singularities as in \lemref{l:elim}.  A more precise/complicated version of this is \lemref{l:parametric elim} below. 

There is another possible modification which is particularly relevant when considering surfaces with Legendrian boundary. The modification shown in \figref{b:split-half} preserves the boundary of a surface (represented by the thickened horizontal line), it is explained in Lemma~2.2 of \cite{elfr-long}. Note that if the hyperbolic point is positive, respectively negative, then its unstable leaves, respectively its stable leaves, have to be part of the boundary.  

\begin{figure}[htb]
\begin{center}
\includegraphics[scale=0.6]{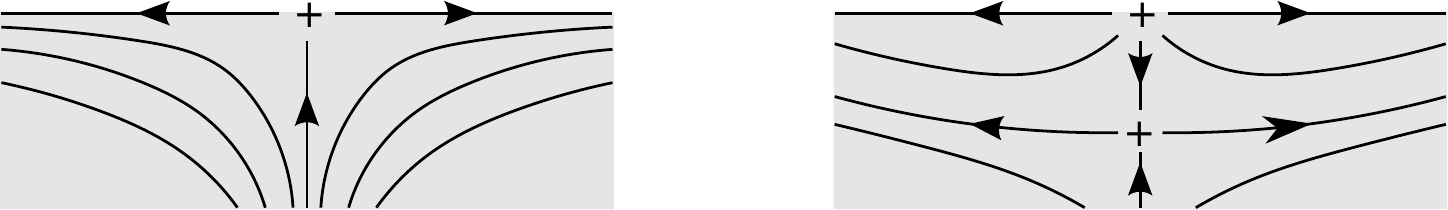}
\end{center}
\caption{Splitting hyperbolic points on a Legendrian boundary of a surface}\label{b:split-half}
\end{figure}
Finally, we note that the rotation number of a Legendrian knot $K$ with Seifert surface $\Sigma$ can be computed from $\xi(\Sigma)$ assuming all singular points of $\xi(\Sigma)$ are isolated. Let $e_\pm$, respectively $h_\pm$, be the number of singular points of index $+1$, respectively $-1$, whose sign is $\pm$ in the interior of $\Sigma$. If the singularities on the boundary are all either hyperbolic or all elliptic, then according to \cite{etnyre-overview}
\begin{equation} \label{e:rot number}
\mathrm{rot}(K)=e_+-e_--h_++h_-.
\end{equation}

\subsection{Tomographie}

If $\xi$ is a contact structure on $\Sigma\times[-1,1]$, then one can study the family $\xi(\Sigma\times\{t\}), t\in[-1,1]$ of characteristic foliations. This approach is pursued by E.~Giroux in \cite{gi-bif}, an exposition of some sapects of this can be found in \cite{geiges}. We recall some results from \cite{gi-bif} in this section. An important application of Giroux's study is a proof of Bennequin's theorem.
\begin{thm}[Bennequin] \label{t:bennequin}
A contact structure on $S^2\times [-1,1]$ such that every sphere of the product decomposition is convex and has connected dividing set is tight. 
\end{thm}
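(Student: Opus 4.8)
The plan is to show that the \emph{everywhere-convex} hypothesis forces $\xi$ to be the standard vertically invariant model over a sphere with connected dividing set, which is tight by \thmref{t:char convexity}. Since each sphere $S^2\times\{t\}$ is convex and its dividing set $\Gamma_t$ is connected, \thmref{t:char convexity} tells us that $\Gamma_t$ is a single embedded circle cutting $S^2\times\{t\}$ into two discs, and \lemref{l:convex spheres} constrains the possible characteristic foliations. Because the dividing set is unique up to isotopy through transverse multicurves, the circles $\Gamma_t$ vary by an ambient isotopy as $t$ ranges over $[-1,1]$; in Giroux's tomographic language this says the movie of dividing sets has no bifurcations at all, which is the simplest situation possible.

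First I would produce a single contact vector field $X$ that is transverse to \emph{every} sphere $S^2\times\{t\}$ simultaneously. Convexity of an individual sphere supplies, by \defref{d:convex}, a contact vector field $X_t$ transverse to it, hence (transversality being an open condition) transverse to all nearby spheres; one coorients these so that each crosses the spheres in the direction of increasing $t$. Assembling the local fields $X_t$ into one globally transverse contact vector field over the compact interval $[-1,1]$ is the technical heart of the argument, and it is exactly here that the hypothesis enters: because no sphere is non-convex, the family $\xi(S^2\times\{t\})$ carries no bifurcation and the connected dividing set can be propagated, following Giroux's tomography \cite{gi-bif}.

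Granting such an $X$, its flow consists of contactomorphisms, and since $X$ is transverse to every sphere its flow lines meet each sphere exactly once; straightening $X$ therefore identifies $(S^2\times[-1,1],\xi)$ with the $t$-invariant neighborhood of the convex sphere $(S^2,\Gamma_0)$. As $\Gamma_0$ is connected, \thmref{t:char convexity} (equivalently \lemref{l:convex spheres}) shows this invariant neighborhood is tight, and because the model is homogeneous in $t$ the tightness holds on the whole slab rather than merely on a neighborhood of one level. Hence $\xi$ is tight.

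I expect the main obstacle to be precisely the global assembly of $X$, not the concluding appeal to \thmref{t:char convexity}. Contact vector fields correspond $\R$-linearly to their contact Hamiltonians, but they do not form a $C^\infty$-module in a way compatible with the transversality condition, so a naive partition-of-unity average of the $X_t$ need not stay transverse to the spheres: derivative terms coming from the cutoff functions can destroy the positive crossing direction. Controlling this, by tracking the family of characteristic foliations across the parameter interval and using that every level is convex to keep $\Gamma_t$ connected and reconstruct the transverse field, is the genuine content of the tomography in \cite{gi-bif}, and it is where all the difficulty lies.
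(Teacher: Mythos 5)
Your overall reduction is fine as far as it goes: \emph{if} a single contact vector field $X$ transverse to every sphere $S^2\times\{t\}$ existed, then straightening its flow would embed the slab into an $\R$-invariant model over a convex sphere with connected dividing set, which is tight (this is Giroux's criterion; note that \thmref{t:char convexity} as stated only gives a tight \emph{neighborhood}, so you need the standard stronger form for the invariant model). The genuine gap is that the existence of such a global $X$ is exactly what you do not prove, and the justification you offer for it is wrong. Convexity of every sphere does \emph{not} mean the movie "carries no bifurcation": by \lemref{l:convex spheres} it only excludes degenerate closed leaves and retrogradient connections, while saddle--saddle connections between singularities of the same sign and birth/death of same-sign elliptic--hyperbolic pairs occur in generic movies and never destroy convexity (the paper says this explicitly after \defref{d:convex}). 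You have conflated constancy of the dividing set up to isotopy (true here) with constancy of the characteristic foliations (false), and it is the latter that obstructs patching the local fields $X_t$: as you yourself observe, contact vector fields are not a $C^\infty$-module, so the cutoff terms destroy transversality, and "tracking the family of foliations to reconstruct the transverse field" is not an argument. In effect your proposal defers its only hard step to "the tomography in \cite{gi-bif}", which is circular, since the tomography argument \emph{is} the proof of this theorem and it does not proceed by building a global transverse field: Giroux analyzes generic movies via the crossing and birth--death lemmas (\lemref{l:rgc}, \lemref{l:birth}) and shows that a movie with no obvious Thurston--Bennequin-violating Legendrian loop is tight; the paper itself states \thmref{t:bennequin} without proof precisely as a citation to Section 2.I of \cite{gi-bif}.

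A repair that stays close to your picture but avoids the false global claim: since transversality of a contact vector field to a surface is an open condition, compactness of $[-1,1]$ gives a finite subdivision $-1=t_0<t_1<\dots<t_N=1$ and contact vector fields $X_i$ transverse to all spheres with $t\in[t_{i-1},t_i]$. Each sub-slab then embeds in an $\R$-invariant tight model over a sphere with connected dividing set, hence is tight and standard. One cannot conclude tightness of the union for free --- gluings of tight pieces can be overtwisted --- but here the gluing spheres are convex with connected dividing sets, so after capping both ends with tight balls (Eliashberg, \thmref{t:tight on ball}) one can invoke Colin's gluing theorem \cite{col-connect} (used in the same way in the proof of \lemref{l:tb>0}) finitely many times to conclude that the result, and hence the slab, is tight. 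That discretize-and-glue argument, or Giroux's direct movie analysis, is what actually carries the content you attributed to the assembly of a single field $X$.
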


\subsubsection{Movies of characteristic foliations} \label{s:tomographie}

Let $\Sigma\times[-\delta,\delta], \delta>0,$ be the closure of a tubular neighborhood of an oriented surface in a contact manifold $(M,\xi)$. We assume that $\Sigma$ is compact and  $\partial{\Sigma}\times\{t\}$ is Legendrian for all $t\in[-\delta,\delta]$. 

\begin{defn}
The {\em movie} of $\xi$ on $\Sigma\times[-\delta,\delta]$ is the family of characteristic foliations of $\xi$ on $\Sigma\times\{t\}=\Sigma_t, t\in[-\delta,\delta]$. 
\end{defn} 
According to \cite{gi-bif} (see also \cite{geiges}) the movie determines the contact structure up to isotopy. Conversely, not every movie of singular foliation arises as movie of a contact structure: If $\xi$ is a contact structure and $\xi(\Sigma_0)$ violates one of the conditions (i),(ii) of \lemref{l:convex spheres}, then this has consequences for $\xi(\Sigma_t)$ for $t$ close to $0$. The following definitions and lemmas are due to E.~Giroux \cite{gi-bif}. 

 
\begin{defn}   \label{d:retrogradient}
A {\em retrogradient connection} is a leaf connecting a positive singular point with a negative one such that the leaf is oriented towards the positive singularity. A retrogradient connection is {\em non-degenerate} if both endpoints are non-degenerate singular points.   
\end{defn}

The following two lemmas are key for the understanding of non-convex spheres in contact manifolds:

\begin{lem}[Lemme de croisement, 2.14 in \cite{gi-bif}]\label{l:rgc}
Let $\xi$ be an oriented positive contact structure on $\Sigma\times(-1,1)$ such that there is a non-degenerate retrogradient connection in $\xi(\Sigma_0)$. For $t>0$, respectively for $t<0$, the corresponding stable leaf in $\xi(\Sigma_0)$ lies over, respectively under, the corresponding unstable leaf.  
\end{lem}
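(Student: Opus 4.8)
The plan is to localise everything near the connecting leaf $\gamma$ and to reduce the statement about ``lies over/under'' to the sign of a transverse displacement governed by a first order linear equation. First I would observe that $\gamma$, being a leaf of the oriented characteristic foliation $\xi(\Sigma_0)$, is a Legendrian arc, and that its two endpoints must be hyperbolic saddles: a positive elliptic point is a source and a negative elliptic point is a sink, so a leaf oriented \emph{into} a positive singularity and \emph{out of} a negative one (this is what the retrogradient condition demands) can only run from a negative saddle $p_-$ to a positive saddle $p_+$. On a neighbourhood of the regular part of $\gamma$ I would take Legendrian normal coordinates $(x,y,z)$ with $\xi=\ker(dz-y\,dx)$ and $\gamma=\{y=z=0\}$, and write the movie as graphs $\Sigma_t=\{z=u(x,y,t)\}$. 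The foliating condition forces $u_t\neq 0$, while $u(x,0,0)=u_x(x,0,0)=0$ and $u_y(x,0,0)=c(x)\neq 0$ express that $\gamma\subset\Sigma_0$ is a leaf. Here the characteristic foliation on $\Sigma_t$ is directed by $X_t=u_y\,\partial_x+(y-u_x)\,\partial_y$; by non-degeneracy the two singular points persist as smooth families $p_\pm(t)$, and by hyperbolicity the branch $\sigma_t^s$ of the stable manifold of $p_+(t)$ continuing $\gamma$, and the branch $\sigma_t^u$ of the unstable manifold of $p_-(t)$ continuing $\gamma$, persist as well.

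Next I would make ``over/under'' precise and turn it into a displacement. Along the regular part of $\gamma$ each separatrix is a graph $y=y^s(x,t)$, respectively $y=y^u(x,t)$, both vanishing at $t=0$; comparing the two Legendrian arcs by height in the front projection $(x,y,z)\mapsto(x,z)$ amounts, since $z=u(x,y,t)$ and $u_y=c(x)\neq 0$, to comparing $c(x)\,y^s(x,t)$ with $c(x)\,y^u(x,t)$ at fixed $x$. So it suffices to find the sign of $c(x)\bigl(y^s(x,t)-y^u(x,t)\bigr)$ for small $t\neq 0$. The leaves of $X_t$ solve $\tfrac{dy}{dx}=\tfrac{y-u_x}{u_y}$, and linearising in $(y,t)$ along $\gamma$ gives the inhomogeneous equation
\begin{equation*}
\frac{dy}{dx}=p(x)\,y+t\,r(x),\qquad p(x)=\frac{1-c'(x)}{c(x)},\quad r(x)=-\frac{a'(x)}{c(x)},\quad a(x)=u_t(x,0,0)\neq 0 ,
\end{equation*}
where the term $t\,r(x)$ is the forcing that breaks the connection for $t\neq 0$. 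The two separatrices are the solutions singled out by the requirement that they limit onto $p_+(t)$, respectively emanate from $p_-(t)$, so $D(x)=\partial_t(y^s-y^u)\big|_{t=0}$ is the solution of the associated linear boundary value problem with data prescribed at the two saddles, and non-degeneracy of the retrogradient connection is what guarantees $D\not\equiv 0$.

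Finally I would extract the sign. The two boundary conditions sit at saddles of opposite nature --- we keep the stable direction at the \emph{positive} saddle $p_+$ and the unstable direction at the \emph{negative} saddle $p_-$ --- and this asymmetry, together with the positivity $\al\wedge d\al>0$ (which fixes the coorientation of $\xi$, hence the signs of the singularities and the orientation of $X_t$) and the orientation of $\gamma$ from $p_-$ to $p_+$, forces the Green's function of the boundary value problem to have a definite sign. Consequently $c(x)\,D(x)$ has a fixed, nonzero sign along the regular part of $\gamma$, so $c(x)\bigl(y^s-y^u\bigr)$ has the sign of $t$; this is exactly the assertion that the stable leaf lies over the unstable one for $t>0$ and under it for $t<0$.

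The main obstacle is precisely this last step. The separatrices reach the hyperbolic points only asymptotically, so the boundary conditions are imposed ``at infinity'' in the leaf parameter; I would therefore work on a compact subarc of $\gamma$ staying away from $p_\pm$ and control the two ends by standard stable/unstable manifold estimates, checking that the exponentially small corrections near the saddles do not spoil the sign. Tracking how the \emph{opposite} signs of the two saddles interact with the fixed sign of the forcing $t\,r(x)$ --- this is what produces the clean swap from ``over'' to ``under'' as $t$ crosses $0$, rather than a symmetric splitting to one side --- is the delicate point, and it is exactly where the contact condition, through the signs it dictates, is indispensable.
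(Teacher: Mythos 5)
The paper itself offers no proof of this lemma --- it is quoted as Lemma 2.14 of \cite{gi-bif} --- so your attempt has to be measured against Giroux's argument, which is indeed a separatrix-splitting computation along $\gamma$; your general plan is therefore of the right shape. However, your normal form is self-contradictory, and this invalidates the computation built on it. If $\xi=\ker(dz-y\,dx)$ and the surfaces are graphs $\{z=u(x,y,t)\}$ over a neighborhood of the whole arc $\gamma$, then your directing field $X_t=u_y\,\partial_x+(y-u_x)\,\partial_y$ satisfies $\operatorname{div}X_t\equiv 1$ with respect to $dx\wedge dy$. The divergence at a zero of a vector field is independent of the area form, the oriented directing field of the characteristic foliation is $\pm X_t$ with a sign that is constant on the connected chart, and (as recalled in \secref{s:non-convex}) the sign of a singularity equals the sign of the divergence of the oriented directing field. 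Hence in your model both endpoints of $\gamma$ carry the same sign, whereas a retrogradient connection runs from a \emph{negative} saddle to a \emph{positive} one. Geometrically: along a retrogradient connection the coorientations of $T\Sigma_0$ and of $\xi$ agree at one end and disagree at the other, while $\xi\equiv\ker dz$ along $\{y=z=0\}$; so $T\Sigma_0$ must make a half-turn and pass through the vertical direction $\partial_z$ somewhere on $\gamma$, i.e.\ $\Sigma_0$ is \emph{not} a graph over the $(x,y)$-plane near the whole arc. The linearized data $p=(1-c')/c$, $r=-a'/c$ therefore describe a configuration that cannot occur.

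The second, related, gap is the sign extraction, which you yourself flag as the delicate point but never carry out --- and the mechanism you propose for it cannot work. For a scalar first-order linear equation the propagator $e^{\int p}$ is positive no matter what boundary conditions are imposed; the separatrix conditions only select the interval of integration, so the displacement is $D(x_0)=\int e^{\int_s^{x_0}p(\tau)\,d\tau}\,r(s)\,ds$, whose sign is that of a positively weighted average of $r=-a'/c$. Since only $a=u_t$, not $a'$, has a definite sign, there is no ``signed Green's function'' asymmetry to exploit. Worse, in your model the contact condition is vacuous: \emph{any} family of graphs with $u_t\neq 0$ is a legitimate movie of the fixed contact structure $\ker(dz-y\,dx)$, so if the retrogradient configuration were possible there, one could choose $c$ and $a$ to give $D$ either sign, contradicting the lemma --- consistent with the model being impossible, and showing that contact positivity must enter \emph{pointwise along $\gamma$}, not through boundary conditions. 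That is exactly how Giroux proceeds: writing $\alpha=\beta_t+u_t\,dt$, the contact condition yields a pointwise differential inequality along the connection which, multiplied by an exponential weight built from the divergence of the directing field, integrates to boundary contributions at the two saddles that are controlled by their non-degeneracy and opposite signs; this is his condition (\textdaggerdbl) on p.~638 of \cite{gi-bif}, the very condition this paper invokes in the proof of \lemref{l:homotopy ex}. To repair your approach you would need a local model in which $T\Sigma_0$ can make its half-turn along $\gamma$ (annular strips around $\gamma$, or Giroux's $\beta_t+u_t\,dt$ formalism) and then run your Melnikov computation with the contact inequality as the source of the sign.
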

Here and in the next lemma, the words over and under refer to the coorientation of $\xi$.  

\begin{lem}[Lemme de naissance-mort, 2.12 in \cite{gi-bif}]\label{l:birth}
Let $\gamma$ be a closed leaf of the characteristic foliation on $\xi(\Sigma_0)$ which is attractive on one side and repelling on the other. If the repelling side lies above $\gamma$, then for $t>0$ small enough $\xi(\Sigma_t)$ has two non-degenerate closed orbits in a neighborhood of $\gamma$ while this neighborhood contains no closed leaves for $t<0$. If the repelling side is below $\gamma$, then it is the other way round.     
\end{lem}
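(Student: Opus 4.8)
The statement is local near $\gamma$, so the plan is to reduce it to a one–parameter bifurcation of a Poincar\'e return map and to read off the direction of the bifurcation from the contact condition. First I would fix an annular neighbourhood of $\gamma$ in $\Sigma$ with coordinates $(\theta,x)\in S^1\times(-\eps,\eps)$ so that $\gamma=\{x=0\}$, keeping the family parameter $t$. Writing the pullback to $\Sigma_t$ of a defining form $\alpha=a\,d\theta+b\,dx+c\,dt$ of $\xi$ as $\beta_t=a\,d\theta+b\,dx$, the characteristic foliation $\xi(\Sigma_t)$ is $\ker\beta_t$, so along a leaf one has $dx/d\theta=-a/b$. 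The hypothesis that $\gamma$ is a closed leaf at $t=0$ is exactly $a(\theta,0,0)=0$ for all $\theta$, and regularity of this closed leaf forces $b\neq0$ along $\gamma$; after rescaling $\alpha$ by a positive function I may assume $b\equiv1$ on $\{x=0\}$, and after the relabelling $x\mapsto-x$ if necessary I may assume the $+x$--side is the side \emph{above} $\gamma$ in the sense of the coorientation of $\xi$ (since $\alpha(\del_x)=b>0$). Closed leaves of $\xi(\Sigma_t)$ near $\gamma$ then correspond to fixed points of the first--return map $P_t$ of $\ker\beta_t$ on the transversal $\{\theta=0\}$.

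\textbf{The fold.} The hypothesis that $\gamma$ is attractive on one side and repelling on the other says that $x=0$ is a parabolic fixed point of $P_0$: one has $P_0(0)=0$, $P_0'(0)=1$ and $P_0''(0)\neq0$, the sign of $P_0''(0)$ recording which side is repelling. Setting $\delta(x,t)=P_t(x)-x$, this gives $\delta(0,0)=0$, $\del_x\delta(0,0)=0$ and $\del_x^2\delta(0,0)\neq0$, which is a fold. Provided $\del_t\delta(0,0)\neq0$, the implicit function theorem then describes the zero set of $\delta$ near the origin as a parabola, and to leading order $\delta(x,t)\approx \del_t\delta(0,0)\,t+\tfrac12\del_x^2\delta(0,0)\,x^2$; hence there are two fixed points (two closed orbits) on the side of $t=0$ where $t\cdot\del_t\delta(0,0)/\del_x^2\delta(0,0)<0$ and none on the other. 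Everything therefore comes down to the sign of $\del_t\delta(0,0)=\del_tP_t(0)\big|_{t=0}$.

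\textbf{The sign, via the contact condition.} This is the only place the contact condition enters, and it is the main obstacle. The standard variation of the return map gives $\del_tP_t(0)\big|_{t=0}=-\int_0^1 w(s)\,\del_t a(s,0,0)\,ds$, where $w(s)=\exp\!\bigl(-\int_s^1\del_x a(\tau,0,0)\,d\tau\bigr)>0$ satisfies $w'=w\,\del_x a$ and $w(0)=w(1)=1$ (the latter because the Floquet multiplier of the parabolic orbit is $1$, i.e. $\int_0^1\del_x a\,d\theta=0$). To control the integrand I would expand $\alpha\wedge d\alpha=\bigl[a(c_x-b_t)-b(c_\theta-a_t)+c(b_\theta-a_x)\bigr]\,d\theta\wedge dx\wedge dt>0$ and evaluate it along $\gamma$, where $a=0$ and $b\equiv1$; this reduces to the pointwise inequality $\del_t a>\del_\theta c+c\,\del_x a$. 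Multiplying by $w>0$, integrating over the loop, and integrating the term $\int_0^1 w\,\del_\theta c\,ds$ by parts, the boundary term vanishes by periodicity of $c$ and by $w(0)=w(1)$, while the relation $w'=w\,\del_x a$ exactly cancels the remaining $\int w\,c\,\del_x a$ term; one is left with $\int_0^1 w\,\del_t a\,ds>0$, hence $\del_tP_t(0)\big|_{t=0}<0$. Combining this with the identification of the repelling side through the sign of $\del_x^2\delta(0,0)$ shows that the two orbits are born precisely on the side of $t=0$ asserted in the lemma, and the case where the repelling side lies below follows by the substitution $x\mapsto-x$. The hard part is thus not the bifurcation analysis but this sign bookkeeping: turning the pointwise inequality $\alpha\wedge d\alpha>0$ into a definite sign of $\del_tP_t(0)$ by the integration by parts above, and checking that the ambient orientation and the coorientation of $\xi$ are normalised so that ``above'' matches the stated sign of $t$ rather than its opposite.
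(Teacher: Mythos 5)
Your proposal is correct, but note that the paper itself contains no proof to compare against: Lemma \ref{l:birth} is imported verbatim from Giroux's paper (Lemme 2.12 in \cite{gi-bif}), where it is proved by exactly the kind of holonomy computation you carry out. So what you have done is reconstruct Giroux's argument in explicit coordinates, and the reconstruction is sound. The reduction of closed leaves near $\gamma$ to fixed points of the return map $P_t$ is legitimate (leaves near $\gamma$ are graphs over $\theta$, so they wind once and are genuine fixed points, and the fold gives non-degeneracy, i.e.\ hyperbolicity, of the two orbits that are born); your variational formula $\partial_t P_t(0)\big|_{t=0}=-\int_0^1 w\,\partial_t a\,ds$ is the standard one, the identity $w(0)=w(1)=1$ does follow from the multiplier of the semi-stable orbit being $1$, and the expansion of $\alpha\wedge d\alpha$ together with $b\equiv 1$ (hence $\partial_\theta b=0$) along $\gamma$ correctly yields the pointwise inequality $\partial_t a>\partial_\theta c+c\,\partial_x a$; the weighted integration by parts then kills the right-hand side exactly, giving $\partial_tP_t(0)<0$, and the fold analysis converts this into the stated direction of the birth. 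Two small points deserve flagging. First, as you yourself note, the final direction of the bifurcation is conventions-dependent: you must parametrize $\gamma$ along the \emph{oriented} characteristic foliation (which fixes whether $P_t$ or $P_t^{-1}$ is the return map) and use that the product orientation of $\Sigma\times[-1,1]$ agrees with the ambient orientation defined by $\alpha\wedge d\alpha>0$; with the paper's conventions (orientation of $\xi(\Sigma)$ followed by the coorientation of $\xi$ equals the orientation of $\Sigma$, and ``above'' measured by the coorientation, i.e.\ $\alpha(\partial_x)>0$) the identification $d\theta\wedge dx\wedge dt>0$ is consistent and your signs come out as stated. Second, one-sided attractivity by itself only forces $P_0'(0)=1$ and a fixed sign of $P_0-\mathrm{id}$; the non-degeneracy $P_0''(0)\neq 0$ is the standing genericity hypothesis (``simply degenerate closed orbit'' in the paper's tomography section) rather than a consequence, so it should be stated as an assumption. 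The main substantive content your argument makes visible — and which is the actual point of Giroux's lemma — is that the contact condition forces $\partial_t\delta(0,0)\neq 0$ with a definite sign, i.e.\ the fold is automatically transverse and can only unfold in one direction; your proof captures this correctly.
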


These lemmas provide criteria to determine when a movie coming from a plane field cannot be the movie of a contact structure. 

The elimination lemma (\lemref{l:elim}) admits a generalization where the surface $\Sigma\times\{0\}$ is part of a movie. It takes into account how the elimination affects the characteristic foliation on nearby surfaces of the movie. We state a simplified version of Lemma~2.15 \cite{gi-bif} which suffices for the applications in this paper. 

The setup for the lemma is as follows: Let $\xi$ be a contact structure on $\Sigma\times[-1,1]$ such that $\xi(\Sigma_0)$ has an elliptic point $a_0$ and a hyperbolic point $b_0$ of the same sign (we assume positive)  and a separatix $\gamma_0$ of $b_0$ connects $b_0$ to $a_0$. This is a stable configuration, i.e. there is $0<\delta<1$ and a smooth family  $a_t,b_t$ of singular points connected by a leaf $\gamma_t$ of $\xi(\Sigma_t), t\in[-\delta,\delta]$.  Let $U$ be a neighborhood of $\cup_{t\in[-\delta,\delta]}\gamma_t$. We abbreviate $U_t=U\cap \Sigma_t$ and use $G_t$ to denote the closure of the union of closed segments of leaves of $\xi(\Sigma_t)$ both of whose endpoints lie in $U_t$.

\begin{lem}[Lemme d'elimination, Lemma 2.15 in \cite{gi-bif}] \label{l:parametric elim}
In this situation, there is a deformation of $\xi$ through contact structures with support in $U$ such that after the deformation $U_t$ contains no singular points of the characteristic foliation for $t\in(-\delta,\delta)$. 

Moreover, let $\Lambda\subset(-1,1)$ be compact and assume that for all $t\in\Lambda$ there is a defining $1$-form for the characteristic foliation on $\Sigma_t$ whose differential does not vanish on $G_t$. 

Then the deformation can be chosen such that every surface $\Sigma_t,t\in\Lambda$ which
\begin{itemize}
\item was convex before the deformation, and
\item admits a dividing set not intersecting $U_t$
\end{itemize} 
remains convex after the deformation. 
\end{lem}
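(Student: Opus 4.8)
The plan is to reduce the statement to an explicit local model supported in $U$ and to carry out the cancellation of \lemref{l:elim} smoothly in $t$, keeping the surfaces $\Sigma_t$ fixed and deforming the contact form instead of isotoping the surface. First I would use stability of the elliptic--hyperbolic connection: the leaves $\gamma_t$ sweep out an embedded sheet $\bigcup_{t\in[-\delta,\delta]}\gamma_t$ in $\Sigma\times[-1,1]$, and by a parametric normal-form argument I would choose coordinates $(x,y)$ near this sheet, together with the parameter $t$, so that $\Sigma_t$ is a slice $\{t=\mathrm{const}\}$, the elliptic points $a_t$ sit at the origin, the hyperbolic points $b_t$ at $(1,0)$, and $\gamma_t$ is the segment joining them. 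Writing $\xi=\ker\alpha$ and $\beta_t=\alpha|_{\Sigma_t}$, the singular points of the characteristic foliation on $\Sigma_t$ are exactly the zeros of $\beta_t$, with sign determined by the divergence $d\beta_t$. In these coordinates I would put $\alpha$ into a normal form realizing, on every slice $t\in[-\delta,\delta]$, the standard gradient-like picture of a cancelling index $+1$ / index $-1$ pair of positive singularities.

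On this model the cancellation becomes a purely local construction. I would produce a homotopy $\alpha_s$, $s\in[0,1]$, of $1$-forms with $\alpha_0=\alpha$, equal to $\alpha$ outside $U$, such that $\alpha_1|_{\Sigma_t}$ has no zeros on $U_t$ for every $t\in(-\delta,\delta)$. Since the two singularities have indices $+1$ and $-1$ summing to zero, the restricted foliation on $U_t$ is homotopic rel $\partial U_t$ to a nonsingular one, so the obstruction is not topological; the point is to realize this homotopy through restrictions of \emph{genuine} contact forms, i.e. keeping $\alpha_s\wedge d\alpha_s>0$. As in the non-parametric \lemref{l:elim}, the freedom to tilt $\xi$ in the direction transverse to $\Sigma_t$ supplies exactly the missing degree of freedom, and making every choice depend smoothly on $t$ and vanish near $\partial U$ yields the required deformation supported in $U$.

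For the ``moreover'' statement I would track the divergence data certifying convexity. By \thmref{t:char convexity} and \defref{d:dividing}, convexity of $\Sigma_t$ is witnessed by a directing vector field $X_t$ and an area form $\omega_t$ with $L_{X_t}\omega_t$ of definite sign on each side of a dividing set $\Gamma_t$, with $\Gamma_t=\{L_{X_t}\omega_t=0\}$ in good position. The hypothesis that for $t\in\Lambda$ some defining $1$-form has nowhere-vanishing differential on $G_t$ says precisely that the divergence keeps a constant sign throughout $G_t$; since both cancelling singularities are positive, this sign is positive and $G_t$ lies in the positive region $\Sigma_+$, disjoint from $\Gamma_t$ (consistently with $\Gamma_t\cap U_t=\emptyset$). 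The region $G_t$ is exactly the set through which the leaves affected by a modification on $U_t$ travel, so I would arrange the deformation of the previous step to be the identity outside $G_t$ and to preserve positivity of $L_X\omega$ throughout $G_t$. The source--saddle cancellation then removes two singularities without ever changing the sign of the divergence, so $\Gamma_t$ is untouched and the certificate $(X_t,\omega_t)$ can be rebuilt across the modified neighborhood. Hence each $\Sigma_t$, $t\in\Lambda$, remains convex with the same dividing set.

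The hard part will be the second step: realizing the topologically unobstructed family cancellation through \emph{honest} contact structures while keeping the support inside $U$ and all choices smooth in $t$. One must verify that the transverse tilt needed to kill the zeros of $\beta_t$ is compatible with $\alpha_s\wedge d\alpha_s>0$ uniformly in $(s,t)$, and that the deformation tapers to the identity near $\partial U$ without reintroducing tangencies. This is precisely the parametric strengthening of \lemref{l:elim}, and the deformation must stay within the class of movies that arise from contact structures, whose local behaviour is governed by Giroux's crossing and birth--death lemmas (\lemref{l:rgc} and \lemref{l:birth}); checking compatibility with those constraints is where the real work lies.
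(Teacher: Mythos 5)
First, a point of comparison: the paper itself contains no proof of this statement --- it is imported, in simplified form, from Lemma~2.15 of \cite{gi-bif}, so the benchmark for your attempt is Giroux's argument, not anything in this article. Measured against that, your proposal is a correct \emph{strategic} outline --- fix the slices $\Sigma_t$, write $\xi=\ker(\beta_t+u_t\,dt)$ with $\beta_t=\alpha|_{\Sigma_t}$, cancel the zeros of $\beta_t$ by a homotopy supported in $U$, and use the hypothesis $d\beta_t\neq 0$ on $G_t$ to protect the convexity certificates --- but it has a genuine gap exactly where you flag it yourself, and that deferred step is the entire content of the lemma. Concretely: the contact condition for $\beta_{t,s}+u_{t,s}\,dt$ reads $u_{t,s}\,d\beta_{t,s}+\beta_{t,s}\wedge(\dot\beta_{t,s}-du_{t,s})>0$, so at every zero of $\beta_{t,s}$ the function $u_{t,s}$ is forced to have the sign of the divergence there; and during the cancellation the pair of zeros must merge through a \emph{degenerate} (birth--death) zero --- necessarily so on the slices $t=\pm\delta$ in the final structure, since $a_t,b_t$ survive for $|t|>\delta$, and along the $s$-homotopy on interior slices as well. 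Your sketch never shows that the transverse tilt $u_{t,s}$ and the family $\beta_{t,s}$ can be chosen smoothly in $(s,t)$, tapering to the given data near $\partial U$, so that positivity survives these degeneracies. The observation that ``the obstruction is not topological'' only produces, slice by slice, a homotopy of nonvanishing $1$-forms rel $\partial U_t$; it does not produce a two-parameter family realized by restrictions of honest contact forms with the correct behavior at $t=\pm\delta$. That computation is precisely what Giroux carries out, and without it the first assertion is unproved; it cannot be waved through by appealing to \lemref{l:rgc} and \lemref{l:birth}, which constrain movies but do not construct them.

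On the ``moreover'' part there is a smaller inaccuracy and a missing mechanism. The hypothesis $d\beta\neq 0$ on $G_t$ gives a \emph{locally} constant sign of the divergence, hence positivity on the component of $G_t$ containing $\gamma_t$, but not on all of $G_t$, so the assertion that $G_t$ lies in $\Sigma_+$ is unjustified as stated (harmlessly, since what is needed is only $G_t\cap\Gamma_t=\emptyset$). More importantly, the real role of $G_t$ is dynamical rather than sign-bookkeeping: after the elimination, leaves that formerly terminated at $a_t$ traverse $U_t$, and segments of $G_t\setminus U_t$ are exactly the routes along which such leaves can return and close up, potentially creating closed orbits or recurrences that destroy any dividing set avoiding $U_t$. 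Nonvanishing divergence along $G_t$ is what excludes these returns and allows the pair $(X_t,\omega_t)$ of \defref{d:dividing} to be rebuilt across the modified region with the \emph{old} dividing set. Your outline gestures at rebuilding the certificate but does not verify that no new recurrence is created, so the convexity statement also rests on the unproven main construction.
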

Recall that by Gray's theorem a deformation of $\xi$ through contact structures keeping the product structure on $\Sigma\times[-1,1]$ fixed can be viewed as a deformation of the product structure  keeping $\xi$ fixed and vice versa. 

A simple application of \lemref{l:parametric elim} shows that a single retrogradient connection appearing in a movie of a contact structure on $S^2\times[-1,1]$ which is not obviously overtwisted (i.e. no overtwisted disc is present on one of the leaves) is tight. 

\begin{lem}  \label{l:only one}
Let $\xi$ be a contact structure on $N\simeq S^2\times[-1,1]$ (with a generic product decomposition) which is tight near the boundary such that the characteristic foliation on $S^2_t$ is convex except when $t=0$ and the characteristic foliation on $S^2_0$ has exactly one retrogradient connection between hyperbolic points.  

Then $\xi$ is isotopic to a family of contact structures on $S^2\times [-1,1]$ such that all spheres $S^2_t$ are convex and $\xi$ is tight. 
\end{lem}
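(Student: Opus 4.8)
The plan is to destroy the single retrogradient connection on $S^2_0$ by a parametric elimination of singularities, so that every sphere of the (unchanged) product decomposition becomes convex, and then to read off tightness from Bennequin's theorem. First I would extract what the hypotheses give away from $t=0$. Since $\xi$ is tight near $\partial N$, the convex spheres $S^2_{\pm1}$ have connected dividing sets by \thmref{t:char convexity}. As $t$ ranges over any interval on which the spheres stay convex, the dividing set is carried along by an ambient isotopy, so its number of components is constant there. Because $t=0$ is the only value at which $S^2_t$ fails to be convex, the dividing set $\Gamma_t$ is therefore connected for all $t\in(-1,0)$ and all $t\in(0,1)$; in particular no $S^2_t$ with $t\ne 0$ carries an obvious overtwisted disc.

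Next I would analyse $S^2_0$ itself. By hypothesis the only violation of the criterion of \lemref{l:convex spheres} is one retrogradient leaf $\gamma_0$ running from a negative hyperbolic point $h_-$ to a positive hyperbolic point $h_+$; all closed leaves are hyperbolic and there are no other negative-to-positive connections. The leaf $\gamma_0$ is simultaneously an unstable separatrix of $h_-$ and a stable separatrix of $h_+$. The key observation is that $h_+$ has a \emph{second} stable separatrix whose backward limit set is, by Poincar\'e--Bendixson, a source which I may take to be a positive elliptic point $e_+$; thus $e_+$ and $h_+$ have the same sign and are joined by a separatrix $\sigma_0$. (Symmetrically one could instead use the negative elliptic sink absorbing the second unstable separatrix of $h_-$.)

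Now I would apply \lemref{l:parametric elim} to the pair $(e_+,h_+)$ along $\sigma_0$, choosing the neighbourhood $U$ of $\bigcup_t\sigma_t$ so thin that $U_t$ is disjoint from $\Gamma_t$ for all small $t\ne 0$. The lemma yields a deformation of $\xi$ through contact structures supported in $U$ (hence fixing the tight germ near $\partial N$) after which $U_t$ is free of singularities for $t\in(-\delta,\delta)$. On $S^2_0$ this removes $e_+$ and $h_+$, so the leaf formerly equal to $\gamma_0$ now runs through the flow box $U_0$ to a negative elliptic sink: the retrogradient connection is gone, no new negative-to-positive connection is created, and since this was the only obstruction, $S^2_0$ is convex by \lemref{l:convex spheres}. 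The spheres $S^2_t$ with $0<|t|<\delta$ whose dividing set avoided $U_t$ stay convex by the second part of \lemref{l:parametric elim}, and those with $|t|\ge\delta$ are untouched. By \thmref{t:gray} the whole deformation is realised by an isotopy, so the new family is isotopic to $\xi$ and has \emph{all} $S^2_t$ convex. Their dividing sets then vary through a family with no non-convex time, so every $\Gamma_t$ is connected as at the boundary, and \thmref{t:bennequin} gives that $\xi$ is tight.

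The main obstacle is the second paragraph: one must be sure that a separatrix of $h_+$ (or of $h_-$) really limits onto an elliptic point of the same sign, rather than onto a repelling (resp.\ attracting) closed leaf, and one must choose $U$ thin enough to miss the dividing sets of the neighbouring convex spheres so that the hypotheses of \lemref{l:parametric elim}---the nonvanishing of the differential of a defining $1$-form on $G_t$ and the disjointness of $\Gamma_t$ from $U_t$---are genuinely met. If the relevant separatrix limits onto a closed leaf instead, I would first remove that leaf via \lemref{l:birth} (and cancel any spurious elliptic--hyperbolic pairs with \lemref{l:elim}) to reduce to the elliptic case before running the elimination.
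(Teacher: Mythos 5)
Your overall strategy (parametric elimination, then \thmref{t:bennequin} plus Gray) is the paper's, but your minimal implementation --- cancelling only the single pair $(e_+,h_+)$ --- leaves the decisive step unproved. The assertion that after the cancellation ``no new negative-to-positive connection is created'' is exactly what needs an argument: the flow box replacing $e_+\cup\sigma_0\cup h_+$ reroutes the interval of leaves entering $U_0$ near $\gamma_0$ onto the interval of leaves that formerly left $U_0$, and among the latter there may well be a leaf running from the source $e_+$ into the stable separatrix of a \emph{second} positive saddle $h'_+$. After the elimination the stable separatrix of $h'_{+}$, traced backwards, passes through $U_t$ towards the unstable manifold of $h_{-,t}$, and a crossing in the sense of \lemref{l:rgc} can then produce a new retrogradient connection $h_-\to h'_+$ on some deformed sphere $S^2_{t^*}$, $t^*\in(-\delta,\delta)$; rerouted leaves can also close up, and nothing in \lemref{l:parametric elim} controls the hyperbolicity of such new closed orbits. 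Note also that the convexity-preservation clause of that lemma only applies on a compact set $\Lambda$ on which the $G_t$-condition holds, so it cannot cover all $t\neq 0$ in $(-\delta,\delta)$: the spheres with small $|t|\neq 0$ have lost the pair $(e_{+,t},h_{+,t})$ and their convexity requires the same missing argument as $S^2_0$. The paper closes this hole by a stronger move: it first shows, using the graph $\Gamma_0^-$ and \lemref{l:rgc}, that both unstable leaves of the positive saddle $p_+$ end on the closed component of $\Gamma_0^-$, and then eliminates \emph{all} positive and \emph{all} negative hyperbolic points on $S^2_t$ for $|t|\le\varepsilon/2$. With no saddles left there is no connection of any kind, so convexity of the middle spheres is automatic, while the spheres with $\varepsilon/2\le|t|\le\varepsilon$ stay convex by the second part of \lemref{l:parametric elim}; there the condition $G_t=U_t$ is available precisely because tightness of the two halves forces the two stable leaves of each positive saddle to come from \emph{different} elliptic points when $t\neq 0$ --- the verification you only gesture at.

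Your fallback for the closed-leaf case is also invalid. \lemref{l:birth} is a statement about \emph{degenerate} closed leaves in a movie: it describes how pairs of nondegenerate closed orbits are born or die as $t$ crosses the degenerate instance; it is not an operation that removes a closed leaf from $\xi(S^2_0)$. A repelling \emph{hyperbolic} closed orbit is perfectly compatible with your hypotheses (it does not obstruct convexity of $S^2_0$) and is structurally stable, so if the second stable separatrix of $h_+$ has such an orbit as its $\alpha$-limit --- and, symmetrically, the second unstable separatrix of $h_-$ an attracting one as $\omega$-limit --- then no same-sign elliptic partner exists on either side and your elimination cannot even begin. Repairing both points essentially forces you either into the paper's all-at-once elimination or into an induction on the number of singular points; as written, the proof has a genuine gap.
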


\begin{proof}
Because we assume that the product decomposition is generic,  all singular points of $\xi(\Sigma_0)$ are non-degenerate.  We pick $\varepsilon_0>0$ so that this is the case for $\xi(\Sigma_t)$ for $|t|\le\varepsilon_0$.

Let $\Gamma_t^-$ be the graph whose vertices are negative singularities and whose edges are unstable leaves of negative singularities.  For $t\neq 0$ this is a connected tree because $\xi$ is tight on $S^2\times[-1,0)$ and $S^2\times(0,1]$. The graph $\Gamma^-_0$ consists of two connected components, one of these components is not closed (the $\omega$-limit set of the edge $\gamma$ forming the retrogradient saddle-saddle connection is a positive hyperbolic singularity $p_+$ and does not lie in $\Gamma_0^-$). 

By \lemref{l:rgc} both unstable leaves of $p_+$ have their $\omega$-limit set in the closed component of $\Gamma_0^-$, otherwise $\xi(S_t^2)$ would contain obvious overtwisted discs for $t>0$ or $t<0$.

Apply \lemref{l:parametric elim} to eliminate all positive hyperbolic singularities on $S_t^2$ for $t$ close to $0$. Since for $t\neq 0$ both stable leaves of positive hyperbolic singularities come from different elliptic singularities, one can choose $U_t$ so that $G_t=U_t$ for $\varepsilon/2\le|t|\le\varepsilon$ so that the condition on the existence of $1$ forms is fulfilled and $U_t$ is disjoint from a dividing set. In the same way, we eliminate all negative hyperbolic singularities. 

We can thus arrange that for the deformed contact structure
\begin{itemize}
\item there are no hyperbolic points left on $\Sigma_t$ for $|t|\le \varepsilon/2$, and 
\item $\Sigma_t$ is convex for $\varepsilon/2\le|t|\le\varepsilon$.
\end{itemize}
The resulting contact structure is convex for all $t$. Therefore it is tight by \thmref{t:bennequin}. The same is true for the original contact structure $\xi$ by Gray's theorem. 
\end{proof}

\begin{rem} \label{r:more than one in only one}
The above proof works, if $\xi(S_0^2)$ has more than one retrogradient connection as long as every connected component of the graph obtained from $\Gamma_0^-$ by removing these connections contains at least as many negative elliptic points as negative hyperbolic points. In particular, if $\xi(S_0^2)$ has exactly two retrogradient connections, then $\xi$ can either be deformed using the elimination lemma so that the resulting characteristic foliation on a disc in $S_0$ is homeomorphic to the one shown in the middle part of \figref{b:n=1} (on p.~\pageref{b:n=1}), or the retrogradient connections can be eliminated completely. 
\end{rem}

\subsubsection{Movies for contact structures on  $S^2\times[-1,1]$} 

Let $\xi_{\sigma}, \sigma\in [0,1],$  be a family of contact structures on $S^2\times[-1,1]$ where the product decomposition and the contact structure vary smoothly but are constant near the boundary of $S^2\times[-1,1]$.

We make the following genericity assumptions on the movie of $\xi_{\sigma}$ on $S^2\times [-1,1]$ with respect to product decomposition associated to the parameter value $\sigma$:  
\begin{itemize}
\item For all parameter values $\sigma$ and all $t\in[-1,1]$, the characteristic foliation on $\xi_{\sigma}(S_t^2), t\in[-1,1],$ has only finitely many singular points.
\item The parameter values where $\xi_{\sigma}(S_t^2)$ has a simply degenerate closed orbit (i.e. the first derivative of the holonomy along the close leaf vanishes, the second derivative does not)  is a union of finitely many hypersurfaces in $[-1,1]\times [0,1]$ which intersect transversely.
\item The parameter values where $\xi_{\sigma}(S_t^2)$ has a doubly degenerate closed orbit (i.e. the first and the second derivative of the holonomy along the close leaf vanishes, the third derivative does not)  is a union of finitely many submanifolds of codimension $2$ of $[-1,1]\times[0,1]$. 
\end{itemize}

By \lemref{l:rgc} and \lemref{l:birth} the submanifolds of $[-1,1]\times [0,1]$ associated to connections pointing from negative singularities to positive ones (i.e. retrogradient connections) or to degenerate closed leaves are never tangent to the foliation of $[-1,1]\times [0,1]$ given be the first factor. In particular, if the stable leaf of a positive singularity coincides with the unstable leaf of a negative singularity, then this coincidence is not degenerate.   


In Section 2.I of \cite{gi-bif}, E.~Giroux gives a beautiful proof of Bennequin's theorem based in particular on \lemref{l:rgc} 
and \lemref{l:birth}. 

The main step is his proof of the following claim: For a generic family of contact structures on $S^2\times[-1,1]$ configurations like the one shown in  \figref{b:giroux-imp} do not occur. In this figure, the $t$-coordinate corresponds to the horizontal direction, the vertical direction  represents the parameter $\sigma$ of the family. The curved lines intersecting once transversely in the box represent the locus where $\xi_{\sigma}(S_t^2)$ is not convex (by \lemref{l:rgc} and \lemref{l:birth} these lines are transverse to the horizontal direction).  The numbers on the complement of the non-convexity locus denote the number of connected components of the dividing set.

\begin{figure}[htb]
\begin{center}
\includegraphics[scale=0.6]{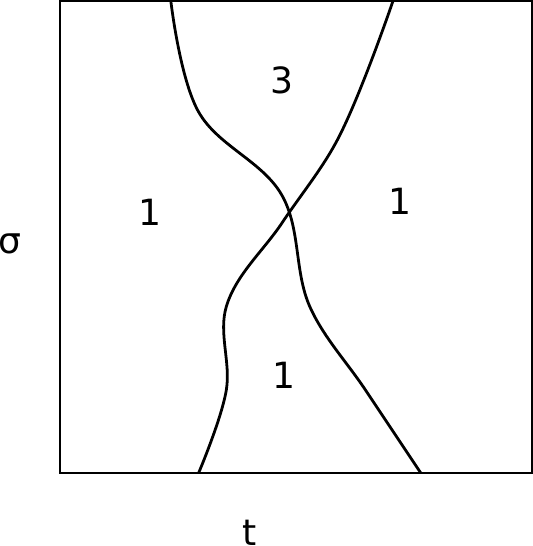}
\end{center}
\caption{Forbidden configuration according to Giroux's proof of Bennequin's theorem}\label{b:giroux-imp}
\end{figure}

Let $\xi$ be an overtwisted contact structure on $N=S^2\times[-1,1]$ which coincides with a given tight contact structure $\xi_0$  near the boundary.  We fix a Riemannian metric on $N$, and we will use the $C^2$-topology on the space $\mathrm{Cont}_\partial(N,\xi_0)$ of contact structures which coincide with $\xi_0$ on a neighborhood of the boundary. We define  $T_\pm : \mathrm{Cont}_\partial(N,\xi_0) \lra [-1,1]$ as
\begin{align}
\begin{split} \label{e:Tpm}
T_-(\xi) & =\sup\{t\in[-1,1] \textrm{ such that }\xi \textrm{ is tight on } S^2\times[-1,t)\} \\
T_+(\xi) & =\inf\{t\in[-1,1] \textrm{ such that }\xi_\sigma\textrm{ is tight on } S^2\times(t,1]\}.
\end{split} 
\end{align}
Since $\xi$ is tight near the boundary, these functions take values in $(-1,1)$.
\begin{lem} \label{l:Tpm cont}
$T_+$ and $T_-$ are continuous. 
\end{lem}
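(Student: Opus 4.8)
The plan is to reduce continuity of $T_\pm$ to a single stability statement and then read off the conclusion from elementary bookkeeping about the product decomposition. The statement I would aim for is: if $\xi,\xi'\in\mathrm{Cont}_\partial(N,\xi_0)$ are sufficiently $C^2$-close (with $N=S^2\times[-1,1]$), then there is a diffeomorphism $\psi$ of $N$, as $C^0$-close to the identity as we wish and equal to $\id$ near $\partial N$, with $\psi_*\xi=\xi'$. Granting this, continuity is immediate. Since $\psi$ is a boundary-fixing contactomorphism from $(N,\xi)$ to $(N,\xi')$, a set $V$ is tight for $\xi'$ exactly when $\psi^{-1}(V)$ is tight for $\xi$. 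If $\psi$ moves no point of $N$ by more than $\delta$ in the $[-1,1]$-coordinate, then $\psi^{-1}\bigl(S^2\times[-1,t)\bigr)\subset S^2\times[-1,t+\delta)$, so for every $t<T_-(\xi)-\delta$ the region $\psi^{-1}(S^2\times[-1,t))$ lies inside the tight region $S^2\times[-1,T_-(\xi))$ and is therefore tight for $\xi$; hence $S^2\times[-1,t)$ is tight for $\xi'$, giving $T_-(\xi')\ge T_-(\xi)-\delta$. Exchanging the roles of $\xi$ and $\xi'$ (applying the same estimate to $\psi^{-1}$) yields $|T_-(\xi')-T_-(\xi)|\le\delta$, and the identical nesting argument applied to the intervals $(t,1]$ controls $T_+$. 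Since $\delta\to0$ as $\xi'\to\xi$, both functions are continuous.

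To produce $\psi$ I would invoke the relative version of Gray's theorem (\thmref{t:gray}). Because $N$ is simply connected, $\xi$ and $\xi'$ are coorientable, so I can choose defining contact forms $\alpha,\alpha'$ that agree with one fixed defining form of $\xi_0$ near $\partial N$; $C^2$-closeness of the plane fields makes $\alpha,\alpha'$ $C^1$-close (after normalizing the forms against a metric). Since the contact condition $\alpha\wedge d\alpha>0$ is open in the $C^1$-topology, for $\xi'$ close enough the straight-line path $\alpha_s=(1-s)\alpha+s\alpha'$, $s\in[0,1]$, consists of contact forms and is constant near $\partial N$. Feeding $\alpha_s$ into the Moser method gives an isotopy $\psi_s$ with $\psi_{s*}\xi=\ker\alpha_s$ and $\psi_s=\id$ near $\partial N$; I set $\psi=\psi_1$. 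The generating vector field of $\psi_s$ is manufactured from $\dot\alpha_s=\alpha'-\alpha$, hence is $C^0$-small and supported away from $\partial N$, so $\psi$ is $C^0$-close to the identity with smallness controlled by the $C^2$-distance from $\xi$ to $\xi'$. This delivers exactly the $\psi$ used above.

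I expect the Moser construction to be the one genuinely delicate step, and it is the crux of the whole argument: it circumvents the subtle question of whether tightness is preserved under arbitrary $C^2$-small perturbations. Rather than arguing directly that the (noncompact, half-open) tight regions are stable, the point is that any small perturbation in $\mathrm{Cont}_\partial(N,\xi_0)$ is realized by a near-identity, boundary-fixing \emph{contactomorphism}, under which tightness is automatically preserved and the nested product regions $S^2\times[-1,t)$ and $S^2\times(t,1]$ move only by $\delta$. Verifying the $C^0$-smallness and the boundary behaviour of the Moser isotopy is the main technical obstacle, but it is routine; once it is in hand the remaining estimates are the elementary nesting inequalities recorded in the first paragraph.
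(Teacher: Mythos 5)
Your proof is correct, and its technical core is the same as the paper's: the paper also normalizes the defining $1$-forms against a metric, takes the linear interpolation $\alpha_t=t\alpha'+(1-t)\alpha$ (contact for $\xi'$ in a small $C^1$-neighborhood), and feeds it into the Moser/Gray method to get a boundary-fixing contactomorphism $\varphi_1:(N,\xi')\to(N,\xi)$ depending continuously on $\xi'$, which is then used exactly via your nesting argument to conclude $T_-(\xi')\ge T_-(\xi)-\varepsilon$. Where you differ is the other inequality: the paper does not reuse the contactomorphism symmetrically, but argues more softly that overtwistedness is open --- for every $\varepsilon>0$ the region $S^2\times[-1,T_-(\xi)+\varepsilon)$ contains a transverse knot violating the Bennequin inequality \eqref{e:tbineq t}, and this configuration persists for all $\xi'$ near $\xi$, giving $T_-(\xi')<T_-(\xi)+\varepsilon$ directly. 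Your symmetric use of $\psi$ (noting that $\psi^{-1}$ also displaces points by at most $\delta$) is perfectly valid and makes the proof more economical, since a single stability statement yields both semicontinuities at once; the paper's transverse-knot argument buys a little robustness in exchange --- it needs no Moser machinery for that direction and works already for perturbations that are small in a much weaker topology, since a knot transverse to $\xi$ violating \eqref{e:tbineq t} stays transverse with the same self-linking number under any $C^0$-small change of the plane field. Your closing remarks on the delicate points (the $C^0$-bound on the Gray vector field via nondegeneracy of $d\alpha_s$ on $\ker\alpha_s$, uniform over the $C^1$-ball, and the vanishing of $\dot\alpha_s$ near $\partial N$) identify precisely the estimates the paper's proof implicitly relies on.
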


\begin{proof}
Let $\xi\in\mathrm{Cont}_\partial(N,\xi_0)$. For all $\varepsilon>0$ the set $S^2\times[-1,T_-(\xi)+\varepsilon)$ contains a transverse knot violating the Thurston-Bennequin inequality \eqref{e:tbineq t}. Therefore, there is a neighborhood of $\xi$ such that $T_-(\xi')<T_-(\xi)+\varepsilon$ for all $\xi'$ in that neighborhood. 

For all plane fields $\xi'$ on $N$ there is a unique defining form $\alpha'$ coorienting $\xi'$ such that $\|\alpha'\|\equiv 1$. Let $\alpha$ define $\xi$ and fix  a neighborhood $V$ of $\xi$ in the space of plane fields which coincide with $\xi_0$ near $\partial S^2\times[-1,1]$ such that for all $\xi'$ in $V$ the forms $\alpha_t=t\alpha'+(1-t)\alpha, t\in[0,1],$ are contact.

The isotopy $\varphi_t$, obtained from the proof of Gray's theorem (such that $\varphi^*_1\alpha'$  is a multiple of $\alpha$)  depends continuously on $\xi'$. In particular, for all $\varepsilon>0$ there is a neighborhood $U$ of $\xi$ in $\mathrm{Cont}_\partial(N,\xi_0)$ such that for all $\xi'$ in $U$ the  image of $S^2\times[-1,T_-(\xi)-\varepsilon]$  under the contactomorphism $\varphi_1^{-1} : (N,\xi') \lra (N,\xi)$  is contained in $S^2\times[-1,T_-(\xi))$ where $\xi$ is tight. Therefore, $T_-(\xi')\ge T_-(\xi)-\varepsilon$ for $\xi'$ in $U$. 

This implies $T_-(\xi')\in[T_-(\xi)-\varepsilon,T_-(\xi)+\varepsilon]$ for all $\xi'$ in the neighborhood $U\cap V$ of $\xi$. The proof for $T_+$ is analogous.   
\end{proof}

The previous lemma is true for contact structures on $\Sigma\times[-1,1]$ for all closed oriented surfaces $\Sigma$. In contrast, the next lemma follows from Giroux's proof of Bennequin's theorem (p.~\cite{gi-bif}) and is false when $S^2$ is replaced by an oriented surface of higher genus.  
\begin{lem} \label{l:T-<T+}
Let $\xi$ be an overtwisted contact structure on $S^2\times [-1,1]$ such that $\xi=\xi_0$ is tight on a neighborhood of $\partial N$. Then $T_-(\xi)\le T_+(\xi)$.
\end{lem}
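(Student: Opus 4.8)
The plan is to argue by contradiction: assuming $T_-(\xi)>T_+(\xi)$, I will show that $\xi$ is tight, contradicting the hypothesis that $\xi$ is overtwisted. First I would reduce to the generic case. Overtwistedness is an open condition (an overtwisted disc persists under small $C^2$-perturbations), and by \lemref{l:Tpm cont} the functions $T_\pm$ are continuous, so the strict inequality $T_+<T_-$ survives a small perturbation. Hence I may perturb $\xi$ (and, if necessary, the product decomposition) so that the movie $\xi(S^2_t)$, $t\in[-1,1]$, is generic: each $S^2_t$ is convex except for finitely many isolated levels, at which non-convexity comes either from a non-degenerate retrogradient connection or from a birth--death of closed orbits.

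The key step is the following claim: if $T_+(\xi)<T_-(\xi)$, then \emph{every} convex level $S^2_t$ has connected dividing set. Indeed, suppose some convex level $S^2_{t'}$ had a dividing set with more than one component. By \thmref{t:char convexity} a convex sphere has a tight neighbourhood if and only if its dividing set is connected, so some product neighbourhood $S^2\times(t'-\delta,t'+\delta)$ would be overtwisted, i.e.\ would contain an overtwisted disc $D$ whose $t$-coordinates all lie in $(t'-\delta,t'+\delta)$. Now if $t'<T_-$, choosing $\delta$ small enough that $t'+\delta<T_-$ gives $D\subset S^2\times[-1,t)$ for some $t<T_-$, contradicting tightness of $\xi$ on $S^2\times[-1,t)$; and if $t'>T_+$, the same disc lies in $S^2\times(t,1]$ for some $t>T_+$, contradicting tightness there. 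Since $T_+<T_-$, every $t'\in[-1,1]$ satisfies $t'<T_-$ or $t'>T_+$, so no convex level can have a disconnected dividing set.

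It remains to upgrade ``all convex levels have connected dividing set'' to tightness of $\xi$, and here I would invoke Giroux's bifurcation analysis exactly as in his proof of Bennequin's theorem. The finitely many non-convex levels are isolated and, by the previous step, are flanked by convex spheres with connected dividing sets. Using \lemref{l:parametric elim} I would eliminate the excess singularities at each such level, precisely as in the proofs of \lemref{l:only one} and \remref{r:more than one in only one}: the connectedness of the neighbouring dividing sets guarantees the combinatorial hypothesis (each component of the relevant graph $\Gamma^-_t$ contains at least as many negative elliptic as negative hyperbolic points, and symmetrically for the positive graph), so the retrogradient connections and the degenerate closed orbits can be removed without creating obvious overtwisted discs on nearby levels. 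After all these eliminations every sphere $S^2_t$ is convex with connected dividing set, so $\xi$ is tight by \thmref{t:bennequin}; by Gray's theorem the original (unperturbed) $\xi$ is tight as well, which is the desired contradiction.

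The main obstacle is precisely this last step: organising the elimination of the non-convex levels so that no intermediate configuration produces an obvious overtwisted disc. This is exactly the content of Giroux's result that the configuration of \figref{b:giroux-imp} cannot occur in a generic family, and it is also the point where the hypothesis $\Sigma\simeq S^2$ (rather than a surface of higher genus) is essential: only for the sphere does ``convex with connected dividing set'' characterise local tightness, which is what powers both the key claim and the concluding application of \thmref{t:bennequin}.
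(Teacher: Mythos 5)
Your overall strategy (contradiction, then upgrade to full convexity, then \thmref{t:bennequin}) differs from the paper's proof, which is much shorter: it quotes the refinement extracted from Giroux's proof of Bennequin's theorem -- a generic movie on $S^2\times[-1,1]$ containing no obvious Legendrian knot violating the Thurston-Bennequin inequality is tight -- deduces from the definition of $T_+(\xi)$ that the sphere $S^2\times\{T_+(\xi)\}$ itself carries such a violating knot, observes that this single knot forces $\xi$ to be overtwisted on $S^2\times[-1,t)$ for every $t>T_+(\xi)$ by \thmref{t:tbineq}, and then removes the genericity assumption by continuity (\lemref{l:Tpm cont}). Your first two steps are sound: the reduction to a generic movie is legitimate since overtwistedness is open and the strict inequality survives perturbation by \lemref{l:Tpm cont}, and your key claim that under $T_+<T_-$ every convex level has connected dividing set is correct (a disconnected dividing set admits no tight neighbourhood by \thmref{t:char convexity}, and every sufficiently thin product neighbourhood lies entirely in $S^2\times[-1,T_-)$ or in $S^2\times(T_+,1]$, where $\xi$ is tight).

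The genuine gap is in the upgrade step, where you assert that ``the connectedness of the neighbouring dividing sets guarantees the combinatorial hypothesis'' of \remref{r:more than one in only one}. This implication is false, and the counterexample is the central object of this paper: a sphere carrying a minimal non-loose unknot (\figref{b:n=1}, \corref{c:non-loose exist}) has two simultaneous retrogradient connections while all nearby spheres are convex with \emph{connected} dividing sets ($\Gamma_{t,-}$ is a tree for $t\neq t_0$); nevertheless, by \eqref{e:chi Gamma'} the component $\Gamma'$ of $\Gamma_-^*$ from which both connections emanate satisfies $e_-(\Gamma')-h_-(\Gamma')=-1$, the elimination hypothesis fails, and indeed no elimination is possible -- every neighbourhood of that sphere is overtwisted, since it contains a Legendrian unknot with $\tb=1$ violating \eqref{e:tbineq}. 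To repair the step you must either (i) use that in a generic \emph{single} movie at most one retrogradient connection occurs per level (simultaneous ones being of codimension two in $t$), in which case \eqref{e:chi Gamma'} gives $e_-(\Gamma')-h_-(\Gamma')=0$ on the relevant component and the hypothesis of \lemref{l:parametric elim} holds automatically, exactly as in \lemref{l:only one}; or (ii) rule out the bad configurations directly, because the $\tb>0$ unknot they carry makes arbitrarily small neighbourhoods overtwisted, which under $T_+<T_-$ lie in the tight region -- the same argument as your key claim. A second, smaller inaccuracy: degenerate closed orbits are \emph{not} removed by \lemref{l:parametric elim}, which eliminates pairs of singular points, not closed leaves. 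Birth--death levels should instead be excluded outright: by \lemref{l:birth} the spheres on one side of such a level carry closed leaves, and a closed leaf is a Legendrian unknot with $\tb=0$ with respect to the disc it bounds in the sphere, so those convex spheres have no tight neighbourhood and hence disconnected dividing sets, contradicting your key claim. (Also, your appeal to \figref{b:giroux-imp} is misplaced here: that configuration concerns two-parameter families and is not needed for a single movie.) With these corrections your route does close, and what it buys is a self-contained argument from the lemmas stated in the paper, at the cost of redoing the bifurcation analysis that the paper's proof imports wholesale from \cite{gi-bif}.
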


\begin{proof}
Assume first that $\xi$ is generic in the sense that all singular points are isolated and retrogradient connections occur only between non-degenerate singular points of $S^2\times\{t\}$ for all $t$.

The proof of Bennequin's theorem in \cite{gi-bif} shows that if the movie of a generic contact structure on $S^2\times[-1,1]$ contains no obvious Legendrian knot (i.e. a loop formed  by leaves of the characteristic foliation on a sphere $S^2\times\{t\}, t\in(-1,1)$) which violates the Thurston-Bennequin inequality, then the contact structure is tight.

Then, by definition, $S^2\times\{T_+(\xi)\}$ contains an obvious Legendrian knot violating the Thurston-Bennequin inequality. Thus, for all $t>T_+(\xi)$, the contact structure  $\xi$ on $S^2\times[-1,t)$ is overtwisted, i.e. $T_-(\xi)\le T_+(\xi)$. 

By continuity, the claim follows for all contact structures in $\mathrm{Cont}_\partial(N,\xi_0)$. 
\end{proof}

\subsection{Non-loose knots} \label{s:non-loose}
We introduce the main subject of this paper,  non-loose knots,  and discuss some of their properties.  Recall the following theorem which follows from \thmref{t:eliashclass}.
\begin{thm}[Dymara, \cite{dym}]\label{t:loose triv}
Let $\xi$ be an overtwisted contact structure on $S^3$ and $K_0,K_1$ Legendrian knots which are smoothly isotopic, $\tb(K_0)=\tb(K_1)$ and  $\mathrm{rot}(K_1)=\mathrm{rot}(K_1)$. If there is an overtwisted disc $D$ in the complement of $K_0\cup K_1$, then there is a Legendrian isotopy between $K_0$ and $K_1$, i.e. a family of Legendrian knots $K_t,t\in[0,1]$, interpolating between $K_0$ and $K_1$. 
\end{thm}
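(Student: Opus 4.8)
The plan is to manufacture a family of contact structures $\xi_s$, $s\in[0,1]$, together with a family of Legendrian knots $K_s$ tangent to $\xi_s$, interpolating between $(\xi,K_0)$ and $(\xi,K_1)$, and then to convert this into an ambient contact isotopy by Gray's theorem. Once such a family is produced with $\xi_0=\xi_1=\xi$ and $K_s$ tangent to $\xi_s$ throughout, the refinement of \thmref{t:gray} recorded after its statement---that a Legendrian knot tangent to every member of a family of contact structures can be preserved by the Gray isotopy---yields a contact isotopy realizing the $K_s$ as a genuine Legendrian isotopy from $K_0$ to $K_1$ in $(S^3,\xi)$. The entire difficulty is thus pushed into constructing the pair $(\xi_s,K_s)$.

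For the construction I would first record the formal input. Because $K_0$ and $K_1$ are smoothly isotopic and satisfy $\tb(K_0)=\tb(K_1)$ and $\mathrm{rot}(K_0)=\mathrm{rot}(K_1)$, they are \emph{formally} Legendrian isotopic: there is a smooth isotopy $K_s$ from $K_0$ to $K_1$ together with a homotopy of plane fields $\zeta_s$ on $S^3$ containing $TK_s$, with $\zeta_0=\zeta_1=\xi$. (The agreement of $\tb$ encodes that the contact framings match and the agreement of $\mathrm{rot}$ that the tangential rotation data match, so no obstruction to the formal homotopy survives.) Since $K_0\cup K_1$ is disjoint from the overtwisted disc $D$ and two smoothly isotopic knots are isotopic inside the complement of any fixed ball, I would choose the smooth isotopy $K_s$ to be supported in the complement of a fixed standard overtwisted ball $B$ with $D\subset B$; then $\zeta_s$ can be arranged to equal $\xi$, an honest contact germ $\xi_\Delta$, near $B$ for all $s$. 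Finally, using a standard Legendrian neighborhood, I would arrange $\zeta_s$ to be a genuine contact germ along $K_s$ for which $K_s$ is Legendrian, tracking this neighborhood by a smooth ambient isotopy $g_s$ with $g_s(K_0)=K_s$.

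The heart of the argument is to upgrade the plane field family $\zeta_s$ to a contact family $\xi_s$ without disturbing the knots or the overtwisted disc. This is exactly the relative, parametric form of Eliashberg's classification, \thmref{t:eliashclass} (cf.\ \lemref{l:families}): the family $\zeta_s$ lies in $\mathrm{Distr}$ relative to the germ $\xi_\Delta$ near $\Delta=D\subset B$ and is already contact at the endpoints $s\in\{0,1\}$ and near $B$. Applying the theorem relative to the parameter values $\{0,1\}$ and relative to $B$ deforms $\zeta_s$, rel these data, to a family of genuine contact structures. To keep $K_s$ Legendrian I would perform this deformation relative also to the standard neighborhoods of $K_s$: after pulling back by the tracking isotopy $g_s$ the knot becomes the fixed $K_0$ carrying a fixed contact germ, so the relative version applies with a fixed compact set, while the accompanying motion of the overtwisted disc is absorbed by the device recorded in the text, namely applying \thmref{t:eliashclass} to $(g_s^{-1})_*\zeta_s$ along the moving family of overtwisted discs $g_s^{-1}(D)$. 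Pushing the result back by $g_s$ produces the desired $(\xi_s,K_s)$.

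I expect the main obstacle to be precisely this last coordination: simultaneously fixing the germ near the overtwisted disc and keeping $K_s$ Legendrian while the knot, its neighborhood, and the disc all move with $s$. Concretely, one must verify that the several relative forms of \thmref{t:eliashclass}---relative to compact subsets of $M$, relative to compact subsets of the parameter interval, and relative to the moving overtwisted disc via the pullback device---can be invoked at once, that $S^3$ minus the relevant fixed set remains connected as the hypotheses demand, and that the formal Legendrian isotopy of the first step can genuinely be realized in the complement of $B$. Everything else, namely the closing appeal to Gray's theorem and the passage from a Legendrian isotopy to an ambient contact isotopy, is routine.
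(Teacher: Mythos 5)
Your overall route---a formal Legendrian isotopy extracted from the matching classical invariants, the parametric relative form of \thmref{t:eliashclass} with the overtwisted disc held fixed, then Gray's theorem---is exactly the argument the paper has in mind (it gives no proof of this statement, attributing it to Dymara and noting that it follows from \thmref{t:eliashclass}). However, there is a genuine gap in your closing step. The refinement of \thmref{t:gray} you invoke applies to a \emph{fixed} knot tangent to every member of the family; your $K_s$ moves with $s$. What Gray's theorem actually produces from your family $(\xi_s,K_s)$ with $\xi_0=\xi_1=\xi$ is an isotopy $\psi_s$ with $\psi_{s*}\xi=\xi_s$ and $\psi_0=\mathrm{id}$, whence $\psi_s^{-1}(K_s)$ is a Legendrian isotopy from $K_0$ to $\psi_1^{-1}(K_1)$---not to $K_1$. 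The monodromy $\psi_1$ is a contactomorphism of $\xi$ which is smoothly, but not obviously contact-, isotopic to the identity. Pulling everything back by your tracking isotopy $g_s$ so that the knot is fixed does not help: it merely trades the problem for a contactomorphism carrying $K_0$ to $K_1$, i.e., coarse equivalence rather than Legendrian isotopy. Since $\xi_s$ is a \emph{loop} based at $\xi$, this endpoint problem is unavoidable in a one-parameter argument.

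To finish, one must contract the loop $\xi_s$ in the space of contact structures rel the germs near $D$. By the weak homotopy equivalence of \thmref{t:eliashclass} this reduces to contracting it among plane fields, where a $\pi_4(S^2)\simeq\Z_2$-type obstruction can survive even though each $\zeta_s$ has the correct endpoint data; it can be killed by concatenating with a correcting loop of plane fields supported in a small ball disjoint from the knots and $D$ (cf. \lemref{l:families} and \remref{r:dym surj}). One then applies the two-parametric form of \thmref{t:eliashclass} over the square and concludes with a two-parameter Gray argument in which the isotopies along the edges $\sigma=0$ and $\sigma=1$ are the identity---precisely the mechanism of the paper's proof of \thmref{t:non or isotopy} and of \corref{c:part of chekanov}, whose central difficulty is exactly this endpoint issue. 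Your earlier steps (the formal isotopy from $\tb$ and $\mathrm{rot}$, pushing the smooth isotopy off a ball containing $D$, and the relative/parametric bookkeeping via $g_s$) are sound; it is only the step you dismiss as routine that contains the real content.
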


It is therefore more interesting to study knots (or pairs of knots) which do not satisfy the conditions stated in \thmref{t:loose triv}.

\begin{defn} \label{d:non-loose}
A Legendrian or transverse knot $K$ in an overtwisted contact manifold $(M,\xi)$ is {\em loose} if $\xi$ is overtwisted on the complement of $K$. Otherwise, it is {\em non-loose}.
\end{defn}

The analogous terminology is used for Legendrian and transverse links. In \cite{elfr} non-loose knots are called exceptional. 

It is relatively easy to show that every overtwisted contact structure $\xi$ on a closed manifold $M$ admits a non-loose transverse link. For example, the binding $K$ of an open book of $M$ carrying $\xi$ is a non-loose transverse knot. Also, it is well-known that every closed contact manifold $(M,\xi)$ can be obtained by Legendrian surgery on a link in the standard tight contact structure in $S^3$. The surgery curves induce a non-loose Legendrian link in $(M,\xi)$. 

The following modified Thurston-Bennequin inequality is one of the few general results on non-loose knots. 

\begin{thm}[{\'S}wi{\k{a}}tkowski \cite{etnyre-ot}] \label{t:swiatkowski}
Let $K\subset M$ be a null-homologous Legendrian non-loose knot which is non-loose. Then 
\begin{equation} \label{e:swiatkowski}
-|\mathrm{tb}(K)|+|\mathrm{rot}(K)|\le-\chi(\Sigma)
\end{equation}
for every embedded surface $\Sigma$ with $\partial\Sigma=K$.
\end{thm}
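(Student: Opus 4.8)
The plan is to reduce \eqref{e:swiatkowski} to the ordinary Thurston--Bennequin inequality \thmref{t:tbineq} applied inside the \emph{tight complement} of $K$, and to show that the absolute value $-|\tb(K)|$ is forced because the relevant quantity is a geometric (hence unsigned) intersection number. First I would fix a standard tubular neighborhood $N(K)$ of the Legendrian knot $K$. Its boundary $\partial N(K)$ is a convex torus whose dividing set $\Gamma_{\partial N}$ consists of two parallel curves, the slope of which, measured against the Seifert framing, equals the contact framing $\tb(K)$. Since $K$ is non-loose, \defref{d:non-loose} tells us that the complement $W:=M\setminus\ring{N}(K)$ carries a tight contact structure. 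This is the only place where the non-looseness hypothesis enters.

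Next I set $\Sigma':=\Sigma\cap W$. Removing the collar annulus $\Sigma\cap N(K)$ leaves the Euler characteristic unchanged, so $\chi(\Sigma')=\chi(\Sigma)$, and $\Sigma'$ is a Seifert surface in $W$ for the curve $c:=\partial\Sigma'\subset\partial N(K)$, which is isotopic to the Seifert-framed longitude of $K$. Using the Legendrian realization principle (Theorem~3.7 in \cite{honda}) I make $c$ Legendrian on the convex torus $\partial N(K)$ while realizing its minimal geometric intersection with $\Gamma_{\partial N}$. The crucial computation is then the standard formula for the Thurston--Bennequin invariant of a Legendrian curve on a convex surface, $\tb(c)=-\tfrac12\#(c\cap\Gamma_{\partial N})$ (see \cite{honda}): because $c$ is a longitude and $\Gamma_{\partial N}$ has slope $\tb(K)$, the geometric intersection number is $2|\tb(K)|$, the absolute value appearing precisely because this count is unsigned and insensitive to the sign of the slope. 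Hence $\tb(c)=-|\tb(K)|$. A parallel bookkeeping of the relative Euler class over the removed annulus shows $\mathrm{rot}(c)=\pm\,\mathrm{rot}(K)$.

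Finally, since $W$ is tight and $c$ is a null-homologous Legendrian knot in $W$ with Seifert surface $\Sigma'$, the Thurston--Bennequin inequality \thmref{t:tbineq} applies verbatim in $W$, giving
\[
\tb(c)+|\mathrm{rot}(c)|\le-\chi(\Sigma').
\]
Substituting $\tb(c)=-|\tb(K)|$, $|\mathrm{rot}(c)|=|\mathrm{rot}(K)|$ and $\chi(\Sigma')=\chi(\Sigma)$ yields \eqref{e:swiatkowski}. Note that \thmref{t:tbineq} black-boxes the entire convexity/Euler-characteristic count for the tight piece, so no further manipulation of the characteristic foliation is needed once the invariants of $c$ are known.

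The hard part will be the second paragraph: proving rigorously that, after Legendrian realization, $c$ meets the two dividing curves in exactly $2|\tb(K)|$ points, and confirming that $\mathrm{rot}(c)$ equals $\pm\,\mathrm{rot}(K)$ rather than being shifted by a multiple of $\tb(K)$ coming from the annulus. A sharpness check against the non-loose unknot, where $\tb(K)=n$, $\mathrm{rot}(K)=n-1$, $\chi(\Sigma)=1$ and \eqref{e:swiatkowski} becomes an equality, pins down the framing conventions: it shows that the rotation number must transfer without a $\tb$-shift, since otherwise the bound would fail to be sharp. One should also treat separately the degenerate case $\tb(K)=0$, in which $c$ is disjoint from $\Gamma_{\partial N}$ and the realization principle requires a small perturbation to obtain a Legendrian representative with the correct invariants.
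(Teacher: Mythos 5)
First, a point of comparison: the paper itself gives no proof of \thmref{t:swiatkowski} --- it is quoted as background with a citation to \cite{etnyre-ot}, so the relevant benchmark is the standard argument found there. Your proposal follows exactly that route: pass to the tight complement $W=M\setminus \ring{N}(K)$ (the only use of non-looseness), replace $K$ by the Legendrian-realized Seifert-framed longitude $c\subset\partial N(K)$ with $\tb(c)=-\tfrac12\#\bigl(c\cap\Gamma_{\partial N}\bigr)=-|\tb(K)|$ (this count is where the unsigned $-|\tb(K)|$ enters, as you say), and apply the Bennequin inequality to $c$ and $\Sigma'=\Sigma\cap W$. The skeleton is right, and most of the bookkeeping you sketch (torus framing of a longitude equals its Seifert framing, $\chi(\Sigma')=\chi(\Sigma)$ after also deleting, by a standard innermost-curve argument, any closed components of $\Sigma\cap\partial N(K)$, and applying \thmref{t:tbineq} after pushing $c$ with a collar of $\Sigma'$ slightly into the interior of $W$) is routine.

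The genuine gap is the one you flag yourself: the claim $\mathrm{rot}(c)=\pm\,\mathrm{rot}(K)$ is asserted, not proved, and the justification you offer --- that the bound ``would fail to be sharp'' on non-loose unknots otherwise --- is circular: you cannot calibrate a step of the proof by demanding that the theorem being proved come out sharp. Note the danger is real: $c$ has $\tb(c)=\tb(K)-2\tb(K)$ when $\tb(K)>0$, so a priori $\mathrm{rot}(c)$ could be shifted by as much as $2\tb(K)$, which would destroy the inequality. The standard way to close this is to work in the model neighborhood: either identify $c$, up to Legendrian isotopy inside $N(K)$, with the balanced stabilization $(S_+S_-)^{\tb(K)}K$ (so $\mathrm{rot}(c)=\mathrm{rot}(K)$ on the nose, not merely up to sign), or observe that the relative Euler class of the standard solid torus vanishes --- the model $J^1(S^1)$ admits a contactomorphism preserving $K$ with its orientation but reversing the coorientation of $\xi$, which forces the rotation-number contribution of the annulus $\Sigma\cap N(K)$ to be zero. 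Either argument also handles your degenerate case $\tb(K)=0$ more cleanly than a ``small perturbation'': there $c$ is isotopic to a Legendrian divide of $\partial N(K)$, i.e.\ a Legendrian push-off of $K$, with $\tb(c)=0$ and $\mathrm{rot}(c)=\mathrm{rot}(K)$ directly (Legendrian realization in the sense of \cite{honda} does not apply, since a curve parallel to $\Gamma_{\partial N}$ is isolating). With this step supplied, your proof is complete and agrees with the argument cited in \cite{etnyre-ot}.
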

Unlike the Thurston-Bennequin inequality \eqref{e:tbineq} for Legendrian knots in tight contact manifolds, \eqref{e:swiatkowski} does not yield an upper bound for the Thurston-Bennequin invariant of a non-loose knot. 

\begin{lem}[Etnyre, \cite{etnyre-ot}]\label{l:non-loose stable}
Let $K\subset (M,\xi)$ be a Legendrian knot in an overtwisted contact structure. If a stabilization of $K$ is non-loose, then so is $K$.
\end{lem}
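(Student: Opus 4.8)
The plan is to prove the contrapositive: if $K$ is loose, then every stabilization $K'$ of $K$ is loose. The key geometric input is that Legendrian stabilization is a purely local operation, so that $K'$ can be realized inside an arbitrarily small tubular neighborhood of $K$ while remaining Legendrian. Once this is in hand, the conclusion follows from an elementary compactness argument, with no appeal to the global contact topology of $M\setminus K$.

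First I would recall the local model of the stabilization. Choose a point $p\in K$ and a Darboux chart around $p$ with contact form $dz-y\,dx$, in which $K$ appears as a Legendrian arc. The positive (respectively negative) stabilization replaces a subarc of $K$ by a zigzag in the front projection, confined to the chart. The height of this zigzag in the $z$-direction and its excursion in the $y$-direction can both be scaled down to be as small as we like without destroying the Legendrian condition. Consequently $K'$ agrees with $K$ outside a small ball $B$, and inside $B$ it stays within any prescribed neighborhood of $K\cap B$. In particular, given any tubular neighborhood $N$ of $K$, we may arrange $K'\subset N$.

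Now suppose $K$ is loose, so that there is an overtwisted disc $D\subset M\setminus K$. Since $D$ and $K$ are disjoint compact sets, we can choose the tubular neighborhood $N$ of $K$ so small that $N\cap D=\emptyset$, while still realizing $K'\subset N$ by the previous step. Then $D\subset M\setminus N\subset M\setminus K'$, so $D$ is an overtwisted disc in the complement of $K'$, and hence $K'$ is loose as well. Taking the contrapositive: if some stabilization $K'$ is non-loose, i.e.\ has tight complement, then $K$ is non-loose. The argument is insensitive to the sign of the stabilization and applies verbatim to any finite number of successive stabilizations, so there is no loss in treating a single one.

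The only delicate point, and the one I would expect to require the most care, is the claim that the stabilized knot can be confined to an \emph{arbitrarily small} tubular neighborhood of $K$ while staying Legendrian. This is standard but genuinely needs the Darboux model together with the scaling of the zigzag; everything else is a routine disjointness-and-compactness statement. Note also that, unlike for looseness where one exhibits an explicit overtwisted disc, the reverse implication (non-looseness of $K$ forcing non-looseness of every stabilization) is \emph{false} in general, which is why the lemma is stated in only one direction.
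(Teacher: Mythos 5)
Your proposal is correct and follows essentially the same route as the paper: the paper also argues via the fact that a stabilization of $K$ is Legendrian isotopic to a knot in an arbitrarily small neighborhood of $K$, so that an overtwisted disc $D\subset M\setminus K$ survives into the complement of the (suitably isotoped) stabilized knot, contradicting its non-looseness. Your Darboux-chart zigzag discussion just makes explicit the local model the paper invokes implicitly; note that your phrase ``we may arrange $K'\subset N$'' tacitly uses that the stabilization is well-defined up to Legendrian isotopy and that looseness is a Legendrian-isotopy invariant, exactly as the paper does.
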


\begin{proof}
We argue by contradiction. Assume that $K$ is loose and let $K'$ be a stabilization of $K$ which is non-loose. The complement of $K$ contains an overtwisted disc $D$. But every stabilization of a Legendrian knot is isotopic to a knot contained in an arbitrarily small neighborhood of $K$. Thus, we can assume that $K'$ is contained in $M\setminus D$ contradicting the tightness of $M\setminus K'$.  
\end{proof}
Finally, there is a further restriction on the Thurston-Bennequin invariant of non-loose unknots \cite{dym2}.

\begin{lem} \label{l:tb>0}
Let $K$ be a non-loose Legendrian unknot in an overtwisted contact manifold $(M,\xi)$. Then $\tb(K)>0$. 
\end{lem}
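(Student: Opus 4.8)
The plan is to exclude the two ways $\tb(K)$ could fail to be positive. First, since $K$ is an unknot it bounds an embedded disc $D$, and applying the {\'S}wi{\k{a}}tkowski inequality (\thmref{t:swiatkowski}) with $\Sigma=D$, for which $\chi(D)=1$, gives
$-|\tb(K)|+|\mathrm{rot}(K)|\le -1$, hence $|\tb(K)|\ge 1$. In particular $\tb(K)\neq 0$ for the non-loose knot $K$. It therefore remains to rule out $\tb(K)\le -1$, and this is where the real work lies: I would show that a Legendrian unknot with $\tb\le -1$ is necessarily loose, contradicting the hypothesis.

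So assume $\tb(K)\le -1$ and let $D$ be a Seifert disc for $K$. Because the twisting of $K$ relative to the framing induced by $D$ is negative, $D$ can be made convex relative to its Legendrian boundary; let $\Gamma$ be the resulting dividing set, a disjoint union of arcs with endpoints on $K$ together with closed curves in $\ring{D}$. If $\Gamma$ contains a closed curve, then, every simple closed curve on a disc being nullhomotopic, that curve bounds a subdisc of $D$, so by \thmref{t:char convexity} the surface $D$ carries an obvious overtwisted disc. This disc can be taken in an arbitrarily small neighborhood of the closed curve, hence inside $\ring{D}$ and disjoint from $\partial D=K$. Thus $K$ would be loose, contradicting the hypothesis.

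We may therefore assume that $\Gamma$ consists of arcs only. Then no component of $\Gamma$ bounds a disc, so by \thmref{t:char convexity} a neighborhood of $D$ is tight, and $K\cup D$ admits a closed ball neighborhood $B$ on which $\xi$ is tight with $K\subset\ring{B}$. Make $\partial B$ convex; since $\xi$ is tight on $B$, the dividing set of the sphere $\partial B$ is connected (otherwise \thmref{t:char convexity} would already produce an overtwisted disc inside $B$). Now consider $M=B\cup_{\partial B}(M\setminus\ring{B})$. If $\xi$ is overtwisted on $M\setminus\ring{B}$, then, since $K\subset\ring{B}$, an overtwisted disc in $M\setminus\ring{B}\subset M\setminus K$ shows that $K$ is loose. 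If instead $\xi$ is tight on $M\setminus\ring{B}$, then $M$ is obtained by gluing two tight contact manifolds along a convex sphere with connected dividing set; such a manifold is tight, because any overtwisted disc in $M$ can be isotoped off $\partial B$ using the standard tight product model near a connected dividing set (\thmref{t:bennequin}, \thmref{t:tight on ball}; cf. Colin's gluing \cite{col-gluing}), after which it lies in one of the two tight pieces, a contradiction. This contradicts that $\xi$ is overtwisted on $M$. In every case we reach a contradiction, so $\tb(K)\le -1$ is impossible and $\tb(K)>0$.

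The main obstacle is the gluing step: showing that a convex sphere $\partial B$ with connected dividing set separating two tight regions forces $M$ to be tight, equivalently that an overtwisted disc of $M$ can always be made disjoint from $\partial B$. This is precisely the content of Colin's gluing theorem and is the one place where a genuine three-dimensional tightness argument is required, rather than bookkeeping with \thmref{t:char convexity}. The other point that deserves care is the convexity of $D$ relative to its Legendrian boundary, which is exactly why the hypothesis $\tb(K)<0$ is what makes the whole scheme available in the range we must treat.
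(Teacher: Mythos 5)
Your proof is correct and follows essentially the same route as the paper: in the main case $\tb(K)<0$ both arguments make the Seifert disc convex, note that non-looseness forbids closed dividing curves, thicken and round the tight neighborhood of $D$ into a ball $B$ with convex boundary that is tight on both sides, and invoke Colin's theorem to contradict the overtwistedness of $\xi$. The only (harmless) deviations are that you exclude $\tb(K)=0$ via \thmref{t:swiatkowski}, whereas the paper instead realizes $D$ as a convex overtwisted disc using Proposition~3.1 of \cite{honda}, and that the gluing statement you need is the one the paper cites from \cite{col-connect} rather than \cite{col-gluing}.
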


\begin{proof}
Since $K$ is an unknot, it bounds a disc $D$. Obviously $\tb(K)\neq 0$, since otherwise we could choose $D$ to be a convex overtwisted disc by Proposition 3.1 in \cite{honda}, and the complement of $K$ would not be tight. Also, if $\tb(K)<0$ we can choose $D$ to be convex, i.e. there is a tubular neighborhood $D\times[-\eps,\eps]$ such that the contact structure on this neighborhood is tight (because $\xi$ is tight on the complement of $K$, the dividing set of $D$ does not have a closed component).  Since the contact structure on $M\setminus D=M\setminus (D\times \{0\})$ is also tight, this tubular neighborhood cannot contain an overtwisted disc. Rounding the edges of $D\times[-\eps,\eps]$ we find a closed ball $B$ such that 
\begin{itemize}
\item the contact structure on $B$ is tight,
\item the contact structure on $M\setminus B$ is tight, and
\item the boundary of $B$ is a convex surface.  
\end{itemize}
According to \cite{col-connect} this implies that $\xi$ is tight contradicting the assumption that $\xi$ is overtwisted.
\end{proof}
This lemma motivates the following terminology. 
\begin{defn} \label{d:minimal}
A non-loose unknot $K$ with $\mathrm{tb}(K)=1$ is called {\em minimal}.
\end{defn}

\subsection{Examples of non-loose knots} \label{ss:fronts}

In this section we discuss examples of non-loose Legendrian knots in $S^3$. We will focus on one overtwisted contact structure which we will denote by $\xi_{-1}$ later (sometimes we will refer to it as the {\em standard overtwisted contact structure} on $S^3$). 

According to R.~Lutz \cite{lutz} $S^1$-invariant contact structures on $S^1$-principal bundles $\pi : M\lra\Sigma$ over closed oriented surfaces are classified up to $S^1$-equivariant isotopy by 
the projection $\Gamma$ of
\begin{equation} \label{e:Gamma}
\{ p\in M \, \vert \, \xi(p) \textrm{ is tangent to the fiber through }p\} 
\end{equation}
to $\Sigma$. Because of the contact condition, this is an embedded submanifold of dimension $1$ in the base surface. When $\Gamma$ is a simple closed curve in $\Sigma=S^2$ and Euler number $1$, then we obtain $\xi_{-1}$. 

Let $K$ be the fiber over a point of $p\in\Gamma$ and $p\neq q\in\Gamma$. A convenient way to establish the universal tightness of $\xi_{-1}$ on the complement of $K$ relies on techniques developed by E.~Giroux in \cite{gi-conv, gi-bif}: One studies the characteristic foliation of $\xi_{-1}$ on $T_t^2=\pi^{-1}(S^1_t)$ where $S_t^1,t\in(0,1)$, is a family of nested circles in $S^2$ such that the circles $S_t^1$ converge to $p$, respectively $q$, as $t\to 0$, respectively $t\to 1$. We choose the circles in such a way that they are transverse to $\Gamma$ and the intersection of $\Gamma$ with $S_t^1$ consists of two points for all $t\in(0,1)$. The fibers above these points form pairs of Legendrian curves and each of these  curves is either a non-singular leaf or a line of singularities of $\xi_{-1}(T^2_t)$. One of these lines is repulsive while the other is attractive. For all $\varepsilon>0$ the sheet (this is our translation of the french word {\em feuille} used in \cite{gi-bif}) of the movie of characteristic foliations on $T^2_t, t\in[\varepsilon,1),$ becomes an annulus after we add the Legendrian curve $\pi^{-1}(q)$ to $T^2_t, t\in(0,1)$ (adding this curve to $\cup_{t\in[\varepsilon,1)}T_t^2$ one obtains a solid torus). 

According to \cite{gi-bif} (Section 4.B) up to isotopy there is a unique contact structure on the solid torus 
which has a movie with the properties described above. This contact structure is universally tight for all $\varepsilon\in(0,1)$, i.e. $\xi_{-1}$ is universally tight on the complement of $K$. 

Alternatively, one can also show that $\xi_{-1}$ is universally tight on $S^3\setminus K$  by embedding the universal cover of $S^3\setminus K$ into $\R^3$ with the standard contact structure. This is the approach taken by K.~Dymara in \cite{dym, dym2}.

From the above description of $\xi_{-1}$ and $K$ one can read of $\mathrm{tb}(K)=1$. Then $\mathrm{rot}(K)=0$  by \eqref{e:sl tb}. Thus, $K$ is a minimal non-loose unknot.

In order to describe more examples of non-loose knots we use front projections. Such projections are often used to represent Legendrian knots in the standard tight contact structure on $\R^3$, many facts commonly known in that context generalize to the situation considered here with minor modifications. We refer the reader to \cite{etnyre-overview} for information about fron projections of Legendrian knots in the standard contact structure on $\R^3$.

A convenient way to describe the standard overtwisted contact structure $\xi_{-1}$ on $S^3$ is to identify the complement of the Hopf link with $T^2\times(0,1)$. The $3$-sphere is the result of the following operation: Compactify $T^2\times(0,1)$ to obtain $T^2\times[0,1]$ and then collapse the first, 
respectively second, 
factor in $S^1\times S^1\times\{i\}$ to a point when $i=0$, respectively $i=1$. The image of the boundary components of $T^2\times [0,1]$ under the quotient map to $S^3$ is a Hopf link $H$ in $S^3$.  

Front projections of Legendrian knots were studied by K.~Dymara in \cite{dym2} and the following examples are from that paper. However, our conventions are different from  those used in \cite{dym2}. The restriction of $\xi_{-1}$ to $T^2\times(0,1)$ is defined by 
\begin{equation} \label{e:ot form} 
\alpha =\sin\left(\frac{3\pi}{2}z\right)dx+\cos\left(\frac{3\pi}{2}z\right)dy.
\end{equation}
When one replaces the factor $3$ by $1$ in this formula, one obtains the tight contact structure. $\xi_{-1}$ is the contact structure  containing the minimal non-loose unknot described above. The fibers of the Hopf fibration are linear curves of slope $1$ 
inside the tori $T^2\times\{t\}, t\in(0,1),$ and the components of $H$. 

Along the Hopf link $H$ the contact planes of $\xi_{-1}$ are tangent to the discs formed by the images of 
$\{*\}\times S^1\times[1/3,1]$ and $S^1\times\{*\}\times [0,2/3]$.  All discs in these two families are overtwisted, two of them are shown in \figref{b:front1}. The Hopf link $H$ is transverse to $\xi_{-1}$ and non-loose.    

\begin{center}
\begin{figure}[htb]
\begin{center}
\includegraphics[scale=0.4]{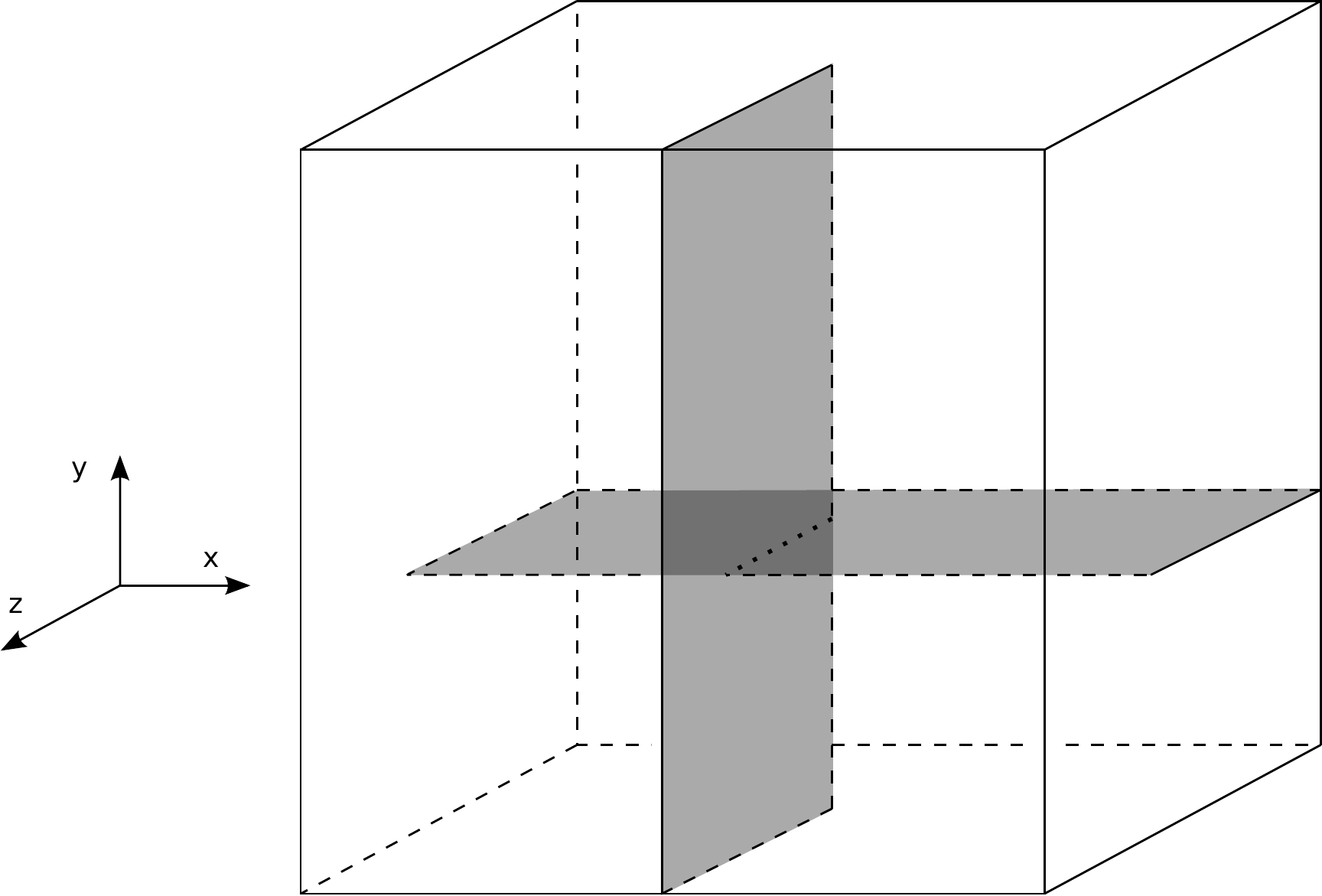}
\end{center}
\caption{Two overtwisted discs in $(S^3,\xi_{-1})$}
\label{b:front1}
\end{figure}
\end{center}

For a Legendrian curve in $S^3$ which is disjoint from the Hopf link $H$ we can consider its image under the front projection $T^2\times(0,1)\lra T^2$. Contrary to the case of the standard tight contact structure on $S^3$, a vector in the tangent space of $T^2$ does not have a unique preimage which is tangent to $\xi_{-1}$. More precisely, only lines whose slope is not negative 
 with respect to coordinates $x,y$ on  $T^2$ have a unique preimage. 
 Because of this, one has to indicate which strand passes above the other strand at a double point of a front projection. 

Let $K$ be a generic Legendrian knot in $(S^3,\xi_{-1})$ which is disjoint from $H$. The front projection of $K$ is a closed curve in $T^2$ whose homotopy class is denoted by $(a,b)\in\Z^2\simeq \pi_1(T^2)$. The only singularities of this curve are transverse double points and cusps.

Front projections of isotopic Legendrian knots are related via isotopies, Reidemeister moves (similar to the ones described in \cite{dym2}) and modifications of the front projection corresponding to the Legendrian knot crossing a component of $H$. When a segment of a Legendrian knot passing through the image of $\{z=1\}$ the diagram changes as indicated in \figref{b:pass-z=0}, when the segment passes through $\{z=1\}$ a similar (horizontal) modification of the front appears. 

\begin{center}
\begin{figure}[htb]
\includegraphics[scale=0.6]{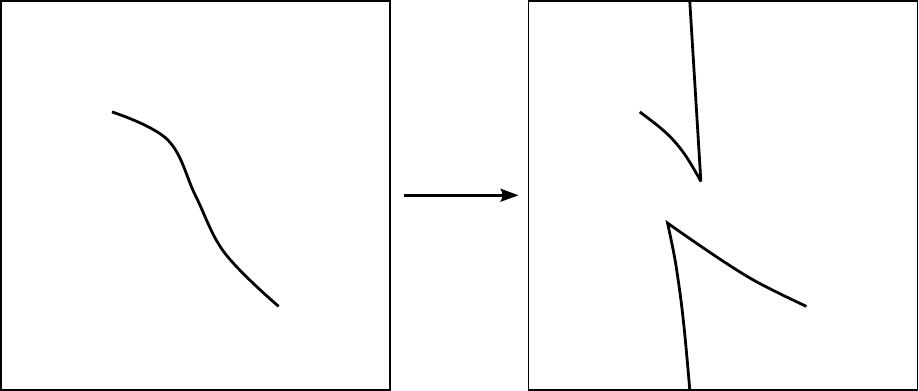}
\caption{A Legendrian segment passing through $\{z=1\}$}
\label{b:pass-z=0}
\end{figure}
\end{center}

According to \cite{dym2} the Thurston-Bennequin invariant of $K$ is
\begin{equation} \label{e:front tb}
\tb(K) = \mathrm{cross}_+(K)-\mathrm{cross}_-(K)-\frac{1}{2}\mathrm{cusp}(K)+a\cdot b. 
\end{equation}
Here $\mathrm{cross}_+$ respectively $\mathrm{cross}_-$ denote the number of positive respectively negative crossings and $\mathrm{cusp}(K)$ denotes the number of cusps of the front projection. 

As explained in \cite{dym2} the rotation number of an oriented Legendrian knot can be read of from a generic front diagram as follows:
\begin{equation} \label{e:front rot}
\mathrm{rot}(K) = \frac{1}{2}\left(\mathrm{cusp}_+(K)-\mathrm{cusp}_-(K)\right) + a - b.  
\end{equation}
Here $\mathrm{cusp}_+$, respectively $\mathrm{cusp}_-$, is the number of positive, respectively negative cusps.  We will need this formula to determine the rotation number for Legendrian knots which have projections without cusps, so we only indicate what positive/negative cusps are: A cusp is positive if the tangent space of the knot crosses $\pm \partial_z$ positively with respect to the orientation of  $\xi$ given by $\alpha$.

The non-loose Legendrian knot described at the beginning of this section is diffeomorphic to the knot $K=\{(t,t,1/2)\,|\,t\in[0,1]\}$. 
There are more non-loose Legendrian knots in $(S^3,\xi_{-1})$ which can be easily described in terms of front projections.

For coprime integers $m,n$ consider a linear curve in $T^2$ representing the homology class $(m,n)\in\Z^2\simeq H_1(T^2;\Z)$. If $mn\ge 0$
\begin{align*}
\tb(K_{m,n}) &=  mn 
& \mathrm{rot}(K_{m,n})&=m-n.  
\end{align*}  
When $n=1$ and $m\ge 1$ (or vice versa) we obtain non-loose Legendrian unknots with positive Thurston-Bennequin invariant such that the rotation number is $\pm(\tb(K_{m,n})-1)$.  

For $m=0,n=1$ or $m=1,n=0$ the curve $K_{m,n}$ lifts to  a Legendrian unknot which bounds overtwisted discs, namely one of the two discs appearing in \figref{b:front1}. Of course neither of these unknots is non-loose. However, according to Proposition 4.9 of \cite{dym2} their union is a non-loose Hopf link.

Finally, we show that a non-loose unknot $K_{m,n}$ with $m=1$ or $n=1$ and $mn>0$ can be stabilized to a Legendrian knot isotopic to $K=K_{1,1}$ (with one of its two possible orientations). In \figref{b:isotopy} we illustrate this for the case $m=1,n=2$, the general case is similar.

\begin{center}
\begin{figure}[htb]
\includegraphics[scale=0.6]{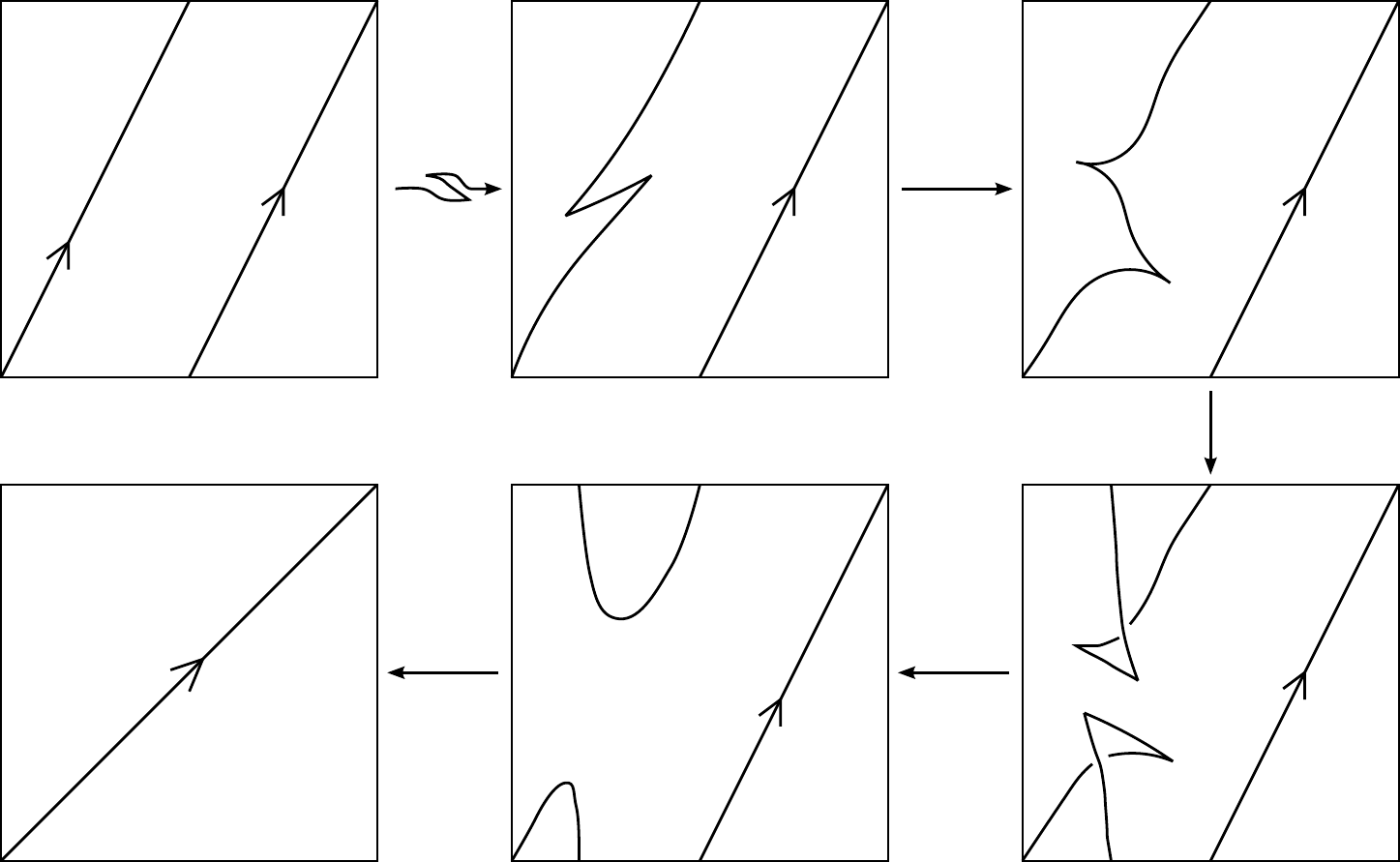}
\caption{Stabilizing $K_{1,2}$ one obtains $K_{1,1}$}
\label{b:isotopy}
\end{figure}
\end{center}

The first arrow indicates a negative stabilization, the third downward arrow indicates the move depicted in \figref{b:pass-z=0} and the fourth arrow indicates two Reidemeister moves. The last and the second arrow  correspond to isotopies.  

Finally, we note an interesting contact diffeomorphism of $\xi_{-1}$. 
\begin{ex} \label{ex:orrev}
Consider the diffeomorphism $\psi$ whose restriction  on $T^2\times(0,1)$ (the complement of the Hopf link $H$) is 
\begin{align*}
\psi\eing{S^3\setminus H} : T^2\times(0,1)=(\R^2/\Z^2)\times(0,1) & \lra T^2\times(0,1) \\
(x,y,z) & \longmapsto (-y,-x,1-z). 
\end{align*}
This map extends to a contact diffeomorphism $\psi$ of $S^3$ which preserves the orientation of $\xi$ and maps $K=K_{1,1}$ to itself but reverses the orientation of this knot. 
\end{ex}

\subsection{Homotopy theory of plane fields on $S^3$} \label{s:homotopy}

In this section we discuss homotopical properties of families of contact structures. This leads to the definition of a $\Z_2$-valued invariant $\mathrm{d}$ for contactomorphisms on the $3$-sphere.

Recall that the tangent bundle of $S^3$ is trivial. One can use a Riemannian metric and a trivialization of $TS^3$ to associate to each oriented plane field $\xi$ on $S^3$ a Gau{\ss} map 
\begin{align*}
S^3 &\lra S^2 \\
p & \lmt \textrm{positive unit vector orthogonal to } \xi.
\end{align*}
By the Thom-Pontrjagin construction homotopy classes of maps from manifolds to $m$-spheres are in one-to-one correspondence with framed submanifolds of codimension $m$ up to framed cobordisms (cf. \cite{milnor}). Usually, the Thom-Pontrjagin theorem is stated for manifolds without boundary, the case when maps are fixed on the boundary of a manifold was considered e.g. by J.~Etnyre \cite{etnyre-ot} or Y.~Huang \cite{huang} in the context of plane fields on $3$-manifolds.

This is useful since homotopy classes of plane fields on an oriented $3$-manifold $M$  correspond to homotopy classes of maps to $S^2$, when $M=S^3$ these homotopy classes correspond to elements of $\pi_3(S^2)$. The homotopy groups of the $2$-sphere that matter for us are well-known (see for example Chapter 4 of \cite{hatcher} or Chapter 5 of \cite{dfn} for a discussion based on the Thom-Pontrjagin construction): 
\begin{align*}
\pi_3(S^2)&\simeq\Z & \pi_4(S^2)&\simeq \Z_2 & \pi_4(S^3)&\simeq\Z_2 .
\end{align*}
A generator of $\pi_3(S^2)$ is the Hopf map 
\begin{align*}
S^3\subset\mathbb{C}^2 & \lra\mathbb{CP}^1\simeq S^2 \\
(z_0,z_1) &\lra [z_0:z_1],  
\end{align*}
a generator of $\pi_4(S^3)$ is the suspension of the Hopf map and a generator of  $\pi_4(S^2)$ is represented by the composition of the Hopf map with its suspension.

Recall that $M=S^3\simeq \mathrm{SU}(2)$ and consider the $\mathrm{SU}(2)$-invariant framing to define the Gau{\ss} map. Homotopy classes of plane fields on $S^3$ are distinguished by an integer $h(\xi)\in\Z$, the Hopf invariant of its Gau{\ss} map defined in \cite{hopf}. We recall one definition of the Hopf invariant using the Thom-Pontrjagin construction. 

Let $f: S^3\lra S^2$ be smooth and $p\in S^2$ be a regular value. Fix an oriented basis of $T_p S^2$. Then $f^{-1}(p)$ is a submanifold of codimension $2$ in $S^3$ with normal bundle $\left(f\eing{f^{-1}(p)}\right)^*(T_pS^2)$. This normal bundle is framed using the preimage of the basis of $T_pS^2$ under $f$. The Hopf invariant $H(f)$ is the linking number of $f^{-1}(p)$ and a push-off of $f^{-1}(p)$ in the direction of the first component of the framing. This number is independent of choices other than the homotopy class of the framing and determines $f$ up to homotopy. 

\begin{ex}
The standard contact structure $TS^3\cap iTS^3$ on $S^3\subset\mathbb{C}^2$ is $\mathrm{SU}(2)$-invariant, just like the framing. Therefore, the Gau{\ss} map is constant  and the Hopf invariant vanishes in this case.
\end{ex}

A construction which is frequently used to change the homotopy class as plane field of a contact structure is the Lutz-twist. In order to describe it, recall that a transverse knot $K$ in a contact manifold has a tubular neighborhood $N(K)\simeq S^1\times D^2(\rho)$ such that the contact structure on $N(K)$ is isomorphic to the contact structure defined by $dz+r^2 d\vartheta$. Here $D^2(\rho)$ is a disc of radius $\rho$ and $(r,\vartheta)$ are polar coordinates on $D^2(\rho)$.

For two smooth functions $f,g$ on $[0,\rho]$ such that 
\begin{align*}
f(0) & =-1  & g(r) &=-r^2 \textrm{ near } r=0 \\
f(r) & = 1  & g(r)&=r^2 \textrm{ near } r=\rho 
\end{align*}
and $fg'-g'f>0$ we can define the contact structure
$$
\xi'=\left\{\begin{array}{ll} \ker(f(r)dz+g(r)d\vartheta) & \textrm{ on }S^1\times D^2(\rho) \\
\xi & \textrm{ on the complement of }N(K).
\end{array}\right.
$$
\begin{defn} \label{d:Lutz twist}
We say that $\xi'$ is the result of a {\em $\pi$-Lutz twist} along $K$. 
\end{defn}
The contact structure $\xi'$ is well-defined up to isotopy. When one reverses the orientation of $K$, the resulting knot is positively transverse to $\xi'$. 

In general, $\xi'$ is not homotopic to $\xi$ even as a plane field. In  Chapter 4.3 of \cite{geiges} it is explained how to determine the difference between the homotopy classes of $\xi$ and $\xi'$. What is relevant for us is that applying a $\pi$-Lutz twist along a null-homologous transverse knot $K$ in $S^3$ changes the Hopf invariant by $\mathrm{sl}(K)$.  The usual Lutz twist (or $2\pi$-Lutz twist) corresponds two consecutive $\pi$-Lutz twists applied to the same knot, it does not change the homotopy type of the plane field.

\begin{rem} \label{r:obvious ot}
The contact structure obtained by a $\pi$-Lutz twist along a transverse knot is always overtwisted since the discs $\{z\}\times D^2(\rho_0)$ are overtwisted when $(f(\rho_0),g(\rho_0))$ lies on the positive part of the $x$-axis ($\rho_0$ is uniquely determined because of $fg'-f'g>0$). 
\end{rem}

\begin{ex} \label{ex:Lutz}
Let $\xi=TS^3\cap iTS^3$ be the standard contact structure on $S^3\subset\mathbb{C}^2$. This contact structure is $\mathrm{SU}(2)$-invariant. The fibers of the Hopf fibration are transverse curves, each of them has self-linking number $-1$. After a $\pi$-Lutz twist along $k$ distinct fibers, one obtains a contact structure with Hopf invariant $(k-2)k$ (c.f. p.47 in \cite{elfr-long}). 

For $k=1$ we obtain the contact structure which we are interested in most in this article. 
\end{ex}

When the plane field varies continuously the same is true for the Gau{\ss} map. Therefore,  homotopy classes of $1$-parameter families $\xi_t, t\in[0,1],$ of coorientable plane fields on $S^3$ are in one-to-one correspondence with  the set of homotopy classes of maps $g: S^3\times [0,1]\lra S^2$ (with fixed boundary conditions).  This set is isomorphic to $\pi_4(S^2)\simeq \Z_2$  if $\xi_0=\xi_1$ and the framing on $S^3$ is chosen so that $g(\cdot,0)=g(\cdot,1)$  is constant. This is the situation we consider from now on. 

Following \S 23.4 in \cite{dfn} we  review how to determine to which element of $\pi_4(S^2)$ a given map $g : S^4\lra S^2$ corresponds. 


We assume that $g$ is smooth and pick a regular value $p$. Then  $\Sigma=g^{-1}(p)\subset S^4$ is a framed submanifold of codimension $2$.  For each oriented simple closed curve $\gamma$ in $\Sigma$ consider its normal vector field in $\Sigma$. Then $\gamma$ can be viewed as framed submanifold of $S^4$ of codimension $3$, so it represents an element $\phi(\gamma)$ in $\pi_4(S^3)\simeq\Z_2$. This element depends only on the homology class $\gamma\in H_1(\Sigma;\Z_2)$ and we have defined a map 
\begin{equation} \label{PHI}
\phi : H_1(\Sigma;\Z_2)  \lra \Z_2. 
\end{equation}
Given two simple closed curves $\gamma,\gamma'$ in $\Sigma$ which intersect transversely we can replace $\gamma\cup\gamma'$ by a collection of simple closed curves representing the homology class $\gamma+\gamma'$ by smoothing the intersection points. One then has
$$
\phi_g(\gamma+\gamma') = \phi_g(\gamma)+\phi_g(\gamma') + \gamma\cdot\gamma' \quad \mod 2
$$
where $\gamma\cdot\gamma'$ is the intersection pairing. 

Thus, $\phi_g : H_1(\Sigma;\Z_2) \lra \Z_2$ is a non-degenerate quadratic form.  Recall that the Arf invariant of a $\Z_2$-valued quadratic form on $H_1(\Sigma;\Z_2)$ takes values in $\Z_2$ and is non-trivial if and only if more than one half of the elements $\gamma\in H_1(\Sigma;\Z_2)$ have $\phi_g(\gamma)=1$. One can show that the Arf invariant of $\phi_g$ depends only on the homotopy class of $g : S^4 \lra S^2$.

K.~Dymara \cite{dym} used $\pi_4(S^2)\simeq\Z_2$ to define a continuous group homomorphism 
$$
\mathrm{d} : \mathrm{Diff}_+(S^3,\xi) \lra \Z_2
$$
on the group of orientation preserving contactomorphisms. This homomorphism can be used to show that the group of coorientation preserving contactomorphisms $\mathrm{Diff}_+(S^3,\xi)$ is not connected when $\xi$ is overtwisted.

We recall the definition from \cite{dym}. For $\psi\in\mathrm{Diff}_+(S^3,\xi)$ choose a family of diffeomorphisms $\psi_t$ such that $\psi_0=\mathrm{id}$ and $\psi_1=\psi$. Such a family exists because $\psi$ is orientation preserving and the group of orientation preserving diffeomorphisms of $S^3$ is connected according to Cerf's theorem \cite{cerf}.

Consider the loop $\psi_{t*}(\xi)$  in the space of oriented plane fields on $S^3$. (This is even a loop in the space of contact structures on $S^3$, but we ignore this fact.) Since the Euler class of $\xi$ in $H^2(S^3;\Z)=0$ vanishes, we can pick a trivialization of $\xi$ and extend it to a framing of $S^3$. 

Applying the Gau{\ss} map we obtain a loop in the space of maps from $S^3$ to $S^2$ based at the constant map. This loop  can be viewed as map $S^4\lra S^2$. Now define 
\begin{align*}
\mathrm{d} : \mathrm{Diff}_+(S^3,\xi) &  \lra \pi_4(S^2)\simeq \Z_2 =\{0,1\} \\
\psi &\longmapsto \left[\left(\psi_{t*}(\xi)\right)_{t\in[0,1]}\right].
\end{align*}
It is proved in \cite{dym} that this is a well-defined homomorphism on the group of connected components of $\mathrm{Diff}_+(S^3,\xi)$.  The proof in \cite{dym} that $\mathrm{d}$ is well-defined contains a minor gap since it is assumed that every contact structure  on $S^3$ is contactomorphic to a contact structure which is invariant under the standard $S^1$-action on $S^3$. However, the proof idea still works. In order to see this, one constructs a contact structure with Hopf invariant $k$ using $\pi$-Lutz twists along a collection of knots transverse to the standard contact structure  with self linking number $-1$ which are invariant under the contact diffeomorphism $(w,z)\longmapsto(w,-z)$ of $S^3\subset\mathbb{C}^2$.  

The following diagram summarizes the various groups involved in the construction of the invariant $\mathrm{d}$.
$$
\xymatrix{
\pi_1(\mathrm{Diff}(S^3,\mathrm{or}))=\Z_2  \ar[r] &  
\pi_1(\mathrm{Cont}(S^3,\xi)) \ar[r]_\delta \ar[d]  &   \pi_0(\mathrm{Diff}_+(S^3,\xi)) \ar@/_1pc/[l] \\
\mbox{} &
\pi_1(\mathrm{Distr}(S^3,\xi)) \ar[r]^{\textrm{Gau{\ss}-map}}_{\textrm{+  collapse}} & 
\pi_4(S^2)=\Z_2 
}
$$
Here $\mathrm{Distr}(S^3,\xi)$, respectively $\mathrm{Cont}(S^3,\xi)$,  denotes the plane fields homotopic to $\xi$, respectively the contact structures isotopic to $\xi$. The group of orientation preserving diffeomorphisms of $S^3$ is  $\mathrm{Diff}(S^3,\mathrm{or})$. It is homotopy equivalent to $\mathrm{SO}(4)$ by \cite{smale conj}. 

The upper line is part of the long exact sequence of the fibration $\mathrm{Diff}(S^3,\mathrm{or}) \lra \mathrm{Cont}(S^3,\xi)$ given by $\varphi\longmapsto \varphi_*(\xi)$, the vertical arrow is induced by the inclusion and the map  $\pi_0(\mathrm{Diff}_+(S^3,\xi))\lra\pi_1(\mathrm{Cont}(S^3,\xi))$ is the section of $\delta$ chosen above. 
 
\begin{rem} \label{r:dym surj}
When $\xi$ is overtwisted, it follows from \lemref{l:families}  that there is a family of contact structures $\xi_t,t\in[0,1],$ with $\xi=\xi_0=\xi_1$  representing a non-trivial loop in the space of pane fields on $S^3$. By Gray's theorem there is a family $\psi_t$ of diffeomorphisms of $S^3$ such that $\psi_{t*}\xi=\xi_t$. Then  $\mathrm{d}(\psi_1)=1$, i.e. $\mathrm{d}$ is surjective. 

If one chooses the homotopy $\xi_t$ so that it is constant on a closed ball $B$ containing an overtwisted disc $\Delta$ such that $S^3\setminus B$ is still overtwisted one can achieve that the support of $\psi_1$ is contained in the complement of $B$.
\end{rem}

\section{The classification of minimal non-loose unknots in $(S^3,\xi)$} \label{s:class}

In this section we prove a basic result of this paper:  We classify minimal non-loose Legendrian unknots in $S^3$ up to Legendrian isotopy. For this, we first review the classification of non-loose unknots up to contact diffeomorphism from \cite{elfr-long, etnyre-ot} where  it is shown that every non-loose unknot $K$ with $\tb(K)=1$ in $S^3$ carrying an overtwisted contact structure is diffeomorphic to the example $K$ discussed at the beginning of \secref{ss:fronts}. Using
\begin{itemize} 
\item the coarse classification,
\item some particular contact diffeomorphisms constructed in \secref{ss:contiso}, and
\item Eliashberg's theorem on overtwisted contact structures
\end{itemize}
we prove in  \secref{s:non or} that $K$ is isotopic to any other minimal non-loose unknot. However, this isotopy does not respect orientations of these knots in general. The proof that $K$ and $\overline{K}$ are not isotopic as oriented Legendrian can be found in \secref{s:or} (here $K$ is a minimal non-loose unknot and $\overline{K}$ is the dame knot with the reversed orientation). 

Except for the coarse classification we will consider only non-loose Legendrian unknots $K$ with $\tb(K)=1$ in this section. Non-loose Legendrian unknots with higher Thurston-Bennequin number will be classified in \secref{ss:Leg class}.

\subsection{The coarse classification} \label{ss:coarse}
In this section we recall the coarse classification of non-loose unknots in $S^3$, i.e. the classification up to contactomorphism from \cite{elfr-long} and \cite{etnyre-ot}.  The following proposition summarizes the information which is needed to put a Seifert disc of a non-loose Legendrian unknot in a standard form. This implies that $(S^3,\xi,K)$ is diffeomorphic to $(S^3,\xi_{-1},K_{1,1})$ described in \secref{ss:fronts} and that this diffeomorphism can be chosen to preserve fixed orientation of the contact structures.  

The proposition is a consequence of the classification of tight contact structures on the solid torus (from \cite{gi-bif,honda}), applied to the complement of a tubular neighborhood of a non-loose unknot in $S^3$ \cite{elfr-long}

\begin{prop} \label{p:n-1} 
Let $\xi$ be a contact structure on $S^3$ and $K\subset S^3$ a non-loose unknot with $\tb(K)=n>0$. Then $\mathrm{rot}(K)=\pm(n-1)$. 
\end{prop}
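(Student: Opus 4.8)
The plan is to study the complement of $K$ as a tight solid torus and to read off $\mathrm{rot}(K)$ from the characteristic foliation of a Seifert disc. First I would realize $K$ as a Legendrian knot with a standard convex neighborhood $N(K)$, so that $V=S^3\setminus N(K)$ is a solid torus; since $K$ is non-loose, $\xi|_V$ is tight. I would then make $\partial V$ convex. Because $N(K)$ is a standard Legendrian neighborhood, its dividing set consists of two parallel curves, and their slope is pinned down by $\tb(K)=n$: in coordinates in which $\mu$ is the meridian of $N(K)$ and $\lambda$ the Seifert longitude, the dividing curves are parallel to $\lambda+n\mu$, while the meridian of the complementary solid torus $V$ is $\lambda$ itself. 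It is worth stressing that $\mathrm{rot}(K)$ is not determined by $N(K)$ alone — the contactomorphism type of a standard neighborhood only records $\tb$ — so the rotation number is genuinely a function of the tight filling on $V$.

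For the upper bound I would simply apply {\'S}wi{\k{a}}tkowski's inequality (\thmref{t:swiatkowski}): for the Seifert disc $\Sigma$ one has $\chi(\Sigma)=1$ and $|\tb(K)|=n$, hence $-n+|\mathrm{rot}(K)|\le -1$, i.e.\ $|\mathrm{rot}(K)|\le n-1$. The content of the proposition is therefore the reverse inequality $|\mathrm{rot}(K)|\ge n-1$. To obtain it I would invoke the classification of tight contact structures on the solid torus (Giroux \cite{gi-bif}, Honda \cite{honda}) with the boundary data found above, following \cite{elfr-long}, in order to bring the characteristic foliation $\xi(\Sigma)$ into a normal form. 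A key point is that $\Sigma$ cannot be made convex: a convex Seifert disc would force $\tb(K)\le -1$, whereas $\tb(K)=n>0$. This failure of convexity is exactly the manifestation of non-looseness, and it is controlled by Giroux's tomography (\lemref{l:rgc}, \lemref{l:birth}, \lemref{l:parametric elim}) applied to the movie of characteristic foliations on the discs cutting across $V$.

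With the normal form fixed, I would read off the rotation number from the singularities of $\xi(\Sigma)$ via \eqref{e:rot number}, $\mathrm{rot}(K)=e_+-e_--h_++h_-$, together with the Poincar\'e--Hopf index relation $(e_++e_-)-(h_++h_-)=\chi(\Sigma)=1$. The tight filling of $V$ dictates the numbers $e_\pm,h_\pm$; the expected outcome is that the interior singularities of one sign cancel in the index count while those of the other sign are as numerous as possible, so that $|\mathrm{rot}(K)|$ attains the upper bound $n-1$. Combined with \thmref{t:swiatkowski} this yields $\mathrm{rot}(K)=\pm(n-1)$, the sign distinguishing the two possible tight fillings (equivalently the two standard models $K_{n,1}$ and $K_{1,n}$ of \secref{ss:fronts}). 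The main obstacle is precisely this last step: establishing the normal form and proving that the singularity data of the non-convex disc must be extremal. This is where the real work lies, since one cannot appeal to convex surface theory directly but must instead track the bifurcations of the movie, as in \cite{elfr-long, gi-bif}; careful bookkeeping of orientations and of the sign conventions relating the boundary slope, the relative Euler class of $V$, and $\mathrm{rot}(K)$ will also be required.
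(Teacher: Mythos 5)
Your proposal is correct and takes essentially the same route as the paper, which likewise disposes of this proposition in one stroke by applying the Giroux--Honda classification of tight contact structures on the solid torus (\cite{gi-bif}, \cite{honda}) to the complement $S^3\setminus N(K)$ with the boundary data dictated by $\tb(K)=n$, following \cite{elfr-long} --- exactly your reduction, including the identification of the dividing slope on $\partial N(K)$ and of $\lambda$ as the meridian of the complement. Your additional use of {\'S}wi{\k{a}}tkowski's inequality (\thmref{t:swiatkowski}) for the bound $|\mathrm{rot}(K)|\le n-1$ is a harmless supplement, since the solid-torus classification already forces the extremal values $\pm(n-1)$ on its own.
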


From this we now deduce the coarse classification of non-loose unknots in $S^3$. The definition of $K_{m,n}$ can be found in \secref{ss:fronts}.

\begin{thm} \label{t:coarse unknot general}
Let $\xi$ be an overtwisted contact structure on $S^3$ and $K$ a non-loose Legendrian unknot with $\tb(K)=n>0$ and $\mathrm{rot}(K)=\pm(n-1)$. Then $(S^3,\xi,K)$ is diffeomorphic to $(S^3,\xi_{-1},K_{\pm 1,\pm n})$. 
\end{thm}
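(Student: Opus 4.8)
The plan is to analyze $(S^3,\xi,K)$ through the tight solid torus obtained by removing a standard neighborhood of $K$, following the approach of \cite{elfr-long,etnyre-ot}. Since $K$ is non-loose, $S^3\setminus K$ is tight; choosing a standard convex tubular neighborhood $N(K)\cong S^1\times D^2$ of $K$, the complement $V=S^3\setminus\mathrm{int}\,N(K)$ is a solid torus carrying a tight contact structure whose germ along $\partial V$ I will control explicitly. On the convex torus $T^2=\partial N(K)$ I use the basis $(\mu,\lambda)$, where $\mu$ bounds a meridian disc of $N(K)$ and $\lambda$ is the Seifert longitude coming from the disc bounded by the unknot $K$. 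Because $\tb(K)=n$, the dividing set of $T^2$ consists of two parallel curves whose slope records that the contact framing differs from $\lambda$ by $n$ meridians. Since $K$ is an unknot, the meridian of the complementary solid torus $V$ is exactly $\lambda$, so the boundary dividing data of $V$ is completely determined by $n$.

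The next step is to feed this into the classification of tight contact structures on solid tori (\cite{gi-bif,honda}). With the boundary slope fixed as above, the tight contact structures on $V$ that glue back to give $S^3$ (rather than a lens space) form a short, explicit list indexed by the continued-fraction/relative Euler class data permitted by the boundary. On this list the rotation number of $K$ is computed from the relative Euler class of the tight structure on $V$ evaluated against the Seifert disc; by \propref{p:n-1} this must equal $\pm(n-1)$, which is the extremal value. The extremality pins down the relative Euler class uniquely up to the sign $\pm$, and hence selects a single tight contact structure on $V$ up to contactomorphism for each sign. I will also verify that the resulting $\xi$ is overtwisted exactly in these cases, so no tight or loose possibilities survive.

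Finally I would produce the diffeomorphism to the model. The contact structure $\xi_{-1}$ together with $K_{\pm1,\pm n}$ from \secref{ss:fronts} realizes precisely this boundary slope and extremal relative Euler class: removing a neighborhood of $K_{\pm1,\pm n}$ yields the same tight solid torus. Since the tight filling of $V$ is unique up to contactomorphism and the gluing reconstructing $S^3$ is standard, the identity on $V$ extends to a contactomorphism $(S^3,\xi,K)\to(S^3,\xi_{-1},K_{\pm1,\pm n})$, which I arrange to preserve the orientation of the plane field (both structures lie in the homotopy class of the negative tight contact structure, by the Hopf-invariant computation of \secref{s:homotopy}). The main obstacle is the bookkeeping of the second step: correctly translating $\tb(K)=n$ and $\mathrm{rot}(K)=\pm(n-1)$ into boundary slope and relative Euler class for $V$, and showing that the non-loose/overtwisted constraints single out exactly the two extremal tight structures. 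The sign ambiguity there is what produces the $\pm$ in the statement and must be tracked consistently through every orientation convention.
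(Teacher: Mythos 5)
Your route is correct in outline, but it is genuinely different from the proof printed in the paper: it is essentially the original argument of Eliashberg--Fraser and Etnyre, which this paper cites only as the source of \propref{p:n-1}. You cut $S^3$ along the convex boundary of a standard neighborhood $N(K)$ and classify tight structures on the complementary solid torus $V$; since the dividing curves of $\partial N(K)$ lie in the class $\lambda+n\mu$ and the meridian of $V$ is $\lambda$, the boundary slope of $V$ normalizes under meridional twists (in Honda's conventions) to $-n/(n-1)$ for $n\ge 2$, whose continued fraction expansion $[-2,\ldots,-2]$ of length $n-1$ yields by Honda's formula exactly two tight structures on $V$, distinguished by the sign of the relative Euler class and hence realizing $\mathrm{rot}(K)=\pm(n-1)$; for $n=1$ the slope is $-1$ and the tight structure is unique, matching the coincidence of the two signs when $\mathrm{rot}=0$. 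Two remarks on your plan: first, this count makes \propref{p:n-1} a corollary of your second step rather than an input, and there are no non-extremal cases to exclude --- likewise overtwistedness of the glued $\xi$ is automatic, since $\tb(K)=n>0$ violates \eqref{e:tbineq}, and your parenthetical about gluing ``rather than a lens space'' is vacuous because the smooth gluing is fixed, so every structure on the list closes up to $S^3$. Second, the step you must make explicit is that the models realize both signs: $\tb(K_{\pm 1,\pm n})=n$ and $\mathrm{rot}(K_{\pm 1,\pm n})=\pm(n-1)$ follow from \eqref{e:front tb} and \eqref{e:front rot}, while non-looseness of $K_{\pm 1,\pm n}$ follows from \lemref{l:non-loose stable} together with the tightness of the complement of $K_{1,1}$ established in \secref{ss:fronts}. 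By contrast, the paper proves the theorem by normalizing the characteristic foliation on a Seifert disc of $K$: boundary hyperbolic points are converted or split, interior hyperbolic points eliminated, and a count against \eqref{e:rot number} forces $e_+=n$, $e_-=1$ and a characteristic foliation unique up to homeomorphism. Your appeal to the solid-torus classification is more economical for the coarse statement, but the paper's disc normal form is exactly what the later sections consume (\figref{b:n=1}, \corref{c:non-loose exist}, and the analysis of retrogradient connections underlying the isotopy classification in \secref{s:non or} and \secref{s:or}), so your route recovers the classification up to diffeomorphism but not the byproduct the rest of the paper actually needs.
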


 We give the proof of this because it yields a normal form for the characteristic foliation on the Seifert surface of a non-loose unknot which will be used later. 

\begin{proof}
Let $\xi$ be an overtwisted contact structure on $S^3$ and $K$ a non-loose oriented Legendrian unknot. We assume $\tb(K)=n>0$ because of \lemref{l:tb>0}.  Choose an oriented spanning disc for $K$. We will simplify the characteristic foliation on $D$ to bring it in a standard form. For this we assume that $D$ is generic so that the singular points of $\xi(D)$ are either elliptic or hyperbolic. 

We first consider the singular points of $\xi(D)$ along $\partial D$ and simplify the characteristic foliation as follows. 
\begin{itemize}
\item Positive hyperbolic points on $\partial D$ whose unstable leaves lie on $\partial D$ are replaced by positive elliptic points (cf. \figref{b:split-half} on p.~\pageref{b:split-half}).
\item Negative hyperbolic points on $\partial D$ whose stable leaves lie on $\partial D$ are replaced by negative elliptic points. 
\end{itemize}

After this, a negative (positive) elliptic singularity is connected  to singularities which are positive (negative) elliptic  or  negative (positive) hyperbolic. The second case can be further simplified using the elimination lemma which provides a deformation of $D$ canceling a hyperbolic point with an elliptic singularity of the same sign while keeping the boundary of $D$ fixed  throughout the deformation.

From now on we assume that there are no canceling pairs of singularities left on $\partial D$. But then, if there is an elliptic singularity left, all singular points on $\partial D$ are elliptic, they alternate between negative and positive along $\partial D$, and the Thurston-Bennequin invariant of $K$ is negative (this can be seen by a direct computation in a model). If $\partial D$ contains no singular points, then $\tb(K)=0$ which is impossible for a non-loose Legendrian unknot in an overtwisted contact manifold. Therefore, $\xi(D)$ has only hyperbolic points on the boundary which alternate between positive and negative and have retrogradient connections between them.

The interior of $D$ can be simplified further. Generically, there are no connections between hyperbolic points such that the connecting separatrix lies in the interior of $D$. Then  every unstable leaf of a negative hyperbolic singularity has a negative elliptic singularity as its $\omega$-limit set since $\xi$ is tight on $M\setminus K$. Therefore, all negative hyperbolic points of $\xi(D)$ in the interior of $D$ can be eliminated and the same is true for the positive hyperbolic points in the interior of $D$. 

We assume $\mathrm{rot}(K)\ge 0$. Because $K$ is non-loose, $\xi(D)$ has no closed leaf in the interior of the disc which could act as sink or source. But only finitely many leaves are stable or unstable leaves of hyperbolic singular points. So $\xi(D)$ must have at least one positive and at least one negative singular point in the interior of $D$ to act as sink or source for infinitely many leaves of $\xi(D)$. According to \eqref{e:rot number} (on p.~\pageref{e:rot number})
$$
\mathrm{rot}(K)=e_+-e_-=n-1. 
$$ 
Therefore, there are at least $n$ positive elliptic singularities. But there cannot be more since on the boundary, there are exactly $n$ singular points whose unstable leaves can come from positive elliptic singularities. If no leaf coming from an elliptic singular point ends at a hyperbolic point, then the basin of this elliptic singularity is either a sphere or it is the entire disc. But this is not the case since the disc $D$ contains  elliptic singular point of both signs.

It follows that $e_+=n$ (and $e_-=1$) and every positive elliptic singular point is connected to a negative hyperbolic point in $\partial D$. All unstable leaves of positive hyperbolic on $\partial D$ end at the same negative elliptic singular point  in the interior. This determines the characteristic foliation on $D$ up to homeomorphism (c.f. the middle part of \figref{b:n=1} for $n=1$).
\end{proof}

\begin{defn} \label{d:p-}
Assume that $S^2\subset(S^3,\xi)$ such that the characteristic foliation has only isolated singularities.  
Let $\Gamma_-^*$ be $\Gamma_-$ with unstable leaves connecting to positive hyperbolic singularities removed and $\Gamma'$ a connected component of $\Gamma_-^*$. By $e_-(\Gamma')$, respectively $h_-(\Gamma')$, we denote the number of singularities of $\xi(S^2)$ in $\Gamma'$ whose index is $+1$, respectively $-1$.  Using $\Gamma_+$ instead of $\Gamma_-$  one defines $e_+(\Gamma'),h_+(\Gamma')$ similarly.
\end{defn}

In general, if $\Gamma_{t,-}$ is a tree for all $t\neq t_0$ close to $t_0$ and some unstable leaves of negative  singularities take part in a retrogradient connection on $S^2\times\{t_0\}$, then for each connected component $\Gamma'$ of $\Gamma_{t_0,-}^*$ 
\begin{equation} \label{e:chi Gamma'}
e_-(\Gamma')-h_-(\Gamma') = 1 - \textrm{number of retrogradient connections starting at }\Gamma'.
\end{equation}
Since $\Gamma_t$ is a tree for $t\neq t_0$, the right hand side of \eqref{e:chi Gamma'} is $\pm 1$ or $0$. 
 
Consider the particular case when $S^2\times\{t_0\}$ contains a non-loose unknot $K$ with $\mathrm{tb}(K)=1$. From the proof of \thmref{t:coarse unknot general} it follows that 
$$
e_-(\Gamma')-h_-(\Gamma')=\pm 1
$$
and that $\Gamma'$ is a tree for all components $\Gamma'$ of $\Gamma_-^*$. There is exactly one component $\Gamma'$ such that $e_-(\Gamma')-h_-(\Gamma')=-1$ and this is the component where both unstable leaves forming the retrogradient connections on $S^2$ start.

The converse of this statement follows immediately from the proof of \thmref{t:coarse unknot general}. 
\begin{cor} \label{c:non-loose exist}
Let $S^2$ be a sphere in a contact manifold $(M,\xi)$ such that the characteristic foliation has 
\begin{itemize}
\item precisely two retrogradient connections,
\item the corresponding unstable leaves start at the same connected component of $\Gamma_-^*$, 
\item $\Gamma_-^*$ has exactly three connected components which are all trees, and
\item $\xi(S^2)$ has no closed leaf. 
\end{itemize}
Then $\xi(S^2)$ contains a minimal unknot which is non-loose in a small tubular neighborhood of the sphere. 
\end{cor}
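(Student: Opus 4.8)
My plan is to read the proof of \thmref{t:coarse unknot general} backwards: the four hypotheses are exactly the combinatorial fingerprint that the normal form for the Seifert disc of a minimal non-loose unknot leaves on $\xi(S^2)$, so the real task is to reconstruct that disc, realize its boundary as a Legendrian unknot, and verify tightness of the complement in a thickening.

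First I would pin down the global bookkeeping of $\xi(S^2)$. Since there are no closed leaves, the Poincar\'e--Bendixson theorem makes $\xi(S^2)$ gradient-like, with every leaf limiting on singular points. Applying \eqref{e:chi Gamma'} to the three components of $\Gamma_-^*$, the distinguished component $\Gamma_0'$ at which both retrogradient connections start satisfies $e_-(\Gamma_0')-h_-(\Gamma_0')=1-2=-1$, while the other two contribute $+1$ each; summing gives $e_--h_-=1$, and Poincar\'e--Hopf ($\sum\mathrm{index}=\chi(S^2)=2$) then forces $e_+-h_+=1$. The two retrogradient connections run from negative hyperbolic points of $\Gamma_0'$ to positive hyperbolic points, and I would argue that together with their hyperbolic endpoints they cut out a closed disc $D\subset S^2$ whose interior carries exactly one positive and one negative elliptic point, i.e. precisely the foliation in the middle of \figref{b:n=1}. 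The cleanest way to see this is that after eliminating all canceling hyperbolic--elliptic pairs (using that the three components are trees and that no closed leaf obstructs the elimination), the only configuration compatible with the index count and with both retrogradient connections emanating from a single component is the normal form.

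Next I would produce the knot. The union of the two retrogradient connections with their hyperbolic endpoints is a circle $C$ tangent to $\xi$ everywhere, since the arcs are leaves and the singular points satisfy $T_pS^2=\xi$. Reversing the boundary modification of \figref{b:split-half}, or perturbing $C$ slightly off the hyperbolic points into a nearby Legendrian curve, I obtain an embedded Legendrian unknot $K=\partial D$; it is unknotted because it bounds $D\subset S^2$. That $K$ is minimal, $\tb(K)=1$, then follows by reading the interior count $e_+=e_-=1$ against the boundary singularities exactly as in \thmref{t:coarse unknot general}, equivalently by matching the resulting germ with the model $(S^3,\xi_{-1},K_{1,1})$ of \secref{ss:fronts}.

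The substantive point, and the step I expect to be the main obstacle, is non-looseness, i.e. that the complement of $K$ in $N\simeq S^2\times[-\eps,\eps]$ is tight. Here I would use Giroux's tomographie: since $C$ consists precisely of the two retrogradient connections, the knot $K$ absorbs all of the non-convex behaviour of $\xi(S^2)$, so that after removing $K$ and passing to a generic product decomposition of $N\setminus K$ every sphere slice is convex with connected dividing set, whence $N\setminus K$ is tight by \thmref{t:bennequin}. The delicate part is to make the slicewise elimination compatible across the whole movie: by \lemref{l:rgc} the two connections resolve with definite sides for $t\gtrless 0$, and I must apply \lemref{l:parametric elim} on a neighborhood disjoint from $K$ while checking the hypotheses on the defining $1$-forms and ruling out closed leaves born under the deformation---essentially the bookkeeping of \lemref{l:only one} carried out in the complement. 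Finally, that $K$ is genuinely non-loose rather than lying in a tight thickening reflects the failure of \remref{r:more than one in only one} for $\Gamma_0'$, where $e_-(\Gamma_0')<h_-(\Gamma_0')$ obstructs removing the two connections and forces the normal-form alternative of \figref{b:n=1}; this is exactly the mechanism by which an overtwisted disc appears in $N$.
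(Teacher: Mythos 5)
Your bookkeeping at the start is sound and matches the paper's: with no closed leaves, \eqref{e:chi Gamma'} gives $e_-(\Gamma')-h_-(\Gamma')=-1$ for the distinguished component $\Gamma_0'$ and $+1$ for the other two, so $e_--h_-=1$ and Poincar\'e--Hopf gives $e_+-h_+=1$. The genuine gap comes next, where you assert that the two retrogradient connections together with their hyperbolic endpoints form a circle $C$, and that ``the only configuration compatible with the index count and with both retrogradient connections emanating from a single component is the normal form.'' Neither holds: in general the two connections have four distinct hyperbolic endpoints (two negative saddles in $\Gamma_0'$, two positive saddles), and the hypotheses place no constraint at all on the positive graph. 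Concretely, consider a negative hyperbolic point $h$ whose two unstable leaves are the retrogradient connections, ending at positive hyperbolic points $p_1\neq p_2$ lying in \emph{different} components of $\Gamma_+^*$, with the other stable leaf of each $p_i$ coming from a positive elliptic point $E_i$, a third positive elliptic point $E_3$ feeding both stable leaves of $h$, and negative elliptic sinks $e_1^-,e_2^-$ absorbing the unstable leaves of $p_1,p_2$. This singular foliation has nonvanishing divergence at all singular points, hence arises as $\xi(S^2)$ for a germ of a contact structure (by the realization result of \cite{gi-conv} quoted in the proof of \lemref{l:homotopy ex}); it satisfies all four hypotheses ($\Gamma_-^*=\{h\}\sqcup\{e_1^-\}\sqcup\{e_2^-\}$: three tree components, both unstable leaves starting at $\{h\}$, no closed leaf) and has the same index count, yet cancelling the same-sign pairs $(p_1,E_1)$ and $(p_2,E_2)$ via \lemref{l:elim}/\lemref{l:parametric elim} removes both retrogradient connections completely, so as in \lemref{l:only one} a small tubular neighborhood is tight and contains no non-loose knot at all. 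This is exactly the second branch of the dichotomy in \remref{r:more than one in only one}, and your argument never rules it out. What the paper's (implicit) construction actually uses, made explicit in the proof of \thmref{t:different or, not isotopic}, is control of the positive side as well: the two connections must terminate in a single component $\Gamma_+'$ of $\Gamma_+^*$ with $e_+(\Gamma_+')+1=h_+(\Gamma_+')$, and the unknot is then the embedded piecewise smooth Legendrian cycle formed by the two connections together with the unique embedded paths in the trees $\Gamma_-'$ and $\Gamma_+'$; it lies in $\xi(S^2)$ itself, with no preliminary elimination.

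The non-looseness step also does not work as written: $N\setminus K$ is not diffeomorphic to a product $S^2\times{}$interval and admits no decomposition into spheres, so \thmref{t:bennequin} cannot be applied slicewise to it; removing $K$ from the middle sphere destroys the sphere fibration rather than making its slices convex. The argument the paper intends by ``follows from the proof of \thmref{t:coarse unknot general}'' is model matching: after using \lemref{l:parametric elim} inside the tree components to bring $\xi(S^2)$ to the normal form in the middle of \figref{b:n=1}, the characteristic foliation determines the germ of $\xi$ along the sphere, which therefore agrees with the germ along a sphere through $K_{1,1}$ in $(S^3,\xi_{-1})$; a sufficiently small tubular neighborhood $N$ then embeds in the model with $K$ mapping to $K_{1,1}$, so $N\setminus K$ embeds in $S^3\setminus K_{1,1}$, which is universally tight by \secref{ss:fronts}, while $N$ itself is overtwisted because $K$ bounds a disc in the sphere and $\tb(K)=1$ violates \eqref{e:tbineq}; tightness transfers back through the elimination by Gray's theorem, as in \lemref{l:only one}. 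Your closing observation that $e_-(\Gamma_0')<h_-(\Gamma_0')$ obstructs full elimination is correct and is indeed the paper's mechanism, but by itself it certifies neither the existence of the cycle nor tightness of the complement; the positive-side condition and the embedding into the model do.
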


\begin{rem}
Recall that the positive/negative stabilization of a non-loose Legendrian unknot $K$ with $\tb(K)>1$ results in a non-loose Legendrian unknot $K'$ with $\tb(K')=\tb(K)-1$ and $\mathrm{rot}(K')=\mathrm{rot}(K)\pm 1$.  

By \thmref{t:coarse unknot general} the positive stabilization of $K_{1,n}$ is loose, and we have seen before that the negative stabilization of $K_{1,n}$ is non-loose when $n>1$.  
\end{rem}

\begin{center}
\begin{figure}[htb]
\includegraphics[scale=0.7]{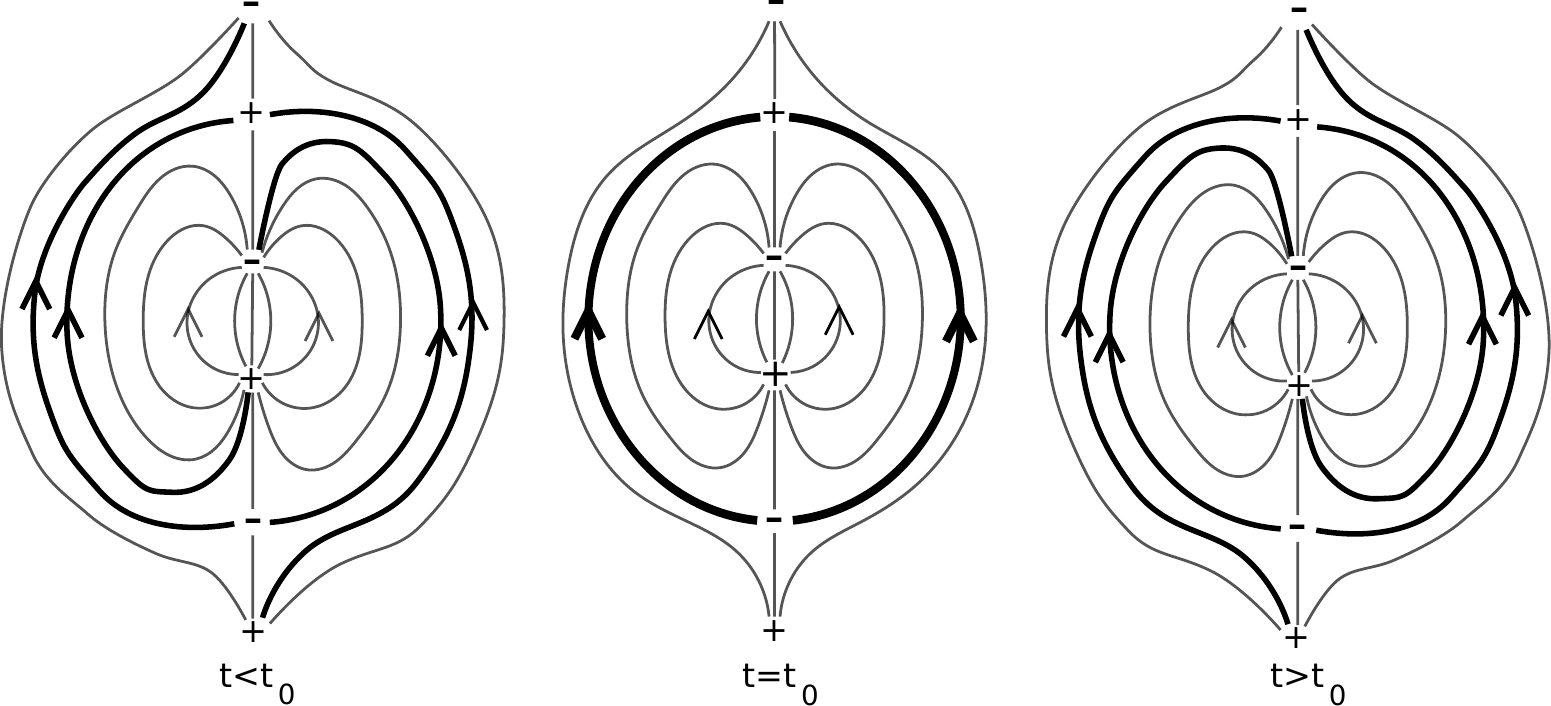}
\caption{Examples of characteristic foliations on spheres.  The thickened lines in the middle figure form a minimal non-loose unknot.}
\label{b:n=1}
\end{figure}
\end{center}

Assume that $S^2\times\{t_0\}$ contains a non-loose unknot and that the characteristic foliation on each disc bounding the knot in  $S^2$ is simplified as in the proof of the previous Theorem, i.e. there are exactly two hyperbolic singularities and four elliptic ones. It is interesting to consider the characteristic foliation on nearby spheres. \figref{b:n=1} shows the characteristic foliation on $S^2\times\{t\}$ for $t<t_0$ (on the left) and $t>t_0$ (on the right) where $t$ is close to $t_0$. \lemref{l:rgc}  describes the behavior of stable/unstable leaves of the two hyperbolic singularities, and the presence of a connection between two hyperbolic points is a codimension $1$ phenomenon. In view of this, the presence of a minimal non-loose unknot of $(S^3,\xi_{-1})$ on an embedded sphere is a codimension two phenomenon. 

Overtwisted discs are not directly visible in \figref{b:n=1} although there is an obvious Legendrian knot violating the Thurston-Bennequin inequality.

In order to see  overtwisted discs appear more explicitly, we will use a  schematic representation of the characteristic foliation on spheres. This representation will be employed later, too.  
Consider a tubular neighborhood $S^2\times(t_0-\varepsilon,t_0+\varepsilon), \varepsilon>0,$ of an embedded sphere $S^2=S^2_{t_0}=S^2\times\{0\}$ in a contact manifold $(M,\xi)$ and assume that
\begin{itemize}
\item $\xi(S^2_{t_0})$ has a retrogradient saddle-saddle connection,
\item the product decomposition on the tubular neighborhood is generic  in the sense that all singular points of $\xi(S^2_{t_0})$ are non-degenerate, and
\item $\xi(S^2_{t_0})$ has no closed leaf.
\end{itemize}  
Note that we do not require that there is only one retrogradient connection for $t=0$ (this case is not really interesting because of \lemref{l:only one}). 

As in the proof of \lemref{l:only one} we consider the graph $\Gamma_{t,-}$ consisting of unstable leaves of negative points of $\xi(S^2_t)$. Solid lines will represent the connected components obtained from $\Gamma_{t,-}$ by removing all stable leaves which do take part in a retrogradient connection at $t=0$. We are only interested in the homotopy type of these components. In all cases under consideration these components will be simply connected, and we represent them by solid lines. 

The unstable leaves which do take part in a retrogradient connection at $t=0$ are dashed. We will always draw the graph before and after  the retrogradient connections occur, and we will use dotted lines to indicate where a stable leaf of $\Gamma_{t,-}, t<0,$ which takes part in the retrogradient connection ends after a retrogradient connection in the future.  

From such a diagram one can read of the number of connected components of $\Gamma_{t,-}$ and their Euler characteristics before and after one or more retrogradient connections occur.  Although $\Gamma_{t,+}$ is not represented, a lot of information about $\Gamma_{t,+}$ can be recovered from the diagrams:  Under our assumptions, $\Gamma_{t,+}$ is homotopy equivalent to $S^2\setminus\Gamma_{t,-}$ for $t\neq 0$. In particular, these diagrams can be used to determine whether  $S^2_t$ contains an overtwisted disc of $\xi$.

Let $S^2_{t_0}\subset(S^2\times[-1,1],\xi)$ be a sphere whose characteristic foliation is homeomorphic to the one shown in \figref{b:n=1}. We want to study nearby spheres $S^2_t$ of the product decomposition.  The union of the dashed and solid lines in the top diagram represents $\Gamma_{t,-}$ for $t<t_0$ while the dotted lines represent the future positions of the unstable leaves in the retrogradient connection. The bottom diagram shows $\Gamma_{t,-}$ for $t>t_0$. The solid straight horizontal line in the middle of each of the four diagrams in \figref{b:non-loose} corresponds to the negative hyperbolic singularity in \figref{b:n=1}, each of the other two solid arcs represents one of the negative elliptic singularities in \figref{b:n=1}.

The situation when two retrogradient connections occur at the same time in a movie is unstable: After a small perturbation of the product structure on $S^2\times[-1,1]$, the retrogradient connections (marked with $A,B$) will still occur, but they will lie on different spheres $S^2\times\{t_A\},S^2\times\{t_B\}$.  The left diagram in the middle row represents $\Gamma_{t,-}$ for $t\in(t_A,t_B)$ for $t_A<t_B$. Finally, the diagram on the right-hand side is analogous for the case $t_B<t_A$. 
In both cases there are obvious overtwisted discs.


\begin{center}
\begin{figure}[htb]
\includegraphics[scale=0.8]{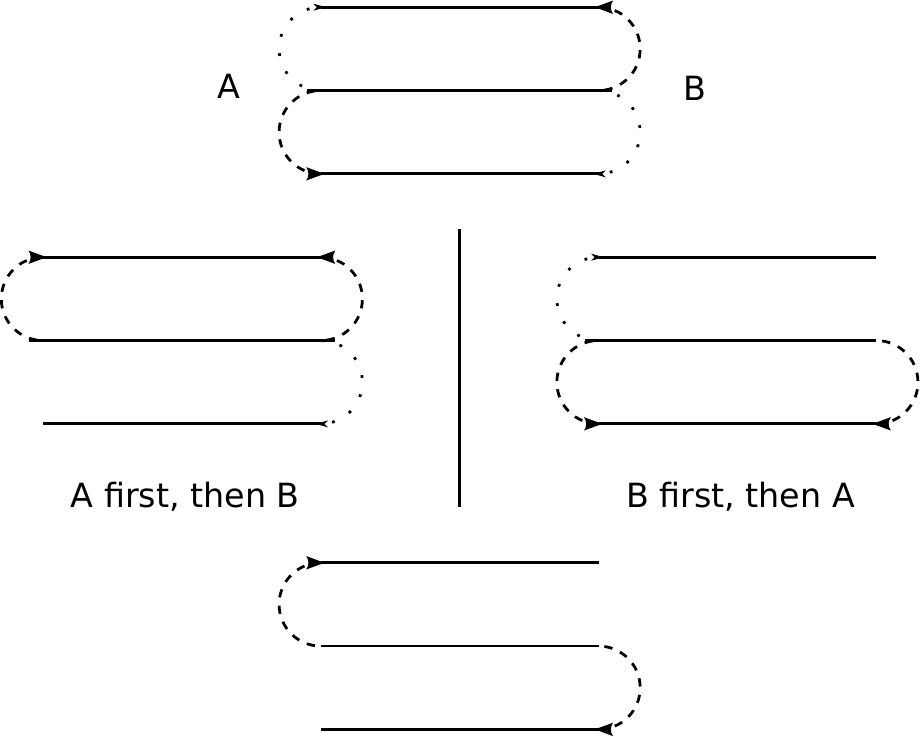}
\caption{Schematic representation of the movie on the spheres in a product neighborhood of a sphere carrying a minimal non-loose Legendrian unknot with two possible resolutions.}
\label{b:non-loose}
\end{figure}
\end{center}

In the following remark we consider minimal non-loose unknots on spheres such that one of the singular points of the characteristic foliation is degenerate. The assumptions made here are very specific and tailored to the application we have in mind.  
\begin{rem}  \label{rem:degenerate sing}
Assume that a minimal non-loose unknot is contained in an embedded sphere $S^2_{t_0}$  containing exactly two retrogradient connections and that all singular points of $\xi(S^2_{t_0})$ are isolated.  Recall that the index of such singularities has to be $\pm 1$ or $0$ and fix a tubular neighborhood $N=S^2\times[t_0-\varepsilon,t_0+\varepsilon]$. We assume that $N$ is so thin that $\xi(S^2_{t_0})$ is Morse-Smale for all $t\neq t_0$. 

Later, it will be relevant  to consider the case when there is a single degenerate negative singularity $x$ in $\Gamma'$ with index zero such that one of its unstable leaves takes part in the retrogradient connection with the positive singularity $z$. Then $e_-(\Gamma')-h_-(\Gamma')=-1$ and $x$ is the endpoint of an unstable leaf $\gamma'$ of a hyperbolic singularity $x'$ in $\Gamma'$.

If one perturbs the product structure on $N$ slightly relative to a neighborhood of $\partial N$, then there is a retrogradient connection on one of the spheres of the perturbed product decomposition involving the same stable leaf of $z$ but it may happen that  this stable leaf is now the unstable leaf of $x'$.   

Of course, the discussion when a positive singular point is degenerate is completely analogous. 
\end{rem}

\subsection{Contact diffeomorphisms preserving non-loose unknots} \label{ss:contiso}
For the proof of our classification results we need  a contact diffeomorphism $$
\psi : (S^3,\xi_{-1})\lra (S^3,\xi_{-1})
$$ which preserves $K$, the orientation of the plane field and has $\mathrm{d}(\psi)\neq 0$. This will be the content of the second example in this section, the first is a preparation for the second. We will use the consequences of the Thom-Pontrjagin construction which were outlined in \secref{s:homotopy}.

 Before explaining the construction, we note that some of the material presented here could be rephrased using the methods developed by Y.~Huang in \cite{huang-ot, huang}.

First, we consider how a particular operation on a given contact structure affects the Hopf invariant. This construction will yield an alternative construction of a non-loose unknot in $S^3$.
  
\begin{ex} \label{ex:homotopy ex}
Start with the standard contact structure $\xi_{st}=TS^3\cap iTS^3$ on $S^3\subset\mathbb{C}^2$. It is $\mathrm{SU}(2)$-invariant, and we use it to orient $S^3$. We choose an $\mathrm{SU}(2)$-invariant framing to define the Gau{\ss} maps (the Gau{\ss} map of $\xi_{st}$ is constant).

Consider the transverse unknot $H=S^1\times\{0\}$  together with a tubular neighborhood  $N(H)\simeq S^1\times D^2$ such that the contact structure on this neighborhood is invariant under translations in the $S^1$-direction and under rotations of the disc. 

We fix a round $2$-sphere $S_0\subset S^3$ which is orthogonal to $\mathbb{C}\times\{0\}\subset \mathbb{C}^2$ such that $S_0\cap N(H)\supset \{1\}\times D^2$ (here $1=(1,0)\subset\mathbb{C}^2$ is a point on $H$) and we orient $S_0$ so that $\xi_{st}(S_0)$ has a positive singular point in the midpoint of the disc $\{1\}\times D^2$. Then $S_0$ is convex and $\xi_{st}(S_0)$ has exactly two singular points (both elliptic), these are the intersection points of $S_0$ with $L$.  

We isotope $\xi_{st}$ to a new contact structure $\xi$ with the following properties: 
\begin{itemize}
\item $\xi$ is $S^1$-invariant on $N(H)\simeq S^1\times D^2 $.
\item The characteristic foliation  of $\xi$ on $\{1\}\times D^2$ is diffeomorphic to the one shown in \figref{b:isolated hyperbolic}. In particular, there is a disc $D_{in}$ such that $\partial D_{in}$ is transverse to $\xi$ and $\xi(D_{in})$ has two positive elliptic singularity, one negative hyperbolic one and no closed leaves. 
\item There is an involution $\rho_1$ of $D_{in}$ which preserves the singular foliation on $D_{in}$ but interchanges the two positive elliptic points.
\end{itemize} 
  
\begin{center}
\begin{figure}[htb]
\includegraphics[scale=0.6]{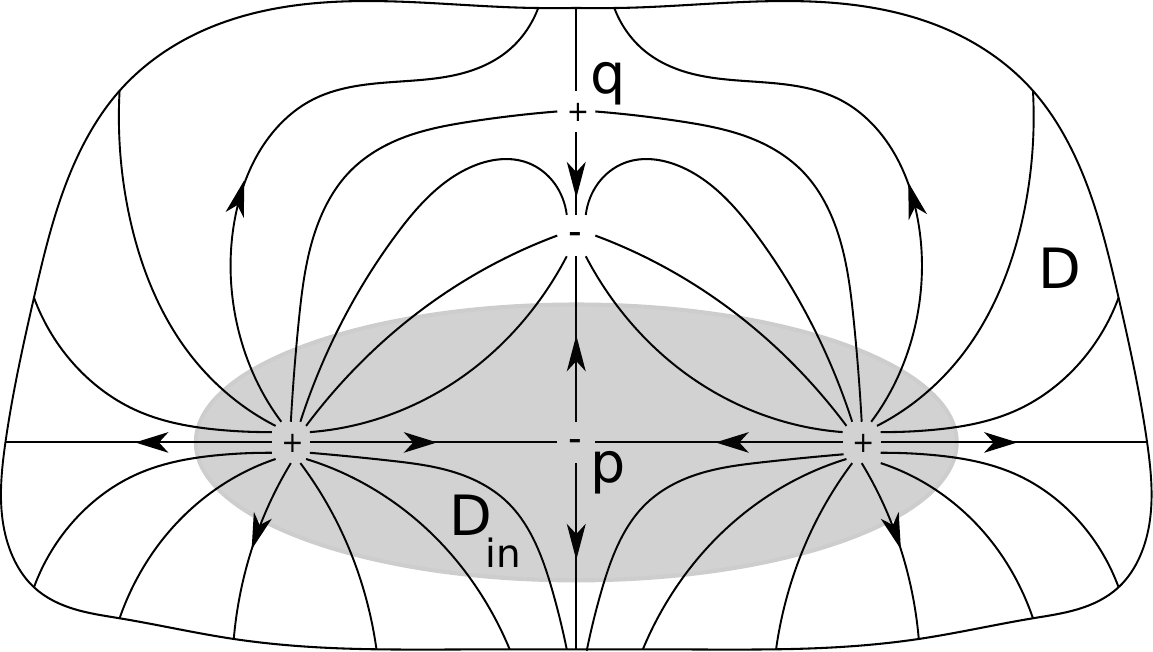}
\caption{Characteristic foliation on $D$ and $D_{in}$}
\label{b:isolated hyperbolic}
\end{figure}
\end{center}

To obtain a new plane field $\zeta_1$ we consider a product neighborhood $S^2\times(-\infty,\infty)$ around the convex sphere $S^2=S^2\times \{0\}$ oriented so that the product orientation is positive and the contact structure is translation invariant. 

Because the boundary of $D_{in}$ is transverse to the characteristic foliation, there is an isotopy $\rho_s, s\in (-1,1),$  of $D_{in}$ interpolating between the identity of $D_{in}$ and the involution $\rho_1$ mentioned above such that
\begin{itemize}
\item $\rho_s=\mathrm{id}$ for $s$ close to $-1$,
\item $\rho_s$ is constant and preserves the characteristic foliation for $s$ close to $1$,
\item $\rho_s$ preserves the characteristic foliation near $\partial D_{in}$ for all $s$,
\item the hyperbolic singularity $p$ is a fixed point of $\rho_s$ for all $s$,
\item the unstable leaves of $p$ are interchanged by $\rho_s$ for $s$ close to $1$, and
\item as $s$ varies from $-1$ to $1$, the unstable leaves of $p$ rotate by a counter clockwise half turn. 
\end{itemize} 
A new plane field $\zeta_1$ is determined (up to homotopy) by the requirement that $\zeta_1=\xi$ outside of $D_{in}^2\times(-1,1)$ and the characteristic foliation of $\zeta_1$  on $D_{in}^2\times\{s\}$ is the image of the characteristic foliation of $\xi$ under $\rho_s$. 

For $k\in\N$ the plane field $\zeta_k$ is defined by applying the above operation $k$ times. For negative $k$ one uses $|k|$-times the isotopy $\rho_s^{-1}$ instead of $\rho_s$. 

When one passes from $\xi$ to $\zeta_k$, the homotopy type of the plane field changes. In order to determine how, we choose a framing of the tangent bundle so that $TD$ is tangent to the span of the first two components of the framing and so that the framing is vertically invariant on the ball $D\times(-1,1)\subset D\times(-\infty,\infty)$ (recall that $\zeta_k=\xi$ outside of this ball). We consider the Gau{\ss} maps $g, g_k$ of $\xi,\zeta_k$. 

Because all singular points of the characteristic foliation on $D$ are non-degenerate, we may assume that $g(-TD^2(p,0))$ is a regular value  of $g$ and $g_k$. By construction, the Thom-Pontrjagin submanifolds of $g$ and $g_k$ coincide, but their framings are different. A direct computation in terms of local coordinates near the negative hyperbolic singularity coordinates shows that the Hopf invariants satisfy   $H(g_k)- H(g)=k$.  
\end{ex}

We continue to study the plane fields from the above example.  

\begin{lem} \label{l:homotopy ex}
For $k>0$, the plane field constructed above cannot be a contact structure. If $k\le 0$, then the Hopf invariant of the resulting plane field is $k$ and it can be chosen to be a contact structure. 
\end{lem}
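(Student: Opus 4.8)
The plan is to treat the two claims separately: the statement about the Hopf invariant is essentially already contained in \exref{ex:homotopy ex}, while the real content is the (non-)realizability of $\zeta_k$ as a contact structure, which I would extract from Giroux's tomography lemmas applied to the movie of $\zeta_k$. First I would settle the Hopf invariant. Since $\xi$ was obtained from $\xi_{st}$ by an isotopy, its Gau{\ss} map $g$ is homotopic to a constant and $H(g)=0$. The example established $H(g_k)-H(g)=k$, so $H(g_k)=k$ for every $k$; in particular this gives the asserted value in the case $k\le 0$.

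Next, the obstruction for $k>0$. I would examine the movie of $\zeta_k$ on the slices $D_{in}\times\{s\}$, extended across $\partial D_{in}$ to the ambient sphere where $\zeta_k=\xi$. Along the interpolation $\rho_s$ the two unstable separatrices of the negative hyperbolic point $p$ execute a counterclockwise half-turn while the two positive elliptic points are interchanged and the exterior sink is fixed. I would show that during this rotation the separatrix configuration of $p$ must reorganize through a (retrogradient) saddle connection on one intermediate slice $s=s_0$, and that the side on which the relevant stable leaf lies relative to the unstable leaf, for $s$ slightly above or below $s_0$, is forced by the sense of rotation. Comparing this forced coorientation (over/under, in the sense of \lemref{l:rgc}) with the one demanded for a positive contact structure, I expect the counterclockwise case $k>0$ to produce exactly the opposite crossing to the one permitted by \lemref{l:rgc} (and, where a closed orbit would be born, by \lemref{l:birth}); this contradiction shows $\zeta_k$ cannot be a contact structure, and iterating the operation $k$ times only reinforces the discrepancy.

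For $k\le 0$ I would argue realizability. The case $k=0$ is immediate, since $\zeta_0=\xi$ is already a contact structure. For $k<0$ the rotation runs clockwise, as one uses $\rho_s^{-1}$, so the same analysis now produces precisely the over/under crossing that \lemref{l:rgc} allows; hence, after making all singular points non-degenerate and the connection generic, the movie meets Giroux's realizability conditions and assembles into a genuine contact structure in the prescribed homotopy class. Since plane fields on $S^3$ are classified by their Hopf invariant and $H(g_k)=k$, this settles the homotopy type; alternatively one may exhibit a model directly by performing $|k|$ many $\pi$-Lutz twists as in \exref{ex:Lutz}, which realizes Hopf invariant $k$ and is therefore homotopic to $\zeta_k$.

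The hard part will be the coorientation bookkeeping in the second paragraph: one must pin down precisely which bifurcation, a retrogradient saddle connection versus a birth of closed orbits, occurs during the half-turn, and then match the geometric over/under dictated by the rotation sense against Giroux's sign conventions in \lemref{l:rgc} and \lemref{l:birth}. Getting this sign right is exactly what distinguishes the forbidden case $k>0$ from the admissible case $k<0$, so essentially all of the work is concentrated there.
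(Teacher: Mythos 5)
Your proposal follows the paper's proof essentially verbatim: the obstruction for $k>0$ is exactly the paper's appeal to the Lemme de croisement (the counterclockwise half-turn forces a retrogradient connection whose over/under crossing is the one forbidden by \lemref{l:rgc}), the Hopf invariant is read off from the computation already done in \exref{ex:homotopy ex}, and for $k\le 0$ the paper likewise assembles the movie into a contact structure via Giroux's realizability results (Lemma~2.4 and the condition (\textdaggerdbl) of \cite{gi-bif}, Proposition~II.1.2 of \cite{gi-conv}, plus a flow adjustment near $\partial D_{in}$), which is precisely what your ``Giroux realizability conditions'' amount to. One caveat on your aside: $\pi$-Lutz twists along $|k|$ distinct Hopf fibers give Hopf invariant $(|k|-2)|k|$, not $k$ (cf.\ \exref{ex:Lutz}), so that alternative model would require the twisting unknots of self-linking $-1$ to sit in disjoint Darboux balls --- and in any case the lemma's later use requires the specific plane field of the construction to be contact, not merely some contact structure in its homotopy class, so the Lutz model could not replace the movie argument.
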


\begin{proof}
The first part is immediate from \lemref{l:rgc}. We now sketch the argument for the second part.  We now sketch the second part.

By Lemma~2.4 of \cite{gi-bif}, it suffices to consider neighborhoods of the instances $s$ where one or two retrogradient connections occurs (for all other $s$ the characteristic foliation on $D\times\{s\}$ admits a dividing set). Let $s_0$ be such an instance. Since all singular points have non-vanishing divergence, the corresponding characteristic foliation on $D\times\{s_0\}$ is the characteristic foliation of a contact structure by Proposition~II.1.2 of \cite{gi-conv} and there is a germ of a contact structure $\xi$ on $D\times(-1,1)$ near $D\times\{s_0\}$ which induces the singular foliation $\mathcal{F}_{s_0}$ on $D_{s_0}$.  Now \lemref{l:rgc} stated above is slightly weaker than what is actually proved in \cite{gi-conv} (Giroux shows a more analytic condition formulated as equation (\textdaggerdbl) on p.~638 in \cite{gi-bif}). This stronger condition is also satisfied by our movie.

In order to arrange that the characteristic foliation of $\xi$ near $D\times\{s_0\}$ coincides with the singular foliation we have defined, one can choose a foliation tangent to $\xi$ and transverse  to $D\times\{s_0\}$ on a tubular neighbourhood  of $\partial D_{in}\times S^1$   and consider  flows  with compact support in that collar tangent to the chosen  foliation which preserve $D\times\{s_0\}$. By the contact condition, there is a flow such that the characteristic foliation of the pull back of $\xi$ with respect to this flow is diffeomorphic to the singular foliation we have constructed. 
\end{proof}

We now specialize to $k=-1$.  There is exactly one positive hyperbolic singularity $q\in D$ and both stable leaves of $q$ come from $D_{in}$. When $k=-1$, one can choose $\rho_s$ so that for each stable leaf of $q$ there is exactly one coincidence with an unstable leaf of $q$ of $\rho_s^{-1}$ for exactly one $s\in(-1,1)$. 

If we choose this parameter to be the same for both coincidences (say for $s=0$) the union of the pair of stable/unstable leaves on the sphere $S^2\times\{s=0\}$ is a minimal non-loose unknot $K$ by \ref{c:non-loose exist}.

Using this description of a minimal non-loose unknot we next construct an example of a family of contact structures on $S^3$ which is not homotopic to the constant family of plane fields. By Gray's theorem this produces a contact diffeomorphism $\psi$ with $\mathrm{d}(\psi)\neq 0$.  Of course, such a contact diffeomorphism can be produced using Eliashberg's classification theorem. Thus, the main point of the following lemma is that we do not only control $\mathrm{d}(\psi)$ but also  $\psi(K)$ where $K$ is the non-loose unknot from the above example. 

\begin{lem} \label{l:K preserved,dym=1}
Let $K\subset S^3$ be the standard non-loose unknot in $(S^3,\xi_{-1})$. Then there is $\psi\in\mathrm{Diff}_+(S^3,\xi_{-1})$ which 
\begin{itemize} 
\item preserves the orientation of $\xi_{-1}$ and $\mathrm{d}(\psi)=1$,
\item $\psi(K)=K$  and the orientation of $K$ is reversed.
\end{itemize} 
\end{lem}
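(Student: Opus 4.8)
The plan is to separate the two requirements---reversing the orientation of $K$ and achieving $\mathrm{d}(\psi)=1$---and to recombine them using that $\mathrm{d}$ is a homomorphism. The orientation reversal is already available: the contactomorphism $\psi_{rev}$ of \exref{ex:orrev} preserves the orientation of $\xi_{-1}$, fixes $K$ setwise and reverses its orientation. It therefore suffices to construct one further $\phi\in\mathrm{Diff}_+(S^3,\xi_{-1})$ with $\mathrm{d}(\phi)=1$ that preserves $K$ together with its orientation. Given such a $\phi$, I set $\psi=\psi_{rev}$ if $\mathrm{d}(\psi_{rev})=1$ and $\psi=\phi\circ\psi_{rev}$ if $\mathrm{d}(\psi_{rev})=0$; since $\phi$ preserves the oriented knot, in either case $\psi(K)=\overline{K}$ and $\mathrm{d}(\psi)=1$, so one never has to evaluate $\mathrm{d}(\psi_{rev})$ itself.

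To produce $\phi$ I would build a loop of contact structures $\xi_\sigma$, $\sigma\in[0,1]$, based at $\xi_{-1}=\xi_0=\xi_1$, which is non-trivial as a family of plane fields (i.e. represents the non-zero class in $\pi_4(S^2)\simeq\Z_2$) and along which a Legendrian unknot isotopic to $K$ stays tangent to $\xi_\sigma$ and returns to $K$ with its orientation. Applying \thmref{t:gray} in the refined form recorded after its statement---keeping the Legendrian knot tangent to the family throughout---produces a contactomorphism $\phi=\psi_1$ with $\phi(K)=K$, preserving the orientation of $\xi_{-1}$, and with $\mathrm{d}(\phi)=[\xi_\sigma]=1$ by the very definition of $\mathrm{d}$. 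The loop is constructed inside the explicit model of \exref{ex:homotopy ex} with $k=-1$, in which $K$ appears on a sphere as the union of a coinciding stable/unstable separatrix pair of a positive hyperbolic singularity (\corref{c:non-loose exist}); the non-trivial homotopy is a localized twist near this sphere, realized through contact structures by the argument of \lemref{l:homotopy ex} (using \lemref{l:rgc} and Giroux's realization results) while dragging $K$ along.

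The main obstacle is precisely the construction and verification of this loop, because the constraints pull against each other. Since $K$ is non-loose, its complement $S^3\setminus K$ is tight, and by \thmref{t:tight on ball} the space of tight contact structures on a ball is weakly contractible; hence the twist producing $\mathrm{d}=1$ cannot be supported in a ball disjoint from $K$, so $K$ genuinely has to move during the loop and be carried back by the refined Gray isotopy. At the same time the loop must be homotopically trivial as regards the Hopf invariant (it is a loop) yet non-trivial for the secondary $\Z_2$-invariant; establishing the latter is the real computation, and I would carry it out with the Thom--Pontrjagin description of $\pi_4(S^2)$ from \secref{s:homotopy}, comparing the framed surface and its quadratic form $\phi_g$ before and after the twist exactly as the Hopf-invariant count was done in \exref{ex:homotopy ex}. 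Once $[\xi_\sigma]=1$ is confirmed, the recombination in the first paragraph finishes the proof.
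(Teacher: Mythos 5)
Your recombination logic is sound, and your observation that the twist cannot be supported in a ball disjoint from $K$ (tightness of $S^3\setminus K$ plus \thmref{t:tight on ball}) is exactly right. The genuine gap is the construction of $\phi$: an element with $\mathrm{d}(\phi)=1$ that preserves $K$ \emph{with its orientation} is, in the language of the eventual mapping class group $\Z_2\oplus\Z_2$, the class $(\mathrm{d},\kappa)=(1,0)$, and nothing in your sketch produces it. The model you invoke --- \exref{ex:homotopy ex} with $k=-1$ together with \lemref{l:homotopy ex} --- is precisely the paper's own proof, but there the nontriviality in $\pi_4(S^2)$ is \emph{not} achieved by a twist localized near one fixed sphere: the paper must sweep the thickened sphere around $S^3$ by a circle of rotations $\psi_\tau$ (rotation by $\tau$ about a plane orthogonal to the Hopf fiber $H$, composed with rotation by $-\tau/2$ about the plane containing $H$), so that the framing of the Thom--Pontrjagin component containing $H$ twists once along $H$ \emph{and} once along the parameter circle; only then is the Arf invariant of the quadratic form $\phi_g$ nonzero. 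The terminal half-turn about the plane of $H$ is exactly what brings $K$ back to itself --- with \emph{reversed} orientation. So the mechanism that makes $\mathrm{d}=1$ in this model is inseparable from $\kappa=1$: the natural way to return $K$ with its orientation preserved is to traverse the loop twice (or replace the half-turn by a full turn), and then $\mathrm{d}=1+1=0$. Your ``localized twist while dragging $K$ along'' is therefore not merely unverified; instantiated in the only model available, it produces the wrong element.

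This inverts the logical architecture of the paper. The lemma \emph{is} the simultaneous statement --- one explicit loop $\xi_{-1}(\tau)=(\psi_\tau)_*(\xi_{-1})$, with the twist of \exref{ex:homotopy ex} inserted along the swept spheres, whose time-$2\pi$ contactomorphism does both jobs at once, which is why no decomposition into $\psi_{rev}$ and $\phi$ is needed. Elements of your type $\phi$ are obtained in the paper only much later, in the proof of \thmref{t:chekanov}: there $\varphi$ with $\mathrm{d}(\varphi)=1$ is built supported away from a ball around one overtwisted disc (\remref{r:dym surj}), and showing that it preserves the oriented isotopy class of $K$ requires \thmref{t:not transitive} (the stabilizations $\Delta$ and $\widehat{\Delta}$ are not Legendrian isotopic) together with the stabilization argument --- machinery far beyond \lemref{l:homotopy ex} and \lemref{l:rgc}, and note that \thmref{t:non or isotopy}, hence the very invariant $\kappa$, already presupposes the present lemma. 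So your reduction replaces the lemma by a statement that is at least as hard, and the proposed route to it fails. The repair is simply to drop the decomposition: run the swept-sphere construction as the paper does and take $\psi=\psi_{2\pi}$, accepting the orientation reversal of $K$ as a feature of the construction rather than an obstacle to be engineered away.
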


\begin{proof} \label{ex:homotopy ex family}
We continue to consider the situation from \lemref{l:homotopy ex} for $k=-1$. Our goal   is to obtain a non-trivial loop of contact structures on $S^3$ which preserves $K$ as a set. 

For $\tau\in[0,2\pi]$ let $\psi_\tau : S^3\lra S^3$ be the composition of two rotations: The first is a rotation around a plane orthogonal to $H$  by the angle $\tau$ while the second rotation is a rotation by $-\tau/2$ around the plane containing $H$. The family of contact structures $\psi_{\tau}(\xi_{st})$ is constant because these rotations preserve the complex structure on $\mathbb{C}^2$. 

As above, deform $\xi_{st}$ to a new contact structure so that on the second factor of $N(H)=S^1\times D^2$ the characteristic foliation of $\xi$  is diffeomorphic to the one shown in \figref{b:isolated hyperbolic}. This can be done relative to a neighborhood of $\{0\}\times S^1$ and $\xi$ can be chosen so that it is invariant under $\psi_{2\pi}$. Then the loop of contact structures $\xi(\tau)=\left(\psi_{\tau}\right)_*(\xi),\tau\in[0,2\pi],$ is contractible in the space of plane fields because $\xi$ is homotopic to $\xi_{st}$ and $\xi_{st}$ is preserved by $\psi_\tau$. 

Let $R$ be the unit vector tangent to the fibers of the Hopf fibration. This vector field is positively transverse to $\xi_{st}$ everywhere.  We may assume that $R$ is a component of the framing and that $-R$  is a regular value of the Gau{\ss} map of $\xi$. Since the loop $\xi_{\tau},\tau\in[0,2\pi]$, is null homotopic, the associated Thom-Pontrjagin manifold of $S^3\times S^1$ (carrying the family of plane fields tangent to $S^3$ induced by $\xi_\tau$) is framed cobordant to the empty manifold. In particular, the sum of the Arf invariants of all components of the Thom-Pontrjagin submanifold is zero. 

We now modify the family $\xi_{\tau}$ so that the framing of one component of the Thom-Pontrjagin submanifold changes so the associated Arf invariant changes while we do not modify the framing of other components. 

For this we consider the family of thickened spheres $\psi_\tau(S^2\times (-\delta,\delta))\subset S^3$ and apply to each such family the modification discussed in \exref{ex:homotopy ex}.  We obtain a closed loop of contact structures $\xi_{-1}(\tau)$. The only part of the Thom-Pontrjagin submanifold affected by this construction is the component containing $H$. The framing of this submanifold changes so that the framing makes one full turn (compared to a $\mathrm{SU}(2)$-invariant framing) as one moves along $H$ and also as one moves along $\{p\}\times[0,2\pi]$ for $p\in H$. 

We identify $S^3\times\{0\}$ with $S^3\times\{2\pi\}$ and consider the situation in $S^3\times S^1$. The Thom-Pontrjagin submanifold associated to the constant family $\xi$ is the same as the Thom-Pontrjagin submanifold of $\psi(\tau)$ but the framing of the component containing $H$ has changed in such a way that the Arf invariant also changes. This implies that $\xi_{-1}(\tau)$ is a homotopically non-trivial loop of plane fields on $S^3$. 

Note that by construction, we also obtain a family $K_\tau$ of Legendrian knots (with respect to $\xi_{-1}(\tau)$) so that $K_{2\pi}=K_0$ but the orientation is reversed. Thus, $\psi_{2\pi}$ is a contact diffeomorphism of $\xi_{-1}$ which reverses the orientation of $K$, preserves the orientation of $\xi_{-1}$ and has $\mathrm{d}(\psi_{2\pi})\neq 0\in\pi_4(S^2)$. Thus, $\psi=\psi_{2\pi}$ has the desired properties. 
\end{proof}


\subsection{The classification up to isotopy for non-oriented unknots} \label{s:non or}
In this section we classify minimal  non-oriented non-loose unknots in $S^3$ up to isotopy. This classification makes heavy use of Eliashberg's classification theorem (\thmref{t:eliashclass}) as discussed on p.~\pageref{t:eliashclass}: The overtwisted disc we will use to apply that theorem will vary in a controlled fashion.


In the following we denote the minimal non-loose unknot described at the beginning of \secref{ss:fronts} by $L$ and $\overline{L}$ denotes the same knot with the reversed orientation. 
\begin{thm} \label{t:non or isotopy}
Let $\xi_{-1}$ be the overtwisted contact structure with Hopf invariant $-1$ on $S^3$. Then every minimal non-loose Legendrian unknot $K$ is isotopic to  $L$ or $\overline{L}$. 
\end{thm}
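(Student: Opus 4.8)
The plan is to reduce the statement to an application of Eliashberg's classification theorem (\thmref{t:eliashclass}) by exhibiting, for any given minimal non-loose unknot $K$, a family of contact structures interpolating between the standard model and $K$ that moves a controlled overtwisted disc. First I would invoke the coarse classification (\thmref{t:coarse unknot general}) to fix the smooth picture: there is a contactomorphism $\Phi$ of $S^3$ carrying $(S^3,\xi,K)$ to $(S^3,\xi_{-1},L)$ and preserving the orientation of the contact structure. This reduces us to working inside $(S^3,\xi_{-1})$ with both knots smoothly isotopic to $L$ and sharing all classical invariants ($\tb=1$, $\mathrm{rot}=0$). The issue is purely that we cannot directly apply \thmref{t:loose triv}: the complement of $K\cup L$ need not be overtwisted, since each is non-loose.

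The heart of the argument is to build a good sphere containing $K$. Using the normal form for the characteristic foliation on the Seifert disc of a minimal non-loose unknot produced in the proof of \thmref{t:coarse unknot general} (two hyperbolic and four elliptic singularities, with the retrogradient saddle-saddle picture of \figref{b:n=1}), I would embed $K$ in a convex-except-at-one-parameter sphere $S^2_{t_0}$ whose characteristic foliation realizes the middle configuration of \figref{b:n=1}, and extend this to a product neighborhood $S^2\times[-1,1]$ on which the movie looks like \figref{b:non-loose}. By \corref{c:non-loose exist} such a sphere carries a minimal non-loose unknot, and by \remref{r:more than one in only one} together with \lemref{l:only one} and \lemref{l:parametric elim}, the contact structure away from the single critical sphere can be arranged to be convex and tight on the two pieces $S^2\times[-1,t_0)$ and $S^2\times(t_0,1]$. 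The point is that this realizes $(S^3,\xi_{-1})$ as two tight balls glued along the overtwisted sphere, and the same description applies verbatim to $L$. So both $K$ and $L$ arise as the obvious Legendrian knot on the retrogradient sphere of such a movie, and the schematic resolutions of \figref{b:non-loose} provide nearby overtwisted discs in the product neighborhood.

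Next I would interpolate. Since the movies for $K$ and for $L$ are the same up to the combinatorial data encoded in $\Gamma^*_-$, I can connect them through a family of plane fields $\zeta_s$ on $S^3$ that agrees with a fixed tight germ near the two balls and carries a continuously varying overtwisted disc $\Delta_s=\varphi_s(\Delta_0)$ in the product neighborhood, with $\varphi_s$ an ambient isotopy. Applying \thmref{t:eliashclass} to $(\varphi_s^{-1})_*(\zeta_s)$ — exactly as described on p.~\pageref{t:eliashclass} — deforms $\zeta_s$ to a family of genuine contact structures $\xi_s$ for which $\Delta_s$ stays overtwisted, with $\xi_0,\xi_1$ equal to $\xi_{-1}$ near the critical spheres. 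By Gray's theorem (\thmref{t:gray}) this produces an ambient contact isotopy carrying the $K$-sphere to the $L$-sphere, and by the relative refinement of Gray's theorem noted after \thmref{t:gray} the isotopy can be taken to preserve the distinguished Legendrian knot on that sphere throughout. This yields a Legendrian isotopy from $K$ to $L$ or to $\overline{L}$, the ambiguity in orientation arising because the identification of the retrogradient sphere of $K$ with that of $L$ need not respect the orientation of the obvious knot.

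The main obstacle will be the control of the overtwisted disc along the family: I must verify that as I pass between the two resolutions of the double retrogradient connection in \figref{b:non-loose}, an overtwisted disc persists in the region where each $\zeta_s$ is a positive contact structure, so that the relative and parametric versions of \thmref{t:eliashclass} genuinely apply without modifying the family near the tight balls. This requires a careful bookkeeping of the graphs $\Gamma_{t,\pm}$ through the bifurcation, using \lemref{l:rgc} and \lemref{l:birth} to guarantee that obvious overtwisted discs appear on the correct side (as in the middle row of \figref{b:non-loose}), and using \lemref{l:parametric elim} to eliminate spurious hyperbolic points without destroying the relevant disc. Once this persistence is established, the orientation-forgetting classification follows formally; distinguishing $L$ from $\overline{L}$ as \emph{oriented} knots is deferred to \secref{s:or} and is not part of the present statement.
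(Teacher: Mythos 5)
Your reduction and toolbox match the paper's, but the interpolation step is where this proof genuinely lives, and your scheme fails there in two ways. First, a one-parameter family cannot do the job: since the contact structure at both ends of your path is the same $\xi_{-1}$, Gray's theorem applied to a path of contact structures from $\xi_{-1}$ to itself only outputs a contactomorphism --- which is exactly what the coarse classification already gave you --- not a Legendrian isotopy. The actual argument is two-parametric: one takes the smooth isotopy $\psi_\sigma$ (Cerf) joining the identity to the contactomorphism $\psi$ with $\psi(L)=K$ and tries to homotope the path of contact structures $\left(\psi_\sigma\right)_*\xi_{-1}$, rel endpoints, to the constant path through contact structures; the filled square plus Gray then yields the family $\varphi_{\sigma,1}^{-1}(\psi_\sigma(L))$ of $\xi_{-1}$-Legendrian knots joining $L$ to $K$. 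Filling the square even at the plane-field level is obstructed by $\mathrm{d}(\psi)\in\pi_4(S^2)\simeq\Z_2$, and one must first kill this obstruction by composing $\psi$, if necessary, with the contactomorphism of \lemref{l:K preserved,dym=1}, which preserves $K$ setwise but reverses its orientation. That normalization --- not a vague failure of the sphere identification to respect orientations --- is the true source of the $L$ versus $\overline{L}$ dichotomy; your proposal never mentions $\mathrm{d}$, and the sentence asserting that you ``can connect them through a family of plane fields $\zeta_s$'' carrying $\Delta_s=\varphi_s(\Delta_0)$ assumes away both the obstruction and the disc.

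Second, the overtwisted-disc control you flag as ``the main obstacle'' is not a verification but the bulk of the proof, and your proposed mechanism breaks at exactly the bad parameters. At any parameter value where the critical sphere carries a genuinely non-loose unknot (both retrogradient connections simultaneous, i.e.\ $T_-=T_+$), the complement of that sphere is tight on both sides, so \emph{no} overtwisted disc disjoint from the sphere exists anywhere in the manifold; the resolutions of \figref{b:non-loose} produce discs only after perturbing off that locus. Since the entire edge $\{\tau=1\}$ of the parameter square consists of such configurations (each $\psi_\sigma(L)$ is non-loose), a continuously varying family of overtwisted discs simply does not exist there, and interior parameters with $T_-=T_+$ can occur as well. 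The paper handles this with a chain of reductions absent from your sketch: continuity of $T_\pm$ (\lemref{l:Tpm cont}) and the inequality $T_-\le T_+$ (\lemref{l:T-<T+}, resting on Giroux's proof of Bennequin's theorem), a layering of the square at the finitely many $\sigma_i$ with $T_-(\sigma_i,0)=T_+(\sigma_i,0)$ using auxiliary contactomorphisms $\widehat{\psi}_i$ normalized to $\mathrm{d}(\widehat{\psi}_i)=0$, the elimination of locally non-loose unknots, and a boundary perturbation splitting the simultaneous retrogradient connections (\figref{b:step0}); only after all this do families of overtwisted discs over overlapping $\sigma$-intervals exist so that \thmref{t:eliashclass} applies. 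Finally, your closing appeal to the relative refinement of Gray's theorem to ``preserve the distinguished Legendrian knot'' requires the knot to be tangent to every contact structure in the family, which fails in the region deformed by Eliashberg's theorem; the paper instead recovers the Legendrian isotopy a posteriori as $\varphi_{\sigma,1}^{-1}(\psi_\sigma(L))$, which is automatically Legendrian for the fixed $\xi_{-1}$.
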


\begin{proof}
Let $K$ be a non-loose unknot. For convenience, we remove two open Darboux balls from the complement of $L\cup K$ such that the characteristic foliation on the boundary of each of these balls is convex. We thus view $L$ as non-loose knot in $S^2\times[-1,1]$. The contact isotopy moving $L$ to $K$ will have support inside this smaller space.  

According to the coarse classification of non-loose unknots there is a contact diffeomorphism $\psi$ with $\psi(L)=K$. In particular, $\psi$ preserves the orientation of $S^3$ and by Cerf's theorem there is an isotopy $\psi_\sigma,\sigma\in[0,1]$, of $S^3$ connecting $\psi$ to the identity. By \lemref{l:K preserved,dym=1} we may assume that $\mathrm{d}(\psi)=0$.  Because the space of $3$-balls in $\R^3$ is simply connected (p.5 in \cite{cerf}), we can choose $\psi_\sigma$ so that it preserves the two balls we have removed from $S^3$. 

In order to construct the contact isotopy moving $L$ to $K$ we will apply Gray's theorem to a family of contact structures on $N=S^2\times[-1,1]$. The parameter space will be $I^2=[0,1]\times[0,1]$. We  now describe 
\begin{itemize}
\item the contact structures $\xi(\sigma,\tau)$ on $N$, and 
\item restrictions on the product decomposition $N\simeq S^2\times[-1,1]$
\end{itemize}
for parameter values $(\sigma,\tau)$ in a neighborhood of $\partial I^2$ and near the boundary of $M$. 
\begin{center} \label{setup table}
\begin{tabular}{|l|l|}
\hline
$t=\pm 1$ & \begin{tabular}{l} $\xi(\sigma,\tau)=\xi_{-1}$ independent of $\sigma$ and $\tau$. \\ The product decomposition near $\partial N$ is \\ independent of $\sigma,\tau$.
\end{tabular}\\
\hline
$\tau=0$ &
\begin{tabular}{l} $\xi(\sigma,\tau)=\xi_{-1}$ independent of $\sigma$. \\
The product decomposition of $N$ is  the image \\of the original 
product decomposition\\ under $\psi_\sigma$. 
\end{tabular}\\
\hline
$\tau=1$ &
\begin{tabular}{l} $\xi(\sigma,\tau)=\left(\psi_{\sigma}\right)_*\xi_{-1}$. \\
The product decomposition of $N$ is  the image \\of the original 
product decomposition\\ under $\psi_\sigma$. 
\end{tabular}\\
\hline
$\sigma=0$ & 
\begin{tabular}{l} $\xi(\sigma,\tau)=\xi_{-1}$ independent of $\tau$. \\
$N\simeq S^2\times[-1,1]$ is such that $L\subset S^2\times\{0\}$ \\
is in normal form (c.f. the proof of \thmref{t:coarse unknot general}).
\end{tabular}\\
\hline
$\sigma=1$ &
\begin{tabular}{l} $\xi(\sigma,\tau)=\xi_{-1}$ independent of $\tau$. \\
$N\simeq \psi_1(S^2\times[-1,1])$ so that $K\subset \psi_1(S^2\times\{0\})$ \\
is in normal form. 
\end{tabular}\\
\hline
\end{tabular}
\end{center}\smallskip

\figref{b:box} illustrates the situation: Each point in the box represents a sphere, horizontal lines represent the manifold $S^2\times[-1,1]$ and some vertices are labeled with coordinates.  On the front face $\{\tau=1\}$, there is an obvious family (parametrized by $\sigma$) of Legendrian knots  interpolating between $K$ and $L$. This is indicated by the curved line on the front face of the box.  However, the contact structure is not constant. The straight lines on the top face, respectively bottom face,  correspond to the knots $L$, respectively $K$. On the back face $\{\tau=0\}$ the contact structure is constant but there is no obvious isotopy moving $K$ to $L$. 

\begin{figure}[htb]
\begin{center}
\includegraphics[scale=0.7]{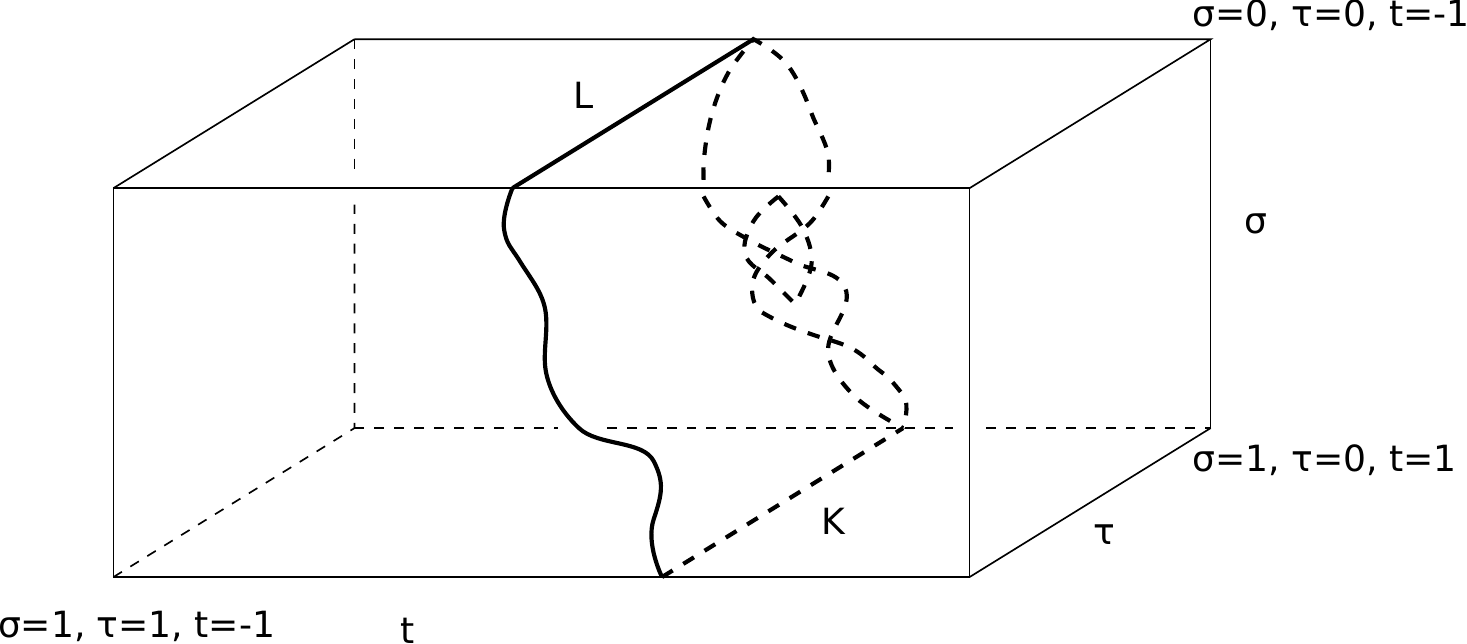}
\end{center}
\caption{The setup for the construction of an isotopy from $L$ to $K$}
\label{b:box}
\end{figure}

For $\tau=1$, the knot $\psi_\sigma(L)$ is contained in a sphere of the product decomposition. One should keep in mind that the product decomposition of $N$ depends on $\sigma$ and $\tau$.  The curved line on the front of the box is
$$
\left\{(T_\pm(\xi(\sigma,1)),\sigma,1)\,|\,\sigma\in I\right\}.
$$
where $S^2(t,\sigma,\tau)=S^2\times\{t\}\times\{(\sigma,\tau)\}$. The union of the left-most, respectively right-most, pieces of the dashed curves on the back side $\{\tau=0\}$ are 
$$\left\{(T_-(\xi(\sigma,1)),\sigma,1)\,|\,\sigma\in I\right\}, \textrm{ respectively } \left\{(T_+(\xi(\sigma,1)),\sigma,1)\,|\,\sigma\in I\right\}.
$$

For $\sigma,\tau\in\partial I^2$ we have constructed  contact structures $\xi(\sigma,\tau)$ on $N$  which are constant near $\partial N$. Our current goal is to extend this family to an $I^2$-family of contact structures. 

In order for this to be possible, we have to be able to extend $\xi(\sigma,\tau)$ as a family of plane fields. But this is obvious since we assumed $\mathrm{d}(\psi_1)=0$, i.e. there is an $I^2$-parametric family $\zeta(\sigma,\tau)$ of plane fields on $N$ which coincides with the contact structures on the boundary $\partial I^2$ (as described in the table above).

In order to ensure that \thmref{t:eliashclass} can be applied we will make a particular choice for $\zeta(\sigma,\tau)$, and we also deform the family $\xi(\sigma,\tau)$ on the boundary. Then we will  argue that $\zeta(\sigma,\tau)$ is homotopic to a family of contact structures $\xi(\sigma,\tau)$ for $(\sigma,\tau)$ in $I^2$ relative to the boundary of the parameter space and such that all plane fields are constant on a neighborhood of $\partial N$.


In the next two steps we modify the product decomposition of $N$ for $\tau=0$ in order to achieve that Theorem~\ref{t:eliashclass} can be applied effectively. 

Recall that $T_-(\xi)$ is defined by the requirement that $\xi$ is tight on $S^2\times[-1,T_-(\xi))$ and overtwisted on $S^2\times[-1,t)$ for all $t>T_-(\xi)$.  $T_+$ is defined similarly requiring tightness of $\xi$ on $S^2\times(T_+(\xi),1]$.  We will write $T_\pm(\sigma,\tau)$ instead of $T_\pm(\xi(\sigma,\tau))$.

{\bf Reduction to the case $T_-(\sigma,0)<T_+(\sigma,1)$ for all $\sigma\not\in\{0,1\}$:} 
Consider the functions $T_\pm(\xi(\sigma,0))$ defined in \eqref{e:Tpm}. By \lemref{l:Tpm cont} they  are continuous and $\xi(\sigma,\tau)$ is overtwisted on every neighborhood of $S^2\times [-1,T_-(\sigma,\tau)]$ and $S^2\times[T_+(\sigma,\tau),1]$. According to \lemref{l:T-<T+} $T_-(\xi(\sigma,0))\le T_+(\xi(\sigma,0))$.

By \lemref{l:birth} there are no closed leaves and by \lemref{l:only one} there must be at least two retrogradient connection and generically there are at most two retrogradient connections on every sphere $S^2(t,\sigma,0)$ if $T_-(\xi(\sigma,0))=T_+(\xi(\sigma,0))$. For a generic product decomposition of $N$ the equality $T_-(\sigma,0)= T_+(\sigma,0)$ happens finitely many instances 
$$
0=\sigma_0<\sigma_1\ldots<\sigma_n=1.
$$

According to \remref{r:more than one in only one} (c.f. p.~\pageref{r:more than one in only one}) the equality $T_-(\sigma_i,0)= T_+(\sigma_i,0)$ implies that  $S^2\times\{T_-(\sigma_i,0)\}$ contains a minimal non-loose unknot $K_{\sigma_i}$. After an isotopy of $N\times I \times\{\tau=1\}$ preserving the hypersurfaces $\{\sigma=\mathrm{const}\}$ to achieve that the non-loose unknots in $S^2(T_+(\xi(\sigma_i,0)),\sigma_i,0)$ and $S^2(T_+(\xi(\sigma_i,1)),\sigma_i,1)$ coincide we can consider layers $[\sigma_i,\sigma_{i+1}]\times I$ in the parameter together with deformed plane fields $\zeta'(\sigma,\tau)$  satisfying boundary conditions along  $\{\sigma=\sigma_i\}$ similar to the ones given in the table above.  The only difference is that along $\{\sigma=\sigma_i\}$ the contact structure is not constant: Let 
$$
\widehat{\psi}_i : (N\times\{(\sigma_i,1)\},\xi(\sigma_i,1)) \lra (N\times\{(\sigma_i,0)\},\xi(\sigma_i,0))
$$ 
which maps the non-loose unknot $\psi_{\sigma_i}(L)\subset N\times \{\sigma_i,1\}$ to the non-loose unknot on $S^2\times\left\{\right(T_-(\sigma_i,0),\sigma_i,0\left)\right\}$ such that $\mathrm{d}(\widehat{\psi}_i)=0$. As above, we then use Cerf's theorem to obtain an isotopy from $\mathrm{id}_N$ to $\widehat{\psi}_i$ and hence a family of contact structures.

Since $\mathrm{d}(\widehat{\psi}_i)=0$ for all $i$, the resulting family of plane fields on 
$$
\left(\partial N\times I^2\right)\cup\left(\bigcup_i N\times\{\sigma_i\}\times I\right)\cup\left(N\times \partial I^2\right)
$$
extends to $N\times I^2$. This reduces the construction of an $I^2$-parametric family of contact structures to the case when $T_-(\sigma,0)<T_+(\sigma,0)$ for all $\sigma\not\in\{0,1\}$. In particular, no sphere $S(t,\sigma,\tau)$ with $\sigma\not\in\{0,1\}$ contains a non-loose unknot. From now on we assume that this was the case from the beginning. 

{\bf Elimination of locally non-loose unknots:} We want to reduce to the case when for all $\sigma\in(0,1)$ there are no spheres $S^2(T_\pm(\sigma,0),\sigma,0)$ containing a Legendrian unknot $K$ such that  \label{elim local non-loose}
\begin{itemize}
\item $\xi(\sigma,0)$ is overtwisted on every tubular neighborhood of that sphere and
\item there is a neighborhood which becomes tight when $K$ is removed,
\item the complement of $S^2(T_\pm(\sigma,0),\sigma,0)$ remains overtwisted when $K$ is removed.
\end{itemize} 
By \remref{r:more than one in only one} the characteristic foliation of $\xi(\sigma,0)$ on $S^2(T_\pm(\sigma,0),\sigma,0)$ has precisely two retrogradient connections (each of these retrogradient connections is indicated by a line in Figure~\ref{b:local-nonloose} below) but now there is an overtwisted disc of $\xi(\sigma,0)$ in the complement of $S^2(T_\pm(\sigma,0),\sigma,0)$. 

By \thmref{t:eliashclass} one can eliminate  the intersection point in the left-hand part of \figref{b:local-nonloose} as indicated in the right-hand side of that figure. 

\begin{figure}[htb]
\begin{center}
\includegraphics[scale=0.7]{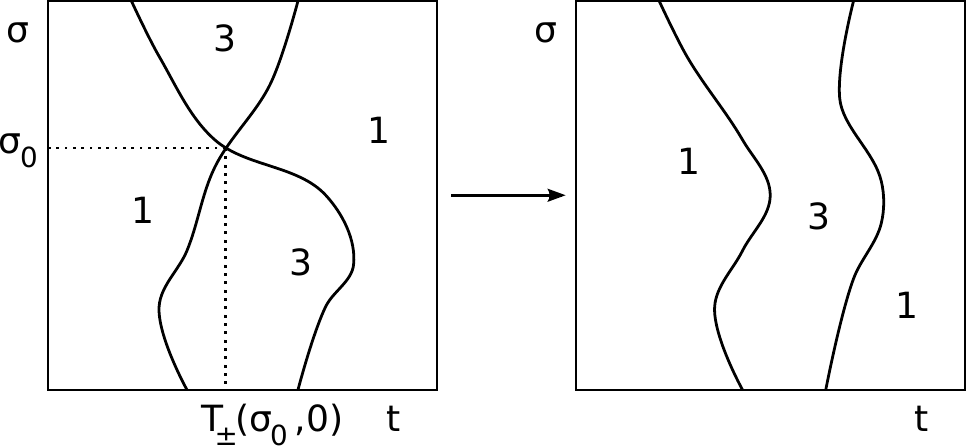}
\end{center}
\caption{Removing a locally (but not globally) non-loose unknot } 
\label{b:local-nonloose}
\end{figure}



{\bf Extension of $\xi(\sigma,\tau)$ to a neighborhood of the boundary of the parameter space
:}  Recall that $\zeta(\sigma,\tau)$ is a contact structure whenever $(\sigma,\tau)\in\partial I^2$. We homotope this family of plane fields relative to $\partial I^2$ to achieve that there is a collar of $\partial I^2$ such that 
there are no simultaneous retrogradient connection on the interior of a collar of $\{\tau=1\}\cup\{\sigma=0 \textrm{ or } \sigma=1\}$.  


For the construction of  this perturbation one uses a slight generalization of \lemref{l:rgc}: We perturb the spheres $S^2\times\{0\}$  so that one  retrogradient saddle-saddle connections occur for $t=0$ while the other such connections occurs for $t>0$ unless $\sigma=0$ or $\sigma=1$. This is indicated in Figure~\ref{b:step0}. The bold line on the right side represents simultaneous retrogradient connection while in the left part, for slightly smaller $\tau$, the two connections occur on different spheres of the product decomposition of $N\times(\sigma,\tau)$.

\begin{figure}[htb]
\begin{center}
\includegraphics[scale=0.7]{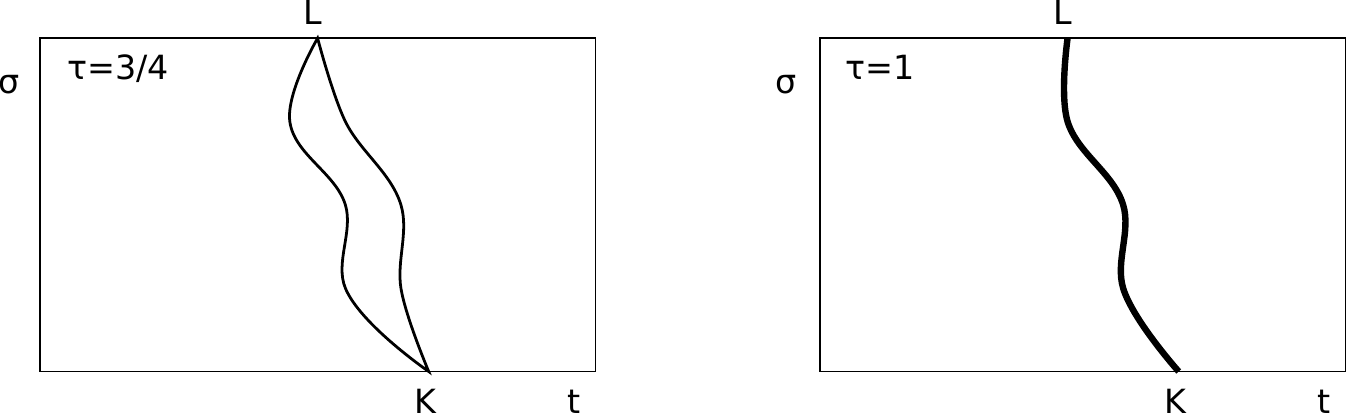}
\end{center}
\caption{The simplification of $\xi(\sigma,\tau)$ near $\{\tau=1\}$}
\label{b:step0}
\end{figure}

From now on we assume that  $\zeta(\sigma,\tau)$ is a contact structure for $3/4\le\tau\le 1$. 

{\bf Construction of plane fields $\zeta''(\sigma,\tau),\sigma,\tau\in I$:} We are now ready to homotope the family of plane fields  $\zeta(\sigma,\tau)$  is contact and overtwisted on parts of $N$ for enough parameter values $(\sigma,\tau)$ so that we will be able to apply \thmref{t:eliashclass}. The notation $T_\pm$ will be used in the same sense as before, the argument of this function will always be an overtwisted  contact structure.

The homotopy consists of two steps. In the first step, we will produce a family of plane fields $\zeta'(\sigma,\tau)$ with the following properties

\begin{itemize}
\item[(1a)] For $\tau\in[0,1/3]$ the plane fields $\zeta'(\sigma,\tau)$ and $\xi(\sigma,0)$ coincide on a  neighborhood of $S^2\times[-1,T_-(\xi(\sigma,0))]$.
\item[(1b)] For $\tau\in[3/4/1]$ there is no change, i.e. $\zeta'(\sigma,\tau)=\zeta(\sigma,\tau)$ is a contact structure.  
\item[(1c)] For $\tau\in[1/2,3/4]$: Let $\varphi_s(\sigma) : N\lra N, s\in[0,1],$ be family of diffeomorphisms of $N$ fixing the boundary such that $\varphi_0=\mathrm{id}_N$ and
$$
\varphi_1(\sigma)\left(S^2\times[T_+(\zeta(\sigma,3/4),1]\right)\textrm{ and }  S^2\times[-1,T_-(\zeta(\sigma,0),1]
$$
are disjoint. Using the isotopy $\varphi_s$ we define $\zeta'(\sigma,\tau)$ for $\tau\in[1/2,3/4]$ as follows: For $\tau\in[1/2,3/4]$ let
$$
\zeta'(\sigma,\tau)=\left(\varphi_s\right)_*(\zeta(\sigma,3/4)) \textrm{ with } s=3-4\tau.
$$
\end{itemize}

The second step is a deformation of the family $\zeta'(\sigma,\tau)$ such that the resulting plane field satisfies $\zeta''(\sigma,\tau)=\zeta'(\sigma,\tau)$ on the regions specified in the previous step, and the following additional requirement.

\begin{itemize}
\item[(2)] For $\tau\in[1/4,1/2]$ we require that $\zeta''(\sigma,\tau)=\zeta'(\sigma,1/2)$ on a neighborhood of $S^2\times[T_+(\zeta'(\sigma,\tau),1]$. 
\end{itemize}

Both homotopies of plane fields exist by standard arguments from homotopy theory (notice that the union of the regions where we impose restrictions on $\zeta'$ and $\zeta'$ retracts onto $\partial(N\times I^2)$. 

The diagram in \figref{b:cond on zeta} indicates where $\zeta''(\sigma_0,\tau)$ is a contact structure for a fixed $\sigma_0$. 

\begin{center}
\begin{figure}[htb] 
\includegraphics[scale=0.9]{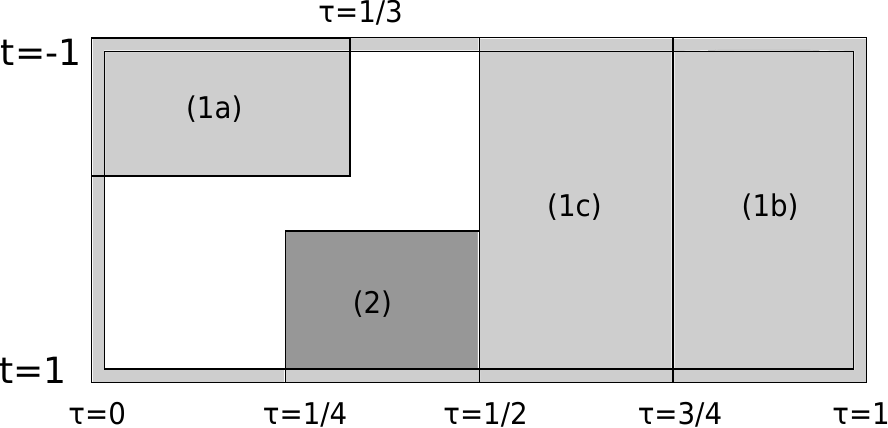}
\caption{Domains in $\{\sigma=\sigma_0\}$ where $\zeta''$ is contact by assumption are shaded. Every line parallel to the $t$-axis represents $N\times\{(\sigma_0,\tau)$.}
\label{b:cond on zeta}
\end{figure}
\end{center}

{\bf Deformation of $\zeta''(\sigma,\tau)$ to a family of contact structures $\xi(\sigma,\tau)$:} By definition  $\zeta''(\sigma.\tau)$ is a contact structure for $\tau\ge 1/2$. When $1/4\le\tau\le1/2$, the plane field $\zeta(\tau,\sigma)$ is an overtwisted contact structure on a collar of $S^2\times\{1\}$ and there is a continuous family of overtwisted discs $\Delta(\sigma,\tau)$. 

Finally, $\zeta''(\sigma,\tau)$ is an overtwisted contact structure on $S^2\times[-1,T_-(\xi(\sigma,0))]$ when $0\le\tau \le 1/3$. Moreover, $\zeta''(\sigma,\tau)$ is independent of $\tau$. In order to apply \thmref{t:eliashclass} it is therefore  enough to construct a collection 
$$
\Delta_1(\sigma_1) \textrm { for }\sigma_1\in(s_0,s_1),\ldots,\Delta_n(\sigma_n) \textrm{ for } \sigma_n\in(s_{n-1},s_n)
$$
of smooth families of overtwisted discs of $\zeta''(\sigma,0)=\xi(\sigma,0)$ such that
\begin{itemize}
\item they are pairwise disjoint in $N\times I\times\{0\}$, 
\item $\Delta_i(\sigma_i)$ is contained in a neighborhood of $S^2\times[-1,T_-(\sigma_i,0)]\times\{\sigma_i,0\}$ where $\zeta''(\sigma_i,0)$ is contact, and
\item $(s_0,s_1)\cup\ldots\cup(s_{n-1},s_n)=(0,1)$.  
\end{itemize}
For this we use $T_-(\sigma,0)<T_+(\sigma,0)$ and the fact that $S^2(T_-(\sigma,0),\sigma,0)$ does not contain a locally non-loose minimal unknot for $\sigma\in(-1,1)$ (c.f. the discussion on p.~\pageref{elim local non-loose}).

By definition, the characteristic foliation of $\zeta''(\sigma,0)=\xi''(\sigma,0)$ on $S^2(T_-(\sigma,0),\sigma,0)$ generically has 
\begin{itemize}
\item a degenerate closed leaf, or 
\item a single retrogradient connection, or
\item a pair of simultaneous retrogradient connections. 
\end{itemize}
In the first two cases we immediately obtain a pair of disjoint overtwisted discs on spheres $S^2(t,\sigma,0)$ for $t=T_-(\sigma,0)+\varepsilon$ for $\varepsilon>0$ small enough. In the third case, because we have eliminated locally non-loose knots on $S^2(t,\sigma,0)$, the same is true. These overtwisted discs correspond to stable configurations of leaves of the characteristic foliation, i.e. they are part of smooth families of overtwisted discs of $\zeta''(s,0)$ lying on $S^2(T_-(s)+\varepsilon,s,0)$ for $s$ close enough to $\sigma$.

Note that $\zeta''(\sigma,0)$ is already a contact structure for $\sigma$ close enough to $0$ or $1$. Finally, since there is always a pair of disjoint overtwisted discs, we have found the desired collection of overtwisted discs.

According to \thmref{t:eliashclass} and the discussion following this statement, $\zeta''$ is homotopic relative to the boundary to a  $I^2$-family of contact structures  extending the $\partial I^2$-family we started with (described in the table on p.~\pageref{setup table}). 

{\bf Conclusion of the proof:}
The family of contact structures $\xi(\sigma,\tau),\sigma,\tau\in I,$ on $N$ is constant near $\partial N$. According to Gray's theorem, there is a family of isotopies $\varphi_{\sigma,\tau}$ such that  $\left(\varphi_{\sigma,\tau}\right)_*(\xi(\sigma,0))=\xi(\sigma,\tau)$. The family of knots  
\begin{equation} \label{e:family of Leg knots} 
\varphi_{\sigma,1}^{-1}(\psi_{\sigma}(L))
\end{equation}
is a family of Legendrian knots (parametrized by $\sigma$) in $(S^2\times[-1,1],\xi=\xi(\sigma,0))$. Because the contact structure is independent of $\tau$ when $\sigma=0$ or $\sigma=1$, it follows that $\varphi_{0,\tau}=\mathrm{id}_M=\varphi_{1,\tau}$. Therefore, the family of Legendrian knots in \eqref{e:family of Leg knots} interpolates between $L$ and $K$. 
\end{proof}

\begin{rem} 
In the previous proof, near $\tau=1$, we perturbed a family of non-loose  Legendrian unknots to obtain families of overtwisted discs. The choice involved here is a retrogradient connection which occurs first in the movie associated to a perturbation of the original family of spheres and the orientation of the knot.

Comparing the effect of the perturbation in the proof above with how a stabilization changes the characteristic foliation on a Seifert surface of the knot  one sees, that the overtwisted discs could also have been obtained by stabilizing the non-loose unknot. Here one has to choose an orientation and the sign of the stabilization. The Legendrian unknots obtained in this way are isotopic to $K_{1,0},K_{0,1}$ (cf. \secref{ss:fronts}).
\end{rem}

\subsection{The classification up to isotopy for oriented unknots} \label{s:or}

All rigidity results in this article build on the following theorem. 
\begin{thm} \label{t:different or, not isotopic}
Let $K$ be a minimal non-loose unknot in $S^3$. Then $K$ and $\overline{K}$ are not isotopic as oriented Legendrian knots. 
\end{thm}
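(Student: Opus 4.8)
The plan is to argue by contradiction, using the homomorphism $\mathrm{d}\colon\mathrm{Diff}_+(S^3,\xi_{-1})\to\pi_4(S^2)\simeq\Z_2$ from \secref{s:homotopy} together with the contactomorphism produced in \lemref{l:K preserved,dym=1}. First I would reduce to the standard model. By \thmref{t:non or isotopy} every minimal non-loose unknot is isotopic, as an oriented knot, to $L$ or to $\overline{L}$, and orientation reversal exchanges these two classes; hence $K$ is isotopic to $\overline{K}$ if and only if $L$ is isotopic to $\overline{L}$, so it suffices to rule out an oriented Legendrian isotopy from $L$ to $\overline{L}$.

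Suppose such an isotopy existed. A Legendrian isotopy extends to an ambient contact isotopy $\phi_t$ of $(S^3,\xi_{-1})$ with $\phi_0=\mathrm{id}$ and $\phi_1(L)=\overline{L}$ as oriented knots (the isotopy extension theorem for Legendrian knots, in the spirit of the refinement of \thmref{t:gray}). Since $\phi_1$ lies in the identity component of $\mathrm{Diff}_+(S^3,\xi_{-1})$ and $\mathrm{d}$ is constant on connected components, $\mathrm{d}(\phi_1)=0$. On the other hand \lemref{l:K preserved,dym=1} provides $\psi\in\mathrm{Diff}_+(S^3,\xi_{-1})$ with $\psi(L)=\overline{L}$ as oriented knots and $\mathrm{d}(\psi)=1$. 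Because $\mathrm{d}$ is a homomorphism to $\Z_2$, the contactomorphism $\chi:=\psi^{-1}\circ\phi_1$ preserves $L$ together with its orientation (it sends $L$ to $\overline{L}$ and then back to $L$), while $\mathrm{d}(\chi)=\mathrm{d}(\psi)+\mathrm{d}(\phi_1)=1$. The whole theorem therefore reduces to the following localization statement, which I expect to be the technical heart: \emph{if $\chi\in\mathrm{Diff}_+(S^3,\xi_{-1})$ preserves $L$ together with its orientation, then $\mathrm{d}(\chi)=0$.}

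To prove this claim I would choose a path $\chi_t$ from $\mathrm{id}$ to $\chi$ inside the diffeomorphisms of $S^3$ preserving the oriented unknot $L$, and compute $\mathrm{d}(\chi)$ as the class in $\pi_4(S^2)$ of the loop of plane fields $\eta_t:=\chi_{t*}\xi_{-1}$. Preserving $L$ helps in two ways. First, $L$ is Legendrian for every $\eta_t$, so along $L\times S^1\subset S^3\times S^1$ the associated Gauss map $g$ takes values in the directions normal to $TL$; this constrains the Thom--Pontrjagin submanifold $\Sigma=g^{-1}(p)$ and the quadratic form $\phi_g$ of \eqref{PHI} near $L$. Second, and crucially, since $L$ is non-loose its complement is tight, and as $\chi_t$ preserves a tubular neighbourhood $\nu(L)$ the pushed-forward structures $\eta_t$ remain tight on $S^3\setminus\nu(L)$ for all $t$. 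Thus all the homotopical nontriviality of the loop $\eta_t$—which is overtwisted in nature—is confined to $\nu(L)\times S^1$, where $\eta_t$ is a loop of contact structures on a solid torus making the oriented core Legendrian and standard near the boundary. A direct model computation should then show that the Arf invariant of $\phi_g$ vanishes, precisely because the orientation of $L$ is preserved: reversing the orientation of $L$ is what introduces the half-turn of separatrices, and the corresponding twisting of the framing of the component of $\Sigma$ meeting $\nu(L)$, that \lemref{l:K preserved,dym=1} exploited to realize $\mathrm{d}=1$; keeping the orientation fixed leaves this framing untwisted, so the class is $0$.

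The main obstacle is exactly this last localization-and-computation: rigorously confining the $\pi_4(S^2)$ class to $\nu(L)$ using tightness of the complement, and then evaluating the localized Arf invariant in the standard Legendrian solid torus to obtain $0$. A secondary technical point to dispatch is the choice of path $\chi_t$: one must either verify that $\chi$ lies in the identity component of the group of orientation-preserving diffeomorphisms of $S^3$ that fix the oriented unknot $L$, or else control the ambiguity coming from $\pi_1(\mathrm{Diff}(S^3,\mathrm{or}))=\Z_2$ (the same ambiguity that enters the proof that $\mathrm{d}$ is well defined in \secref{s:homotopy}) so that it does not affect the computed class.
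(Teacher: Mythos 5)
Your reduction is valid as far as it goes: if $L$ were isotopic to $\overline{L}$ through oriented Legendrian knots, then extending the isotopy ambiently and composing with the $\psi$ of \lemref{l:K preserved,dym=1} would indeed produce $\chi\in\mathrm{Diff}_+(S^3,\xi_{-1})$ with $\chi(L)=L$ as oriented knots and $\mathrm{d}(\chi)=1$. The fatal problem is that the statement you single out as the technical heart --- that any such orientation-preserving $\chi$ must have $\mathrm{d}(\chi)=0$ --- is false. In the notation of \thmref{t:chekanov} it asserts that $(\mathrm{d},\kappa)$ never takes the value $(1,0)$, but the proof of that theorem constructs exactly such a contactomorphism: by \remref{r:dym surj} one realizes $\mathrm{d}=1$ by a contactomorphism $\varphi$ supported in the complement of a ball containing one of the overtwisted discs of \figref{b:front1}; this disc is then fixed by $\varphi$, its boundary is the positive stabilization of $K$, and since the positive stabilizations of $K$ and of $\overline{K}$ are not Legendrian isotopic (\thmref{t:not transitive}), $\varphi(K)$ is isotopic to $K$ as an \emph{oriented} knot. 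Composing $\varphi$ with the ambient contact isotopy carrying $\varphi(K)$ back to $K$ (which does not change $\mathrm{d}$) yields a $\chi$ preserving the oriented knot $L$ with $\mathrm{d}(\chi)=1$. So the two $\Z_2$-summands of $\pi_0\left(\mathrm{Diff}_+(S^3,\xi_{-1})\right)$ are independent, $\mathrm{d}$ cannot be read off from the action on $L$, and the $\chi$ your reduction produces is not absurd --- no contradiction follows. This also shows why the proposed localization cannot be carried out: tightness of $\chi_{t*}\xi_{-1}$ on the complement of $\nu(L)$ for each fixed $t$ does not confine the class of the loop in $\pi_4(S^2)$ to $\nu(L)\times S^1$ (there is no parametric analogue of \thmref{t:tight on ball} available here), and your guiding intuition that the framing twist is tied to reversing the orientation of $L$ is contradicted by the $\varphi$ above.

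The paper's proof is of a completely different nature and uses no mapping-class-group input beyond \lemref{l:K preserved,dym=1}; indeed the logical order is the reverse of what your plan requires, since \thmref{t:different or, not isotopic} and \thmref{t:not transitive} are ingredients in the proof of Chekanov's theorem, and the lemma your plan needs would contradict it. Concretely, the paper argues by a parity count: from a hypothetical oriented isotopy from $K$ to $\overline{K}$ it builds a two-parameter family of foliations of $N\simeq S^2\times[-1,1]$ by spheres (using Hatcher's theorem via \corref{c:fol contractible} and the normalization of \lemref{l:fam almost normal}), perturbs near the boundary of the parameter square so that the distinguished retrogradient connection is selected by the orientation of the knot, and then shows, via \propref{p:manifold} and a case analysis of triple retrogradient connections, degenerate singularities and extra saddle connections, that the locus $\mathcal{L}$ of parameters $(t,\sigma,\tau)$ whose sphere carries a minimal non-loose unknot is a compact, properly embedded, piecewise smooth $1$-manifold meeting $\partial\left([-1,1]\times I^2\right)$ in exactly one point --- impossible, since a compact $1$-manifold has an even number of boundary points. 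The orientation hypothesis enters precisely in arranging the single boundary point. If you wanted to rescue an approach through $\mathrm{d}$, you would first need an invariant separating $(1,0)$ from $(1,1)$, i.e.\ $\kappa$ itself --- and the well-definedness of $\kappa$ in \eqref{e:kappa} is exactly the content of the theorem you are trying to prove.
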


In order to prove \thmref{t:different or, not isotopic} we will use two results. The first is concerned with the space of foliations by spheres of $S^2\times[0,1]$ and is based on Hatcher's theorem \cite{s1s2} on the space of diffeomorphisms of $S^2\times S^1$. The second result is more technical and uses standard theorems on transversality to establish that certain degenerate configurations of characteristic foliations on leaves foliations by spheres occur on topologically tame subsets of the product of the leaf space with the parameter space.

\subsubsection{Foliations on $S^2\times S^1$ and spheres containing a non-loose unknot} \label{ss:hatcher}

We recall Hatcher's theorem on the homotopy type of $\mathrm{Diff}(S^2\times S^1)$ and conclude that the space of foliations on $S^2\times[0,1]$ which coincide with the product foliation near the boundary is weakly contractible. Recall also, that if one leaf of a foliation by orientable surfaces on a $3$-manifold is a sphere, then all leaves are spheres (this famous result is due to G.~Reeb).

\begin{thm}[Hatcher \cite{s1s2}]  \label{t:hatcher}
The map 
\begin{align*}
\mathrm{O}(2)\times\mathrm{O}(3) \times \Omega \mathrm{SO}(3) &\lra \mathrm{Diff}(S^2\times S^1) \\
(A,B,\gamma_t) & \lmt \big( (p,\tau)\lmt (B\circ\gamma_\tau(p),A\tau)\big)
\end{align*}
is a weak homotopy equivalence.
\end{thm}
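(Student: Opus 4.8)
The plan is to reconstruct Hatcher's argument from \cite{s1s2}, whose architecture is a short tower of fibrations of diffeomorphism and embedding spaces, all ultimately resting on Smale's computation $\mathrm{Diff}(S^2)\simeq\mathrm{O}(3)$ and on the Smale conjecture $\mathrm{Diff}(D^3,\partial)\simeq *$. Write $M=S^2\times S^1$ and fix the round fibre $\Sigma_0=S^2\times\{0\}$. First I would check that the displayed map is a well-defined continuous map into $\mathrm{Diff}(M)$, the three factors accounting respectively for isometries of the base circle, isometries of the fibre, and the ``twisting'' monodromy accumulated while travelling once around $S^1$ (note $\gamma\in\Omega\,\mathrm{SO}(3)$ is based, so $B\circ\gamma_\tau$ closes up). Both source and target have $\pi_0=(\Z_2)^3$, and the map sends the evident generators (reflection of $S^1$, reflection of $S^2$, and the non-trivial class of $\pi_0\Omega\,\mathrm{SO}(3)=\pi_1\mathrm{SO}(3)=\Z_2$) to the three standard generators of the mapping class group of $M$; it therefore suffices to prove a weak equivalence on identity components, which I would do by building $\mathrm{Diff}(M)$ from pieces matching the three factors.

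\textbf{The orbit fibration on essential spheres.} Let $\mathcal S$ be the space of unparametrised embedded $2$-spheres in $M$ isotopic to $\Sigma_0$. By Laudenbach's theorem any two such spheres are isotopic, so together with the isotopy extension theorem the orbit map $\phi\mapsto\phi(\Sigma_0)$ is a fibration $\mathrm{Diff}(M)\to\mathcal S$ with fibre the subgroup $\mathrm{Diff}(M,\Sigma_0)$ preserving $\Sigma_0$ setwise. The main geometric step is to show $\mathcal S\simeq S^1$: every sphere in $\mathcal S$ isotopes to a round fibre $S^2\times\{\theta\}$, and once the Smale conjecture is available the space of such normalising isotopies is contractible, so $\mathcal S$ deformation retracts onto the circle $\{S^2\times\{\theta\}\}_{\theta\in S^1}$. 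The subgroup $\mathrm{SO}(2)$ of base rotations maps to this circle by a degree-one map, giving a section; the monodromy of the resulting $S^1$-bundle is conjugation by a full base rotation, which is the identity diffeomorphism, so the bundle is homotopically trivial and $\mathrm{Diff}(M)\simeq S^1\times\mathrm{Diff}(M,\Sigma_0)$ on identity components. The reflection of the base $S^1$ lies in the fibre and contributes the $\pi_0$ of the $\mathrm O(2)$ factor.

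\textbf{The fibre.} Restricting a diffeomorphism to $\Sigma_0$ gives a fibration $\mathrm{Diff}(M,\Sigma_0)\to\mathrm{Diff}(\Sigma_0)=\mathrm{Diff}(S^2)\simeq\mathrm O(3)$ whose fibre is the subgroup fixing $\Sigma_0$ pointwise; cutting $M$ along $\Sigma_0$ identifies this fibre with $\mathrm{Diff}(S^2\times I,\partial)$. The crucial lemma is $\mathrm{Diff}(S^2\times I,\partial)\simeq\Omega\,\mathrm{SO}(3)$: recording $f$ by the path of spheres $t\mapsto f(S^2\times\{t\})$ and scanning produces a fibration of $\mathrm{Diff}(S^2\times I,\partial)$ over $\Omega\,\mathrm{Diff}(S^2)\simeq\Omega\,\mathrm{O}(3)=\Omega\,\mathrm{SO}(3)$ with fibre $\mathrm{Diff}(D^3,\partial)$, and the Smale conjecture makes the latter contractible, so scanning is an equivalence. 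Both fibrations split up to homotopy, since each base is a Lie group admitting a section from the standard isometries and conjugation by those isometries is homotopic to the identity; this yields $\mathrm{Diff}(M,\Sigma_0)\simeq\mathrm O(3)\times\Omega\,\mathrm{SO}(3)$ and hence $\mathrm{Diff}(M)\simeq\mathrm O(2)\times\mathrm O(3)\times\Omega\,\mathrm{SO}(3)$.

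Finally I would check that the explicit map realises this decomposition: $A$ and $B$ are exactly the isometries providing the two sections above, while $\gamma\in\Omega\,\mathrm{SO}(3)$ maps to the fibrewise rotation $(p,\tau)\mapsto(\gamma_\tau(p),\tau)$, which is precisely the scanning representative of its class in $\mathrm{Diff}(S^2\times I,\partial)$. The main obstacle is not the homotopy-theoretic assembly but the two geometric inputs underneath it, namely Smale's theorem for $S^2$ and, above all, the Smale conjecture $\mathrm{Diff}(D^3,\partial)\simeq *$, together with the Laudenbach/Hatcher analysis showing $\mathcal S$ retracts onto the round fibres. I would treat the Smale conjecture as a black box (Hatcher's separate work) and concentrate the remaining effort on this sphere-space contraction and on verifying that the monodromies of both fibrations are homotopically trivial, so that the splittings are genuine rather than merely splittings of the associated long exact sequences.
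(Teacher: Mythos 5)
The paper offers no proof of this statement: Theorem~\ref{t:hatcher} is imported verbatim from Hatcher's paper \cite{s1s2} and used as a black box, its only role being to derive Corollary~\ref{c:fol contractible} on the contractibility of $\mathcal{FOL}_\partial(S^2\times[0,1])$; so there is no internal argument to compare yours against, and the relevant benchmark is Hatcher's published proof. Measured against that, your outline is a faithful reconstruction of the actual argument: the orbit fibration $\mathrm{Diff}(S^2\times S^1)\to\mathcal S$ over the space of spheres isotopic to a fibre (Laudenbach giving connectedness), the deformation retraction of $\mathcal S$ onto the circle of round fibres --- which you correctly isolate as the hard geometric step alongside the Smale conjecture --- the restriction fibration over $\mathrm{Diff}(S^2)\simeq\mathrm O(3)$, and the equivalence $\mathrm{Diff}(S^2\times I,\partial)\simeq\Omega\,\mathrm{SO}(3)$; your ``scanning'' fibration for the last step is a repackaging of the standard comparison fibration $\mathrm{Diff}(S^2\times I,\partial)\to\mathrm{Diff}(D^3,\partial)\to\mathrm{Emb}(D^3,\mathrm{int}\,D^3)\simeq\mathrm{SO}(3)$, and either version works once $\mathrm{Diff}(D^3,\partial)\simeq *$ is granted. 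One small imprecision: the subgroup of $\mathrm{Diff}(M,\Sigma_0)$ fixing $\Sigma_0$ pointwise is not literally $\mathrm{Diff}(S^2\times I,\partial)$ --- it has an extra $\Z_2$ of components generated by the coorientation-reversing reflection $(p,\tau)\mapsto(p,-\tau)$, which is precisely the reflection accounting for $\pi_0\,\mathrm O(2)$; since you verify $\pi_0$ separately via Gluck's computation of the mapping class group of $S^2\times S^1$ and assemble the weak equivalence on identity components, this does not damage the argument, but it is worth stating explicitly so that the two $\Z_2$ bookkeepings are not silently conflated.
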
 
Let $\mathrm{Diff}_\partial(S^2\times [0,1])$ be the group of diffeomorphisms of $S^2\times[0,1]$ which coincide with the identity near the boundary and $\mathrm{Diff}_\partial(S^2\times[0,1],\FF)$ the subgroup of those diffeomorphisms which preserve the product foliation. \thmref{t:hatcher} implies that 
$$
\mathrm{Diff}_\partial (S^2\times[0,1],\FF) \lra \mathrm{Diff}_\partial(S^2\times[0,1])
$$
is a weak homotopy equivalence (see item (8) in the appendix of \cite{smale conj} together with \cite{s1s2}). By $\mathcal{FOL}_\partial(S^2\times[0,1])$ we denote the space of foliations on $S^2\times[0,1]$ which coincide with the product foliation near the boundary (we view foliations as plane fields to define the $C^k$-topology on $\mathcal{FOL}_\partial(S^2\times[0,1])$). The map 
\begin{align*}
\mathrm{Diff}_\partial(S^2\times[0,1]) & \lra \mathcal{FOL}_\partial(S^2\times[0,1]) \\
f & \lmt \left(f(S^2\times\{t\})\right)_{t\in[0,1]}
\end{align*}
is a Serre fibration whose fibers are homeomorphic to $\mathrm{Diff}_\partial(S^2\times[0,1],\FF)$. This group can be thought of as $\Omega\mathrm{SO}(3)$ (with the multiplication coming from the group structure on $\mathrm{SO}(3)$). From \thmref{t:hatcher} together with Smale's theorem stating that $\mathrm{O(3)}\lra\mathrm{Diff}(S^2)$ is a weak homotopy equivalence one obtains the following corollary.

\begin{cor} \label{c:fol contractible} 
The space $\mathcal{FOL}_\partial (S^2\times [0,1])$ is weakly contractible. 
\end{cor}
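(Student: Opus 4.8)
The plan is to avoid analyzing $\mathcal{FOL}_\partial(S^2\times[0,1])$ directly, and instead to read off its homotopy type from the Serre fibration already assembled above. Recall that the map $p\colon \mathrm{Diff}_\partial(S^2\times[0,1]) \lra \mathcal{FOL}_\partial(S^2\times[0,1])$ sending $f$ to $(f(S^2\times\{t\}))_{t\in[0,1]}$ is a Serre fibration; it is surjective (every foliation of $S^2\times[0,1]$ by spheres agreeing with the product foliation near the boundary is carried to the product foliation by a boundary-fixing diffeomorphism, by Reeb's theorem and the standardness of such sphere foliations), and its fiber over the product foliation is precisely the subgroup $\mathrm{Diff}_\partial(S^2\times[0,1],\FF)$ of diffeomorphisms preserving each leaf. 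Thus I have a fibration
$$
\mathrm{Diff}_\partial(S^2\times[0,1],\FF) \stackrel{i}{\lra} \mathrm{Diff}_\partial(S^2\times[0,1]) \stackrel{p}{\lra} \mathcal{FOL}_\partial(S^2\times[0,1]),
$$
in which $i$ is the fiber inclusion.

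The decisive input, already obtained from \thmref{t:hatcher} and Smale's theorem, is that $i$ is a weak homotopy equivalence. Granting this, I would simply run the long exact homotopy sequence of the fibration. Since $i_*\colon \pi_n(\mathrm{Diff}_\partial(S^2\times[0,1],\FF)) \to \pi_n(\mathrm{Diff}_\partial(S^2\times[0,1]))$ is an isomorphism in every positive degree and a bijection on $\pi_0$, exactness at the total space forces $p_*=0$; exactness at the base then makes the connecting map $\partial$ injective; and exactness at the fiber one degree down, combined with the injectivity of $i_*$ there, forces $\partial$ to vanish as well. Together these give $\pi_n(\mathcal{FOL}_\partial(S^2\times[0,1]))=0$ for all $n\ge 1$, and a short check at the bottom of the sequence (surjectivity of $p$ together with $i_*$ surjective on $\pi_0$) shows the base is path-connected. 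This is exactly weak contractibility.

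The one conceptual point to get right — and where it would be easy to go astray — is that one must \emph{not} attempt to argue that the total space $\mathrm{Diff}_\partial(S^2\times[0,1])$ is contractible: it is not, being weakly equivalent to $\Omega\mathrm{SO}(3)$. The entire force of the argument comes from the fiber and the total space being weakly equivalent \emph{via the fiber inclusion}, which collapses the base rather than the total space. Consequently all the genuine geometric content sits in the cited results (Hatcher's computation of $\mathrm{Diff}(S^2\times S^1)$ and the ensuing weak equivalence $i$), and once those are in hand the corollary reduces to a formal diagram chase; the only remaining care is the bookkeeping of basepoints and of $\pi_0$.
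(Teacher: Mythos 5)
Your proposal is correct and follows essentially the same route as the paper: the paper likewise deduces from \thmref{t:hatcher} (together with Smale's theorem) that the fiber inclusion $\mathrm{Diff}_\partial(S^2\times[0,1],\FF)\lra\mathrm{Diff}_\partial(S^2\times[0,1])$ is a weak homotopy equivalence, and then reads off the weak contractibility of $\mathcal{FOL}_\partial(S^2\times[0,1])$ from the long exact sequence of the Serre fibration $f\lmt\left(f(S^2\times\{t\})\right)_{t\in[0,1]}$. Your additional remarks on surjectivity of the fibration (via Reeb stability) and on the basepoint and $\pi_0$ bookkeeping are correct refinements of details the paper leaves implicit.
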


\subsubsection{Degeneracies of characteristic foliations on spheres}
The second ingredient is a result on retrogradient connections present in the characteristic foliation on leaves of foliations or families of such foliations by spheres on $(S^2\times[-1,1],\xi)$.  We assume that all singularities of characteristic foliations are isolated. When we consider families of foliations we still can parametrize each leaf space by $[-1,1]$. 

Let $\FF$ be a foliation from $\mathcal{FOL}_\partial(S^2\times[0,1])$ and assume that the leaf $S_{t_0}(\xi)$ contains a retrogradient connection.  We parametrize a foliated neighborhood of this leaf by $S^2\times(t_0-\varepsilon,t_0+\varepsilon)$ so that the foliation by the first factor coincides with the given foliation and fix a fiber $T$ of  $S^2\times(t_0-\varepsilon,t_0+\varepsilon)\lra S^2=S_{t_0}$.  

Next, we will construct $1$-parameter families of perturbations $\mathcal{F}_x,x\in(-\delta,\delta)$,  with compact support in $S^2\times(t_0-\varepsilon,t_0+\varepsilon)\setminus T$ and $\delta>0$. 

For $p\in\gamma$ choose a compactly supported Legendrian vector field $X$ vanishing near $T$ and outside of $S^2\times[t_0-\varepsilon,t_0+\varepsilon]$ such that $X(p)\neq 0$ and $X$ is positively transverse to the leaves of $\FF$ whenever it does not vanish.

Then the flow $\varphi_x$ of $X$ is well-defined. Let $\FF_x:=\varphi_x(\FF)$. By \lemref{l:rgc} there is still a retrogradient connection close to $\gamma$ on nearby spheres $S_{\Gamma(x)}$ intersecting $T$ in $\Gamma(x)$ if $|x|<\delta$ is small enough. Let
\begin{align*}
\Gamma: (-\delta,\delta) & \lra (t_0-\varepsilon,t_0+\varepsilon)=T \\
x & \longmapsto \Gamma(x).
\end{align*}
The contact property of $\xi$ ensures that  $t_0$ is a regular value of $\Gamma$ (c.f. the proof of \lemref{l:rgc} in \cite{gi-bif}).

We have described a perturbation for a fixed foliation $\FF$ but the construction can be carried out in the same way for finite dimensional families of characteristic foliations. Similar perturbations exist for degenerate singularities and connections between non-degenerate hyperbolic singularities of the same sign. 

This type of perturbation can be applied simultaneously to different retrogradient connections and other degeneracies. Therefore, standard transversality theory (e.g. Theorem 2.7 in Chapter 3.2 of \cite{hirsch}) implies the following proposition:

\begin{prop} \label{p:manifold}
Let $(N=S^2\times [-1,1],\xi)$ be a contact manifold with convex boundary and $\FF_{\sigma,\tau}, \sigma,\tau\in(0,1)$, a $2$-parameter family of foliations on $N$ which is constant near $\partial N$. Assume that the following conditions are satisfied for $(t,\sigma,\tau)$ outside of a compact set in $P=(-1,1)\times(0,1)\times(0,1)$:
\begin{itemize}
\item[(i)] The subset of $P$ consisting of points $(t,\sigma,\tau)$ where a non-degenerate retrogradient connection occurs on $S^2(t,\sigma,\tau)$ is a union of finitely many surfaces. Each retrogradient connection corresponds to one such surface. The analogous statement holds for  degenerate singular points and connections between hyperbolic singular points  which are not retrogradient. A degenerate singular point $p\in S^2(t,\sigma,\tau)$ is supposed to be of birth-death type. In particular, the index of $p$  is zero and there is exactly one leaf $\gamma_p$ of the characteristic foliation which is an unstable, respectively stable, leaf of $p$ if $p$ is negative, respectively positive. 
\item[(ii)] The set of points corresponding to connections of a degenerate singularity $p$ to a hyperbolic singularity such that the connecting leaf is  $\gamma_p$ is a codimension $2$ submanifold. 
\item[(iii)] Any two of the submanifolds from (i),(ii) intersect transversely and each of the submanifolds in (i) is transverse to the intersection of pairs of other submanifolds from (i). In particular, the points $(t,p,\sigma)$ where $S^2(t,\sigma,\tau)$ has three retrogradient connections are isolated.
\end{itemize}
After a $C^r$-small perturbation of $\FF_{\sigma,\tau}$ relative to the complement of a small open neighborhood of the compact set these conditions are satisfied on $P$. 
\end{prop}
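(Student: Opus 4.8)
The plan is to recast the three conditions as transversality statements for a single universal evaluation map and then apply the parametric transversality theorem (Theorem~2.7 in Chapter~3.2 of \cite{hirsch}). The perturbations built in the paragraph preceding the proposition---the flows $\varphi_x$ of compactly supported Legendrian vector fields, together with their analogues for degenerate singularities and for same-sign saddle connections---will supply the finite-dimensional family of wiggles that makes this map transverse.

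First I would describe the target strata. A non-degenerate retrogradient connection on $S^2(t,\sigma,\tau)$ is the coincidence of a stable separatrix of a positive hyperbolic point with the unstable separatrix of a negative one; measuring the signed separation of the two separatrices along a fixed transversal produces a smooth function whose zero set is the locus in question, a codimension-one condition. In the same way a birth--death degeneracy is cut out by the vanishing of the appropriate one- and two-jets of a defining vector field at the singular point, and a same-sign saddle connection is codimension one as well. The mixed locus of condition (ii)---a degenerate point $p$ whose distinguished leaf $\gamma_p$ runs into a hyperbolic singularity---is the intersection of the codimension-one degeneracy stratum with a codimension-one connection stratum, hence codimension two. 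Since $\dim P=3$, transversality to these strata yields the surfaces of (i), the codimension-two submanifolds of (ii), and, where three codimension-one strata meet, the isolated triple points of (iii).

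The heart of the argument is the submersivity of the universal map. Here I would invoke the explicit computation from the preceding paragraph: the contact condition, through \lemref{l:rgc} (and the companion statement \lemref{l:birth} for closed orbits), guarantees that $t_0$ is a \emph{regular} value of the map $\Gamma$ recording the position of the connection, so a single retrogradient connection is already moved transversally in the leaf direction by $\FF_x=\varphi_x(\FF)$. Choosing the supporting vector fields to have pairwise disjoint supports---possible because the relevant degeneracies are isolated and their distinguished leaves are disjoint---lets one displace the finitely many degeneracies independently. Packaging these into one family $\FF_{\sigma,\tau,\lambda}$ with $\lambda$ in a ball $B\subset\R^{N}$, the evaluation map $P\times B\to$ (the relevant jet space) becomes transverse to every stratum, and the parametric transversality theorem then produces a residual, hence dense, set of $\lambda$ for which the slice $\FF_{\sigma,\tau,\lambda}$ satisfies (i)--(iii); in particular arbitrarily small such $\lambda$ exist, giving the required $C^{r}$-small perturbation.

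The relative nature of the statement is handled by a cutoff: since the conditions already hold outside the given compact set and transversality is an open condition, I would multiply each perturbing vector field by a bump function supported in a small neighborhood of that set, so that $\FF_{\sigma,\tau}$ is left unchanged elsewhere. I expect the main obstacle to be the bookkeeping at degenerate singular points---verifying that the perturbations control the birth--death stratum and the mixed stratum of (ii) in the same submersive manner in which \lemref{l:rgc} controls non-degenerate retrogradient connections---and arranging that all the resulting strata can be made to meet mutually transversely rather than only pairwise.
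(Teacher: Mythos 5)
Your proposal matches the paper's own argument essentially step for step: the paper constructs exactly these flows $\varphi_x$ of compactly supported Legendrian vector fields transverse to the leaves, uses \lemref{l:rgc} (via the regularity of the map $\Gamma$ recording the position of the connection) to see that each retrogradient connection is displaced transversally in the leaf direction, asserts the analogous perturbations for degenerate singularities and same-sign saddle connections applied simultaneously, and then concludes by citing Theorem~2.7 in Chapter~3.2 of \cite{hirsch}. Your additional bookkeeping --- disjoint supports, the bump-function cutoff for the relative statement, and the residual set of small parameters $\lambda$ --- only makes explicit what the paper leaves to ``standard transversality theory''.
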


The fact that there are only finitely many singular points implies only that there are finitely many leaves of the characteristic foliations taking part in retrogradient connections. It does not mean that the space of instances, where a retrogradient connection occurs, is compact. For example, assume that $\xi(S_{t_0})$ has a closed leaf such that the holonomy is attractive on one side while it is repelling on the other. When a stable leaf of a positive hyperbolic singularity and an unstable leaf of a negative hyperbolic singularity accumulate on the degenerate closed leaf, then the set of leaves of a foliation by spheres containing $S_{t_0}$ as a leaf, which contain a retrogradient connection, is non-compact. 

\subsubsection{Proof that $K$ and $\overline{K}$ are not isotopic.} \label{ss:mainproof}
The proof of \thmref{t:different or, not isotopic} is based on the following idea: We argue by contradiction. From a Legendrian isotopy from $K$ to $\overline{K}$ we construct a $2$-parameter family (parametrized by $(\sigma,\tau)\in[0,1]^2$) of foliations $\FF_{\sigma,\tau}$ by spheres on $S^2\times[-1,1]$ (each leaf space is parametrized by $t\in[-1,1]$) such that 
$$
\mathcal{L}=\left\{ (t,\sigma,\tau)\in[-1,1]\times[0,1]^2 \,\left|\, \begin{array}{l}

 \textrm{the leaf }S^2\times\{t\} \textrm{ of }\FF_{\sigma,\tau} \textrm{ contains } \\ \textrm{ a non-loose unknot}
 \end{array}\right. \right\}
$$
has the following properties:
\begin{itemize}
\item $\LL$ is a piecewise smooth submanifold  of $[-1,1]\times[0,1]^2$ of codimension $2$.  
\item $\LL$ is properly embedded in $[-1,1]\times[0,1]^2$.
\item $\partial\left([-1,1]\times[0,1]^2\right)$ contains exactly one boundary point of $\LL$. 
\end{itemize}
Since compact $1$-manifolds with boundary have an even number of boundary points, this is a contradiction. A slightly weaker claim on $\mathcal{L}$ which would suffice is discussed briefly in the remarks following the proof of \ref{t:different or, not isotopic}.


In the following example we consider one configuration where  the sphere $S^2(t_0,\sigma_0,\tau_0)$  contains three simultaneous retrogradient connection. Our goal is to understand how this can lead to $\mathcal{L}$ being a non-smooth submanifold of $[-1,1]\times[0,1]^2$.  Recall \corref{c:non-loose exist} which will be used frequently  to establish the existence of minimal non-loose unknots. 

\begin{ex} \label{ex:one triple}
The top diagram of \figref{b:triple1} represents three  non-degenerate retrogradient connections on $S^2(t_0,\sigma_0,\tau_0)$ (as explained on p.~\pageref{b:non-loose}).  We assume that $p_0=(t_0,\sigma_0,\tau_0)\in\mathcal{L}$ lies in the interior of $P$. The graph $\Gamma_{t,-}$ is a closed (as subset) tree for $t\in(t_0-\varepsilon,t_0)$ and $t\in(t_0,t_0+\varepsilon)$ for $\varepsilon>0$ small enough.  
We consider which points of 
$$
\mathcal{L}_{\textrm{gen}}=\{(t,\sigma,\tau)\in \mathcal{L}\,|\,\xi(S^2(t,\sigma,\tau)) \textrm{ has exactly two retrogradient connections}\} 
$$ 
can lie in a neighborhood of  $p_0$. 

Points $(t_{double},\sigma,\tau)$ of $\mathcal{L}_{\textrm{gen}}$ which are close enough to $(t_0,\sigma_0,\tau_0)$ have two simultaneous retrogradient connections, and the remaining third bifurcation also occurs in the movie on $N\times\{(\sigma,\tau)\}$, but at a time $t_{single}$ which is different from $t_{double}$.

\begin{center}
\begin{figure}
\includegraphics[scale=0.8]{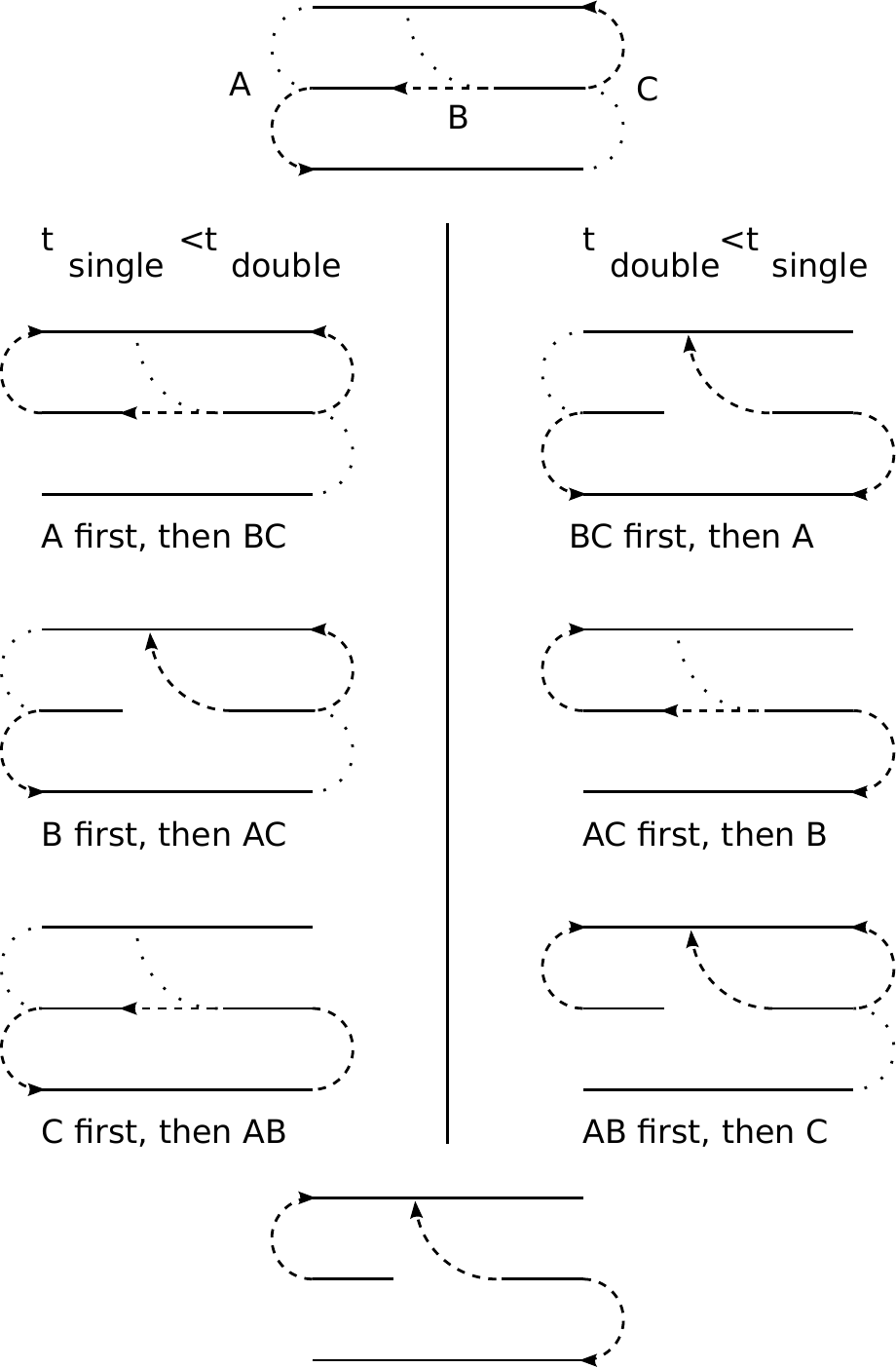}
\caption{Three simultaneous retrogradient connections and possible resolutions with a pair of simultaneous retrogradient connections.}
\label{b:triple1}
\end{figure}
\end{center}

For $(t_{double},\sigma,\tau)\in\mathcal{L}_{\textrm{gen}}$ it is necessary (and also sufficient  by  \corref{c:non-loose exist}), that no overtwisted disc appears on a sphere $S^2_t$ in the movie $N\times\{(\sigma,\tau)\}$ for $t\neq t_{double}$. In \figref{b:triple1} we consider all possibilities: The column on the left schematically shows $\Gamma_{t,-}$ for $t_{single}<t<t_{double}$, when the single bifurcation occurs before the two simultaneous retrogradient connections. $\Gamma_{t,-}$ is the union of solid and dashed arcs while dotted arcs indicate the bifurcation which happens later at $t_{double}$. Note that in all three cases, $\Gamma_{t,-}$ is not a tree, i.e. there is  an obvious overtwisted disc. Hence, none of these resolutions of the threefold retrogradient connection into a single retrogradient connection occurring {\em before} a double retrogradient connection corresponds to points of $\mathcal{L}_{\textrm{gen}}$.

The column on the right shows $\Gamma_{t,-}$ when $t_{double}<t<t_{single}$. Here, there is only the resolution  (marked with {\em AB first, then C}) of the threefold retrogradient connection into a single retrogradient connection occurring {\em after} a double retrogradient connection  leads to $\Gamma_{t,-}$ not being a tree for $t\in(t_{double},t_{single})$. The other two resolutions correspond to points in $\mathcal{L}_{\textrm{gen}}$.

Hence, $p_0$ is a non-smooth point of $\mathcal{L}$, but it has a neighborhood where $\mathcal{L}$ is a topological submanifold of $(-1,1)\times(0,1)^2$. In particular, it is not a boundary point of $\mathcal{L}$.  

\figref{b:ABC2} represents a neighborhood of $p_0$ in $P$. Points on planes marked with $A,B,C$  correspond to spheres where the retrogradient connection $A,B,C$ occurs. The  intersection point of all three planes is $p_0$. The $t$-axis is supposed to be horizontal, and by \lemref{l:rgc} the planes  have to be transverse to this direction. Intersections between two planes (dashed and dotted lines) correspond to two retrogradient connections occurring on the same sphere. The thickened dashed segments are contained in $\mathcal{L}$. 

\begin{center}
\begin{figure}
\includegraphics[scale=0.9]{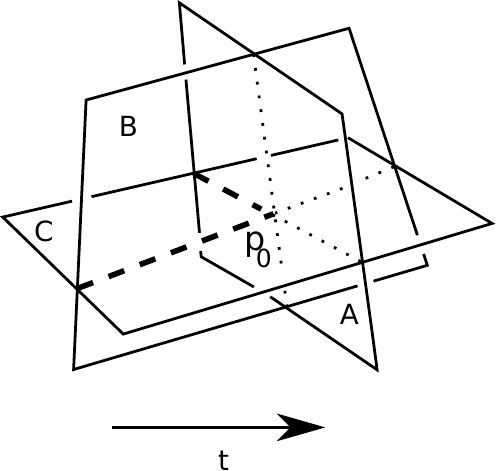}
\caption{A non-smooth point in $\mathcal L$}
\label{b:ABC2}
\end{figure}
\end{center}
\end{ex}

Configurations similar to the one considered in this example will appear later.

The following  preparatory lemma yields a normalization of the characteristic foliation on a sphere containing a non-loose unknot. 
\begin{lem} \label{l:fam almost normal}
Let $K_\sigma\subset S^3,\sigma\in[0,1],$ be a family of non-loose Legendrian unknots with $\mathrm{tb}(K_\sigma)=1$ and $K_1=K_0$. Then there are two balls $B_0,B_1\subset S^3$ and a family of foliations $\FF_\sigma$ of $S^3\setminus(B_0\cup B_1)$ by spheres such that 
\begin{itemize}
\item $\FF_0=\FF_1$, 
\item $K_\sigma$ is contained in a leaf $S_\sigma$ of $\FF_\sigma$, and
\item the characteristic foliation on $S_\sigma$ has exactly two singularities along $K_\sigma$. 
\end{itemize} 
\end{lem}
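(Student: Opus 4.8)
The plan is to build, for each $\sigma$, an embedded sphere $S_\sigma$ through $K_\sigma$ whose characteristic foliation realizes the normal form produced in the proof of \thmref{t:coarse unknot general}, and then to thicken the family $S_\sigma$ to a family of foliations by spheres. The essential observation is that only the germ of $\xi$ along $K_\sigma$ is constrained by the contact condition: a point $p\in K_\sigma$ is singular for $\xi(S_\sigma)$ precisely when $\xi(p)=T_pS_\sigma$, which depends only on $S_\sigma$ near $K_\sigma$. Hence ``exactly two singularities along $K_\sigma$'' is a condition on a collar of $K_\sigma$ in $S_\sigma$, while the remaining leaves of $\FF_\sigma$ are unconstrained and may be chosen by purely topological means, so that \corref{c:fol contractible} applies. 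Since $\{K_\sigma\}_{\sigma\in[0,1]}$ is compact, I first fix two disjoint closed balls $B_0,B_1$ in the complement of $\bigcup_\sigma K_\sigma$ so that $S^3\setminus(B_0\cup B_1)\cong S^2\times[-1,1]$ and all $K_\sigma$ lie in its interior; this makes the two balls independent of $\sigma$ and places every $\FF_\sigma$ in $\mathcal{FOL}_\partial(S^2\times[-1,1])$.

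For a single $\sigma$, since $K_\sigma$ is non-loose with $\tb(K_\sigma)=1$, I span it by a disc and run the normalization from the proof of \thmref{t:coarse unknot general}, splitting boundary hyperbolic points and applying the elimination lemma until the only singularities on $K_\sigma$ are one positive and one negative hyperbolic point whose separatrices along the boundary constitute $K_\sigma$ (the retrogradient configuration in the middle of \figref{b:n=1}). Restricting to a collar of $K_\sigma$ gives the desired germ; I then cap off on both sides and extend to a foliation $\FF_\sigma$ by spheres that agrees with the product foliation near $\partial(S^2\times[-1,1])$. To obtain this over the whole interval I choose the family of spanning discs smoothly and carry out the normalization parametrically, using the parametric elimination lemma \lemref{l:parametric elim} together with a generic perturbation of the family; the interior singularities may be created and annihilated, but one arranges that the two hyperbolic points on $K_\sigma$ persist for every $\sigma$.

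Finally I arrange $\FF_0=\FF_1$. Because $K_1=K_0$, both endpoint foliations contain $K_0$ on a normal-form leaf, and the space of foliations in $\mathcal{FOL}_\partial(S^2\times[-1,1])$ having some normal-form sphere through the fixed curve $K_0$ is connected: once the leaf $S_0$ is fixed, the foliations of the two complementary copies of $S^2\times[-1,1]$ form contractible spaces by \corref{c:fol contractible}, and the normal-form spheres through $K_0$ themselves form a connected family. Modifying $\FF_\sigma$ near $\sigma=1$ within this space (keeping $K_\sigma$ on a leaf) then closes the loop. The hard part will be the parametric step, namely guaranteeing that the normalization can be performed continuously in $\sigma$ with exactly two singularities remaining on $K_\sigma$ for every parameter value, i.e. ruling out a birth--death of the boundary singularities; here the fact that $K_\sigma$ stays Legendrian with $\tb=1$ and non-loose, so that \thmref{t:coarse unknot general} applies at each $\sigma$, is what forces the configuration to stay in normal form, while the flexibility supplied by \corref{c:fol contractible} absorbs all the remaining, purely topological, choices.
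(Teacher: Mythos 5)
There is a genuine gap at your parametric step, and it sits exactly where you yourself flag the difficulty. You propose to choose a family of spanning discs for $K_\sigma$ and ``carry out the normalization parametrically, using the parametric elimination lemma together with a generic perturbation,'' arguing that non-looseness and $\tb(K_\sigma)=1$ ``force the configuration to stay in normal form.'' That last assertion is not an argument: \thmref{t:coarse unknot general} is a pointwise coarse classification and gives no continuity in $\sigma$; for a generic $1$-parameter family of spanning discs the characteristic foliations undergo bifurcations (births/deaths of singularities, also on the Legendrian boundary, and retrogradient connections appearing at isolated parameter values), and nothing you say rules these out or shows the eliminations can be performed continuously with exactly one positive and one negative hyperbolic point surviving on $K_\sigma$ for every $\sigma$. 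Note also that \lemref{l:parametric elim} is formulated for the leaves $\Sigma_t$ of a movie inside a fixed tubular neighborhood, not for a family of discs whose Legendrian boundary moves; adapting it to your setting is precisely the unproved content. The paper sidesteps all of this with one idea you are missing: by the Legendrian isotopy extension theorem (the refinement of Gray's theorem mentioned after \thmref{t:gray}) there is a contact isotopy $\psi_\sigma$, supported away from $B_0\cup B_1$, with $\psi_\sigma(K_0)=K_\sigma$. One normalizes a \emph{single} sphere $S_0$ through $K_0$ and sets $\FF'_\sigma=\psi_\sigma(\FF_0)$; since contactomorphisms carry characteristic foliations to characteristic foliations, the normal form along $K_\sigma$ is preserved for free, and no parametric elimination is needed.

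Your closing step has a secondary gap of the same kind: to arrange $\FF_0=\FF_1$ you assert that the normal-form spheres through the fixed knot $K_0$ ``form a connected family,'' but that is exactly what requires proof. In the paper this is done in two stages: first the germs of $S_0$ and $S_1'=\psi_1(S_0)$ along $K_0$ are matched by comparing the angle functions measuring the rotation of $\xi$ relative to the two spheres along $K_0$ --- each attains the values $\ldots,-\pi,0,\pi,2\pi,\ldots$ exactly once, so both are homotopic through such functions to strictly monotone ones and hence to each other, which yields a twisting isotopy keeping exactly two singular points on the boundary throughout; then a Roussarie--Thurston type argument isotopes the resulting sphere onto $S_0$, and only after that does \corref{c:fol contractible} finish the job. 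Your appeal to \corref{c:fol contractible} is the right final ingredient, but it cannot absorb the connectedness of normal-form germs along $K_0$, which is a contact-geometric and not a purely topological statement.
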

\begin{proof}
The family of knots $K_\sigma,\sigma\in[0,1]$, misses two small balls $B_0,B_1$ which we assume to be Darboux balls with convex boundary. We choose a sphere $\FF_0$ with the desired properties such that $S_0$ is a leaf carrying $K_0$ such that the characteristic foliation is in normal form.

By Gray's theorem we can choose a contact isotopy $\psi_\sigma$ of $S^3$ with support on the complement of $B_0\cup B_1$ so that $\psi_\sigma(K_0)=K_\sigma$, and we consider $\FF'_\sigma=\psi_\sigma(\FF_0)$. This is a family of foliations with all desired properties except that $\FF_0=\FF_0'\neq \FF'_1$ in general. 

Let $S_1'= \psi_1(S_0)$ and $\alpha,\alpha'$  functions on the universal cover  $\mathrm{pr}: \widetilde{K}_0\lra K_0$ of $K_0=K_1$ such that $\alpha(p)$, respectively $\alpha'(p)$, is the angle between $\xi(\mathrm{pr}(p))$ and $S_0$ respectively $S_1'$. Both functions attain each of the values $\ldots,-\pi,0,\pi,2\pi,\ldots$ exactly once. They are homotopic through functions with this property to strictly monotone functions $\alpha_m,\alpha_m'$.  There is a homotopy between these two functions through strictly monotone functions. Twisting $S_1'$ around $K_0$ accordingly, we obtain an isotopy moving $S_1'$ to $S_1''$ keeping $K_1$ fixed such that $S_1''$ and $S_0$ coincide on a tubular neighborhood of $K_1$ and such that throughout the isotopy the characteristic foliation on the sphere has exactly two singular points along $K_0=K_1$.

As in the proof of the Roussarie-Thurston normal form \cite{rou} for surfaces in $3$-manifolds carrying a Reebless foliation one can isotope $S_1''$ so that the coincides with $S_0$ after the isotopy. From \corref{c:fol contractible} we obtain the desired family of foliations $\FF_\sigma$. 
\end{proof}

Finally, we are in a position to prove \thmref{t:different or, not isotopic}.

\begin{proof}[Proof of \thmref{t:different or, not isotopic}]
Assume that $K_\sigma,\sigma\in[0,1]=I$, is a family of oriented Legendrian knots in $(S^3,\xi)$ such that $K_0=K$ and $K_1=\overline{K}$. This isotopy avoids two points of $S^3$, and we can therefore consider $S^3$ with two small open balls removed. We denote this space by $N\simeq S^2\times[-1,1]$. 

We will consider $N\times I^2$, and as in the proof of \thmref{t:non or isotopy}  the product decomposition of $N$ will vary.  
\begin{itemize}
\item For $\tau=0$ we choose an identification of $N$ with $S^2\times[-1,1]$ such that $S=S^2\times\{0\}$ contains $K$.  We require that the characteristic foliation of  $\xi$ on $S$ is in standard form (c.f. \thmref{t:coarse unknot general}). 
\item For $\sigma=0$ and $\sigma=1$ we consider the same identification of $N$ with $S^2\times[-1,1]$.
\item When $\tau=1$ we choose a family of spheres $S_\sigma$ such that 
\begin{itemize}
\item $K_\sigma\subset S_\sigma$, and 
\item $S_0$ and $S_1$ coincide with the sphere $S^2\times\{0\}$ from the identifications chosen above.
\end{itemize} 
\item Now we extend $S_\sigma$ to a family of smooth foliations by spheres on $N$ such that $S_\sigma$ is a leaf on the foliation $N\times\{(\sigma,\tau=1)\}$. By \lemref{l:fam almost normal} we may assume that the characteristic foliation of $\xi$ on $S_\sigma$ has exactly two singular points along $K_\sigma$.  
\end{itemize}
This fixes the boundary conditions. The vertical dashed line in \figref{b:unique} in the back face $\tau=0$ corresponds to a constant family of Legendrian knots $K$  while the  thickened curve on the front face $\{\tau=1\}$ represents the family $K_\sigma$. 

In the next step we perturb the family of spheres in a particular way on a  neighborhood of  $S^2\times\partial\left([-1,1]\times I^2\right)$. 

On each sphere  $S^2\times[-1,1]\times\{(\sigma,\tau)\}$ with $(\sigma,\tau)\in\partial I^2$ containing the non-loose unknot $K_\sigma$, starting with $\tau=1$ and $\sigma=1/2$, we consider the retrogradient connection $\gamma_{\sigma,\tau}$ whose orientation coincides with the orientation of the Legendrian knot as one moves along $\partial I^2$. 

Because $K_0$ and $K_1$ have different orientations, the retrogradient connection one obtains after returning to $\tau=1,\sigma=1/2$ for the first time is opposite to the one  has started with. 

For each point  $(\sigma,\tau)\in\partial\left( I^2\right)$ we choose a small deformation of the family of spheres as follows:
\begin{itemize}
\item For the point $(\sigma=1/2,t=1)$ the family of spheres is unchanged.  
\item For all other points the deformation is constructed as follows. Fix  a small disc $D_{\sigma,\tau}$ intersecting the retrogradient connection chosen above (but not the other)  such that the characteristic foliation of $\xi$ on $D_{\sigma,\tau}$ has no singular points. To obtain the deformation we push the interior of the small discs slightly into the direction given by the coorientation of the spheres (and extending this deformation to nearby spheres). 

As $(\sigma,\tau)$ approaches $(\sigma=1/2,\tau=1)$, the size of the deformation converges to zero (with respect to every $C^r$-norm, $1\le r\in \Z$), so that we obtain a family of deformations depending smoothly on $(\sigma,\tau)$. 
\end{itemize}
Sufficiently small deformations as above do not introduce new singular points of the characteristic foliation on the deformed spheres and the retrogradient connections which formed the non-loose unknot before the deformation now appear on two different spheres (c.f. the discussion at the end of \secref{ss:coarse}) except for $\sigma=1/2$ and $\tau$ close to $1$.  

We use these deformations to extend the family of spheres from $\partial I^2$ to a neighborhood of $\partial I^2$ in $I^2$. By construction the retrogradient connections on $N\times\{\sigma,\tau\}$ for $(\sigma,\tau)$ in the interior of $I^2$ occur on different spheres except for  $\sigma=1/2$ and $\tau$ close to $1$. These are represented by the thickened dashed lines in \figref{b:unique}. 

\begin{figure}[htb]
\begin{center}
\includegraphics[scale=0.7]{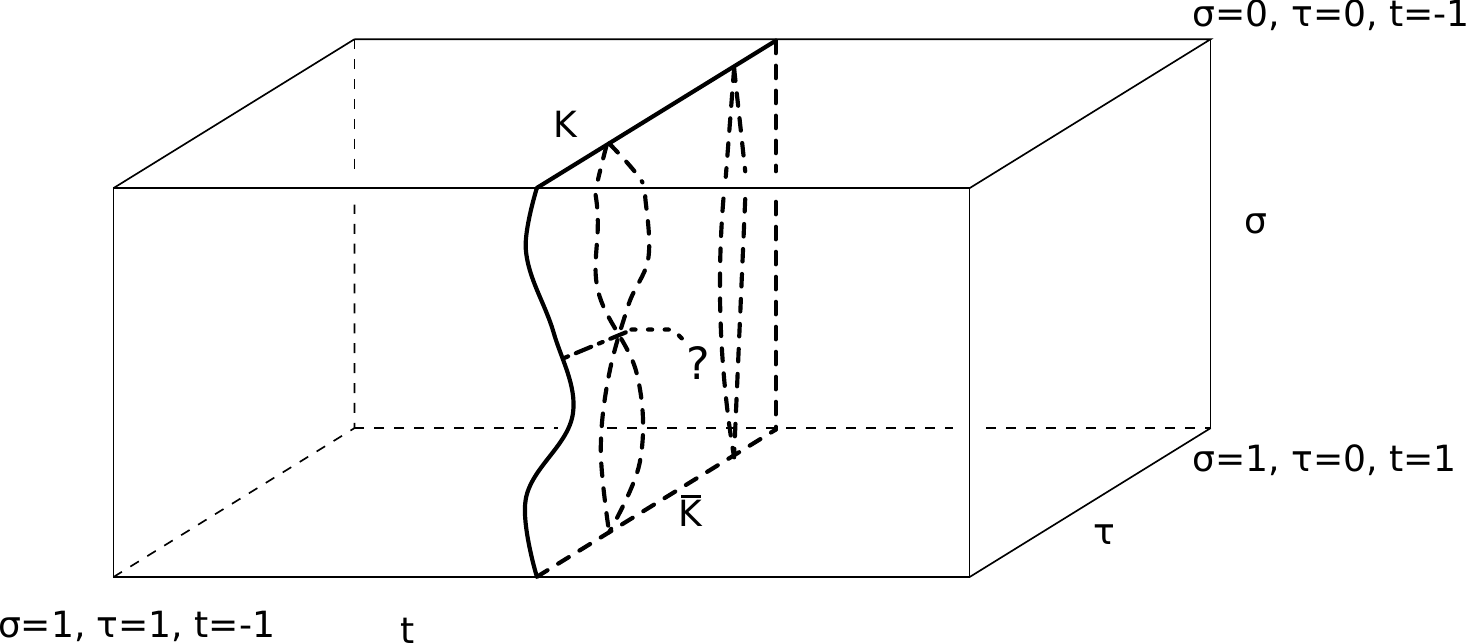}
\end{center}
\caption{Germ of a $1$-manifold obtained from an orientation reversing isotopy of $K$}\label{b:unique}
\end{figure}

According to  Corollary~\ref{c:fol contractible} we can extend the family of foliations by spheres on $N$ we have constructed for parameter values in a neighborhood of $I^2$ to the entire parameter space $I^2$. As usual, the spheres are parametrized by three parameters: $t\in[-1,1]$ and $(\sigma,\tau)\in I^2$, and we assume that the singularities of the characteristic foliations of $\xi$ on these spheres of $\xi$ are isolated. 

We now consider the set $\mathcal{L}$ of those parameter values $(t,\sigma,\tau)$  for which the corresponding sphere contains a non-loose piecewise smooth Legendrian unknot.
\medskip

{\bf Claim:} {\em $\mathcal{L}$ is a piecewise smooth properly embedded submanifold of codimension $2$ whose boundary is contained in $\partial([-1,1]\times I^2)$.}
\medskip

Before proving the claim, note that it implies the theorem since each submanifold component contributes an even number of boundary points. However, there is only one boundary point of $\mathcal{L}$ in $\partial\left([-1,1]\times I^2\right)$.
 
{\bf Proof of the claim:}   By \propref{p:manifold} we may assume that the points in $(-1,1)\times(0,1)^2$ where the corresponding characteristic foliation has exactly two retrogradient connections  is a codimension-$2$ submanifold. 
We denote the set of these generic points in $\mathcal{L}$ by $\mathcal{L}_{\textrm{gen}}$. Points where $\mathcal{L}$  might not be a  smooth submanifold  are 
\begin{enumerate} 
\item points where three retrogradient connections occur simultaneously, 
\item points where  one singularity of the characteristic foliation which is a limit point of a retrogradient connection is degenerate, and 
\item points where two retrogradient connections occur and there is an additional connection between two hyperbolic points.
\end{enumerate}
All these degenerations occur in isolated points of $(-1,1)\times I^2$ by \propref{p:manifold} and more degenerate situations do not occur at all.

In the following we treat the cases (2) and (3). After that we will show that $\mathcal{L}$ is compact and it turns out that all limit configurations of $\mathcal{L}_{gen}$ with three retrogradient connections can be treated in the same way as in \exref{ex:one triple}.

Let us now consider a sphere $S^2(t_0,\sigma_0,\tau_0)$ which contains a minimal non-loose unknot such that one of the singularities of the characteristic foliation, which is a limit point of a retrogradient connection, is degenerate. We will assume that this  singularity is negative. This situation was already discussed in \remref{rem:degenerate sing} on p.~\pageref{rem:degenerate sing}. 

The following considerations deal with a sufficiently small neighborhood of $p_0=(t_0,\sigma_0,\tau_0)$ in the parameter space. By our genericity assumptions, the degenerate singularity has index $0$ and it is of birth-death type. In particular, the set 
$$
\mathcal{S}=\left\{(t,\sigma,\tau)\,\left|\,\xi\left(S^2(t,\sigma,\tau)\right) \textrm{ has a degenerate singularity}\right.\right\}
$$ 
is a submanifold near $p_0$.  According to \remref{rem:degenerate sing}, $\mathcal{S}$ separates two surfaces defined by the presence of a retrogradient connection $A$ and $A'$ which involve the same stable leaf of a hyperbolic positive singularity of $\xi\left(S^2(t,\sigma,\tau)\right)$ for $(t,\sigma,\tau)$ close to $p_0$. While $A'$ defines a surface, $A$ determines a surface with boundary. This boundary is contained in the closure of the surface corresponding to $A'$ and it is also contained in $S$. Moreover, it is smooth. The other retrogradient connection $B$ is not affected by the degeneracy of the singular point and in general position (in a neighborhood of $p_0$) with respect to all subspaces of the parameter space mentioned so far (in particular to the boundary of $A$ where a retrogradient connection is degenerate).

This implies that $\mathcal{L}$ is a piecewise smooth submanifold of $P$ of codimension $2$ near $p_0$. The left-hand side of \figref{b:ABC} depicts the situation we have considered. The shaded plane marked with $S$ consists of those points in $P$  for which the corresponding characteristic foliation has a degenerate singularity. 

\begin{center}
\begin{figure}[htb]
\includegraphics[scale=0.85]{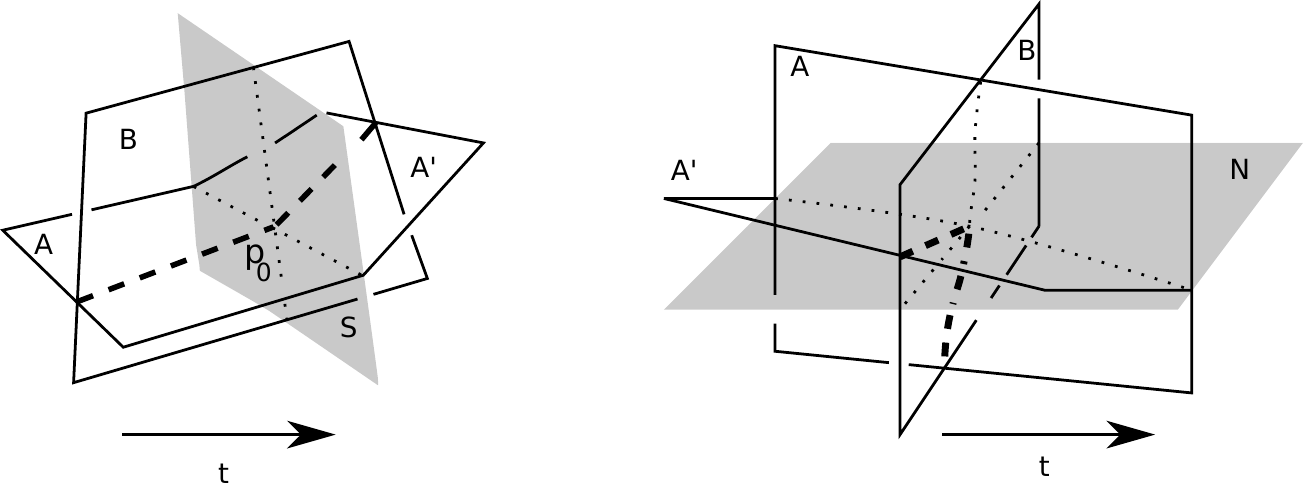}
\caption{Non-smooth points of $\mathcal{L}$}
\label{b:ABC}
\end{figure}
\end{center}

Similar considerations apply in he case when $\xi(S(p_0))$ has exactly two retrogradient connections and another connection $\gamma$ between two hyperbolic singularities $x,y$ of the same sign. We assume that $x,y$ are negative.

We consider the one of the configurations for which one has to appeal to transversality and which leads to a non-smooth point in $\mathcal{L}$. The other cases are analogous. Assume that 
\begin{itemize} 
\item[(i)] an unstable leaf of $x$ is one of the  retrogradient connections,
\item[(ii)] $\gamma$ is a stable leaf of $x$, and 
\item[(iii)] the other retrogradient connection $B$ is connected to $y$ using a path in $\Gamma_-^*$ which does not pass through $x$. 
\end{itemize} 

Let $z$ be the positive singularity at the end of the retrogradient connection which is opposite to $x$ and $U$  a  neighborhood of $p_0\in (-1,1)\times(0,1)^2$. Since all singular points are non-degenerate $x,y$, respectively $\gamma$ is part of a family of hyperbolic singularities respectively unstable leaves of a hyperbolic singularity defined on $U$.  Let $N$ be the local hypersurface such that for parameters values in $U$ the leaf $\gamma$ connects $y$ to $x$.

We choose $U$ so small that $U\setminus N$ has two connected components. In one of these components there are always precisely two retrogradient connections $A,B$. When $A,B$  occur on the same sphere, then there is a non-loose unknot on that sphere (formed by a path containing the two retrogradient connections and paths in the  components   $\Gamma_-'\subset\Gamma_-^*$ and $\Gamma'_+\subset\Gamma_+^*$  which satisfy $e_\pm(\Gamma'_\pm)+1=h_\pm(\Gamma'_\pm)$ (see \defref{d:p-} on p.~\pageref{d:p-}).

By \lemref{l:rgc} there is one additional 2-parametric family of parameter values $p\in U$ such that there is a retrogradient connections $A'$ from $y$ to $z$. The retrogradient connection $A$ (from $x$ to $z$) and $A'$ (from $y$ to $z$) never occur on the same sphere. Because of assumption (iii), a non-loose unknot is now formed when  the retrogradient connections $A'$ and $B$  occur simultaneously (c.f. the right-hand side in \figref{b:ABC}) but not when $A$ and $B$ appear on the same sphere. Note that of (iii) is not satisfied, then $p_0$ is a smooth point of $\mathcal{L}$ defined by the intersection of the surfaces corresponding to $A$ and $B$.

In order to finish the proof of the claim it remains to show that $\LL$ is a {\em compact} piecewise smooth submanifold.  

Let $S_n$ be a sequence of leaves of $\FF_{\sigma_n,\tau_n}$ containing a minimal non-loose Legendrian knot $K_n$. We may assume that 
\begin{itemize}
\item $S_n$ converges to a sphere $S_\infty$ (we denote the corresponding parameter values by $t_\infty,\sigma_\infty,\tau_\infty$) of $\FF_{\sigma_\infty,\tau_\infty}$. 
\item $K_n$ converges to a union $\mathcal{K}$ of leaves of $\xi(S_\infty)$ which is a closed subset of $S_\infty$. 
\end{itemize} 
Since we understand $\mathcal{L}$ near the boundary of $[-1,1]\times I^2$ we are only  interested in the case when the  limit point $(t_\infty,\sigma_\infty,\tau_\infty)$ lies in the interior of $[-1,1]\times I^2$.

$\mathcal{K}$ cannot contain a non-degenerate closed leaf since every sphere close enough to $S_\infty$ contains closed leaf contradicting the fact that $K_n$ is non-loose. For degenerate closed leaves \lemref{l:birth} asserts that for $t>t_\infty$ or $t<t_\infty$ the sphere $S(t,\sigma_\infty,\tau_\infty)$ contains an attractive closed leaf. This prevents the existence of non-loose unknots in spheres close to $S_\infty$. We may therefore assume that $\xi(S_\infty)$ has no closed leaf. 

If $S_\infty$ contains a cycle (made from stable and unstable leaves of hyperbolic singularities) with one sided holonomy, then after an isotopy of $S_\infty$ which moves $S_\infty$ into its complement the characteristic foliation on the isotoped sphere contains an overtwisted disc. This does not require any assumption on the signs  singular points on the cycle and uses only  the Poincar{\'e}-Bendixson theorem: If $\gamma$ is the cycle and $C$ a closed curve transverse curve such that $C$ and $\gamma$ bound an annulus, then a perturbation  as in \figref{b:cycle} makes a closed leaf appear.    

This contradicts   $(t_\infty,\sigma_\infty,\tau_\infty)$ being the limit of parameter values corresponding to spheres containing non-loose unknots. Therefore, $S_\infty$ does not contain any limit cycles with one sided holonomy and all leaves of $\xi(S_\infty)$ have singular points as limit sets.

\begin{center}
\begin{figure}
\includegraphics[scale=0.6]{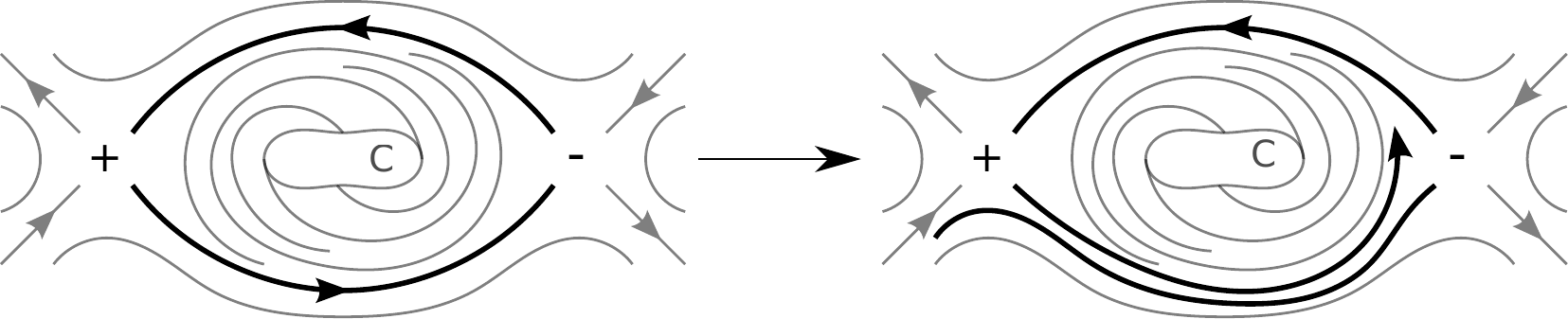}
\caption{Perturbations of cycles with one-sided holonomy result in overtwisted discs.}
\label{b:cycle}
\end{figure}
\end{center}

Let $\Gamma_-^*$ be the graph consisting of negative singularities of the characteristic foliation on $S_\infty$ and their unstable leaves which do not take part in retrogradient connections. If a connected component of $\Gamma_-^*$ is not a tree, then $N\setminus S_\infty$ contains overtwisted discs. This is a contradiction to the fact that there are non-loose unknot arbitrarily close to $S_\infty$. 

By construction, $S_\infty$ contains at least two retrogradient connections and the complement $N\setminus S_\infty$ is tight. Assume that $S_\infty$ contains exactly two retrogradient connections. Then $\Gamma_-^*$ has three connected components. If each of these components takes part in at most one retrogradient connection, then one can eliminate all hyperbolic singularities on $S_\infty$ using \lemref{l:parametric elim}:

Let $S_\infty\times[\delta,\delta], \delta>0,$ be a tubular neighborhood of $S_\infty$ such that the characteristic foliation on each boundary sphere of this neighborhood has only two elliptic singularities. Then one can deform the contact structure on $N\times\{\sigma_\infty,\tau_\infty\}$ so  that 
\begin{itemize}
\item nothing changes away from $N\times(-\delta,\delta)$, and
\item the characteristic foliation on $S_\infty\times\{s\}$ for $s\in(-\delta,\delta)$ never has hyperbolic points after the deformation. 
\end{itemize}
Then $S_\infty$ has a tight neighborhood and $\xi$ itself would be tight. Therefore, there is one component of $\Gamma_-$ which takes part in both retrogradient connections and the same is true for $\Gamma_+$ (consisting of positive singular points and their stable leaves). The union of these two components contains a non-loose Legendrian unknot whose Thurston-Bennequin invariant is one.    

Assume now that the limit sphere $S^2(t_\infty,\sigma_\infty,\tau_\infty)$ contains three retrogradient connections. Since points of $\mathcal{L}_{\mathrm{gen}}$ lie arbitrarily close to $(t_\infty,\sigma_\infty,\tau_\infty)$, this configuration is obtained from a pair of simultaneous retrogradient connections $A,B$  as in the top part of \figref{b:n=1} (on p.~\pageref{b:n=1})
by adding a third retrogradient connection $C$ such that the associated negative hyperbolic singularities lies on one component of $\Gamma_-(t_\infty)$ with the unstable leaves which take part in $A,B$ removed.



Some configurations obtained in this way are less interesting than others: For example, the case when the new retrogradient connection is trivial in the sense that the bifurcation does not affect which components of $\Gamma_-^*$ are connected directly through unstable leaves in $\Gamma_-\setminus\Gamma_-^*$ (as in the left-hand  part of \figref{b:trivial config})  corresponds to smooth points of $\mathcal{L}$. We also do not have to consider the case when the new retrogradient connection (i.e. the one different from $A,B$)  leads to obvious overtwisted discs which are not affected by $A$ or $B$ (as in the right-hand part of \figref{b:trivial config}). Finally, note that \lemref{l:rgc} restricts the possible bifurcations. 

\begin{center}
\begin{figure}[th]
\includegraphics[scale=0.8]{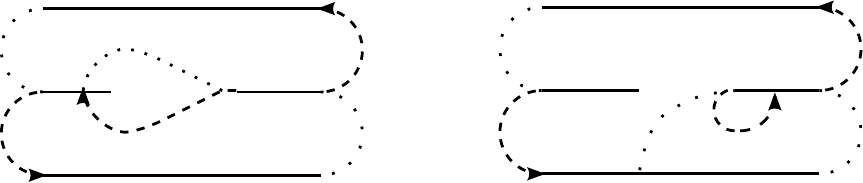}
\caption{Simple examples of configurations with three retrogradient connections.}
\label{b:trivial config}
\end{figure}
\end{center}

Up to combinatorial equivalence (rotating a diagram by $180^\circ$) all remaining possibilities are listed in \figref{b:triple in closure of L} on p.~\pageref{b:triple in closure of L}. Two of these (left column in the middle and right column at the bottom) cannot appear as limit points $(t_\infty,\sigma_\infty,\tau_\infty)$ of $\mathcal{L}_{\mathrm{gen}}$ because there are obvious overtwisted discs on $S^2\times \{t\}\times\{\sigma_\infty,\tau_\infty)\}$ for $t>t_\infty$ or $t<t_\infty$. The top left configuration was analyzed in \exref{ex:one triple}, the remaining three configurations can be treated in the same fashion. 

\begin{center}
\begin{figure}[ht]
\includegraphics[scale=0.8]{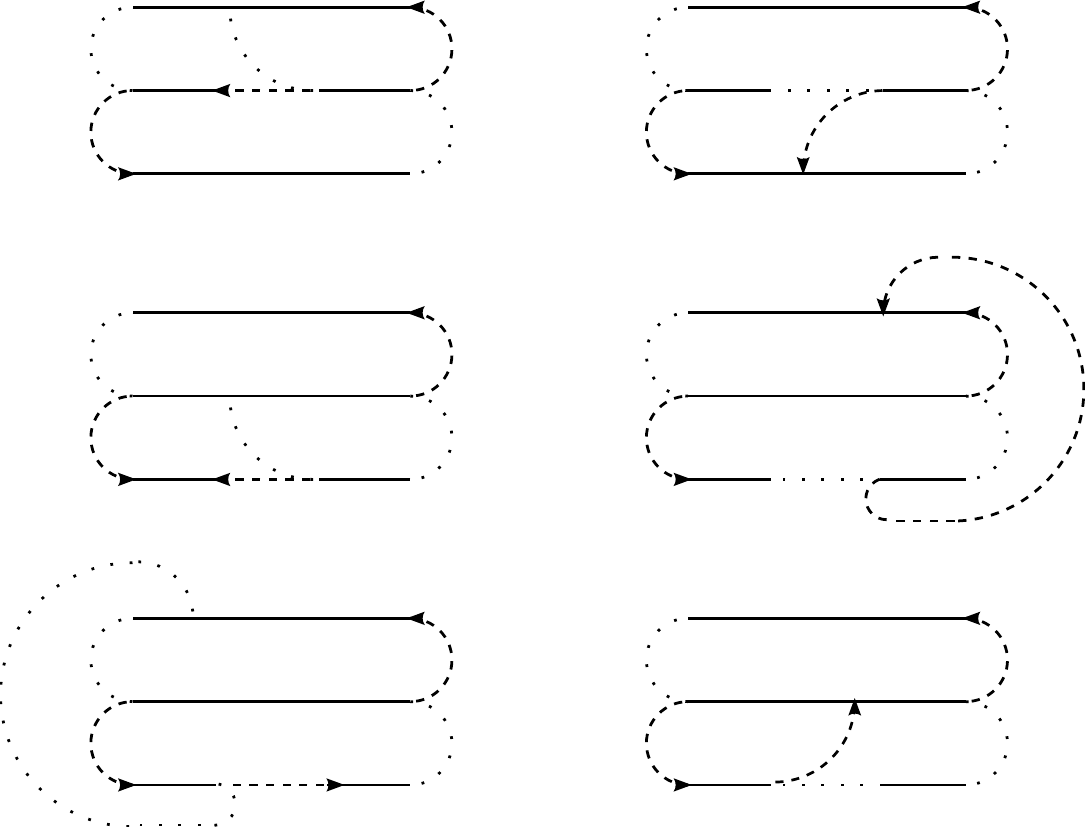}
\caption{Remaining configurations with three simultaneous retrogradient connections.}
\label{b:triple in closure of L}
\end{figure}
\end{center}


We have already treated the case when the characteristic foliation on $S^2(t_\infty,\sigma_\infty,\tau_\infty)$ contains a degenerate singularity or on additional non-retrogradient connection between hyperbolic singular points. Therefore,  $\mathcal{L}$ is a piecewise smooth submanifold of codimension $2$ in $[-1,1]\times I^2$ which is compact and properly embedded.
\end{proof}

Let us discuss two aspects of the above proof:
\begin{enumerate}
\item All configurations with three retrogradient connections in $\overline{\mathcal{L}}_{gen}$   have the following property: $\Gamma_-^*$ consists of four connected components and one of them is obtained by splitting one of the components of the analogous graph for a minimal non-loose unknot with two retrogradient connections into two pieces. Using the normal form from \thmref{t:coarse unknot general} for a minimal non-loose unknot  one sees that one of the components $\Gamma'$ of $\Gamma_-^*$  satisfies  $e_-(\Gamma')=h_-(\Gamma')$. 

Since this component is a tree, one can eliminate $\Gamma'$  completely by \lemref{l:parametric elim}  without creating closed leaves. In particular, one of the three retrogradient connections on $S_\infty$ disappears and 
 no new triples of simultaneous retrogradient connection appear (use Lemma 2.15 in \cite{gi-bif}).  A similar argument can be used in the above proof for degeneracies of type (3). 

This  argument only shows that $\mathcal{L}$ is piecewise smooth after an additional operation on the foliation by spheres. Of course, this would suffice for our purposes. 
\item We did not show that there is a {\em continuous} (in the Hausdorff topology) family of non-loose unknots $K(x)$ with $x\in\mathcal{L}$.
\end{enumerate}

The argument used in the previous proof can be used to show the following, very similar, result.  In the refereeing process it was pointed out to the author that the claim of the following theorem goes back to Y.~Chekanov. Unfortunately, there seems to be no printed reference.

Before we state the theorem, we fix the setup. Let $K$ be a non-loose unknot in $(S^3,\xi_{-1})$ with some orientation. We denote the unknot obtained as positive stabilization of $K$ by $\Delta$, $K$ with reversed orientation is $\overline{K}$ and the positive stabilization of $\overline{K}$ is $\widehat{\Delta}$. Note that $\Delta,\widehat{\Delta}$ are  unknots with  vanishing Thurston-Bennequin invariant and $\mathrm{rot}(\Delta)=\mathrm{rot}(\widehat{\Delta})=1$. 

\begin{thm}[Chekanov] \label{t:not transitive}
$\widehat{\Delta}$ is not isotopic to $\Delta$. 
\end{thm}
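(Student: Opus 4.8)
The plan is to argue by contradiction, adapting the counting argument behind \thmref{t:different or, not isotopic} but with the roles of the tight and overtwisted boundary faces interchanged. Suppose $\Delta_\sigma$, $\sigma\in[0,1]$, is a Legendrian isotopy with $\Delta_0=\Delta$ and $\Delta_1=\widehat\Delta$. Writing $S_+$, $S_-$ for the positive and negative stabilization, we have $\Delta=S_+(K)$ and $\widehat\Delta=S_+(\overline K)$, and each bounds an overtwisted disc obtained from the minimal non-loose unknot $K$ by the stabilization-perturbation correspondence of the remark following \thmref{t:different or, not isotopic}: one breaks one of the two retrogradient connections $A,B$ of $\xi(S)$ on a sphere $S$ carrying $K$ and reads off the boundary of the resulting obvious overtwisted disc (cf. \figref{b:non-loose}). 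The first point I would pin down is that $\Delta$ and $\widehat\Delta$ break \emph{different} connections: breaking $A$ with the orientation of $K$ yields $S_+(K)=\Delta$, while breaking $B$ with the orientation of $\overline K$ yields $\overline{S_-(K)}=S_+(\overline K)=\widehat\Delta$. Thus reversing the orientation of $K$ while keeping the stabilization positive interchanges $A$ and $B$; this is the source of the parity that produces the contradiction.

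Next I would build, exactly as in the proof of \thmref{t:different or, not isotopic} and using \corref{c:fol contractible}, a two-parameter family of foliations by spheres $\FF_{\sigma,\tau}$ on $N\simeq S^2\times[-1,1]$, $(\sigma,\tau)\in I^2$, constant near $\partial N$, with the following boundary data. Along $\{\tau=0\}$ the family is constant and carries $K$ on the middle sphere in the normal form of \thmref{t:coarse unknot general}; along $\{\tau=1\}$ it realizes the isotopy $\Delta_\sigma$ through the overtwisted discs bounded by the $\Delta_\sigma$ (so the two connections are already resolved onto distinct leaves); along $\{\sigma=0\}$ it interpolates between $K$ and $\Delta$ by breaking $A$, and along $\{\sigma=1\}$ between $K$ and $\widehat\Delta$ by breaking $B$. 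I then consider
\[
\mathcal{L}=\left\{(t,\sigma,\tau)\ \middle|\ \text{the leaf }S^2\times\{t\}\text{ of }\FF_{\sigma,\tau}\text{ carries a minimal non-loose unknot}\right\}.
\]
The entire interior analysis --- that $\mathcal{L}$ is a compact, properly embedded, piecewise smooth one-manifold --- is word for word the argument for \thmref{t:different or, not isotopic}: by \propref{p:manifold} the generic stratum of spheres with exactly two retrogradient connections is codimension two, the triple-connection points are handled as in \exref{ex:one triple}, and degenerate singularities, additional saddle-saddle connections and the compactness of $\mathcal{L}$ are treated with \corref{c:non-loose exist}, \lemref{l:rgc} and \lemref{l:parametric elim} precisely as before.

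The decisive new point, and the step I expect to be the main obstacle, is the boundary count. After perturbing near $\partial([-1,1]\times I^2)$ to break the constant non-loose unknot on $\{\tau=0\}$, the \emph{choice} of which connection to break is a section of a $\Z_2$-bundle over the non-loose locus on the bottom face; it is forced to equal $A$ near the corner on $\{\sigma=0\}$ and $B$ near the corner on $\{\sigma=1\}$, while the foliation, hence the pair $\{A,B\}$, is constant along $\{\tau=0\}$. Since $A\neq B$, this section changes an odd number of times, and at each change the non-loose unknot survives, contributing an odd number of points of $\mathcal{L}$ on $\{\tau=0\}$. On the other three boundary pieces $\mathcal{L}$ is empty: along $\{\sigma=0\}$ and $\{\sigma=1\}$ a single fixed connection is broken, and along $\{\tau=1\}$ the passage from breaking $A$ to breaking $B$ happens \emph{through loose unknots} (the given isotopy $\Delta_\sigma$), never through the non-loose configuration. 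Hence $\mathcal{L}$ has an odd number of boundary points, contradicting the fact that a compact one-manifold has an even number of endpoints. The delicate verification is exactly the orientation-dependence asserted in the first paragraph --- that positive stabilization of $K$ and of $\overline K$ select opposite retrogradient connections --- together with checking that the perturbed $\mathcal{L}$ meets the boundary only on the bottom face; granting these, the parity argument forces $\Delta\not\simeq\widehat\Delta$.
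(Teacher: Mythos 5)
Your proposal is correct and takes essentially the same route as the paper: the paper's proof of \thmref{t:not transitive} likewise builds a two-parameter family of foliations by spheres with the assumed loose isotopy carried on the face $\{\tau=1\}$ (via the extending contact isotopy $\varphi_\sigma$), uses the fact that the two resolutions of the double retrogradient connection produce overtwisted discs with boundaries isotopic to $\Delta$ and $\widehat{\Delta}$ respectively to arrange the boundary data, and then derives the contradiction from the parity of boundary points of $\mathcal{L}$ exactly as in the proof of \thmref{t:different or, not isotopic}. Your only deviation --- an odd number of switch points of the $\Z_2$-choice on the bottom face rather than the paper's normalization to exactly one non-loose point at $\sigma=1/2$ --- is cosmetic.
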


\begin{proof}
Assume that there is an isotopy $\varphi_\sigma$ of $S^3$ which moves $\Delta$ to $\widehat{\Delta}$. As in the previous proof $\varphi_\sigma(\Delta),\sigma\in I=[0,1],$ misses two balls which we assume to be so small that they are contained in Darboux domains and have convex boundary. The complement of these balls is $N\simeq S^2\times[-1,1]$. We reconsider the setting of the proof of \thmref{t:different or, not isotopic}. On $N\times I^2$  we consider the restriction of $\xi_{-1}$ to $N$ on all $N\times\{\sigma,\tau\}$. We fix a family of foliations $\FF_{\sigma,\tau},\sigma,\tau\in I$ by spheres on $N$ so that
\begin{itemize}
\item $\FF_{\sigma,\tau}$ is constant near $\partial N$, 
\item a sphere of $\FF_{\sigma=0,\tau=1}$ contains $\Delta$,
\item $\FF_{\sigma,1}=\varphi_\sigma(\FF_{0,1})$, 
\item $\FF_{0,\tau}, \FF_{1,\tau}$ is independent of $\tau$,
\item on $N\times\{\sigma,\tau=0\}$ we pick a family of foliations interpolating between $\FF_{\sigma=0,0}$ and $\FF_{\sigma=1,0}$ (already fixed)  as follows: a leaf of $\FF_{1/2,0}$ contains a non-loose unknot $K$. To obtain $\FF_{\sigma,0}$ for $\sigma\le 1/2$ and $\sigma\ge 1/2$ first deform $\FF_{1/2,0}$ to make overtwisted discs appear which are isotopic to $\Delta$, respectively $\widehat{\Delta}$, for $\sigma<1/2$, respectively $\sigma>1/2$, and use the isotopy to extend the given family of foliations to a family of foliations for $(\sigma,\tau)\in\partial( I^2)$ such that precisely one leaf of one foliation contains a non-loose unknot (namely $K$ on a leaf of $\FF_{1/2,0}$).
\end{itemize}
As in the proof of \thmref{t:different or, not isotopic} one shows that such a configuration is not possible contradicting the assumption that $\Delta$ is isotopic to $\widehat{\Delta}$. 
\end{proof}
The above theorem can be rephrased:  The identity component of $\mathrm{Diff}_+(S^3,\xi_{-1})$ does not act transitively on the set of boundaries of overtwisted discs. 

Finally, we note the following corollary of the proof of \thmref{t:non or isotopy}. This corollary, or a statement equivalent to it, must have been known to Y.~Chekanov (see \thmref{t:chekanov} below). 

\begin{cor} \label{c:part of chekanov}
Let $\psi\in\mathrm{Diff}_+(S^3,\xi_{-1})$ such that $\mathrm{d}(\psi)=0$ and $\psi(K)$ is isotopic to $K$ as oriented Legendrian knot. Then $\psi$ is isotopic to the identity through contact diffeomorphisms. 
\end{cor}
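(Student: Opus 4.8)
The plan is to revisit the proof of \thmref{t:non or isotopy}, applied with the standard minimal non-loose unknot taken to be $K$ itself and with the coarse contactomorphism taken to be the given map $\psi$, so that the non-loose unknot to which $K$ gets connected is $\psi(K)$. Since $\mathrm{d}(\psi)=0$ is assumed, there is no need to modify $\psi$ by the orientation reversing map of \lemref{l:K preserved,dym=1}. Using that $\mathrm{Diff}(S^3,\mathrm{or})$ is connected (Cerf's theorem) I would fix a path $\psi_\sigma$, $\sigma\in[0,1]$, with $\psi_0=\mathrm{id}$ and $\psi_1=\psi$, and build the $I^2$-family of contact structures $\xi(\sigma,\tau)$ on $N\simeq S^2\times[-1,1]$ with the boundary data recorded in the table on p.~\pageref{setup table}; in particular $\xi(\sigma,0)=\xi_{-1}$, $\xi(\sigma,1)=\left(\psi_\sigma\right)_*\xi_{-1}$, and $\xi(\sigma,\tau)=\xi_{-1}$ is constant in $\tau$ for $\sigma\in\{0,1\}$.

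The key point is that this construction yields more than a Legendrian isotopy of knots. Applying the parametric version of Gray's theorem (\thmref{t:gray}) to the family $\xi(\sigma,\tau)$ produces a smooth family of isotopies $\varphi_{\sigma,\tau}$ with $\left(\varphi_{\sigma,\tau}\right)_*\xi(\sigma,0)=\xi(\sigma,\tau)$ and $\varphi_{\sigma,0}=\mathrm{id}$. Because $\xi(\sigma,0)=\xi_{-1}$ and $\xi(\sigma,1)=\left(\psi_\sigma\right)_*\xi_{-1}$, the diffeomorphisms
\[
g_\sigma:=\psi_\sigma^{-1}\circ\varphi_{\sigma,1}
\]
satisfy $\left(g_\sigma\right)_*\xi_{-1}=\xi_{-1}$, so each $g_\sigma$ is a contactomorphism preserving the orientation of $\xi_{-1}$. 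Since $\xi(\sigma,\tau)$ is constant in $\tau$ for $\sigma\in\{0,1\}$ one has $\varphi_{0,\tau}=\varphi_{1,\tau}=\mathrm{id}$, whence $g_0=\mathrm{id}$ and $g_1=\psi^{-1}$. Thus $\sigma\mapsto g_\sigma^{-1}$ is a path of contactomorphisms from $\mathrm{id}$ to $\psi$, which is exactly the assertion of the corollary.

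What remains is to check that the family $\xi(\sigma,\tau)$ can be built so that the boundary identities $\xi(\sigma,1)=\left(\psi_\sigma\right)_*\xi_{-1}$ and $\xi(\sigma,\tau)=\xi_{-1}$ for $\sigma\in\{0,1\}$ hold \emph{exactly} — equivalently, that none of the orientation reversing moves permitted in the proof of \thmref{t:non or isotopy} are inserted. The only step where such a move can occur is the reduction to the case $T_-(\sigma,0)<T_+(\sigma,0)$: at the isolated parameters $\sigma_i$ with $T_-(\sigma_i,0)=T_+(\sigma_i,0)$ a minimal non-loose unknot $K_{\sigma_i}$ appears on a middle sphere (\remref{r:more than one in only one}, \corref{c:non-loose exist}), and the family is patched across $\{\sigma=\sigma_i\}$ by an auxiliary contactomorphism $\widehat{\psi}_i$ with $\mathrm{d}(\widehat{\psi}_i)=0$ matching $\psi_{\sigma_i}(K)$ to $K_{\sigma_i}$. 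Here the hypothesis that $\psi(K)$ is isotopic to $K$ \emph{as an oriented} Legendrian knot is used: it forces the oriented isotopy class along the family $\psi_\sigma(K)$ to be constant, so the middle unknots $K_{\sigma_i}$ can be oriented compatibly and each $\widehat{\psi}_i$ chosen to preserve the knot orientation while keeping $\mathrm{d}(\widehat{\psi}_i)=0$; no orientation reversing adjustment of the type in \lemref{l:K preserved,dym=1} is then needed, and the prescribed boundary values survive unchanged.

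This last point is the crux, and it is exactly what separates the corollary from \thmref{t:non or isotopy}. By \thmref{t:different or, not isotopic} the oriented and the reverse-oriented minimal non-loose unknots are genuinely distinct, so if the oriented class did flip somewhere along $\psi_\sigma(K)$ one would be forced to insert an orientation reversing $\widehat{\psi}_i$; restoring $\mathrm{d}(\widehat{\psi}_i)=0$ by composing with the generator of \lemref{l:K preserved,dym=1} would then twist the family at $\{\sigma=\sigma_i\}$ and destroy the identity $g_1=\psi^{-1}$ — which is precisely how a contactomorphism with $\mathrm{d}=0$ but $\psi(K)=\overline{K}$ avoids being isotopic to the identity. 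Under the oriented hypothesis this obstruction vanishes, and the path $g_\sigma$ constructed above finishes the proof.
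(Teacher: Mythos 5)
Your overall architecture matches the paper's: run the construction of \thmref{t:non or isotopy} with the given $\psi$, use $\mathrm{d}(\psi)=0$ to extend the family of plane fields over $I^2$, apply \thmref{t:eliashclass}, and finish with Gray's theorem. Your explicit endgame $g_\sigma=\psi_\sigma^{-1}\circ\varphi_{\sigma,1}$ is a correct and welcome unpacking of the paper's final sentence (``a deformation of the smooth isotopy $\psi_\sigma$ into a contact isotopy relative to $\psi_0$ and $\psi_1$''). But the treatment of the crux --- the reduction to $T_-<T_+$ --- diverges from the paper and contains a genuine gap. The paper does \emph{not} keep the layering: it uses the hypothesis that $\psi(K)$ is Legendrian isotopic to $K$ \emph{as an oriented knot} to perturb the product decompositions of $N$ near $\{\tau=0\}$ and $\{\tau=1\}$ so that $T_+(\sigma,\tau)\neq T_-(\sigma,\tau)$ away from $\sigma,\tau\in\{0,1\}$; this is the orientation-guided perturbation of retrogradient connections from the proof of \thmref{t:different or, not isotopic}, which can be made consistent along the whole boundary collar precisely because the oriented classes at $\sigma=0$ and $\sigma=1$ agree (when $\psi(K)\simeq\overline{K}$ it must fail at one point, by parity). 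No patching contactomorphisms $\widehat{\psi}_i$ are needed at all.

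Your alternative --- keeping the walls $\{\sigma=\sigma_i\}$ and choosing each $\widehat{\psi}_i$ with $\mathrm{d}(\widehat{\psi}_i)=0$ and orientation-compatible on the knots --- is where the argument breaks. First, your stated mechanism is wrong: the walls are \emph{interior} to $I^2$, so composing a $\widehat{\psi}_i$ with the generator of \lemref{l:K preserved,dym=1} does not alter the boundary data at $\tau=1$ or $\sigma\in\{0,1\}$, hence cannot ``destroy the identity $g_1=\psi^{-1}$''; the obstruction must live in the solvability of the per-layer constructions (the collar perturbations and families of overtwisted discs needed for \thmref{t:eliashclass}), and you never show why orientation compatibility is what unblocks them. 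This matters because your argument proves too much as written: patchings with merely $\mathrm{d}(\widehat{\psi}_i)=0$ always exist (coarse classification plus \lemref{l:K preserved,dym=1}), so if the layered construction with such patchings yielded a global $I^2$-family, your Gray endgame would show that \emph{every} $\psi$ with $\mathrm{d}(\psi)=0$ is contact-isotopic to the identity --- contradicting \thmref{t:chekanov}, where the class $(\mathrm{d},\kappa)=(0,1)$ is nontrivial. Second, the existence of an $\widehat{\psi}_i$ that is simultaneously $\mathrm{d}=0$ and orientation-preserving on the knot requires a contactomorphism with $\mathrm{d}=1$ fixing the oriented knot class, i.e.\ the class $(1,0)$; in the paper this is produced only inside the proof of \thmref{t:chekanov} (via \remref{r:dym surj} and \thmref{t:not transitive}), and since \thmref{t:chekanov} in turn invokes this corollary, you would have to extract that construction carefully to avoid circularity --- a point you do not address. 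To repair the proof, follow the paper: use the oriented Legendrian isotopy from $K$ to $\psi(K)$ (available in $N\times I\times\{\tau=0\}$) to perturb the decompositions so the $T_-=T_+$ instances never occur for $\sigma\not\in\{0,1\}$, and then your Gray computation applies verbatim.
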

\begin{proof}
We may assume that $\psi$ fixes two disjoint Darboux balls and that $\psi(K)$ is isotopic to $K$ in the complement $N$  of these balls. There is a smooth isotopy $\psi_\sigma,\sigma\in I$, with $\psi_1=\psi$ and $\psi_0=\mathrm{id}$ which does not move the two Darboux balls. 

Now apply the construction of the proof of \thmref{t:non or isotopy}. For this we fix  contact structures $\xi(\sigma,\tau)$ and product decompositions of $N$ for $(\sigma,\tau)\in \partial I^2$ as in the proof of \thmref{t:non or isotopy}. Since $\psi(K)$ is isotopic to $K$ through Legendrian knots   in $N\times I\times\{\tau=0\}$. Therefore we can perturb the product decompositions of $N$ near $\{\tau=0\}$ and $\{\tau=1\}$ so that $T_+(\sigma,\tau)\neq T_-(\sigma,\tau)$ for $t\not\in\{0,1\}$ or $\sigma\not\in\{0,1\}$. (Recall that the assumption that $K$ is isotopic to $\overline{K}$ led points $(\sigma,\tau)$ in near $\partial I^2$ where $T_+(\sigma,\tau)=T_-(\sigma,\tau)$ in the proof of \thmref{t:different or, not isotopic}.)  

Since $\mathrm{d}(\psi)=0$, the proof of \thmref{t:non or isotopy} yields a deformation of the smooth isotopy $\psi_\sigma$ from $\mathrm{id}$ to $\psi$ into a contact isotopy relative to $\psi_0$ and $\psi_1$. 
\end{proof}

\section{Applications} \label{s:appl}

We give the proof of a theorem of Y.~Chekanov about the contact mapping class group of overtwisted contact structures on $S^3$. The proof of that theorem also requires further results on the action of the contactomorphism group on boundaries of overtwisted discs, these will be discussed first. 

We conclude with the classification of Legendrain unknots in overtwisted conact structures on $S^3$. So far, we were mostly concerned with minimal non-loose unknots.

\subsection{The action of contactomorphisms on the set of boundaries of overtwisted discs}
It is an easy corollary of Eliashberg's classification result that the group of all contactomorphisms of an overtwisted contact structure $\xi$  acts transitively on the set $\mathcal{U}_0(\xi)$ of Legendrian unknots with vanishing Thurston-Bennequin invariant and rotation number one. In this section we will show that the connected component of the identity of $\mathrm{Diff}_+(S^3,\xi_{-1})$ does act transitively on $\mathcal{U}_0(\xi_{k})$ when $k\neq -1$.


\begin{lem} \label{l:ot disc part of Lutz}
Let $(M,\xi)$ be a contact manifold and $\Delta\in\mathcal{U}_0$. Then there is a Lutz tube along a transverse unknot with self-linking number $-1$ containing $\Delta$ such that the Lutz tube is contained in a ball whose boundary is convex and has a tight neighborhood. If the $\pi$-Lutz twist is undone, then the contact structure  becomes tight on the ball. 
\end{lem}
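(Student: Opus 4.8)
The plan is to realise $\xi$ near $\Delta$ by an explicit model in which $\Delta$ is a meridian of a Lutz tube, and then to enlarge this to a ball on which $\xi$ is literally a tight structure modified by a single $\pi$--Lutz twist. First I would build the model: equip a round ball $B_{st}$ with the standard tight contact structure $\xi_{st}$, fix a transverse unknot $K_{st}\subset B_{st}$ with $\mathrm{sl}(K_{st})=-1$ together with a Lutz--ready neighbourhood $V_{st}\cong S^1\times D^2(\rho)$ on which $\xi_{st}=\ker(dz+r^2\,d\vartheta)$, and perform a $\pi$--Lutz twist along $K_{st}$ (\defref{d:Lutz twist}) to obtain an overtwisted structure $\xi_{ot}$. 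By \remref{r:obvious ot} the disc $D_{st}=\{z_0\}\times D^2(\rho_0)$ is overtwisted and its boundary $\Delta_{st}$ is a meridian of $V_{st}$; normalising orientations so that the centre of $\xi_{ot}(D_{st})$ is a positive elliptic point gives $\mathrm{rot}(\Delta_{st})=1$ by \eqref{e:rot number}, so $\Delta_{st}\in\mathcal{U}_0$. Since the twist is supported in $V_{st}$ and $\xi_{ot}=\xi_{st}$ for $r\ge\rho$, the sphere $\partial B_{st}$ is convex with connected dividing set and a tight collar, and undoing the twist returns the tight $\xi_{st}$. Thus the model already satisfies every conclusion of the lemma.

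Next I would transport this model to $M$. Using that an overtwisted disc with prescribed boundary rotation number has a standard germ, I would normalise the characteristic foliation $\xi(D)$ on a disc $D$ bounded by $\Delta$ to a single positive elliptic centre with $\Delta$ as the unique closed leaf (cancelling surplus singularities by \lemref{l:elim}); Giroux's theorem that the characteristic foliation determines the germ then yields a contactomorphism from a neighbourhood of $D_{st}$ onto a neighbourhood $N(D)$ of $D$ carrying $\Delta_{st}$ to $\Delta$. After an isotopy I may therefore assume that $\xi$ agrees with $\xi_{ot}$ near $D$, so that a transverse core arc of $K_{st}$ and the twisted form become visible there.

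Finally I would enclose $D$ in a ball $B\supset N(D)$ with $\partial B$ a convex sphere carrying a tight collar: starting from a convex sphere with connected dividing set lying in the tight part of $M$ surrounding $N(D)$ and containing the prospective tube, \propref{p:contact squeezing} lets me normalise it to bound such a $B$. On $B$ the structures $\xi|_B$ and the transported model $\xi_{ot}$ are overtwisted contact structures that agree near $D$ and, after matching boundary germs, near $\partial B$; once $B$ is chosen so that their relative homotopy classes of plane fields coincide, \thmref{t:eliashclass}, applied relative to the overtwisted disc $D$ and to $\partial B$, produces a contact isotopy identifying them by some $\Psi\colon (B_{st},\xi_{ot})\to (B,\xi|_B)$. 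Then $V=\Psi(V_{st})$ is the desired Lutz tube, $K=\Psi(K_{st})$ is a transverse unknot bounding a disc in $B$ with $\mathrm{sl}(K)=-1$, and undoing the $\pi$--Lutz twist along $K$ replaces $\xi|_B$ by $\Psi_*\xi_{st}$, which is tight; the uniqueness of tight structures on the ball (\thmref{t:tight on ball}) confirms this last point.

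The main obstacle is this final step. The germ of $\xi$ along the two--disc $D$ does not control $\xi$ over the whole solid torus $V$, nor over the collar out to $\partial B$, so the delicate issue is to produce the enclosing ball with the correct boundary germ and the correct relative Hopf invariant, so that the global plane--field homotopy type of $\xi|_B$ matches that of a once--twisted tight ball. This is exactly where the hypothesis $\mathrm{sl}(K)=-1$ enters, since it fixes the jump in Hopf invariant caused by the twist, and where \thmref{t:eliashclass} together with \thmref{t:tight on ball} must be invoked to upgrade an equivalence of plane fields into a genuine identification of contact structures, thereby guaranteeing that undoing the twist yields a tight ball rather than merely a plane field in the tight homotopy class.
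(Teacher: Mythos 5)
Your model in the first paragraph is exactly the right one (it matches \remref{r:obvious ot} and \exref{ex:Lutz}), and your instinct that \thmref{t:eliashclass} must convert a plane-field identification into a contact one is also right. But the third step contains a genuine gap, and it is precisely the point the whole lemma turns on. You propose to find, inside the given $(M,\xi)$, a ball $B\supset N(D)$ whose boundary is a convex sphere with connected dividing set and a tight collar, ``starting from a convex sphere with connected dividing set lying in the tight part of $M$ surrounding $N(D)$.'' There is no such tight part: $\xi$ is overtwisted, and since $\mathrm{tb}(\Delta)=0$ while non-loose unknots have positive Thurston--Bennequin invariant (\lemref{l:tb>0}), $\Delta$ is loose, so even the complement of $\Delta$ is overtwisted. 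A generic sphere around $N(D)$ is convex, but nothing forces its dividing set to be connected or its collar to be tight --- producing such a sphere is essentially the assertion of the lemma, so assuming it is circular. For the same reason \propref{p:contact squeezing} is unavailable: it explicitly requires $\xi$ to be tight on a neighborhood of the ball being squeezed. Your subsequent application of \thmref{t:eliashclass} on $B$ rel $\partial B$ then also needs the two structures to have matching germs along $\partial B$ and matching relative homotopy class of plane fields on $(B,\partial B)$; you flag this (``once $B$ is chosen so that\dots'') but give no mechanism to achieve it, and none is available at that stage.

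The paper resolves this by inverting the logic: instead of hunting for the ball inside $\xi$, it builds an abstract plane field $\zeta$ on all of $M$ which equals $\xi$ near $\Delta$ and near an auxiliary overtwisted disc $D'$ chosen disjoint from everything (this $D'$ exists because $\Delta$ is loose), which is an honest contact structure on a model ball $B^3\supset D$ realizing the Lutz tube with convex boundary and connected dividing set, and which is homotopic to $\xi$ as a plane field (the free region outside the fixed pieces absorbs any homotopy-class adjustment). Then \thmref{t:eliashclass}, applied globally on $M$ with the fixed overtwisted disc $D'$ and relative to a neighborhood of $\Delta$, gives an isotopy from $\zeta$ to $\xi$, and the image of the model ball under this isotopy is the desired ball in $(M,\xi)$. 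The auxiliary disc $D'$ --- which plays the role of the controlled overtwisted disc that Eliashberg's theorem requires --- is the device missing from your argument; with it, no boundary germ, tight collar, or relative Hopf invariant inside the original $\xi$ ever needs to be located in advance. (The paper also uses the same $D'$-trick at the outset to arrange a convex disc $D$ bounding $\Delta$ with connected dividing set, where your normalization via \lemref{l:elim} is workable but again quietly relies on $\mathrm{rot}(\Delta)=1$ through an application of \thmref{t:eliashclass}.)
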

$\pi$-Lutz twists as in this lemma will be called simple. 
\begin{proof}
First of all, note that one can choose a convex disc $D$ bounding $\Delta$ such the dividing set on $D$ is connected. In order to see this consider any convex disc $D$ bounding $\Delta$, fix an overtwisted disc $D'$ in the complement of $D$  and construct a contact structure with the desired properties using \thmref{t:eliashclass}. For this one uses assumption that $\mathrm{rot}(\Delta)=1$. 

Now consider a plane field $\zeta$ on $M$ which is homotopic to $\xi$ satisfying the following conditions.
\begin{itemize}
\item $\zeta=\xi$ near $\Delta$.
\item $\zeta=\xi$ near $D'$.
\item $\zeta$ is a contact structure on a closed ball $B^3$ such that
\begin{itemize}
\item $B^3$ is disjoint from $D'$ and contains $D$ in its interior, 
\item the boundary of $B^3$ is convex with respect to $\zeta$ and the dividing set on $\partial B^3$ is connected, and
\item $\zeta\eing{B^3}$ is obtained from the tight contact structure determined by $\zeta(\partial B^3)$ by a single $\pi$-Lutz twist along a transverse unknot and $D$ is one of the obvious overtwisted discs obtained from a  $\pi$-Lutz twist.  
\end{itemize}
\item $\zeta$ is homotopic to $\xi$ as a plane field.
\end{itemize}
According to \thmref{t:eliashclass} $\zeta$ is homotopic to a contact structure and $\zeta$ is isotopic to $\xi$ relative to a neighborhood of $\Delta$ (one uses the overtwisted disc $D'$ for the application of \thmref{t:eliashclass}), so we can assume that $\zeta$ itself is a contact structure. The ball with the desired properties is obtained from $B^3$ and the contact structure $\zeta$ using the fact that $\xi$ and $\zeta$ are isotopic contact structures. 
\end{proof}
The last lemma allows us to establish a contact topological (rather than homotopy theoretic) criterion, which distinguishes $\xi_{-1}$ from those positive overtwisted contact structures on $S^3$ which are not diffeomorphic to $\xi_{-1}$.

\begin{prop} \label{p:xi1 detect}
Let $\xi$ be an overtwisted contact structure on $S^3$. Then $\xi$ is isotopic to $\xi_{-1}$ if and only if there is a closed ball $B^3\subset M$
\begin{itemize}
\item which contains a simple Lutz tube, 
\item $\partial B^3$ has a tight neighborhood, and $S^3\setminus B^3$ is tight. 
\end{itemize} 
\end{prop}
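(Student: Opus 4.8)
The plan is to prove the two implications separately, using in both directions the fact that on $S^3$ an overtwisted contact structure is determined up to isotopy by its Hopf invariant (this is Eliashberg's classification, \thmref{t:eliashclass}, combined with the identification of homotopy classes of plane fields on $S^3$ with $\pi_3(S^2)\simeq\Z$ from \secref{s:homotopy}), together with the fact that $\xi_{-1}$ has Hopf invariant $-1$ (\exref{ex:Lutz}). The two substantive inputs will be Colin's gluing theorem \cite{col-connect} and the computation in \secref{s:homotopy} that a $\pi$-Lutz twist along a transverse knot $K$ changes the Hopf invariant $h$ by $\mathrm{sl}(K)$.

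For the \emph{if} direction, suppose such a $B^3$ exists and take it to be the ball furnished by the simple Lutz tube (\lemref{l:ot disc part of Lutz}), so that $\xi|_{B^3}$ is obtained from the tight contact structure $\eta_0$ determined by $\xi(\partial B^3)$ by a single simple $\pi$-Lutz twist along a transverse unknot $U$ with $\mathrm{sl}(U)=-1$. First I would \emph{undo} this Lutz twist: since it is supported in the interior of $B^3$, this produces a plane field $\xi'$ with $\xi'=\xi$ on $S^3\setminus B^3$ and with $\xi'|_{B^3}=\eta_0$ tight, leaving the germ along $\partial B^3$ untouched. By hypothesis $\xi$, hence $\xi'$, is tight on $S^3\setminus B^3$, while $\partial B^3$ is a convex sphere whose tight neighborhood forces its dividing set to be connected by \thmref{t:char convexity}. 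Now I would invoke Colin's gluing theorem \cite{col-connect} exactly as in the proof of \lemref{l:tb>0} to conclude that $\xi'$ is tight on all of $S^3$; by uniqueness of the tight contact structure on $S^3$ (\thmref{t:tight on ball}) it is the standard one, so $h(\xi')=0$. Reinstating the $\pi$-Lutz twist along $U$ and applying the Hopf-invariant formula of \secref{s:homotopy} gives $h(\xi)=h(\xi')+\mathrm{sl}(U)=-1$. Since $\xi$ is overtwisted, \thmref{t:eliashclass} forces $\xi\simeq\xi_{-1}$.

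For the \emph{only if} direction I would produce the ball explicitly in a convenient model and then transport it by an isotopy. Start from the standard tight $(S^3,\xi_{st})=TS^3\cap iTS^3$, which has $h(\xi_{st})=0$ and is tight, so round spheres are convex with connected dividing set. Choose a transverse unknot $U$ with $\mathrm{sl}(U)=-1$ inside a Darboux ball; its Lutz tube $N(U)\cong S^1\times D^2(\rho)$ is an unknotted solid torus, hence lies in the interior of a closed ball $B^3\subsetneq S^3$ whose boundary can be taken convex with connected (therefore tight) dividing set, chosen so that a collar of $\partial B^3$ is disjoint from $N(U)$. Performing a single $\pi$-Lutz twist along $U$ yields a contact structure $\eta$ equal to $\xi_{st}$ outside $N(U)\subset\ring{B}^3$; thus $\eta$ is tight on $S^3\setminus B^3$ and near $\partial B^3$, $\eta|_{B^3}$ is a tight structure modified by one simple $\pi$-Lutz twist (so $B^3$ contains a simple Lutz tube), and $\eta$ is overtwisted by \remref{r:obvious ot}. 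By \secref{s:homotopy}, $h(\eta)=0+\mathrm{sl}(U)=-1$, whence $\eta\simeq\xi_{-1}$ by \thmref{t:eliashclass}. A contactomorphism realizing this isotopy carries $B^3$ to a ball with all the required properties inside $(S^3,\xi_{-1})$, and hence inside any $\xi$ isotopic to $\xi_{-1}$.

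The main obstacle I anticipate is bookkeeping rather than a deep new idea: one must check that undoing the Lutz twist is genuinely supported in $\ring{B}^3$, so that the tight neighborhood of $\partial B^3$ and the tight complement survive intact, and that the hypotheses of Colin's gluing theorem (a convex separating sphere with connected dividing set, tightness on both sides) are literally met after the undo. The one genuinely global point is the normalization in the Hopf-invariant computation: it is essential that the transverse unknot underlying a \emph{simple} Lutz tube has $\mathrm{sl}=-1$, since this is precisely what pins $h(\xi)$ to $-1$ and rules out every $\xi_k$ with $k\neq-1$. A Lutz twist along a transverse unknot of different self-linking number, or one that failed to localize inside a ball with tight complement, would land on a different value of $h$, which is why the contact-topological condition in the proposition detects $\xi_{-1}$ and nothing else.
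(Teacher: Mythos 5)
Your proposal is correct and takes essentially the same route as the paper's proof: the paper likewise handles the \emph{if} direction by undoing the simple $\pi$-Lutz twist inside $B^3$ (tightness there being part of the definition of a simple Lutz tube), invoking Colin's gluing theorem to get a tight structure on all of $S^3$, and then identifying $\xi$ via the Hopf-invariant change $\mathrm{sl}(U)=-1$ together with Eliashberg's classification, while the \emph{only if} direction reads off the ball from the description of $\xi_{-1}$ as a single $\pi$-Lutz twist on the standard tight structure (\exref{ex:Lutz}). Your only deviations are expository: you make the Hopf-invariant bookkeeping explicit and build the model ball around a transverse unknot in a Darboux chart instead of citing \exref{ex:Lutz} directly, which amounts to the same construction since transverse unknots with self-linking number $-1$ are unique up to transverse isotopy.
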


\begin{proof}
First, note that according to the  description of $\xi_{-1}$ outlined in \exref{ex:Lutz}  this contact structure is obtained from the tight contact structure on $S^3$ by a single Lutz twist along a transverse unknot with self-linking number $-1$. Thus, there is a ball containing that Lutz twist whose complement is tight.

Conversely, if there is ball with tight complement and convex boundary who contains a simple Lutz twist in its interior, then undoing the Lutz twist on $B^3$ we obtain a tight contact structure on $B^3$. By Colin's gluing theorem \cite{col-connect} undoing the Lutz twist yields a tight contact structure on $S^3$. In other words, $\xi$ is obtained from the tight contact structure on $S^3$ by a single $\pi$-Lutz twist. Then $\xi$ is homotopic to (and hence isotopic) to $\xi_{-1}$. 
\end{proof}

This  has the following consequence.

\begin{thm} \label{t:three ot discs}
Let $\xi$ be an overtwisted contact structure on $S^3$ which is not isotopic to $\xi_{-1}$. Then for every pair $\Delta_1,\Delta_2\in\mathcal{U}_0(\xi)$ there is an overtwisted disc $\Delta$ which is disjoint from $\Delta_1\cup\Delta_2$.  
\end{thm}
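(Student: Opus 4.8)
The plan is to argue by contradiction. If no overtwisted disc is disjoint from $\Delta_1\cup\Delta_2$, then $S^3\setminus(\Delta_1\cup\Delta_2)$ is tight, that is, $\Delta_1\cup\Delta_2$ is a non-loose link, and the goal is to conclude that $\xi$ is isotopic to $\xi_{-1}$, contradicting the hypothesis. The main tool is \propref{p:xi1 detect}, which promotes the presence of a single simple Lutz tube with tight complement to the isotopy statement $\xi\simeq\xi_{-1}$; the required Lutz tube will be produced from $\Delta_1$ by \lemref{l:ot disc part of Lutz}.

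First I would apply \lemref{l:ot disc part of Lutz} to $\Delta_1$ to obtain a closed ball $B_1\supset\Delta_1$ whose boundary is convex with connected dividing set and a tight neighbourhood, which contains a simple Lutz tube of which $\Delta_1$ bounds one of the obvious overtwisted discs, and for which undoing the $\pi$-Lutz twist makes $\xi|_{B_1}$ tight. If the complement $S^3\setminus B_1$ is tight, then \propref{p:xi1 detect} applies verbatim and gives $\xi\simeq\xi_{-1}$, the desired contradiction; the same conclusion holds at once if $\Delta_2\subset\ring{B}_1$, since then $S^3\setminus B_1\subset S^3\setminus(\Delta_1\cup\Delta_2)$ is tight. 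So the substantive case is that $S^3\setminus B_1$ is overtwisted while $\Delta_2$ meets $S^3\setminus B_1$.

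In that case I would undo the $\pi$-Lutz twist inside $B_1$ to obtain a contact structure $\xi_1$ that agrees with $\xi$ on $S^3\setminus B_1$, hence is still overtwisted, and is tight on $B_1$. The key claim is that $\Delta_2$ is then a non-loose unknot for $\xi_1$: the piece $S^3\setminus(B_1\cup\Delta_2)\subset S^3\setminus(\Delta_1\cup\Delta_2)$ is $\xi$-tight, the piece $B_1\setminus\Delta_2$ is $\xi_1$-tight, and—because the modification is supported in the interior of $B_1$, away from $\partial B_1$—gluing these two tight pieces along the convex boundary $\partial B_1$ via Colin's theorem \cite{col-connect} shows that $\xi_1$ is tight on $S^3\setminus\Delta_2$. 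Granting this, \lemref{l:tb>0} forces $\tb_{\xi_1}(\Delta_2)>0$ (and \propref{p:n-1} pins down its rotation number), so it would remain only to show that undoing the twist cannot raise the Thurston--Bennequin invariant above its original value $\tb_\xi(\Delta_2)=0$, i.e. that $\tb_{\xi_1}(\Delta_2)\le 0$; this contradiction closes the argument.

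The step I expect to be the main obstacle is precisely this last comparison together with the gluing across $\partial B_1$ when $\Delta_2$ is linked with $\Delta_1$. Two knots contained in disjoint balls are unlinked, so a non-zero linking number forces $\Delta_2$ to puncture $\partial B_1$; one must therefore justify a relative version of \cite{col-connect} along the convex sphere punctured at its (transverse) intersection points with $\Delta_2$, and then compute the change of $\tb$ under the $\pi$-Lutz twist along the Lutz core in terms of its linking with $\Delta_2$. I expect the sign of this change to be governed by the orientation data, so that the hypothesis $\mathrm{rot}(\Delta_2)=+1$ is exactly what makes the twist raise, and its undoing lower, the framing, yielding $\tb_{\xi_1}(\Delta_2)\le 0$. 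This sign asymmetry is consistent with the known non-loose Hopf link in $(S^3,\xi_{-1})$, one of whose components has rotation number $-1$: for that component the inequality would go the other way, which is precisely why the statement is restricted to $\mathcal{U}_0$ and genuinely fails for $\xi_{-1}$.
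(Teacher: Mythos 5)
Your opening moves coincide with the paper's: apply \lemref{l:ot disc part of Lutz} to $\Delta_1$ and observe via \propref{p:xi1 detect} that $S^3\setminus B_1$ must be overtwisted, since otherwise $\xi\simeq\xi_{-1}$. From there, however, your route---undoing the $\pi$-Lutz twist to get $\xi_1$ and playing $\tb_{\xi_1}(\Delta_2)>0$ (from \lemref{l:tb>0}) against $\tb_{\xi_1}(\Delta_2)\le 0$---has a gap that is fatal in exactly the case you single out as the crux. If $\mathrm{lk}(\Delta_1,\Delta_2)\neq 0$, then $\Delta_2$ meets every Seifert disc of $\Delta_1$, and $\Delta_1$ bounds a disc \emph{inside} the Lutz tube; hence $\Delta_2$ is forced to pass through the tube, i.e.\ through the support of the modification from $\xi$ to $\xi_1$. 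After undoing the twist, $\Delta_2$ is then no longer Legendrian for $\xi_1$, so $\tb_{\xi_1}(\Delta_2)$ is simply undefined: there is no ``change of $\tb$ under the $\pi$-Lutz twist'' formula for a knot passing through the twist region, both halves of your intended contradiction lose their meaning, and the sign heuristic via $\mathrm{rot}(\Delta_2)=1$ is never actually used in a rigorous step. (The same problem already occurs when $\Delta_2$ is unlinked from $\Delta_1$ but still enters the tube.) Independently, your gluing step needs a version of Colin's theorem along the convex sphere $\partial B_1$ inside the open manifold $S^3\setminus\Delta_2$; when $\Delta_2$ punctures $\partial B_1$, the gluing surface is a sphere crossed transversally by the knot, and no relative version of \cite{col-connect} covering this is available---you flag it, but it is not a routine extension. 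Only in the special case $\Delta_2\cap B_1=\emptyset$ does your scheme plausibly close (there $\tb_{\xi_1}(\Delta_2)=0$ contradicts \lemref{l:tb>0} directly, modulo a gluing statement for the knot complement, say via \cite{col-gluing}).

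The paper avoids both problems by never changing the contact structure. Keeping $\xi$ fixed, it iteratively squeezes $\Delta_2$ into $V=B^3\cup V'$, where $V'$ is a tight collar of $\partial B^3$: one cuts a spanning disc of $\Delta_2$ along $\partial B^3$, makes an outermost piece convex with piecewise Legendrian boundary, and then either its dividing set has a closed component---producing an obvious overtwisted disc disjoint from the link---or it has a tight neighbourhood and \propref{p:contact squeezing} pushes it into $V$ by a contact isotopy preserving the rest of $\Delta_2$. Once $\Delta_2\subset V$, tightness of $\overline{V}\setminus\ring{B}^3$ together with \cite{col-connect} guarantees an overtwisted disc in $S^3\setminus V$, disjoint from a link Legendrian isotopic to $\Delta_1\cup\Delta_2$. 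Linking of $\Delta_2$ with $\Delta_1$ causes no difficulty there, since $\Delta_2$ is allowed to end up inside the ball. To rescue your approach you would need, at a minimum, a substitute for the $\tb$-comparison that makes sense for knots running through the Lutz tube; the paper's squeezing argument is, in effect, that substitute.
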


\begin{proof}
According to \lemref{l:ot disc part of Lutz} we may assume that $\Delta_1$ is obtained by a $\pi$-Lutz twist along a transverse unknot in a tight ball $B^3$ with convex boundary. By \propref{p:xi1 detect}, the complement of this ball is overtwisted. 

We will use a certain procedure to attempt to isotope $\Delta_2$ relative to  $\Delta_1$ so that the result lies in the union of $B^3$ with a neighborhood $V$ of $B^3$ such that $\overline{V}\setminus \ring{B}^3$ is tight. According to \cite{col-connect} the complement of $V$ (and not only the complement of $B^3$) contains an overtwisted disc. The process described below either works or not. 

If this procedure works, then there is an overtwisted disc in $S^3\setminus V$, i.e. in the complement of a link which is Legendrian isotopic to  $\Delta_1\cup \Delta_2$. This implies the result in this case. 

It will turn out that if the procedure does not work, then this is due to the presence of an overtwisted disc in $S^3\setminus V$ which is disjoint from a link which is Legendrian isotopic to  $\Delta_1\cup\Delta_2$. Again, the result follows.  

Let $\widehat{D}$ be a disc bounding $\Delta_2$. Without loss of generality we assume that $\Delta_2$ and $\widehat{D}$ are transverse to $\partial B^3$. There is a disc $D\subset\widehat{D}\setminus \ring{B}^3$ such that
\begin{itemize}
\item the interior of $D$ is disjoint from $\Delta_2$, $\xi|_{\delta_2}=T\widehat{D}|_{\delta_2}$, and
\item $\partial D$ is the union of an arc $\delta_2\subset\Delta_2$ with an arc $\delta_B\subset D\cap\partial B^3$.
\end{itemize}
After an isotopy of $D$ relative to $\delta_2$ we obtain a disc $D'$ with piecewise smooth Legendrian boundary  such that $\delta_2'=\partial D'\setminus\delta_2$ is contained in a tubular neighborhood $V'$ of $\partial B^3$ where $\xi$ is tight. (Recall that $\Delta_1$ is obtained by a $\pi$-Lutz twist inside a tight ball $B^3$. The boundary of $B$ still has a tight neighborhood after the $\pi$-Lutz twist.) 

We may also assume that $D'$ is convex since we may stabilize $\delta_2'$. If the dividing set of $D'$ contains a closed component, then by \thmref{t:char convexity} every neighborhood of $D'$ contains an overtwisted disc which is disjoint from $\Delta_2$.  If $D'$ has a tight neighborhood, then by \propref{p:contact squeezing} there is a contact isotopy supported in a small neighborhood of $D'$ which moves a ball $B'$ containing $D'$ into  $V=B^3\cup V'$ and preserves $\Delta_2\setminus B'$ as a set. 


This process can be iterated. After finitely many steps we either found an overtwisted disc in the complement of a Legendrian link which is isotopic to $\Delta_1\cup \Delta_2$, or we have isotoped $\Delta_2$ into the union of $B^3$ with a tight tubular  neighborhood of $\partial B^3$. 
\end{proof}



\subsection{Chekanov's theorem on $\pi_0(\mathrm{Diff}_+(S^3,\xi))$ for overtwisted contact structures $\xi$} \label{s:chekanov}

The following theorem is stated in Remark 4.15 of \cite{elfr}. It is attributed to Y.~Chekanov without an indication of a proof.

\begin{thm}[Chekanov] \label{t:chekanov}
Let $\xi$ be an overtwisted oriented contact structure on $S^3$ and $\mathrm{Diff}_+(S^3,\xi)$ the group of diffeomorphisms of $S^3$ which preserve $\xi$ and its orientation. 
Then
\begin{align*}
\pi_0\left(\mathrm{Diff}_+(S^3,\xi)\right) & \simeq \left\{\begin{array}{rl} \Z_2\oplus \Z_2 & \textrm{ if } \xi\simeq \xi_{-1} \\
\Z_2 & \textrm{otherwise.}
\end{array} \right.
\end{align*}
\end{thm}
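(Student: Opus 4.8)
The plan is to study the homomorphism $\mathrm{d}\colon\pi_0\left(\mathrm{Diff}_+(S^3,\xi)\right)\to\Z_2$, which is surjective for every overtwisted $\xi$ by \remref{r:dym surj}, and to decide in each case whether $\mathrm{d}$ is injective or carries a second $\Z_2$. The two cases are governed by the existence of non-loose unknots, which occur only for $\xi\simeq\xi_{-1}$; this is what produces the extra summand.

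Suppose first $\xi\simeq\xi_{-1}$ and let $K$ be a minimal non-loose unknot. Since every minimal non-loose unknot is Legendrian isotopic to $K$ or to $\overline{K}$ by \thmref{t:non or isotopy}, and these are not isotopic by \thmref{t:different or, not isotopic}, any $\psi\in\mathrm{Diff}_+(S^3,\xi_{-1})$ sends $K$ to a knot isotopic either to $K$ or to $\overline{K}$; recording this defines a map $r\colon\pi_0\left(\mathrm{Diff}_+(S^3,\xi_{-1})\right)\to\Z_2$. I would first check that $r$ is a well-defined homomorphism, using that applying a diffeomorphism commutes with reversing the orientation of a knot and preserves Legendrian isotopy. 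Then I claim that $(\mathrm{d},r)\colon\pi_0\left(\mathrm{Diff}_+(S^3,\xi_{-1})\right)\to\Z_2\oplus\Z_2$ is an isomorphism. Injectivity is exactly \corref{c:part of chekanov}: if $\mathrm{d}(\psi)=0$ and $r(\psi)=0$, meaning $\psi(K)$ is isotopic to $K$ as an oriented Legendrian knot, then $\psi$ is contact-isotopic to the identity. For surjectivity I would produce generators off the diagonal: \lemref{l:K preserved,dym=1} gives $\psi_L$ with $\mathrm{d}(\psi_L)=1$ and $r(\psi_L)=1$, while \exref{ex:orrev} gives a contactomorphism $\psi_0$ with $r(\psi_0)=1$ for which one computes $\mathrm{d}(\psi_0)=0$. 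Then $(\mathrm{d},r)$ attains the values $(0,1)$ and $(1,1)$, hence also $(1,0)$ and $(0,0)$, so it is onto and the group is $\Z_2\oplus\Z_2$.

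Now suppose $\xi$ is not isotopic to $\xi_{-1}$. Here there is no non-loose unknot, so no invariant $r$, and the claim becomes that $\mathrm{d}$ is injective. Given $\psi$ with $\mathrm{d}(\psi)=0$, I must show $\psi$ is contact-isotopic to the identity. Fix an overtwisted disc $\Delta$ with $\partial\Delta\in\mathcal{U}_0(\xi)$. The key input is \thmref{t:three ot discs}: any two members of $\mathcal{U}_0(\xi)$ admit a common disjoint overtwisted disc, which by \thmref{t:loose triv} makes them Legendrian isotopic, and the isotopy can be taken inside the identity component of $\mathrm{Diff}_+(S^3,\xi)$. Composing $\psi$ with such an isotopy changes neither its class in $\pi_0$ nor the value of $\mathrm{d}$, so I may assume $\psi(\Delta)=\Delta$ together with its overtwisted germ. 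Choosing a smooth isotopy $\psi_t$ from the identity to $\psi$ by Cerf's theorem, the loop $\psi_{t*}\xi$ carries the family of overtwisted discs $\psi_t(\Delta)$, and $\mathrm{d}(\psi)=0$ says this loop is null-homotopic as a loop of plane fields relative to that disc. Applying \thmref{t:eliashclass} relative to the fixed overtwisted disc, in the parametric form discussed after that theorem, upgrades the null-homotopy of plane fields to one through contact structures, and Gray's theorem then yields a contact isotopy from $\psi$ to the identity. Hence $\mathrm{d}$ is an isomorphism onto $\Z_2$.

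The main obstacle is the injectivity of $\mathrm{d}$ when $\xi\not\simeq\xi_{-1}$: one must run the Eliashberg-with-a-controlled-overtwisted-disc argument of \thmref{t:non or isotopy} and \corref{c:part of chekanov} \emph{without} a non-loose unknot to single out a preferred sphere, so the transitivity coming from \thmref{t:three ot discs} is precisely what replaces the role played by $K$ in the $\xi_{-1}$ case, and making the parametric application of \thmref{t:eliashclass} relative to $\Delta$ precise is the delicate point. A secondary, more computational obstacle in the case $\xi\simeq\xi_{-1}$ is to verify that $r$ is genuinely a homomorphism and, for surjectivity, to evaluate $\mathrm{d}$ on the explicit contactomorphism of \exref{ex:orrev}, confirming $\mathrm{d}(\psi_0)=0$, so that the image of $(\mathrm{d},r)$ is the full group rather than the diagonal subgroup.
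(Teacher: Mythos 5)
Your architecture coincides with the paper's: the pair of invariants $(\mathrm{d},r)$ (the paper's $(\mathrm{d},\kappa)$), injectivity from \corref{c:part of chekanov}, and, for $\xi\not\simeq\xi_{-1}$, transitivity on boundaries of overtwisted discs via \thmref{t:three ot discs} combined with \thmref{t:loose triv} and the relative/parametric form of \thmref{t:eliashclass} --- that half of your argument is essentially the paper's proof. The genuine gap is in the surjectivity of $(\mathrm{d},r)$ when $\xi\simeq\xi_{-1}$. You need an element whose pair of invariants lies off the diagonal, and you propose to obtain it from the contactomorphism $\psi_0$ of \exref{ex:orrev} by ``computing'' $\mathrm{d}(\psi_0)=0$; but you never perform this computation, you yourself flag it as an unresolved obstacle, and nothing you cite delivers it. It cannot be waved away: a priori the image of $(\mathrm{d},r)$ could be the diagonal subgroup $\{(0,0),(1,1)\}$, in which case \corref{c:part of chekanov} would give $\pi_0\left(\mathrm{Diff}_+(S^3,\xi_{-1})\right)\simeq\Z_2$ --- precisely the alternative the theorem rules out. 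Nor does a formal argument help: if $\mathrm{d}(\psi_0)$ happened to equal $1$, then $\psi_0\circ\psi_L^{-1}$ would simply be contact-isotopic to the identity by \corref{c:part of chekanov}, with no contradiction, so $\mathrm{d}(\psi_0)=0$ cannot be deduced abstractly. Evaluating $\mathrm{d}(\psi_0)$ directly means identifying the $\pi_4(S^2)$-class of the loop of plane fields induced by a smooth isotopy from the identity to $\psi_0$, a nontrivial Thom--Pontrjagin computation that neither you nor the paper carries out.

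The paper closes exactly this point by a softer argument you are missing. Using \remref{r:dym surj} it produces a contactomorphism $\varphi$ with $\mathrm{d}(\varphi)=1$ whose support avoids a ball around one of the two overtwisted discs of \figref{b:front1}, so $\varphi$ preserves that disc up to isotopy. It then shows $\kappa(\varphi)=0$: a contactomorphism with $\kappa\neq 0$ would have to interchange (up to contact isotopy) the two overtwisted discs $\Delta$ and $\widehat{\Delta}$, since these arise as the positive stabilizations of $K$ and of $\overline{K}$ respectively, and $\Delta$ is not Legendrian isotopic to $\widehat{\Delta}$ by \thmref{t:not transitive}. This yields the value $(1,0)$, which together with the value $(1,1)$ supplied by \lemref{l:K preserved,dym=1} gives surjectivity without ever evaluating $\mathrm{d}$ on \exref{ex:orrev}. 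Note that \thmref{t:not transitive} --- the second Chekanov result, proved in the paper by the same foliation-by-spheres cobordism argument as \thmref{t:different or, not isotopic} --- is precisely the ingredient your proposal never invokes in this case; without it, or an honest computation of $\mathrm{d}(\psi_0)$, your surjectivity claim is unsupported and the proof is incomplete at its decisive step.
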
 
\begin{proof}
According to \cite{dym} the homomorphism 
$$
\mathrm{d} : \mathrm{Diff}_+(S^3,\xi)  \lra \Z_2
$$
which was described at the end of \secref{s:homotopy} is onto for all overtwisted contact structures on $S^3$. 

We first assume $\xi\simeq \xi_{-1}$. Let  $K\subset S^3$ be a non-loose Legendrian unknot with $\mathrm{tb}(K)=1$. According to \thmref{t:non or isotopy} and \thmref{t:different or, not isotopic} there is a well-defined group homomorphism
\begin{align} \label{e:kappa}
\begin{split}
\kappa : \mathrm{Diff}_+(S^3,\xi) &  \lra \Z_2\\
\psi & \longmapsto\left\{ \begin{array}{rl} 0 & \textrm{ if } \psi(K) \textrm{ is Legendrian isotopic to } K \\
1 &   \textrm{ if } \psi(K) \textrm{ is Legendrian isotopic to } \overline{K}.
\end{array} \right.
\end{split}
\end{align}
By \lemref{l:K preserved,dym=1}, there is an orientation preserving contact diffeomorphism $\psi$  such that $\mathrm{d}(\psi)\neq 0$ which maps $K$ to itself but reverses its orientation. Moreover, when one applies the proof of surjectivity of $\mathrm{d}$ to the ball surrounding one of the overtwisted discs shown in \figref{b:front1} (as in indicated in \remref{r:dym surj}), then the resulting contact diffeomorphism $\varphi$  satisfies $\mathrm{d}(\varphi)=1$ and $\varphi$ preserves the overtwisted disc up to isotopy. 

As we have shown above, a contact diffeomorphism of $\xi_{-1}$ which reverses to orientation of the non-loose unknot interchanges (up to contact isotopy) the two overtwisted discs in \figref{b:front1}. This is true because the positive stabilization of the standard non-loose Legendrian unknot yields one of the overtwisted discs in \figref{b:front1} while the negative stabilization (i.e. the positive stabilization when the orientation of $K$ is reversed) yields the other overtwisted disc. Since $\varphi$ preserves one of the two overtwisted discs in \figref{b:front1} and these two overtwisted discs are not isotopic, $\varphi(K)$ is isotopic to $K$ as oriented knot. Thus, $(\mathrm{d},\kappa)$ is a surjective homomorphism
$$
(\mathrm{d},\kappa) : \mathrm{Diff}_+(S^3,\xi_{-1}) \lra \Z_2\oplus\Z_2. 
$$
Every orientation preserving contact diffeomorphism  $\psi$ in the kernel of this map is isotopic to the identity inside of $\mathrm{Diff}_+(S^3,\xi
_{-1})$ by \corref{c:part of chekanov}. This finishes the proof of the theorem in the case when $\xi\simeq\xi_{-1}$. 

We now consider the case $\xi\not\simeq\xi_{-1}$. Let $\psi$ be a contact diffeomorphism of $(S^3,\xi)$ which preserves the orientation of $\xi$ and $\mathrm{d}(\psi)=0$ and choose an overtwisted disc $D$. By \thmref{t:three ot discs}  the complement of $\partial D\cup \psi(\partial D)$ is overtwisted. Therefore, we may assume that $\psi$ preserves an overtwisted disc and since $\mathrm{d}(\psi)$ is trivial we can apply \thmref{t:eliashclass} to obtain an isotopy connecting $\psi$ to the identity. 
\end{proof}
The following consequence is immediate. We denote the connected component of the space of contact structure on $S^3$ which contains a fixed contact structure $\xi$ by $\mathrm{Cont}(S^3,\xi)$.
\begin{cor}
Let $\xi$ be an overtwisted contact structure on $S^3$. Then 
$$
\pi_1\left(\mathrm{Cont}(S^3),\xi\right)\simeq  \left\{\begin{array}{rl} \Z_2\oplus \Z_2 & \textrm{ if } \xi\simeq \xi_{-1} \\
\Z_2 & \textrm{otherwise.} \end{array}\right.
$$
\end{cor}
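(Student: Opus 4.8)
The plan is to read off $\pi_1(\mathrm{Cont}(S^3,\xi))$ from the long exact homotopy sequence of the orbit fibration
$$\mathrm{Diff}_+(S^3,\xi)\hookrightarrow \mathrm{Diff}(S^3,\mathrm{or})\xrightarrow{\ \varphi\mapsto\varphi_*\xi\ }\mathrm{Cont}(S^3,\xi),$$
which is exactly the fibration appearing in the diagram of \secref{s:homotopy} (its fiber is the stabilizer of $\xi$, and its orbit is the component of $\xi$ because $\mathrm{Diff}(S^3,\mathrm{or})$ is connected). Since $\mathrm{Diff}(S^3,\mathrm{or})\simeq\mathrm{SO}(4)$, the total space is connected with $\pi_1\cong\Z_2$ and $\pi_0=0$, so the relevant portion of the sequence is
$$\pi_1(\mathrm{Diff}_+(S^3,\xi))\xrightarrow{\ \iota\ }\Z_2\xrightarrow{\ j\ }\pi_1(\mathrm{Cont}(S^3,\xi))\xrightarrow{\ \delta\ }\pi_0(\mathrm{Diff}_+(S^3,\xi))\longrightarrow 0.$$
Exactness on the right (using $\pi_0(\mathrm{SO}(4))=0$) makes $\delta$ surjective. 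Thus the corollary reduces to \textbf{the single claim that $\delta$ is injective}, i.e. $\ker\delta=\mathrm{im}\,j=0$: granting this, $\delta$ identifies $\pi_1(\mathrm{Cont}(S^3,\xi))$ with $\pi_0(\mathrm{Diff}_+(S^3,\xi))$, and \thmref{t:chekanov} supplies the two values $\Z_2\oplus\Z_2$ and $\Z_2$.

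By exactness at the middle $\Z_2$, the vanishing $\mathrm{im}\,j=0$ is equivalent to $\iota$ being surjective, that is, to \emph{the generator of $\pi_1(\mathrm{SO}(4))$ being represented by a loop of contactomorphisms of $(S^3,\xi)$}. I would first record that this generator can be taken to be the single‑plane rotation $\gamma_s(z_0,z_1)=(e^{2\pi i s}z_0,z_1)$, $s\in[0,1]$: it lies in $\mathrm{SO}(2)\subset\mathrm{U}(2)\subset\mathrm{SO}(4)$ and maps to the generator of $\pi_1(\mathrm{SO}(4))\cong\Z_2$ (whereas the Hopf circle $\mathrm{diag}(e^{2\pi i s},e^{2\pi i s})$ winds twice in $\pi_1(\mathrm{U}(2))$ and is therefore null‑homotopic in $\mathrm{SO}(4)$, which is exactly why the ambient $S^1$‑symmetry of an $S^1$‑invariant $\xi$ does not by itself suffice). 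The circle $\gamma_s$ fixes the Hopf fiber $H_0=\{z_0=0\}$ pointwise and preserves $H_1=\{z_1=0\}$.

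For $\xi\simeq\xi_{-1}$ this already settles the key step. By \exref{ex:Lutz} the structure $\xi_{-1}$ is obtained from $\xi_{st}$ by a single $\pi$‑Lutz twist along a Hopf fiber; performing this twist along $H_0$ with rotationally symmetric twisting functions $f(r),g(r)$ yields a contact structure literally invariant under $\gamma_s$, because $\gamma_s$ merely rotates the meridian angle of the Lutz tube, on which $f$ and $g$ do not depend. Hence $\gamma_s$ is a loop of contactomorphisms of $(S^3,\xi_{-1})$, so $\iota$ is onto and $\delta$ is an isomorphism in this case.

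The hard part will be producing such a loop for \emph{every} overtwisted homotopy class, i.e. for each Hopf invariant $h\neq -1$. Since only $H_0$ and $H_1$ are $\gamma_s$‑invariant, iterated $\pi$‑Lutz twists along these two fibers do not reach all $h$, so I would instead realize $\xi$ up to isotopy (all that the component $\mathrm{Cont}(S^3,\xi)$ remembers) as a contact structure invariant under a weighted circle $\lambda_s=\mathrm{diag}(e^{2\pi i a s},e^{2\pi i b s})\subset\mathrm{U}(2)$ with $a+b$ odd; every such circle still maps to the generator of $\pi_1(\mathrm{SO}(4))$, and an equivariant (Lutz‑type, cf. \cite{lutz} and \secref{ss:fronts}) classification of invariant contact structures for the associated Seifert $S^1$‑action should realize the remaining classes. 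Checking that these invariant models exhaust all Hopf invariants is the technical heart of the argument; once it is in place, $\iota$ is surjective in every case, $\delta$ is an isomorphism, and the stated values of $\pi_1(\mathrm{Cont}(S^3,\xi))$ follow directly from \thmref{t:chekanov}.
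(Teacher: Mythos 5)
Your reduction is exactly the paper's argument: the published proof consists of the long exact homotopy sequence of the same fibration $\mathrm{Diff}_+(S^3,\xi)\lra\mathrm{Diff}(S^3,\mathrm{or})\lra\mathrm{Cont}(S^3,\xi)$, plus the single sentence that $\xi$ can be chosen invariant under rotations around one complex plane in $\mathbb{C}^2\supset S^3$, which makes $\pi_1(\mathrm{SO}(4))\lra\pi_1(\mathrm{Cont}(S^3,\xi))$ trivial; \thmref{t:chekanov} then supplies the two groups. Your identification of the single-plane rotation $\gamma_s$ as the generator of $\pi_1(\mathrm{SO}(4))\simeq\Z_2$, and your observation that the Hopf circle winds twice and is therefore useless (which is why $S^1$-invariance under the Hopf action would not suffice), are correct and make explicit what the paper leaves implicit.

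The one step you leave open --- an invariant model for every Hopf invariant $h\neq -1$ --- is precisely what the paper's one-line assertion covers, and it does not require your detour through weighted circles $\lambda_s$ with $a+b$ odd: the unweighted $\gamma_s$ already suffices for every class, via equivariant $\pi$-Lutz twists along $\gamma_s$-invariant transverse unknots. These are the pointwise-fixed fiber $H_0$ (where a rotationally symmetric twist is equivariant, as you note for $\xi_{-1}$), the invariant fiber $H_1$, and the meridian orbits $\{(e^{2\pi is}z_0,z_1)\}$ of the action, each a transverse unknot with $\mathrm{sl}=-1$ in $\xi_{st}$, with $\mathrm{lk}(\mathrm{orbit},H_0)=1$ and all other relevant linking numbers ($\mathrm{orbit}$ with $\mathrm{orbit}'$, $\mathrm{orbit}$ with $H_1$) equal to $0$, while $\mathrm{lk}(H_0,H_1)=1$. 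A $\pi$-Lutz twist along $K$ changes the Hopf invariant by $\mathrm{sl}(K)$ and shifts the self-linking number of a disjoint transverse knot by $+2$ per unit of linking with $K$ --- this is exactly the bookkeeping behind the formula $(k-2)k$ in \exref{ex:Lutz}. Hence twisting along $k$ disjoint meridian orbits (mutual linking $0$, so each keeps $\mathrm{sl}=-1$) realizes $h=-k$ for all $k\ge 1$, while twisting first along $H_0$ (giving $h=-1$), then along $H_1$ (whose self-linking has become $+1$, giving $h=0$), and then along $m$ meridian orbits (each now of self-linking $-1+2=+1$) realizes $h=m$ for all $m\ge 0$. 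So every overtwisted class has a $\gamma_s$-invariant representative, $\iota$ is onto in all cases, and your argument closes along the paper's own route.
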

\begin{proof}
This follows from the long exact sequence of the fibration
$$
\mathrm{Diff_+(S^3,\xi)} \longrightarrow \mathrm{Diff}(S^3,\mathrm{or}) \lra \mathrm{Cont(S^3,\xi)}.
$$
One can choose $\xi$ to be invariant under rotations around one complex plane in $\mathbb{C}^2\supset S^3$. This shows that the map $\pi_1(\mathrm{Diff}(S^3,\mathrm{or})\simeq\pi_1(\mathrm{SO}(4))\lra \pi_1(\mathrm{Cont}(S^3,\xi))$ is trivial. 
\end{proof}

\subsection{Completion of the classification of Legendrian unknots in overtwisted contact structures on $S^3$} \label{ss:Leg class} 
 
As final application of the classification of minimal non-loose unknots  and of \thmref{t:chekanov} we complete the classification of Legendrian unknots in $S^3$. First, we consider  the contact structure $\xi_{-1}$. We now deal with non-loose unknots, then we consider the loose case. 

We already know the coarse classification for non-loose unknots: If $K\subset(S^3,\xi)$ is a non-loose unknot, then
\begin{align*}
\tb(K) & = n >0 & \mathrm{rot}(K) & = \pm(n-1) 
\end{align*} 
and there is an orientation preserving contact diffeomorphism mapping $K$ to $K_{n,1}$, respectively $K_{-n,-1}$, if $\mathrm{rot}(K)=n-1$, respectively $\mathrm{rot}(K)=-(n-1)$. Let us consider the first case.  We know two examples of knots with the same classical invariants as $K$, namely $K_{n,1}$ and $K_{-1,-n}$.

Furthermore, we have explained (\figref{b:isotopy}) that after $n-1$ negative stabilizations of $K_{1,n}$, respectively $K_{-1,-n}$, we end up with  $K_{1,1}$, respectively $K_{-1,-1}$. These two knots are not Legendrian isotopic by \thmref{t:different or, not isotopic}.  Therefore, $K_{n,1}$ is not isotopic to $K_{-1,-n}$. 

\begin{thm} \label{t:non-loose class}
Let $K\subset S^3$ be a non-loose Legendrian unknot with $\mathrm{tb}(K)=n>0$ and $\mathrm{rot}(K)=n-1$. Then $K$ is isotopic to either $K_{n,1}$ or $K_{-1,-n}$. 
\end{thm}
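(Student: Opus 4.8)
The plan is to reduce the statement to the minimal case settled in \thmref{t:different or, not isotopic} and \thmref{t:non or isotopy}, using the coarse classification and Chekanov's theorem to organise the bookkeeping. Write $\mathcal S$ for the set of oriented Legendrian isotopy classes of non-loose unknots $K'\subset(S^3,\xi_{-1})$ with $\tb(K')=n$ and $\mathrm{rot}(K')=n-1$. The coarse classification (\thmref{t:coarse unknot general}) shows that any two elements of $\mathcal S$ are related by an orientation-of-$\xi$-preserving contactomorphism, so $G:=\pi_0(\mathrm{Diff}_+(S^3,\xi_{-1}))$ acts transitively on $\mathcal S$; here one uses that a contact isotopy of $\phi$ moves $\phi(K')$ only by a Legendrian isotopy, so the action descends to isotopy classes. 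By \thmref{t:chekanov} we have $G\cong\Z_2\oplus\Z_2$, the isomorphism being $(\mathrm d,\kappa)$.

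First I would record that $K_{n,1}$ and $K_{-1,-n}$ are genuinely two distinct elements of $\mathcal S$. Both have $\tb=n$ and $\mathrm{rot}=n-1$, and $n-1$ negative stabilisations carry them to $K_{1,1}=L$ and $K_{-1,-1}=\overline L$ respectively (as recorded just before the theorem, cf. \figref{b:isotopy}). Since stabilisation respects Legendrian isotopy, an isotopy $K_{n,1}\simeq K_{-1,-n}$ would descend to $L\simeq\overline L$, contradicting \thmref{t:different or, not isotopic}. Hence $[K_{n,1}]\neq[K_{-1,-n}]$, the $G$-action on $\mathcal S$ is nontrivial, and $|\mathcal S|\in\{2,4\}$; it remains to prove $|\mathcal S|\le 2$, i.e. that the stabiliser $H:=\mathrm{Stab}_G([K_{n,1}])$ has order at least two.

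Here I would use negative destabilisation as an equivariant bridge to the minimal case. The map $\sigma:\mathcal S\to\mathcal S_{\min}=\{[L],[\overline L]\}$ sending a class to the class of its $(n-1)$-fold negative stabilisation is well defined, surjective (by the previous paragraph) and $G$-equivariant, where $G$ acts on $\mathcal S_{\min}$ through $\kappa$ (an element reverses the orientation of the minimal unknot exactly when $\kappa=1$); one checks equivariance from the fact that a contactomorphism in $\mathrm{Diff}_+(S^3,\xi_{-1})$ preserves the orientation of any $K'\in\mathcal S$, since otherwise the image would have rotation number $-(n-1)$. Equivariance forces $H\subseteq\mathrm{Stab}_G([L])=\ker\kappa=\{(0,0),(1,0)\}$. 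Thus everything comes down to the single assertion that the class $(1,0)$ — the nontrivial value of the Dymara invariant $\mathrm d$ with $\kappa=0$ — stabilises $[K_{n,1}]$. Granting this, $H=\{(0,0),(1,0)\}$, so $|\mathcal S|=4/|H|=2$, and since the two classes with $\kappa=1$ (namely $(1,1)$ and $(0,1)$) then both send $K_{n,1}$ to $K_{-1,-n}$, we conclude $\mathcal S=\{[K_{n,1}],[K_{-1,-n}]\}$; the explicit symmetry $\psi$ of \exref{ex:orrev} already exhibits $\psi(K_{n,1})=K_{-1,-n}$.

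The hard part is precisely showing that the Dymara class acts trivially on $[K_{n,1}]$, equivalently that $\sigma$ is injective on non-loose unknots with these invariants. The disc argument from the proof of \thmref{t:chekanov} only yields that a representative $\varphi$ of $(1,0)$ preserves the orientation class of the destabilisation — i.e. $\varphi(K_{n,1})\not\simeq K_{-1,-n}$ — which for $n>1$ is strictly weaker than $\varphi(K_{n,1})\simeq K_{n,1}$, because orientation reversal no longer maps $\mathcal S$ to itself. To close this gap I would produce a contactomorphism $\varphi$ with $\mathrm d(\varphi)=1$, $\kappa(\varphi)=0$ and $\varphi(K_{n,1})\simeq K_{n,1}$ by adapting \lemref{l:K preserved,dym=1}: realise the nontrivial loop in $\pi_4(S^2)$ via the Lutz-type modification of \exref{ex:homotopy ex} performed on a family of spheres carrying $K_{n,1}$ in the normal form of \thmref{t:coarse unknot general}, arranged so that the accompanying family of Legendrian knots closes up to $K_{n,1}$ with its orientation. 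The main obstacle is controlling this family of knots across the loop so that it returns to an oriented-isotopic position while still detecting $\mathrm d=1$; an alternative, which I expect to be equally viable, is to run the sphere-tomography argument of \thmref{t:different or, not isotopic} directly at Thurston--Bennequin number $n$, giving an independent proof that $(1,0)$ fixes $[K_{n,1}]$.
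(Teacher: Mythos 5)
Your reduction coincides with the paper's own first two steps: the coarse classification plus \thmref{t:chekanov} cut the possibilities down to the orbit $\{K_{n,1},\,\psi(K_{n,1})=K_{-1,-n},\,\varphi(K_{n,1}),\,\varphi(\psi(K_{n,1}))\}$ with $\varphi$ representing $(\mathrm{d},\kappa)=(1,0)$, and the $(n-1)$-fold negative stabilization down to $L$ and $\overline{L}$ together with \thmref{t:different or, not isotopic} separates $[K_{n,1}]$ from $[K_{-1,-n}]$. You have also correctly isolated the one remaining assertion: that $(1,0)$ fixes $[K_{n,1}]$, equivalently that destabilization is injective here. But at exactly this point --- the entire content of the theorem beyond the minimal case --- your proposal stops being a proof and becomes a plan, and both routes you sketch face real obstructions. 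Adapting \lemref{l:K preserved,dym=1} is not a routine modification: the loop of contact structures built there has monodromy containing a half-turn of the sphere carrying the minimal unknot, which is precisely why the resulting contactomorphism reverses the orientation of $K$ and so represents $(1,1)$, not $(1,0)$; you flag yourself that you cannot control the accompanying family of knots so that it closes up orientedly while still detecting $\mathrm{d}=1$. And rerunning the tomography of \thmref{t:different or, not isotopic} at Thurston--Bennequin invariant $n$ is not ``equally viable'': by the normal form of \thmref{t:coarse unknot general}, a sphere containing a non-loose unknot with $\tb=n$ carries $2n$ simultaneous retrogradient connections (the knot consists of $2n$ alternating hyperbolic points and the $2n$ retrogradient leaves joining them), a codimension-$2n$ phenomenon; for $n\ge 2$ the locus $\mathcal{L}$ is therefore not a codimension-two one-manifold in the three-dimensional parameter space, and the parity-of-boundary-points mechanism that drives the minimal case collapses.

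The paper closes the gap with a soft localization argument that your proposal misses entirely. Since $\kappa(\varphi)=0$, one may arrange after a contact isotopy that $\varphi$ fixes $K_{1,1}$ together with a tubular neighborhood $N_0$ \emph{pointwise}; it then suffices to Legendrian isotope $K_{n,1}$ into $N_0$. This is done by exhibiting $K_{n,1}$ as an $(n-1)$-fold band connected sum of $K_{1,1}$ with unlinked copies of $K_{1,0}$, each the boundary of an overtwisted disc (\figref{b:tb=n}); each copy of $K_{1,0}$ is Legendrian isotopic to a negative stabilization of $K_{1,1}$ performed inside $N_0$, and the isotopy of \figref{b:isotopy} can be chosen to move neither $K_{1,1}$ nor the band regions. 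Once $K_{n,1}$ has been isotoped into $N_0$, the identity $\varphi(K_{n,1})\simeq K_{n,1}$ is immediate, and your group-theoretic bookkeeping then yields $\mathcal{S}=\{[K_{n,1}],[K_{-1,-n}]\}$. Without this (or some substitute for it) your argument is incomplete for every $n>1$.
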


\begin{proof}
By Chekanov's  theorem and the coarse classification of non-loose unknots, $K$ is isotopic to one of the following knots:
$$
K_{n,1},\psi(K_{n,1})=K_{-1,-n}, \varphi(K_{n,1}), \varphi(\psi(K_{n,1}))
$$
where $\psi$ is the contactomorphism from \exref{ex:orrev} and $\varphi$ is a contactomorphism with $d(\varphi)=1$ and $\kappa(\varphi)=0$. We already know that $K_ {n,1}$ and $K_{-1,-n}$ are not isotopic. Therefore it suffices to show that $\varphi(K_{n,1})$ is isotopic to $K_{n,1}$. 

We may assume that $\varphi$ preserves $K_{1,1}$ together with a tubular neighborhood $N_0$ of $K_{1,1}$ pointwise. The claim follows if we show that $K_{n,1}$ is isotopic to a knot contained in $N_0$. 

Now $K_{n,1}$ can be obtained from $K_{1,1}$ by a $(n-1)$-fold band connected sum with $K_{1,0}$.  The $(n-1)$ copies of $K_{1,0}$ are unlinked copies of the boundary of an overtwisted disc (as in \figref{b:tb=n}). 

\begin{figure}[htb]
\begin{center}
\includegraphics[scale=0.7]{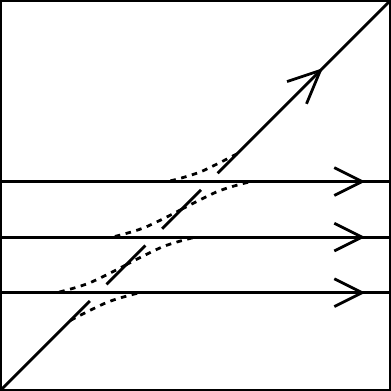}
\end{center}
\caption{Obtaining $K_{4,1}$ from $K_{1,1}$ and three copies of $L_{1,0}$}\label{b:tb=n}
\end{figure}

This collection of $n-1$ unknots is Legendrian isotopic  to a Legendrian link contained in $N_0$. But this is clear since a negative stabilization of $K_{1,1}$ (the stabilization is contained $N_0$) is isotopic to $K_{1,0}$. We have seen in \figref{b:isotopy} that this  isotopy can be chosen so that it does not move $K_{1,1}$ nor the region were the band connected sum is performed. 
\end{proof}

Again relying on \thmref{t:chekanov} we now give a complete classification of loose unknots in $(S^3,\xi)$ up to isotopy. Recall that Eliashberg and Fraser showed in \cite{elfr-long} that two  Legendrian unknots  $K_1,K_2$ in an overtwisted contact structure are isotopic when 
\begin{align*}
\tb(K_1) & = \tb(K_2) <0  & \mathrm{rot}(K_1) & = \mathrm{rot}(K_2).
\end{align*}
This is complemented by the following proposition. Before we state it, we fix $\psi\in\mathrm{Diff}_+(S^3,\xi_{-1})$ with $\kappa(\psi)\neq 0$.

\begin{prop} \label{p:loose class}
Let $K$ be a loose Legendrian unknot with $\mathrm{tb}(K)\ge 0$ in $(S^3,\xi_{-1})$. Every other loose Legendrian unknot $L\subset(S^3,\xi_{-1})$ with the same classical invariants is Legendrian isotopic to either $K$ or $\psi(K)$  
\end{prop}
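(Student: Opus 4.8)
The plan is to recast the classification as an orbit computation for the action of the contact mapping class group on isotopy classes of loose unknots. Write $X$ for the set of oriented Legendrian isotopy classes of loose unknots in $(S^3,\xi_{-1})$ that are smoothly unknotted and carry the same classical invariants $(\tb,\mathrm{rot})$ as $K$ (with orientations chosen so that the rotation numbers agree). A path of contactomorphisms carries a Legendrian knot through Legendrian knots, so the group $G=\pi_0(\mathrm{Diff}_+(S^3,\xi_{-1}))$ acts on $X$, and by \thmref{t:chekanov} one has an isomorphism $(\mathrm{d},\kappa)\colon G\xrightarrow{\ \sim\ }\Z_2\oplus\Z_2$. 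The first ingredient I would invoke is the coarse classification of loose unknots: any loose unknot with the same classical invariants as $K$ is $\phi(K)$ for some $\phi\in\mathrm{Diff}_+(S^3,\xi_{-1})$ respecting the knot orientation. For $\tb<0$ this is Eliashberg--Fraser \cite{elfr-long}; for $\tb\ge 0$ it follows from \thmref{t:eliashclass} applied to the overtwisted complement of a standard tubular neighborhood, whose boundary germ is pinned down by $\tb$ and $\mathrm{rot}$ and whose plane-field homotopy type is fixed by the ambient $\xi_{-1}$. Consequently $X=G\cdot[K]$ is a single orbit, and by orbit--stabilizer it suffices to exhibit one nontrivial element of the stabilizer $G_{[K]}$.

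The heart of the argument is the claim that \emph{there is $\varphi\in\mathrm{Diff}_+(S^3,\xi_{-1})$ with $\mathrm{d}(\varphi)=1$ and $\varphi(K)=K$.} To construct it I would use looseness of $K$: pick an overtwisted disc $D'\subset S^3\setminus K$ and a small ball $B'\supset D'$ with $B'\cap K=\emptyset$, and set $B=S^3\setminus\ring{B}'$, a ball with $K\subset\ring{B}$. Then $S^3\setminus B=\ring{B}'\supset D'$ is overtwisted; and $B$ is overtwisted as well, since $\xi_{-1}$ carries two disjoint overtwisted discs (for instance two members of one of the explicit families in \figref{b:front1}), so a second overtwisted disc may be taken disjoint from $B'$, hence inside $B$. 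This is precisely the configuration of \remref{r:dym surj}: running the $\mathrm{d}=1$ construction with the homotopy held constant on $B$ produces a contactomorphism $\varphi$ with $\mathrm{d}(\varphi)=1$ whose support lies in $S^3\setminus B$. As $K\subset B$, we get $\varphi(K)=K$ on the nose, which proves the claim.

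With the claim in hand the orbit computation concludes the proof, and it is worth stressing that the exact value of $\kappa(\varphi)$ is irrelevant. The stabilizer $G_{[K]}$ contains the identity and the element $[\varphi]=(1,\kappa(\varphi))$, so $|G_{[K]}|\ge 2$ and therefore $|X|=|G|/|G_{[K]}|\le 2$. If $|G_{[K]}|=4$ then $X$ is a single class and the asserted equality $X=\{[K],[\psi(K)]\}$ holds trivially. If $|G_{[K]}|=2$, then $G_{[K]}=\{(0,0),(1,c)\}$ for some $c\in\{0,1\}$ (it contains the identity and an element of $\mathrm{d}$-degree one); since $[\psi]=(0,1)$ is neither the identity nor of the form $(1,\ast)$, it does not lie in $G_{[K]}$, so $[\psi(K)]\neq[K]$ and $X=\{[K],[\psi(K)]\}$. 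In all cases every loose unknot with the classical invariants of $K$ is Legendrian isotopic to $K$ or to $\psi(K)$. The point is that a single stabilizing element of $\mathrm{d}$-degree one forces $\mathrm{d}$ to act trivially on $X$, leaving only the $\kappa$-direction, realized by $\psi$, to separate isotopy classes.

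The step I expect to be most delicate is the coarse classification in the range $\tb\ge 0$, together with the orientation bookkeeping for $\mathrm{rot}$: one must verify that the contactomorphism supplied by \thmref{t:eliashclass} can be chosen in $\mathrm{Diff}_+(S^3,\xi_{-1})$ and compatibly with orientations, so that $\psi(K)$, oriented by $\psi_*$, genuinely lies in $X$. By contrast, the geometric construction of $\varphi$ is comparatively routine once two disjoint overtwisted discs are in hand, and this is the only place where looseness of $K$ is actually used.
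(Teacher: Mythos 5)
Your orbit--stabilizer repackaging proves the statement as literally phrased, and at its core it runs on the same mechanism as the first paragraph of the paper's proof: the paper uses \thmref{t:eliashclass} to produce a contactomorphism $\varphi$ with $\varphi(K)=L$, corrects $\mathrm{d}(\varphi)$ if necessary by composing with a $\mathrm{d}=1$ contactomorphism from \remref{r:dym surj} supported away from the knot (this is precisely your stabilizing element $\varphi$), and then quotes \thmref{t:chekanov} to conclude that the corrected map is contact-isotopic to the identity or to $\psi$. So for the ``at most two classes'' assertion the two arguments are essentially the same, yours being a slightly more systematic bookkeeping of the $\Z_2\oplus\Z_2$-action. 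What you deliberately omit --- and what occupies the bulk of the paper's proof --- is the sharper fact that $K$ and $\psi(K)$ are \emph{not} Legendrian isotopic. The paper derives this from \thmref{t:not transitive}: directly when $\tb(K)=0$, $\mathrm{rot}(K)=\pm1$; for $\tb=0$, $|\mathrm{rot}|\ge 3$ via a convex Seifert disc and the Legendrian realization principle, producing a piecewise smooth unknot $\gamma_D\cup\sigma_D$ with $\tb=0$, $\mathrm{rot}=1$ that a hypothetical isotopy $K\to\psi(K)$ would carry to its $\psi$-image; and for $\tb>0$ by passing to positive stabilizations. Your method is structurally unable to reach this, since distinctness is exactly the question of whether $[\psi]$ lies in the stabilizer $G_{[K]}$, which orbit counting cannot decide; note that the proposition is invoked elsewhere in the ``exactly two classes'' form (\figref{b:geography} and the introduction's claim that unknots with $\tb\ge 0$ are never classified by their classical invariants), so your proof is strictly weaker than the paper's even though it covers the stated sentence.

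Two concrete points need repair. First, your justification for the second overtwisted disc does not follow as written: the global existence of two disjoint overtwisted discs in $(S^3,\xi_{-1})$ says nothing about your particular ball $B'$, and in $\xi_{-1}$ the complement of a ball containing an overtwisted disc can genuinely be tight (this is the content of \propref{p:xi1 detect} --- it is what distinguishes $\xi_{-1}$ from the other overtwisted structures, cf.\ \thmref{t:three ot discs}). The fix is local: since $K$ is loose there is an overtwisted disc in $S^3\setminus K$; perturb it to a convex disc whose dividing set has a closed component (as in the proof of \lemref{l:tb>0}) and use an invariant neighborhood to obtain two disjoint overtwisted discs in $S^3\setminus K$, then take $B'$ around one of them missing the other, after which \remref{r:dym surj} applies with $B=S^3\setminus\ring{B}'$. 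Second, your case analysis assumes $[\psi]=(0,1)$ under $(\mathrm{d},\kappa)$, whereas the paper fixes $\psi$ only by $\kappa(\psi)\neq 0$; if $[\psi]=(1,1)$ and $G_{[K]}=\{(0,0),(1,1)\}$, your conclusion would name the wrong representative. You should normalize $\mathrm{d}(\psi)=0$ explicitly (possible since $(\mathrm{d},\kappa)$ is onto); the paper's own proof makes the same tacit normalization when it asserts that a contactomorphism with $\mathrm{d}=0$ is isotopic to the identity or to $\psi$.
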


\begin{proof}
Because $L$ and $K$ are loose, one can use \thmref{t:eliashclass} directly to construct a contact diffeomorphism $\varphi$ preserving the orientation of $\xi$ such that $\varphi(K)=L$. When $\mathrm{d}(\varphi)\neq 0$ then we compose $\varphi$ with a contact diffeomorphism from \remref{r:dym surj}, so we may assume $\mathrm{d}(\varphi)=0$.

By \thmref{t:chekanov} $\varphi$ is either isotopic to the identity or to $\psi$. Hence, $L$ is isotopic to $K$ or $\psi(K)$. What remains to be shown is that $\psi(K)$ and $K$ are not Legendrian isotopic.  

For this, we first assume that $\tb(K)=0$. If in addition $\mathrm{rot}(K)=r=\pm 1$, then the claim follows from \thmref{t:not transitive}. We assume that $r\ge 3$ (recall that the sum of the Thurston-Bennequin invariant and the rotation number of a null-homologous knot is odd). 

According to Proposition 3.1 in  \cite{honda} there is a convex disc $D$ with $\partial D=K$. Moreover, $\mathrm{rot}(D)=\chi(D_+)-\chi(D_-)$ by \eqref{e:rot number} on p.~\pageref{e:rot number} when $D$ is a convex surface with contact vector field $X$ transverse to $D$ such that $X$, respectively $-X$, coorients $\xi$ on the interior of $D_+$, respectively $D_-$.  

If $r\ge3$, then $\chi(D_+)\ge 2$. By the Legendrian realization principle (Theorem 3.7 in \cite{honda}) we may assume that there is a properly embedded Legendrian arc $\gamma_D\subset D$ such that
\begin{itemize}
\item all singular points of $\xi(D)$ along $\gamma_D$ and $\partial D$ have the same sign, and
\item there is a segment $\sigma_D\subset\partial D$ such that $\gamma_D\cup\sigma_D$ is a piecewise smooth Legendrian knot with $\mathrm{rot}(\gamma_D\cup\sigma_D)=1$ (we use the orientation of $\sigma_D$ to orient $\gamma_D\cup\sigma_D$) and $\tb(\gamma_D\cup\sigma_D)=0$. 
\end{itemize}

Now assume that $\psi(K)$ and $K$ are isotopic through Legendrian knots $K_t$. We obtain a contact  isotopy $\varphi_t$  such that $\varphi_0=\mathrm{id}$, $\varphi_t(K)=K_t$ and $\varphi_1(\sigma_D)=\psi(\sigma_D)$. However, $\varphi_1(D)=\widehat{D}$ and $\psi(D)$ do not coincide in general. 

There is a smooth isotopy $\widehat{\varphi}_t$ such that $\widehat{\varphi}_0=\mathrm{id}$ and $ \widehat{\varphi}_1(\widehat{D})=\psi(D)$ fixing $\psi(K)$. Moreover, we may assume that the isotopy is the identity on a ball $B_t$ which is disjoint from the interior of $\widehat{\varphi}_t(\widehat{D})$ and contains $\psi(K)$ in its boundary. This determines a family of contact structures $\widehat{\xi}_t=\left(\widehat{\varphi}_t\right)_*(\xi_{-1})$ such that $\widehat{\xi}_t=\xi_{-1}$ on $B_t$.  The restriction of $\xi_{-1}$ to $B_t$ is overtwisted since $\mathrm{tb}(\psi(K))=0$.  By Remark \ref{r:dym surj} we may assume that the loop $\widehat{\xi}_t$ of contact structures is trivial as a loop of plane fields. 

By \thmref{t:eliashclass} and the discussion following this statement $\widehat{\xi}_t$ is trivial as a loop of contact structures. Using Gray's theorem we can deform the  isotopy $\widehat{\varphi}_t$ to a contact isotopy $\widetilde{\varphi}_t$ of $(S^3,\xi_{-1})$ such that $\widetilde{\varphi}_0=\widehat{\varphi}_0$ and $\widetilde{\varphi}_1=\widehat{\varphi}_1$.

We have constructed a contact isotopy  which moves $D$ to $\psi(D)$. But this implies that $\gamma_D\cup\sigma_D$ and $\psi(\gamma_D\cup\sigma_D)$ are Legendrian isotopic.  This is a contradiction to \thmref{t:not transitive}. Hence, $K$ cannot be isotopic to $\psi(K)$ when $\mathrm{tb}(K)=0$. 

Finally, if $\tb(K)>0$, then every isotopy interpolating between $K$ and $\psi(K)$ induces an isotopy interpolating between the $\tb(K)$-fold positive stabilization of $K$ and its image under $\psi$. This is a contradiction to what we have already shown. 
\end{proof}

Finally, let $\xi$ be an overtwisted contact structure which is not isomorphic to $\xi_{-1}$. 

\begin{prop} \label{p:other contstr unknot Leg simple}
Legendrian unknots in $(S^3,\xi)$ are classified up to Legendrian isotopy by the Thurston-Bennequin invariant and the rotation number. 
\end{prop}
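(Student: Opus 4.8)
The plan is to reduce the statement to the computation of the contact mapping class group in \thmref{t:chekanov}. Since $\xi$ is overtwisted and $\xi\not\simeq\xi_{-1}$, the coarse classification (recall that $\xi_{-1}$ is the only overtwisted contact structure on $S^3$ carrying a non-loose unknot, and that non-loose unknots have $\tb>0$ by \lemref{l:tb>0}) implies that \emph{every} Legendrian unknot in $(S^3,\xi)$ is loose. Let $K_0,K_1$ be Legendrian unknots with $\tb(K_0)=\tb(K_1)$ and $\mathrm{rot}(K_0)=\mathrm{rot}(K_1)$; I want to show that they are Legendrian isotopic. (For $\tb<0$ this is already the result of Eliashberg and Fraser quoted before \propref{p:loose class}, so the interesting range is $\tb\ge 0$, but the argument I give is uniform in $\tb$.)

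First I would construct, exactly as in the proof of \propref{p:loose class}, an orientation preserving contactomorphism $\varphi\in\mathrm{Diff}_+(S^3,\xi)$ with $\varphi(K_0)=K_1$: the two knots are smoothly isotopic and share their classical invariants, and since both are loose such a $\varphi$ is obtained directly from \thmref{t:eliashclass} by exploiting an overtwisted disc in the complement. It then remains to promote $\varphi$ to a contact isotopy from $\mathrm{id}$, which is where the homotopy-theoretic invariant $\mathrm d$ must be controlled.

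The second step is to arrange $\mathrm{d}(\varphi)=0$ \emph{without} changing the Legendrian isotopy type of $\varphi(K_0)=K_1$. For this I would produce a contactomorphism $g\in\mathrm{Diff}_+(S^3,\xi)$ with $\mathrm{d}(g)=1$ whose support is disjoint from $K_1$, so that $g(K_1)=K_1$; replacing $\varphi$ by $g\circ\varphi$ if necessary then gives $\mathrm{d}(\varphi)=0$ (as $\mathrm d$ is a homomorphism) while preserving $\varphi(K_0)=K_1$. Such a $g$ is built by the recipe of \remref{r:dym surj}, which yields a contactomorphism with $\mathrm{d}=1$ supported in the complement of a ball $B$ that contains an overtwisted disc and whose complement $S^3\setminus B$ is still overtwisted; the only extra requirement is to arrange $K_1\subset B$. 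Here the hypothesis $\xi\not\simeq\xi_{-1}$ enters essentially: starting from an overtwisted disc $\Delta_1$ disjoint from the loose knot $K_1$, the characterization of $\xi_{-1}$ in \propref{p:xi1 detect} together with \lemref{l:ot disc part of Lutz} shows that the complement of a ball swallowing $K_1\cup\Delta_1$ cannot be tight (otherwise $\xi$ would be isotopic to $\xi_{-1}$), and combining \thmref{t:three ot discs} with the squeezing \propref{p:contact squeezing} one places $K_1$ together with one overtwisted disc inside $B$ while keeping a second overtwisted disc in $S^3\setminus B$.

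Finally, once $\mathrm{d}(\varphi)=0$, \thmref{t:chekanov} asserts that for $\xi\not\simeq\xi_{-1}$ the group $\pi_0\!\left(\mathrm{Diff}_+(S^3,\xi)\right)\simeq\Z_2$ is detected by $\mathrm d$, so $\varphi$ is isotopic to the identity through contactomorphisms; following this path carries $K_0$ to $K_1=\varphi(K_0)$ through Legendrian unknots, giving $K_0\simeq K_1$. The main obstacle is the middle step: I expect the real work to be the disjointness bookkeeping required to realize the Dymara contactomorphism $g$ with $\mathrm{d}(g)=1$ supported away from $K_1$ — equivalently, to see that for $\xi\not\simeq\xi_{-1}$ there is always ``room'' for a second overtwisted disc outside a ball containing $K_1$. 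This is exactly the phenomenon that fails for $\xi_{-1}$, where the residual invariant $\kappa$ of \eqref{e:kappa} survives and produces the two distinct isotopy classes of \propref{p:loose class}.
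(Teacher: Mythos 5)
Your skeleton is exactly the paper's: looseness of both knots via the coarse classification, a contactomorphism $\varphi$ with $\varphi(K_0)=K_1$ from \thmref{t:eliashclass}, correction of $\mathrm{d}(\varphi)$ by composing with a $\mathrm{d}=1$ contactomorphism fixing the knot, and then \thmref{t:chekanov} to conclude that $\varphi$ is contact isotopic to the identity, which carries $K_0$ to $K_1$ through Legendrian knots. The gap is in your middle step, which you correctly identify as the crux but then resolve incorrectly. You want the corrector $g$ supported in the complement of a ball $B$ with $K_1\subset B$, an overtwisted disc inside $B$, and $S^3\setminus B$ overtwisted, and you argue the existence of such a $B$ from \propref{p:xi1 detect}. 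But that proposition characterizes $\xi_{-1}$ by a ball containing a \emph{simple Lutz tube}, with convex boundary admitting a tight neighborhood; a ball merely ``swallowing'' $K_1\cup\Delta_1$ with tight complement yields no contradiction with $\xi\not\simeq\xi_{-1}$. To upgrade $\Delta_1$ to a simple Lutz tube via \lemref{l:ot disc part of Lutz} you must apply \thmref{t:eliashclass} with an auxiliary overtwisted disc $D'$ whose position relative to $K_1$ you do not control, so the resulting ball may intersect $K_1$ — the argument becomes circular, since keeping $K_1$ out of the picture requires precisely the ``room'' you are trying to establish. Nor does \thmref{t:three ot discs} apply as stated: it concerns pairs of elements of $\mathcal{U}_0(\xi)$, not a loose knot of arbitrary classical invariants together with a disc.

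The paper's actual proof shows no such global bookkeeping is needed: the corrector is supported in a small tubular neighborhood $N(D)$ of an overtwisted disc $D$ disjoint from $K$ (which exists by looseness alone). Since the germ of $\xi$ along an overtwisted disc contains disjoint parallel overtwisted copies, one keeps one such copy fixed inside $N(D)$, runs a homotopically nontrivial loop of plane fields supported in the remainder of $N(D)$, compresses it to a loop of contact structures by the relative version of \thmref{t:eliashclass}, and applies Gray's theorem (the Moser vector field vanishes where the loop is constant) to obtain $\varphi$ supported in $N(D)$ with $\mathrm{d}(\varphi)=1$; precomposing then kills $\mathrm{d}$ while fixing $K$. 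In particular, you have mislocated where the hypothesis $\xi\not\simeq\xi_{-1}$ enters: it is not needed to build the corrector, but only in \thmref{t:chekanov} itself (through \thmref{t:three ot discs} in its proof), which is what guarantees that $\ker\mathrm{d}$ is the identity component. With your middle step replaced by this local construction, your argument coincides with the paper's.
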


\begin{proof}
Let $K,K'$ be two Legendrian unknots with the same Thurston-Bennequin invariant and rotation number. Both unknots are loose by \thmref{t:coarse unknot general}, we fix an overtwisted disc which is disjoint from $K$. There is a diffeomorphism $\psi$ of $\xi$ which maps $K$ to $K'$. If $\mathrm{d}(\psi)=0$, then $\psi$ is isotopc to the identity through contact diffeomorphisms. If $\mathrm{d}(\psi)\neq 0$, then we precompose $\psi$ with a contact diffeomorphism $\varphi$ which is supported in a neighborhood of $D$ and satisfies $\mathrm{d}(\varphi)=1$. 
\end{proof}


\begin{thebibliography}{12345}

\bibitem[BaO]{bo} K.~L.~Baker, S.~Onaran, {\em Non-looseness of non-loose knots}, Alg. \& Geom. Topology, (2015) Vol. 15, No. 2, 1031--1066.

\bibitem[Be]{bennequin} D.~Bennequin, {\em Entrelacements et {\'e}quations de Pfaff}, Ast{\'e}risque 107--108 (1983), 83--161. 

\bibitem[Ce]{cerf} J.~Cerf, {\em Sur les diff{\'e}omorphismes de la sph{\`e}re de dimension trois ($\Gamma_4=0$)}, Lect. Notes in Math. 53 (1968) Springer.

\bibitem[Co97]{col-connect} V.~Colin, {\em Chirurgies d'indice un et isotopies de sph{\`e}res dans les vari{\'e}t{\'e}s de contact tendues}, C. R. Acad. Sci. Paris S{\'e}r. I Math. 324 (1997), no. 6, 659--663.
 
\bibitem[Co99]{col-gluing} V.~Colin, {\em Recollement des vari{\'e}t{\'e}s de contact tendues}, Bull. Soc. math. France 127 (1999), 43--96.

\bibitem[DFN]{dfn} B.~Dubrovin, A.~Fomenko, S.~Novikov, {\sl Modern Geometry - Methods and Applications -- Part II}, Grad. Texts in Math. 104, Springer 1985. 

\bibitem[Dy01]{dym} K.~Dymara, {\em Legendrian knots in overtwisted contact structures on $S^3$}, Ann. Global Anal. Geom. 19 (2001), no. 3, 293--305. 

\bibitem[Dy04]{dym2} K.~Dymara, {\em Legendrian knots in overtwisted contact structures}, arxiv:math/0410122v2. 

\bibitem[El89]{elot} Y.~Eliashberg, {\em Classification of overtwisted contact structures on $3$-manifolds}, Invent. Math. 98 (1989), 623--637.

\bibitem[El92]{el20} Y.~Eliashberg, {\em Contact $3$-manifolds twenty years since J.~Martinet's work}, Ann. Inst. Fourier 42, 1--2 (1992), 165--192. 

\bibitem[ElF98]{elfr} Y.~Eliashberg, M.~Fraser, {\em Classification of topologically trivial Legendrian knots},  Geometry, topology, and dynamics 17--51, CRM Proc. Lecture Notes, 15, Amer. Math. Soc., Providence, RI, 1998.

\bibitem[ElF09]{elfr-long} Y.~Eliashberg, M.~Fraser, {\em Topologically trivial Legendrian knots}, J. Symplectic Geom. 7 (2009), no. 2, 77--127.



\bibitem[EtN]{etnyreng} J.~Etnyre, L.~Ng, {\em Problems in low dimensional contact geometry}, 
 Topology and Geometry of Manifolds, Proc. Sympos. Pure Math. 71 (2003), 337--357.

\bibitem[Et05]{etnyre-overview} J.~Etnyre, {\em Legendrian and Transversal Knots}, 
Handbook of Knot Theory (Elsevier B. V., Amsterdam), 2005, 105-185. 

\bibitem[Et08]{etnyre-ex} J.~Etnyre, {\em On contact surgery}, Proc. Amer. Math. Soc. 136 (2008), no. 9, 3355--3362. 

\bibitem[Et13]{etnyre-ot} J.~Etnyre, {\em On knots in overtwisted contact structures}, Quantum Topol. 4 (2013), no. 3, 229--264. 

\bibitem[Ge]{geiges} H.~Geiges, {\sl An introduction to contact topology}, Cambridge Studies in Advanced Mathematics, 109. Cambridge University Press, Cambridge, 2008. 

\bibitem[GeO]{go} H.~Geiges, S.~Onaran, {\em 
Legendrian rational unknots in lens spaces}, J. of Symplectic Geom. 13 (2015), no. 1, 17--50. 

\bibitem[Gi91]{gi-conv} E.~Giroux, {\em Convexit{\'e} en topologie de contact}, Comment. Math. Helv. 66 (1991), no. 4, 637--677.


\bibitem[Gi93]{gi-bour} E.~Giroux, {\em Topologie de contact en dimension $3$ (autour des travaux de Yakov Eliashberg)}, S{\'e}minaire Bourbaki, Vol. 1992/93, Ast{\'e}risque No. 216 (1993), Exp. No. 760, 3, 7--33. 


\bibitem[Gi00]{gi-bif} E.~Giroux, {\em Structures de contact en dimension trois et bifurcations des feuilletages de surfaces}, Invent. Math. 141 (2000), no. 3, 615--689.

\bibitem[Ha81]{s1s2} A.~Hatcher, {\em On the diffeomorphism group of $S^1\times S^2$}, Proc. Amer. Math. Soc. 83 (1981), no. 2, 427--430. 


\bibitem[Ha83]{smale conj} A.~Hatcher, {\em A proof of the Smale conjecture, $\mathrm{Diff}(S^3)\simeq\mathrm{O}(4)$}, Ann. of Math. (2) 117 (1983), no. 3, 553--607.


\bibitem[Ha01]{hatcher} A.~Hatcher, {\sl Algebraic topology}, Cambridge University press 2001. 


\bibitem[Hi]{hirsch} M.~W.~Hirsch, {\sl Differential topology}, Graduate Texts in Math. 33, Springer 1997. 

\bibitem[Ho]{honda} K.~Honda, {\em On the classification of tight contact structures. I},  Geometry \& Topology 4 (2000), 309--368. 

\bibitem[H]{hopf} H.~Hopf, {\em {\"U}ber die Abbildungen der dreidimensionalen Sph{\"a}re auf die Kugelfl{\"a}che}, Math. Ann. 104 (1931), 637--665.

\bibitem[Hu13]{huang-ot} Y. Huang, {\em A proof of the classification theorem of overtwisted contact structures via convex surface theory}, J. Symplectic Geom. 11 (2013), no. 4, 563--601.

\bibitem[Hu14]{huang} Y.~Huang, {\em Bypass attachments and homotopy classes of $2$-plane fields in contact topology}, J. Symplectic Geom. 12 (2014), no. 3, 599--617.


\bibitem[LOSS]{non-loose-ex} P.~Lisca, P.~Ozsv{\'a}th, A.~Stipsicz, Z.~ Szab{\'o}, {\em Heegaard Floer invariants of Legendrian knots in contact three-manifolds}, J.~Eur.~Math.~Soc. 11 (2009), no. 6, 1307--1363. 

\bibitem[Lu]{lutz} R.~Lutz, {\em Structures de contact sur les fibr{\'e}s principaux en cercles de dimension trois}, Ann. Inst. Fourier (Grenoble) 27 (1977), no. 3, ix, 1--15.

\bibitem[Mi]{milnor} J.~W.~Milnor, {\sl Topology from the differentiable viewpoint}, Princeton Landmarks in Math., Princeton University Press, Princeton, NJ, 1997.

\bibitem[Ro]{rou} R.~Roussarie, {\em Plongements dans les vari{\'e}t{\'e}s feuillet{\'e}es et classification de feuilletages sans holonomie}, I.H.E.S. Publ. Math. 423 (1973), 101-142.

\bibitem[V]{unique} T.~Vogel, {\em On the uniqueness of the contact structure  approximating a foliation}, Geometry \& Topology 20 (2106), no.~5, p. 2439--2573. 

\end{thebibliography}
\end{document}